\documentclass[11pt]{amsart}


\usepackage[margin=1.1in]{geometry}
\usepackage{microtype}

\usepackage{graphicx}
\usepackage{amssymb}
\usepackage{enumerate}
\usepackage{times}
\usepackage{cite}


\usepackage[colorlinks=true, pdfstartview=FitV, linkcolor=blue, citecolor=blue, urlcolor=blue]{hyperref}



\def\Xint#1{\mathchoice
{\XXint\displaystyle\textstyle{#1}}%
{\XXint\textstyle\scriptstyle{#1}}%
{\XXint\scriptstyle\scriptscriptstyle{#1}}%
{\XXint\scriptscriptstyle\scriptscriptstyle{#1}}%
\!\int}
\def\XXint#1#2#3{{\setbox0=\hbox{$#1{#2#3}{\int}$}
\vcenter{\hbox{$#2#3$}}\kern-.5\wd0}}

\def\avgint{\Xint-}


\makeatletter
\newcommand*\rel@kern[1]{\kern#1\dimexpr\macc@kerna}
\newcommand*\widebar[1]{%
  \begingroup
  \def\mathaccent##1##2{%
    \rel@kern{0.8}%
    \overline{\rel@kern{-0.8}\macc@nucleus\rel@kern{0.2}}%
    \rel@kern{-0.2}%
  }%
  \macc@depth\@ne
  \let\math@bgroup\@empty \let\math@egroup\macc@set@skewchar
  \mathsurround\z@ \frozen@everymath{\mathgroup\macc@group\relax}%
  \macc@set@skewchar\relax
  \let\mathaccentV\macc@nested@a
  \macc@nested@a\relax111{#1}%
  \endgroup
}
\makeatother


\newtheorem{theorem}{Theorem}[section]
\newtheorem{lemma}[theorem]{Lemma}
\newtheorem{prop}[theorem]{Proposition}
\newtheorem{corollary}[theorem]{Corollary}

\newtheorem{definition}[theorem]{Definition}

\theoremstyle{definition}

\theoremstyle{remark}
\newtheorem{remark}[theorem]{Remark}


\numberwithin{equation}{section}
 \allowdisplaybreaks




\newcommand{\R}{\mathbb R}
\newcommand{\N}{\mathbb N}
\newcommand{\Z}{\mathbb Z}

\newcommand{\subRn}{{{\mathbb R}^n}}


\newcommand{\Q}{\mathcal Q}
\newcommand{\D}{\mathcal D}
\newcommand{\Ss}{\mathcal S}

\newcommand{\F}{\mathcal F}


\newcommand{\M}{\mathcal M}

\newcommand{\W}{\mathcal{W}}
\newcommand{\barW}{\overline{\mathcal{W}}}

\newcommand{\A}{\mathcal A}
\newcommand{\B}{\mathcal B}


\DeclareMathOperator*{\esssup}{ess\,sup}

\DeclareMathOperator{\op}{op}
\DeclareMathOperator{\diag}{diag}


\DeclareMathOperator{\conv}{conv}
\DeclareMathOperator{\clconv}{\overline{conv}}

\newcommand{\K}{\mathcal{K}}
\newcommand{\acs}{\K_{acs}(\R^d)}
\newcommand{\bcs}{\K_{bcs}(\R^d)}
\newcommand{\abcs}{\K_{abcs}(\R^d)}

\newcommand{\Rdf}{\mathcal{R}}


\newcommand{\Md}{\mathcal{M}_d}
\newcommand{\Sd}{\mathcal{S}_d}

\newcommand{\ld}{m_d} 
\renewcommand{\ln}{{m}_n}

\newcommand{\E}{\mathcal E}

\setcounter{tocdepth}{3}
\makeatletter
\def\l@subsection{\@tocline{2}{0pt}{3.5pc}{5pc}{}}
\makeatother


\title{Extrapolation and Factorization of matrix weights}

\author{Marcin Bownik}
\address{Department of Mathematics,
University of Oregon,
Eugene, OR 97403--1222, USA}
\email{mbownik@uoregon.edu}

\author{David Cruz-Uribe, OFS}
\address{Department of Mathematics \\ University of Alabama \\
Tuscaloosa, AL 35487, USA}
\email{dcruzuribe@ua.edu}

\thanks{ The first author was partially supported by NSF grants
  DMS-1956395 and DMS-2349756. The second author was supported by research funds from
  the Dean of the College of Arts \& Sciences, the University of
  Alabama, and is currently partially supported by a Simons Foundation
  Travel Support for Mathematicians Grant.  The authors would like to
  thank Fedor Nazarov for an inspiring lecture on matrix weights at
  the University of Missouri over two decades ago, which led to the
  formulation of Proposition
  \ref{prop:general-reverse-factorization}.}

\subjclass[2010]{42B25, 42B30, 42B35}

\keywords{Convex analysis, convex-set valued functions,
  maximal operators, 
  Muckenhoupt weights, matrix weights,
  Rubio de Francia extrapolation}


\begin{document}

\begin{abstract}
  In this paper we prove the Jones factorization theorem and the Rubio
  de Francia extrapolation theorem  for matrix $\A_p$ weights.  
  These results answer longstanding open questions in the study of
  matrix weights.  The proof
  requires the development of the theory of convex-set valued
  functions and measurable seminorm functions.  In particular, we define a convex-set valued version of the
  Hardy Littlewood maximal operator and construct
  an appropriate generalization of the Rubio de Francia iteration
  algorithm, which is central to the proof of both results in the
  scalar case. 
\end{abstract}


\vspace*{-0.1in}

\maketitle


\section{Introduction}
\label{section:introduction}

The purpose of this paper is to extend the theory of matrix $\A_p$
weights by proving the Jones factorization theorem \cite{jones}  and the Rubio de
Francia extrapolation theorem \cite{rubio} in this setting.  Our work answers a
longstanding open question first raised (we believe) by Nazarov and
Treil in 1996~\cite[Section~11.5.4]{MR1428988}.  To provide some
context for our results, we briefly recall some earlier work. For
further details, we refer the reader to~\cite{duoandikoetxea01,
  garcia-cuerva-rubiodefrancia85}.  The now classical $A_p$ weights
were introduced by Muckenhoupt and others in the 1970s.  A weight
(i.e., a non-negative, measurable function $w$ that satisfies
$0<w(x)<\infty$ a.e.) is said to satisfy $w\in A_p$, $1<p<\infty$, if
\begin{equation} \label{eqn:Ap-defn}
[w]_{A_p} =  \sup_Q \avgint_Q w(x)\,dx \bigg(\avgint_Q w(x)^{1-p'}\,dx\bigg)^{p-1}
  < \infty,  
\end{equation}
where the supremum is taken over all cubes in $\R^n$ with sides
parallel to the coordinate axes.  A weight $w$ is in $A_1$ if
\begin{equation} \label{eqn:A1-defn}
 [w]_{A_1} =  \sup_Q \esssup_{x\in Q} w(x)^{-1} \avgint_Q w(y)\,dy <
  \infty. 
\end{equation}
It was shown that \eqref{eqn:Ap-defn} is a
sufficient condition, when  $1<p<\infty$, for norm inequalities of the form
\[ \int_{\R^n} |Tf(x)|^p w(x)\,dx \leq C \int_{\R^n} |f(x)|^p
  w(x)\,dx,\]
and \eqref{eqn:A1-defn} is sufficient for the corresponding weak type estimate when $p=1$,
where $T$ is the Hardy-Littlewood maximal operator, a
Calder\'on-Zygmund singular integral, a square function, and other
classical operators of harmonic analysis.  

Two fundamental and closely related results in the study of weighted
norm inequalities are the Jones factorization theorem and the Rubio de
Francia extrapolation theorem.

\begin{theorem}[Jones Factorization Theorem]
  Given a weight $w$ and $1<p<\infty$, $w\in A_p$ if and only if there
  exist weights $w_0,\,w_1\in A_1$ such that $w=w_0w_1^{1-p}$.  
\end{theorem}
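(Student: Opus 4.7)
I would prove the two implications separately. For the easier direction, assume $w = w_0 w_1^{1-p}$ with $w_0, w_1 \in A_1$. The $A_1$ hypothesis gives $\avgint_Q w_i \leq [w_i]_{A_1} \essinf_Q w_i$; pulling the essential infimum out of the negatively exponentiated factor in each average inside $[w]_{A_p}$ yields
\[ \avgint_Q w_0 w_1^{1-p} \leq [w_1]_{A_1}^{p-1} \left(\avgint_Q w_0\right)\left(\avgint_Q w_1\right)^{1-p} \]
and an analogous bound for $\avgint_Q w_0^{1-p'} w_1$. Multiplying these and using $(p-1)(p'-1)=1$ causes the averages of $w_0$ and $w_1$ to cancel completely, giving $[w]_{A_p} \leq [w_0]_{A_1}[w_1]_{A_1}^{p-1}$.

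For the harder direction, the plan is a coupled Rubio de Francia iteration. Since $w \in A_p$, the Muckenhoupt theorem gives that $M$ is bounded on $L^p(w)$ and, by duality, on $L^{p'}(\sigma)$ with $\sigma = w^{1-p'}$. Rewritten on unweighted $L^q$, this says that
\[ T_1 f := w^{1/p} M(w^{-1/p} f), \qquad T_2 g := w^{-1/p} M(w^{1/p} g) \]
are bounded on $L^p$ and $L^{p'}$ respectively. The goal is to produce a single strictly positive $F$ such that $T_1 F \leq C F$ pointwise (so that $w_1 := w^{-1/p} F \in A_1$ by the $L^\infty$ characterization of $A_1$) and $T_2(F^{p-1}) \leq C^{p-1} F^{p-1}$ pointwise (so that $w_0 := w^{1/p} F^{p-1} \in A_1$); a direct algebraic check using $(p-1)/p + 1/p = 1$ then gives $w_0 w_1^{1-p} = w$. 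To construct such an $F$, I would iterate the positively homogeneous nonlinear operator
\[ \Phi f = T_1 f + \bigl(T_2(f^{p-1})\bigr)^{1/(p-1)}, \]
which is bounded on $L^p$ with norm at most $\|T_1\|_{L^p} + \|T_2\|_{L^{p'}}^{1/(p-1)}$. Starting from a strictly positive $f_0 \in L^p$, a suitably discounted Rubio de Francia series should produce an $F > 0$ in $L^p$ satisfying $\Phi F \leq C F$; since the two summands of $\Phi F$ are the two quantities to be controlled, this yields exactly the pair of pointwise inequalities required.

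The main obstacle is precisely this final iteration step: $\Phi$ is only quasi-subadditive, the constant coming from the elementary inequalities between $(a+b)^{p-1}$ and $a^{p-1}+b^{p-1}$ (convexity for $p \geq 2$ and subadditivity for $1 < p < 2$). The naive Rubio de Francia series does not obviously converge to a fixed-point-up-to-a-constant, because quasi-subadditivity constants can accumulate across infinitely many summands. The fix, which is the crux of the construction, is either to inflate the discount factor of the series so that the quasi-subadditivity constant is absorbed into a geometrically convergent tail, or alternatively to produce $F$ via a Schauder fixed-point argument on a weakly compact convex subset of $L^p$ containing the iterates; either route delivers $F$, and hence the desired $A_1$ factorization of $w$.
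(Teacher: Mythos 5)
Your reverse direction is the standard computation and is correct. Your factorization direction also follows the right overall strategy---the Coifman--Jones--Rubio de Francia scheme of producing one function $F$ whose two pointwise maximal inequalities generate both $A_1$ factors, which is precisely the scalar model behind Proposition~\ref{prop:factorization} of this paper---and your reduction of the theorem to $T_1F\le CF$ and $T_2(F^{p-1})\le C^{p-1}F^{p-1}$, together with the algebra $w_0w_1^{1-p}=w$, is sound. The genuine gap is the construction of $F$. For $p\ge 2$ there is actually no problem: by Minkowski's inequality in $L^{p-1}$ of the normalized measure on each cube (the scalar case of Lemma~\ref{lemma:M_p-sublinear}), the map $f\mapsto\bigl(T_2(f^{p-1})\bigr)^{1/(p-1)}$ is sublinear with constant $1$, so your series works verbatim. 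For $1<p<2$ it is only quasi-subadditive, with constant $K=2^{(2-p)/(p-1)}>1$, and neither of your proposed repairs closes the gap. Inflating the discount factor cannot work: iterating quasi-subadditivity through the tail of the series gives at best $\Phi\bigl(\sum_k a_k\Phi^kf_0\bigr)\le\sum_k K^{k+1}a_k\Phi^{k+1}f_0$, so a termwise domination $\Phi F\le CF$ forces $a_{k+1}\ge K^{k+1}a_k/C$, i.e.\ super-exponential growth of the coefficients, which is incompatible with convergence of $\sum_k a_k\Phi^kf_0$ in $L^p$, where $\|\Phi^kf_0\|_{L^p}$ grows at most geometrically. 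The Schauder route is also not routine: norm-bounded convex sets in $L^p$ are only weakly compact, $M$ (hence $\Phi$) is not weakly continuous, and a fixed point $\Phi F=F$ is in any case more than you need.

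The standard repair---and the one this paper implements in the matrix setting---is to rebalance the exponents so that genuine sublinearity holds for every $p$: work on $L^{pp'}$ and iterate
\[
\Psi h=\bigl(T_1(h^{p'})\bigr)^{1/p'}+\bigl(T_2(h^{p})\bigr)^{1/p},
\]
whose two pieces are sublinear with constant $1$ because the inner exponents $p,p'\ge 1$, and which is bounded on $L^{pp'}$ since $\|(T_1(h^{p'}))^{1/p'}\|_{L^{pp'}}=\|T_1(h^{p'})\|_{L^p}^{1/p'}\lesssim\|h\|_{L^{pp'}}$, and similarly for the second term. The plain Rubio de Francia series applied to $\Psi$ produces $\bar h$ with $T_1(\bar h^{p'})\le C\bar h^{p'}$ and $T_2(\bar h^{p})\le C\bar h^{p}$, whence $w_1=w^{-1/p}\bar h^{p'}\in A_1$, $w_0=w^{1/p}\bar h^{p}\in A_1$, and $w_0w_1^{1-p}=w$; this is exactly the scalar version of the operators $T_1,T_2$ on $L^q_\K$ with $q=pp'$ in the proof of Proposition~\ref{prop:factorization}. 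Alternatively, you could keep your $\Phi$ as stated for $p\ge 2$ and dispose of $1<p<2$ by duality: factor $\sigma=w^{1-p'}\in A_{p'}$ (note $p'>2$) as $\sigma=u_0u_1^{1-p'}$ and use $(1-p)(1-p')=1$ to get $w=\sigma^{1-p}=u_1u_0^{1-p}$.
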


\begin{theorem}[Rubio de Francia Extrapolation]
Given $1\leq p_0 < \infty$, suppose that an operator $T$ is such that
for every $w_0 \in A_{p_0}$ and $f\in L^{p_0}(w_0)$,
\[ \int_{\R^n} |Tf(x)|^{p_0} w_0(x)\,dx
  \leq C_0([w_0]_{A_{p_0}}) \int_{\R^n} |f(x)|^{p_0}
  w_0(x)\,dx. \]
Then for every $p$, $1<p<\infty$, every $w\in A_p$, and every $f\in L^{p}(w)$,
\[ \int_{\R^n} |Tf(x)|^p w(x)\,dx \leq C([w]_{A_{p}}) \int_{\R^n} |f(x)|^p
  w(x)\,dx. \]
\end{theorem}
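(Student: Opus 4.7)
The plan is to carry out the classical Rubio de Francia extrapolation argument, reducing the desired bound at exponent $p$ to the hypothesized bound at $p_0$ via duality and an iteration algorithm. I would treat the case $p > p_0$ in detail; the case $p < p_0$ follows by an analogous dualization, using that $w \in A_p$ if and only if $w^{1-p'} \in A_{p'}$, together with the equivalent reformulation of the hypothesis on $T$ with test weights again in $A_{p_0}$.

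Fix $w \in A_p$ with $p > p_0$ and $f \in L^p(w)$. Set $r = (p/p_0)'$, so that $L^{p/p_0}(w)$ is dual to $L^{r}(w)$. By duality,
$$ \|Tf\|_{L^p(w)}^{p_0} = \bigl\||Tf|^{p_0}\bigr\|_{L^{p/p_0}(w)} = \sup_h \int_{\R^n} |Tf(x)|^{p_0}\, h(x)\, w(x)\, dx, $$
the supremum taken over nonnegative $h$ with $\|h\|_{L^{r}(w)} \le 1$. The heart of the argument is to replace such an $h$ by a majorant $H \ge h$ that is still controlled in $L^{r}(w)$ but whose product $Hw$ lies in $A_{p_0}$, with $[Hw]_{A_{p_0}}$ depending only on $[w]_{A_p}$. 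Once this is done, the hypothesis on $T$ gives
$$ \int |Tf|^{p_0} h\,w\,dx \;\le\; \int |Tf|^{p_0} H\,w\,dx \;\le\; C_0([Hw]_{A_{p_0}}) \int |f|^{p_0} H\,w\, dx, $$
and H\"older's inequality in the form $\int |f|^{p_0} H\,w\,dx \le \|f\|_{L^p(w)}^{p_0} \|H\|_{L^{r}(w)}$ closes the estimate, producing the desired bound $\|Tf\|_{L^p(w)} \le C([w]_{A_p}) \|f\|_{L^p(w)}$.

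To construct $H$ I would use Rubio de Francia's iteration: given a sublinear operator $S$ with finite norm $\|S\|$ on $L^{r}(w)$, the geometric series
$$ H := \sum_{k=0}^{\infty} \frac{S^k h}{(2\|S\|)^k} $$
satisfies automatically $h \le H$, $\|H\|_{L^{r}(w)} \le 2\|h\|_{L^{r}(w)}$, and $S(H) \le 2\|S\|\cdot H$ pointwise. The delicate point is the design of $S$: it must be chosen so that the pointwise control $S(H) \le CH$ forces $Hw \in A_{p_0}$. Natural candidates are weighted maximal operators such as $g \mapsto M(gw)/w$ or $M_w g(x) := \sup_Q \chi_Q(x)\, \tfrac{1}{w(Q)}\int_Q |g|\, w$, whose $L^{r}(w)$-boundedness is known (for the latter, without further hypothesis on $w$). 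The $A_{p_0}$ membership of $Hw$ is then extracted by combining $w \in A_p$ with the $A_1$-like property of $H$ inherited from $S(H) \le CH$, together with the structural decomposition of $A_{p_0}$ weights as products of an $A_1$ and an $A_1^{1-p_0}$ factor provided by Jones factorization.

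I expect the main obstacle to be precisely this last point: choosing an iteration operator (often a pair of iterations, one for the $A_1$ factor and one for the $A_1^{1-p_0}$ factor of the $A_{p_0}$ decomposition) whose majorant lies in exactly the right Muckenhoupt class with constants that do not blow up. In the scalar case this is a matter of routine bookkeeping once Jones factorization is in hand; in the matrix-weighted setting pursued in this paper, it is the principal source of new difficulty, since no pointwise ordering on matrix weights is available and factorization must be recast through convex-set valued averages.
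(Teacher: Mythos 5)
Your high-level strategy (duality, a Rubio de Francia iteration, reverse factorization) is the same circle of ideas as in the paper's proof of the matrix version (Theorem~\ref{thm:matrix-extrapolation}), but the proposal has a genuine gap exactly at the step you defer as ``routine bookkeeping'': producing a majorant $H\ge h$ with $\|H\|_{L^r(w)}\lesssim 1$ and $Hw\in A_{p_0}$ with constant controlled by $[w]_{A_p}$, where $r=(p/p_0)'$. Neither candidate operator you name delivers this. The operator $Sg=M(gw)/w$ is bounded on $L^r(w)$ if and only if $w^{1-r}\in A_r$, i.e.\ if and only if $w\in A_{p/p_0}$, which does \emph{not} follow from $w\in A_p$ when $p_0>1$; it is bounded on $L^{p'}(w)$, since $w^{1-p'}\in A_{p'}$, and this is precisely why the sharp proofs (and the paper's Case III, in its normalization) dualize at exponent $p$, pairing $|Tf|$ against $h\in L^{p'}(w)$, rather than at exponent $p/p_0$ as you do. The operator $M_w$ is bounded on $L^r(w)$ for any $A_p$ weight, but the self-improvement $M_wH\le CH$ only says $H\in A_1(w\,dx)$, and that property together with $w\in A_p$ cannot by itself give $Hw\in A_{p_0}$: taking $H\equiv 1$ satisfies it, while $w$ itself need not lie in the strictly smaller class $A_{p_0}$. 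What is actually needed is a quantitative reverse-factorization statement with the correct exponents, e.g.\ that a weight of the form $(\mathcal{R}h)^{1-p'/p_0'}$ times the appropriate power of $w$ lies in $A_{p_0}$ with constant $\lesssim [w]_{A_p}^{p/p_0}$; in this paper that is Corollary~\ref{cor:Duo-sharp-factorization} (Duoandikoetxea's Lemma~2.1 in the scalar case), and it, together with the choice of iteration operator and exponent, is the heart of the proof rather than bookkeeping.

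The case $p<p_0$ is also not covered by your sketch. Passing the hypothesis to ``an analogous dualization'' implicitly transfers the $L^{p_0}$ bound to an adjoint, which requires $T$ to be linear; the theorem is routinely applied to sublinear objects (maximal functions, square functions), and the paper's formulation via extrapolation pairs has no operator to dualize at all. The standard treatment, and the paper's Cases I and II, handle $p<p_0$ directly: apply the iteration algorithm (built from the maximal operator bounded on $L^p(w)$, which is available since $w\in A_p$) to a function manufactured from $f$ (and from $g$ in the pairs setting), use the resulting $A_1$-type property and the $A_\infty$ half of reverse factorization to produce the $A_{p_0}$ weight, and then apply H\"older with exponent $p_0/p$. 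Finally, note that the hypothesis may only be invoked after checking that $f$ lies in $L^{p_0}$ of the constructed weight; your H\"older step does give this in the case you treat, but the same a priori finiteness must be arranged in the untreated case as well.
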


The proofs of both of these results are very closely related:  each depends
on the properties of the Rubio de Francia iteration algorithm
\[ \Rdf h(x) = \sum_{k=0}^\infty \frac{M^kh(x)}{2^k
    \|M\|_{L^p(w)}^k}, \]
where $M$ is the Hardy-Littlewood maximal operator.  (See the above
references and also~\cite{preprint-DCU,MR2797562}.)

Rubio de Francia extrapolation has had many important applications in
harmonic analysis and PDEs:  see, for
instance,~\cite{DuRu,MR1439553,MR2319768}.  In particular, it was
was central to the original proofs of the so-called ``$A_2$
conjecture'':  that is, the sharp constant estimate 
\[ \|Tf\|_{L^p(w)} \leq
  C(n,p,T)[w]_{A_p}^{\max\{1,p'-1\}}\|f\|_{L^p(w)}, \]
where $T$ is a Calder\'on-Zygmund singular integral.  By using a
sharp, quantitative version of extrapolation, the proof is reduced to
showing this inequality holds for $p=2$.  See
Hyt\"onen~\cite{hytonenP2010,MR3204859} and Lerner~\cite{Lern2012}. 

\medskip

We now turn from the theory of scalar weights to matrix weights.
Given a Calder\'on-Zygmund singular integral operator $T$, it extends
to an operator on vector-valued functions ${f}=(f_1,\ldots,f_d)^t$ by
applying it to each coordinate: $T{f}=(Tf_1,\ldots,Tf_d)^t$.  In a
series of papers in the
1990s~\cite{MR1428988,MR1428818,Vol,MR1030053,MR1478786}, Nazarov,
Treil and Volberg considered the question of whether there existed a
corresponding ``matrix'' $A_p$ condition on positive semidefinite,
symmetric (i.e., real self-adjoint) matrix functions $W$ such that
\[ \int_{\R^n} |W^{1/p}(x)T{f}(x)|^p \,dx
  \leq C \int_{\R^n} |W^{1/p}(x){f} (x)|^p \,dx, \]
This problem was motivated by applications to stationary processes and
to Toeplitz operators acting on vector-valued functions.  It was first
solved on the real line when $T$ is the Hilbert transform and 
$p=2$ by Treil and Volberg~\cite{MR1428818}.  They showed that a
sufficient condition on the matrix $W$ is a matrix analog of the $A_2$
condition:
\[ [W]_{A_2} =
  \sup_Q \bigg| \left(\avgint_Q W(x)\,dx\right)^{\frac{1}{2}}
  \left(\avgint_Q W^{-1}(x)\,dx\right)^{\frac{1}{2}}\bigg|_{\op}
  <\infty. \]
This condition, however, does not extend to the case $p\neq 2$.  An
equivalent, but more technical definition of matrix $A_p$ in terms of
norm functions was conjectured by Treil~\cite{MR1030053} and used by
Nazarov and Treil~\cite{MR1428988} and separately by
Volberg~\cite{Vol} to prove matrix weighted norm inequalities for the
Hilbert transform.  These authors noted two significant technical
obstructions.  The first was the lack of a ``vector-valued'' version
of the Hardy-Littlewood maximal operator that could bound
vector-valued operators but not lose the geometric information
imbedded in the vector structure.  The second was that proofs were
much easier in the case $p=2$, but that there was no version of the
Rubio de Francia extrapolation theorem to extend these results to
$p\neq 2$.

These results were extended to general Calder\'on-Zygmund singular
integrals in $\R^n$ by Christ and Goldberg~\cite{CG,MR2015733}.  A key
component of their proofs is to define for each $p$ a scalar-valued,
matrix weighted maximal operator :
\[ M_W{f}(x) =
  \sup_Q \avgint_Q |W^{1/p}(x)W^{-1/p}(y){f}(y)|\,dy\cdot \chi_Q(x).  \]
While sufficient for their approach, here we note one drawback of this
operator:  while ${f}$ and $T{f}$ are vector-valued operators,
$M_W{f}$ is scalar-valued, and so cannot be iterated.

Finally, we note that Roudenko~\cite{MR1928089} gave an equivalent
definition of matrix $A_p$ that looked more like the definition in the
scalar case:  $W\in A_p$ if and only if
\begin{equation} \label{eqn:roudenko}
  [W]_{A_p} = \sup_Q \avgint_Q \bigg( \avgint_Q
    |W^{1/p}(x)W^{-1/p}(y)|_{\op}^{p'}\,dy\bigg)^{\frac{p}{p'}}\,dx <
    \infty. 
  \end{equation}

  All of the estimates for singular integrals were qualitative: like
  the early proofs in the scalar case they did not give good estimates
  on the dependence of the constant on the value of $[W]_{A_p}$.
  After the sharp result in the scalar case was proved by Hyt\"onen,
  it was natural to conjecture that the same result holds in the
  matrix case: more precisely, that
\[ \bigg(\int_{\R^n} |W(x)^{1/p}T{f}(x)|^p \,dx\bigg)^{1/p}
  \leq C[W]_{A_p}^{\max\{1,p'-1\}} \bigg( \int_{\R^n} |W(x)^{1/p}{f} (x)|^p
  \,dx\bigg)^{1/p}.\]
This problem is referred to as the matrix $A_2$ conjecture; it was
first considered by Bickel, Petermichl and Wick~\cite{MR3452715} and
by Pott and Stoica~\cite{MR3698161} when $p=2$.  In 2017,
Nazarov, Petermichl, Treil and Volberg~\cite{MR3689742} proved that in
this case, the best constant is bounded above by
$C(n,d,T)[W]_{A_2}^{3/2}$.  (Also see~\cite{MR3818613}.)  Very
recently, Domolevo, Petermichl, Treil and Volberg~\cite{DPTV-2024}
proved that this is the best possible exponent.   For this problem
most of the work has been done on the case $p=2$ since this case is
easier than working with arbitrary $p$.  The only known quantitative results for $p\neq 2$ were
proved by the second author, Isralowitz and Moen~\cite{MR3803292}, who
got a constant of the form
\begin{equation} \label{eqn:sharp-p-neq-2}
 C(n,d,p,T)[W]_{A_p}^{1+\frac{1}{p-1}-\frac{1}{p}}.  
\end{equation}
It is an open question whether this estimate is sharp when $p\neq 2$. 


A very important tool in the more recent proofs of the $A_2$
conjecture in the scalar case is the domination of singular integrals
by sparse operators introduced by Lerner~\cite{Lern2012}.  Nazarov,
Petermichl, Treil and Volberg~\cite{MR3689742} extended this result to
vector-valued singular integrals by interpreting the vector $T{f}$
as a point in a convex set.  More precisely, they showed that there
exists a sparse collection of dyadic cubes $\Ss$,  depending on $T$ and $f$, such that
\[ T{f}(x) \in C\sum_{Q\in \Ss}
  \langle\langle{f}\rangle\rangle_Q \chi_Q(x), \]
where $\langle\langle{f}\rangle\rangle_Q$ is the convex set
\[  \langle\langle{f}\rangle\rangle_Q
  = \bigg\{ \avgint_Q k(y) {f}(y)\,dy : k \in L^\infty(Q),
  \|k\|_\infty \leq 1 \bigg\},  \]
and the sum is the (infinite) Minkowski
sum of convex sets.   However, instead of working directly with
convex-set valued functions, they reduced the problem to estimating
vector-valued sparse operators of the form
\[ T^S {f}(x) = \sum_{Q\in \Ss} \avgint_Q
  \varphi_Q(x,y){f}(y)\,dy, \]
where for each $Q$, $\varphi_Q$ is a real-valued function supported on
$Q\times Q$  such that, for each $x$,
$\|\varphi_Q(x,\cdot)\|_\infty \leq 1$.
Sparse domination has been generalized to other operators:
see~\cite{MR4225835, MR4454483, MR4357365,MR4245601,MR3936542}.
\medskip

Given this background, we can now describe our main results.  To do so
we must first introduce a change in notation.  For a number of reasons
connected to our proofs, we have chosen to write a
matrix weighted norm of a function $f: \R^n \to \R^d$ in the form
\[ 
||f||_{L^p(\R^n,W)} =
\bigg(\int_{\R^n} |W(x){f}(x)|^p\,dx\bigg)^{\frac{1}{p}}. \]
This is equivalent to replacing the matrix weight $W$ by $W^p$.  In
doing this we replace the class $A_p$ with the equivalent class
$\A_p$:
\[ [W]_{\A_p} = \sup_Q \bigg(\avgint_Q \bigg( \avgint_Q
    |W(x)W^{-1}(y)|_{\op}^{p'}\,dy\bigg)^{\frac{p}{p'}}\,dx\bigg)^{\frac{1}{p}} <
    \infty. \]
  We also define the classes $\A_1$ and $\A_\infty$ by
  \[ [W]_{\A_1} = \sup_Q \esssup_{x\in Q} \avgint_Q
    |W^{-1}(x)W(y)|_{\op}\,dy < \infty,  \]
and
\[ [W]_{\A_\infty} =\sup_Q \esssup_{x\in Q} \avgint_Q
  |W(x)W^{-1}(y)|_{\op}\,dy < \infty.  \]
The class $\A_1$ was first introduced by Frazier and
Roudenko~\cite{MR2104276}; the class $\A_\infty$ is new, though it was
implicit in the literature in the scalar case.  Note that in the scalar
case we can write the definition of $\A_p$ as 

\[ [w]_{\A_p}
  = \sup_Q |Q|^{-1}\|w\chi_Q\|_{L^p}
  \|w^{-1}\chi_Q\|_{L^{p'}}  < \infty, \]
and this makes sense even when $p=1$ or $p=\infty$.
This definition of $\A_\infty$ was implicit in 
Muckenhoupt~\cite{muckenhoupt72} but mostly overlooked.  It has been used to define a uniform $A_p$ condition,
$1\leq p \leq \infty$:  see Nieraeth~\cite{MR4000248}.   We also remark that this approach to 
scalar weighted norm inequalities is used for off-diagonal
inequalities and norm inequalities on Banach function spaces: see, for
instance,~\cite{CruzUribe:2016ji,MR2927495, muckenhoupt-wheeden74}.

With this notation, our main results are the following.

\begin{theorem} \label{thm:jones-factorization-intro}
  Fix $1<p<\infty$.  Given a matrix weight $W$, we have $W\in \A_p$ if and
  only if
  \[  W = W_0^{1/p} W_1^{1/p'}, \]
for some commuting  matrix weights  $W_0\in
  \A_1$ and  $W_1 \in \A_\infty$.
\end{theorem}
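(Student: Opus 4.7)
The proof naturally splits into two directions. For the reverse direction (factorization implies $\A_p$), assume $W = W_0^{1/p} W_1^{1/p'}$ with commuting $W_0 \in \A_1$ and $W_1 \in \A_\infty$. Commutativity of $W_0$ and $W_1$ carries over to their fractional powers and their inverses, so one can regroup factors to write
\[
W(x) W^{-1}(y) \;=\; W_0^{1/p}(x) W_0^{-1/p}(y)\cdot W_1^{1/p'}(x) W_1^{-1/p'}(y).
\]
Submultiplicativity of $|\cdot|_{\op}$, followed by H\"older's inequality with exponents $p$ and $p'$ inside the nested averages that define $[W]_{\A_p}$, reduces the bound to a product of the $\A_1$ data of $W_0$ and the $\A_\infty$ data of $W_1$. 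This is essentially the matrix version of the classical scalar computation and is presumably the content of Proposition~\ref{prop:general-reverse-factorization}.

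The forward direction is the substantive part. The plan is to carry out a Rubio de Francia iteration in the convex-set valued framework developed earlier in the paper. Given $W \in \A_p$, encode it through the measurable seminorm $\rho_W(x,v) = |W(x)v|$, or equivalently through the family of symmetric ellipsoids $F(x) = W(x)^{-1} B$, where $B$ denotes the Euclidean unit ball in $\R^d$. Using the convex-set valued maximal operator $\mathcal{M}$ from the paper, define
\[
\Rdf F(x) \;=\; \sum_{k=0}^\infty \frac{\mathcal{M}^k F(x)}{(2\|\mathcal{M}\|)^k},
\]
with the sum interpreted as a Minkowski sum. Arguing as in the scalar case, the partial sums telescope to give $\mathcal{M}(\Rdf F)(x) \subseteq 2\|\mathcal{M}\|\cdot \Rdf F(x)$, which is the convex-set version of the $\A_1$ condition. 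Via the correspondence between measurable seminorms and matrix weights this returns a matrix weight $W_0 \in \A_1$. One then defines $W_1$ through $W_1^{1/p'} := W_0^{-1/p} W$, so that $W = W_0^{1/p} W_1^{1/p'}$ by construction, and it remains to verify $W_1 \in \A_\infty$.

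The principal obstacle is the commutativity requirement: for $W_1^{1/p'} = W_0^{-1/p} W$ to even define a positive symmetric matrix, $W_0$ must commute pointwise with $W$, and the raw output of a convex-set iteration need not have this property. My approach would be to constrain the iteration to the commutant of $W(x)$ at each point; since $F(x)$ is an ellipsoid aligned with the eigenframe of $W(x)$, one runs scalar Rubio de Francia iterations on the $d$ eigenvalue functions of $W$ simultaneously and assembles $W_0$ as a spectral function of $W$. The convex-set valued framework is precisely what makes this possible in a measurable way through eigenvalue crossings, where the individual eigenvalue functions fail to be smoothly selectable but the ellipsoid field remains measurable.

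Once commutativity is in place, the $\A_\infty$ property of $W_1$ is expected to follow by feeding the already-proved reverse direction back through the identity $W = W_0^{1/p} W_1^{1/p'}$: namely, combining the $\A_p$ bound on $W$ and the $\A_1$ bound on $W_0$ with the submultiplicativity estimate above should yield an inequality of the form $[W_1]_{\A_\infty} \lesssim [W]_{\A_p}^{p'}[W_0]_{\A_1}^{-p'/p}$ up to structural constants. I expect the commutative iteration to be the technically heaviest step, and the $\A_\infty$ verification to be largely a bookkeeping consequence.
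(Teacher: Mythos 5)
Your proposal has genuine gaps in both directions. For the reverse direction, the very first step fails: the regrouping $W(x)W^{-1}(y)=W_0^{1/p}(x)W_0^{-1/p}(y)\cdot W_1^{1/p'}(x)W_1^{-1/p'}(y)$ requires matrices evaluated at \emph{different} points to commute (e.g., $W_1^{1/p'}(x)$ with $W_0^{-1/p}(y)$), whereas the hypothesis only gives pointwise commutativity of $W_0(x)$ with $W_1(x)$ at each fixed $x$. If you instead apply submultiplicativity of $|\cdot|_{\op}$ directly, you are left with uncoupled factors such as $|W_0^{1/p}(x)|_{\op}\,|W_0^{-1/p}(y)|_{\op}$, which the $\A_1$ and $\A_\infty$ conditions cannot control. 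This is precisely why the paper treats reverse factorization as a difficult direction: it works with norm functions rather than matrices, uses the weighted geometric mean of norms and its double dual (Lemma~\ref{lemma:double-dual-geo-mean}, Proposition~\ref{prop:geometric-double-dual}, Corollary~\ref{dwgm}), and proves Proposition~\ref{prop:general-reverse-factorization} by a duality argument that is, implicitly, finite-dimensional interpolation. Calling it ``essentially the scalar computation'' is exactly the trap the paper warns against.

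For the forward direction there are two problems. First, constructing only $W_0$ by iteration and then defining $W_1^{1/p'}:=W_0^{-1/p}W$, with $W_1\in\A_\infty$ to follow ``by bookkeeping'' from the reverse direction, is circular: reverse factorization is an implication \emph{toward} $\A_p$, not a division theorem, and already in the scalar case an arbitrary $\A_1$ weight $w_0$ does not make $(w_0^{-1/p}w)^{p'}$ an $\A_\infty$ weight --- the two memberships must be produced simultaneously. The paper does this by iterating a two-term operator $T=T_1+T_2$ in Proposition~\ref{prop:factorization}, where $T_1$ encodes the $\A_1$ condition for $W_1^{-1}$ and $T_2$ the $\A_1$ condition for $W_0$, so a single function $\bar r$ yields both. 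Second, your commutativity fix --- running scalar Rubio de Francia iterations on the $d$ eigenvalue functions of $W$ and reassembling a spectral function --- is not shown to give a matrix $\A_1$ weight: the condition $\avgint_Q|W_0^{-1}(x)W_0(y)|_{\op}\,dy\lesssim 1$ couples the eigenframes at $x$ and $y$, and controlling each eigenvalue separately in the frame of $W(x)$ does not control this quantity; the measurability issues at eigenvalue crossings are the least of the difficulties. The paper sidesteps all of this with the exhausting operator $N_W$ of Definition~\ref{defn:Nw}, which collapses every iterate to a scalar multiple of a fixed ellipsoid (equivalently, the iteration runs on ball-valued functions in unweighted $L^q_\K$ with $q=pp'$), so that the output weights $W_0=\bar r^{\,p}W$ and $W_1=\bar r^{-p'}W$ are scalar multiples of $W$ and commute automatically --- a much stronger and simpler conclusion than your eigenvalue-wise scheme would give even if it could be made to work.
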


\begin{theorem} \label{thm:matrix-extrapolation-intro} Given an
  operator $T$,  suppose that for some $p_0$,
  $1 \leq p_0\leq\infty$, there exists an increasing function
  $K_{p_0}$ such that for every $W_0\in \A_{p_0}$,
\begin{equation}\label{ext1}
 \|Tf\|_{L^{p_0}(\R^n, W_0)}
  \leq K_{p_0}([W_0]_{\A_{p_0}})\|f\|_{L^{p_0}(\R^n, W_0)}. 
  \end{equation}
  Then for all $p$, $1<p<\infty$,
  for all $W\in \A_p$, and for all $f\in L^\infty_c(\R^n)$, 
\begin{equation}\label{ext2}
    \|Tf\|_{L^p(\R^n,W)}
  \leq K_{p}(p,p_0,n,d,[W]_{\A_{p}})\|f\|_{L^{p}(\R^n,W)},
  \end{equation}
  where
  \[ K_p (p,p_0,n,d,[W]_{\A_{p}})
    = C(p,p_0)K_{p_0}\bigg(C(n,d,p,p_0)[W]_{\A_p}^{\max\big\{\frac{p}{p_0},\frac{p'}{p_0'}\big\}}\bigg). \]
  Moreover, if $T$ is linear, it has a continuous extension to all
  $f\in L^{p}(\R^n,W)$ that satisfies the same bound. 
\end{theorem}

\begin{remark}
  For simplicity and ease of comparison to the scalar case, we state
  Theorem~\ref{thm:jones-factorization-intro} assuming that the
  matrices $W_0$ and $W_1$ 
  commute.  We can remove this hypothesis, but to do so we must
  replace the product $W_0^{1/p} W_1^{1/p'}$ with the geometric
  mean of the two matrices.  See Proposition~\ref{prop:reverse-factorization}.
   One interesting feature of our proof is
  that in  constructing the matrices $W_0$ and $W_1$, we
  show that they can be realized as scalar multiples of $W$.
\end{remark}

\begin{remark}
  We actually prove a more general version of
  Theorem~\ref{thm:matrix-extrapolation-intro}, replacing the operator
  $T$ by a family of pairs of functions $(f,g)$.  This more abstract
  approach to extrapolation was first suggested
  in~\cite{cruz-uribe-perez00} and systematically developed
  in~\cite{MR2797562}.  
\end{remark}

\begin{remark}
  In Theorem~\ref{thm:matrix-extrapolation-intro} the function $K_p$
  depending on $K_{p_0}$ has exactly the same form as the function
  gotten in the sharp constant extrapolation theorem of
  Dragi{\v{c}}evi{\'c}, {\em et al.}~\cite{MR2140200}.  Note also that
  we are able to begin the extrapolation from $p_0=\infty$; this gives
  a quantitative version of a result proved in the scalar case by Harboure, {\em et
    al.}~\cite{MR944321}; this quantitative version  was recently
  proved by Nieraeth~\cite[Corollary~4.14]{MR4000248}.  
\end{remark}

\begin{remark}
By~\cite[Propositions~3.6,~3.7]{MR3544941} we have
that $L^\infty_c(\R^n,\R^d)$ is dense in $L^p(\R^n,W)$ for any matrix weight
$W$ and $1\leq p<\infty$.  In Theorem~\ref{thm:matrix-extrapolation-intro}, the set
$L^\infty_c(\R^n)$ can be replaced by any  collection which is dense
in $L^p(\R^n,W)$ and contained in every scalar weighted space $L^p(\R^n,w)$.   If
$T$ is not linear, the problem of proving the continuous extension
exists is more delicate, as is the problem of showing this abstract
extension agrees with the original operator.   We consider a specific
example in Theorem~\ref{thm:rough-sio} below.
\end{remark}


To prove the Jones factorization theorem and Rubio de Francia
extrapolation for matrix weights, we considerably expand upon the
ideas underlying the convex-set sparse domination theorem described
above.  To do so, we draw upon an extensive literature on convex-set
analysis (see, for instance,~\cite{MR2458436,CV,Rock}) which does not
seem to have been previously applied to problems in harmonic analysis.
We define measurable functions $F : \R^n \rightarrow \K$, where $\K$
is (a subset of) the collection of convex sets in $\R^d$, and develop
the connection between norm functions and convex-set valued functions.
There is a one-to-one correspondence between measurable norm functions
and measurable convex-set valued functions.  As noted above, the
matrix $A_p$ condition was originally defined in terms of norm
functions, but the trend, at least since the work of
Roudenko~\cite{MR1928089} and Goldberg~\cite{MR2015733},  has been to
interpret it only in terms of matrices.  We go back to this definition
in terms of norm functions; this proved to be essential at several
points in our proofs as it provides the necessary link between
matrices and convex-set valued functions.

We define a convex-set valued version of the
Hardy-Littlewood maximal operator by using the so-called Aumann
integral of convex-set valued functions (see~\cite{MR2458436}) to define
the maximal operator
\[ MF(x) = \clconv\bigg(\bigcup_Q \avgint_Q F(y)\,dy\cdot\chi_Q(x)
  \bigg). \]
With this definition we get analogs of all the properties of the scalar
maximal operator:  in particular,   it dominates $F$ via inclusion,
$F(x)\subset MF(x)$;  and is bounded on $L^p(W)$ when $W\in
\A_p$.   Most importantly,  it maps convex-set
valued functions to convex-set valued functions, and therefore can be
iterated.  This allows us to define the Rubio de Francia iteration
algorithm for convex-set valued functions:
  \[\Rdf H(x)=\sum_{k=0}^\infty \frac{M^kH(x)}{2^k\|M\|_{L^p(W)}^k}. \]
  This operator has properties analogous to the scalar operator:
  $H(x) \subset \Rdf H(x)$;
  $\|\Rdf H\|_{L^p(W)} \leq 2\|H\|_{L^p(W)}$; and $\Rdf H$ satisfies a
  convex-set valued $A_1$ condition;
  $M(\Rdf H)(x) \subset C\Rdf H(x)$.  This property is closely related
  to the $A_1$ condition for norm functions (and so for matrix
  weights).  With this version of the iteration algorithm, we are able
  to extend the scalar proofs of factorization and extrapolation to
  the matrix case.  The overall outline of the proofs is similar to
  those of the scalar results (see~\cite{CruzUribe:2016ji,MR2797562},
  but there are a significant number of technical obstacles which must
  be addressed.  Here we note the two most difficult: first, matrix
  functions do not, in general, commute.  Second, while it is possible
  to define powers of matrices (and so of ellipsoids), it is not
  possible to define powers of arbitrary convex sets.  (See Milman and
  Rotem~\cite{MR3709538,MR3985279}.)  Therefore, at several points we
  need to pass back and forth between convex-set valued functions and
  ellipsoid valued functions.


  \begin{remark}
    The fact that we must specialize to consider ellipsoid valued
    functions might suggest that we could simplify our approach to
    extrapolation by restricting to these kinds of functions rather
    than working with the more general convex-set valued functions.
    However, even for vector-valued functions, the Aumann averages and
    the convex-set valued maximal operator will yield convex sets that
    are not ellipsoids.  We give an example in
    Section~\ref{section:maximal}.   Therefore, it is necessary for us
    to develop the general machinery of convex-set valued functions
    for our proofs.
  \end{remark}

  \medskip
  
  We now want to briefly consider applications of our results.  As has been noted in the literature (e.g.,
  in~\cite{domelevo2021boundedness,MR4159390}), many problems in
  matrix weighted inequalities are significantly easier to prove when
  $p=2$ than for all $p$ (see, for instance,
  \cite{MR1428818,CG,MR3698161,MR3689742,MR3452715,MR3937322,MR4245601}).
  But by applying Theorem~\ref{thm:matrix-extrapolation-intro}, these
  results can immediately be extended to the full range $1<p<\infty$.
  For instance,  the $L^2$ bounds in~\cite{MR3689742} for singular
  integrals immediately
  extend to all $p$.   In~\cite{MR4245601}, the authors prove
  matrix-weighted $L^2$
  bounds for maximal rough singular integrals; using extrapolation we
 extend these results to all $L^p$:  see Theorem~\ref{thm:rough-sio} below.  This improves the results
  of~\cite{MR4357365}, which give $L^p$ bounds for rough singular
  integrals. 

  However, it is surprising (at least to the authors), that
  extrapolation, which yields the best possible constants for singular
  integrals in the scalar case, does not yield sharp results in the
  matrix case.  For example, by extrapolation, starting with the sharp exponent
  $[W]_{A_2}^{3/2}$ from~\cite{MR3689742}, we get
  $[W]_{A_p}^{\frac{3}{2}\{1,\frac{1}{p-1}\}}$, which is worse than the
  constant \eqref{eqn:sharp-p-neq-2} gotten in~\cite{MR3803292}.
  Similarly, extrapolating the $L^2$ bound for rough singular
  integrals in~\cite{MR4245601} gets a worse constant than gotten
  in~\cite{MR4357365} for $p\neq 2$.  

  Extrapolation should also prove to be useful in other settings.  For
  instance, Vuorinen~\cite{vuorinen2023strong} has proved that our
  extrapolation theorem could be extended to the setting of ``strong
  matrix $A_p$'' which is associated with the basis of rectangles.  He
  used this to prove that a result due to Domelevo, {\em et
    al.}~\cite{domelevo2021boundedness} for bi-parameter Journ\'e operators,  which they were only able to
  prove in $L^2$ for strong matrix $A_2$ weights, holds for all $p$.


  \medskip

The remainder of this paper is organized as follows.  To prove
factorization and extrapolation we need to establish a large number of
preliminary results.  This is done in
Sections~\ref{section:prelim}--\ref{section:seminorm}.  In
Section~\ref{section:prelim} we present a number of results about
convex sets and seminorms.  Most of these results are known and we
gather them here for ease of reference and to establish consistent
notation.  However, some results are new (or rather, we could not find
them in the literature).  In particular, we prove some basic results
about the geometric mean of two norms that are essential to the proof
of factorization.

    In Section~\ref{section:convex-set-valued-functions} we define
    measurable, convex-set valued functions and establish the
    properties of the Aumann integral necessary to define the maximal
    operator on convex-set valued functions.  We have gathered
    together, with consistent hypotheses and notation, a number of
    theorems from across the literature and proved some results
    specific to our needs, such as a version of Minkowski's inequality
    for the Aumann integral (Proposition~\ref{prop:convex-minkowski}).  Since much of this material appears
    unfamiliar to most harmonic analysts, and since there are a number
    of delicate issues related to measurability of convex-set valued
    functions, we have included most details and we give extensive
    references to the literature.

    In Section~\ref{section:seminorm} we define seminorm functions,
    explore their connection with measurable convex-set valued
    functions, and define the norm-weighted $L^p$ spaces of convex-set
    valued functions.  These are not Banach spaces, but have most of
    the same properties, which allows us to rigorously define the
    Rubio de Francia iteration algorithm.  Finally, we make explicit
    the connection between measurable seminorm functions and matrix weights.

In the remaining sections we develop our new results.  
In Section~\ref{section:maximal} we define averaging operators and the
maximal operator on convex-set
valued functions.  We prove that this maximal operator has properties
that are the exact analogs of those of the scalar maximal operator, and we
prove unweighted norm inequalities by adapting
the scalar proof (using dyadic cubes) to the setting of convex-set
valued functions (Theorem~\ref{thm:M-norm-ineq}). 

In Section~\ref{section:matrix-weights} we turn to the
definition of matrix $\A_p$ in terms of norm functions.  Many of these results are already in
the literature, but, because we have chosen to take a different approach
than what has been done previously, we believed it was important to carefully
restate these results to incorporate the endpoint results when $p=1$
and $p=\infty$. 
The main result of this section is that the convex-set valued maximal
operator is bounded on $L^p(W)$, $1<p\leq \infty$, when $W\in \A_p$.
The proof uses a measurable version of the John ellipsoid theorem (Theorem~\ref{mj})
to reduce to norm inequalities for the Christ-Goldberg matrix weighted
maximal operator.

In Section~\ref{section:rubio} we define convex-set valued
$\A_1^\K$ weights and show that there is a one-to-one correspondence between
them and norm functions in $\A_1$ (Theorem~\ref{thm:matrix-convex-A1}); this gives us a connection between convex-set
$\A_1^K$ and matrix $\A_1$ weights that is needed for the proof of
extrapolation.  We then define a generalized Rubio de Francia
iteration algorithm which includes the version given above and
which covers the various forms of the operator used in the proofs of
factorization and extrapolation
(Theorem~\ref{thm:gen-rdf-algorithm}). 

In Section~\ref{section:factorization} we state and prove our version
of the Jones factorization theorem (restated there as
Theorem~\ref{thm:jones-factorization}).  The proof is based on that of the scalar version given in~\cite{preprint-DCU}.  In the
scalar case, the difficult direction is to prove that an $A_p$ weight
can be factored as the product of $A_1$ weights; the other direction,
sometimes referred to as ``reverse factorization'', is an immediate
consequence of the definition of $A_p$ weights.  In the matrix case,
however, both directions are difficult.  The proof of factorization is
based on the Rubio de Francia iteration algorithm and follows the
scalar proof given in~\cite{preprint-DCU}.  The proof of reverse
factorization is more delicate: it is here that we were required to
work with the definition of $\A_p$ in terms of norm functions. Our
final proof is, implicitly, based on an interpolation argument between
finite dimensional spaces.

In Section~\ref{section:extrapolation} we state and prove
a sharp constant version of Rubio de Francia extrapolation for matrix weights
(Theorem~\ref{thm:matrix-extrapolation}).  The proof is based on the
approach to extrapolation developed in~\cite{MR2797562}, and so
reverse factorization is a central part of the proof.  We adopt the
perspective of working with families of extrapolation pairs $(f,g)$,
which completely avoids any mention of operators.
Using our
  definition of $\A_p$ weights, we are also able to give a uniform
  proof that includes the endpoint results when $p=\infty$.  This
  yields a quantitative version of a result proved in the scalar case
  by Harboure, Mac\'\i as and Segovia~\cite{MR944321}.

  Finally, in Section~\ref{section:applications} we discuss some of
  the technical details involved in applying
  Theorem~\ref{thm:matrix-extrapolation}, and we illustrate this by
  proving quantitative $L^p$ bounds for maximal rough singular
  integral operators, extending the results from~\cite{MR4245601}.

\medskip

Throughout this paper we will use the following notation.  We will
develop some things in the setting of abstract measure spaces; in
this setting $(\Omega,\A, \mu)$ will denote a 
$\sigma$-finite, complete measure space endowed with a positive
measure $\mu$.  In Euclidean space the constant $n$ will denote
the dimension of $\R^n$, which will be the domain of our functions.  The value
$d$ will denote the dimension of vector and set-valued functions.   In
$\R^d$, $\B$  will denote
the $\sigma$-algebra of Borel sets, and  $\ld$ will denote the Lebesgue
measure.  For $1\leq p \leq \infty$, $L^p(\R^n)$ will denote the Lebesgue space
 of scalar functions, and $L^p(\R^n,\R^d)$ will denote the Lebesgue
 space of vector-valued functions.  

 Given $v=(v_1,\ldots,v_d)^t \in \R^d$, the Euclidean norm of $v$ will
 be denoted by $|v|$.  The standard orthonormal basis in $\R^d$ will
 be denoted by $\{e_i\}_{i=1}^d$.  The open unit ball in
 $\{ v \in \R^d : |v|<1\}$ will be denoted by ${\mathbf B}$ and its
 closure by $\overline{\mathbf B}$.  Matrices will be $d\times d$
 matrices with real-valued entries unless otherwise specified.  The
 set of all such matrices will be denoted by $\Md$.  The set of all
 $d\times d$, symmetric (i.e., self-adjoint), positive semidefinite matrices will be denoted by
 $\Sd$.  We will denote the transpose of a matrix $A$ by $A^*$.  Given
 two quantities $A$ and $B$, we will write $A \lesssim B$, or
 $B\gtrsim A$ if there is a constant $c>0$ such that $A\leq cB$.  If
 $A\lesssim B$ and $B\lesssim A$, we will write $A\approx B$.

\section{Convex sets and seminorms}
\label{section:prelim}

In this section we develop the connections between convex sets in
$\R^d$ and seminorms defined on $\R^d$.  We begin with some basic
definitions and notation.  Given a set $E\subset \R^d$, let
$\overline{E}$ denote the closure of $E$.  Given two sets
$E,\,F \subset\R^d$, define their Minkowski sum to be the set
\[ E+F = \{ x+y : x\in E, y\in F\}. \]
For $\lambda \in \R$, define $\lambda E = \{ \lambda x : x\in E\}$.  A
set $E$ is symmetric if $-E=E$.  A set $E$ is absorbing if for every
$v\in \R^d$, $v\in tE$ for some $t>0$.  

A set $K\subset \R^d$ is convex if for all $x,\,y\in K$ and
$0<\lambda<1$, $\lambda x+ (1-\lambda)y \in K$.  For the basic
properties of convex sets, see~\cite{Rock,MR1216521}. Given a set $E$,
let $\conv(E)$ denote the convex hull of $E$: the smallest convex set
that contains $E$.  Equivalently, $\conv(E)$ consists of all finite
convex combinations elements in $E$:
\[ \conv(E)= \bigg\{ \sum_{i=1}^k \alpha_i x_i : x_i \in E, \alpha_i \ge
  0, \sum_{i=1}^k \alpha_i = 1 \bigg\}.  \]
The convex hull is additive:  given two sets
$E,\,F$, $\conv(E)+\conv(F)=\conv(E+F)$.  We will denote the closure
of the convex hull of $E$ by $\clconv(E)$.

Let $\K(\R^d)$ be the collection of all closed, nonempty subsets of
$\R^d$. The subscripts $a$, $b$, $c$, $s$ appended to $\K$ will denote
absorbing, bounded, convex, and symmetric sets, respectively. Since
$\R^d$ is finite dimensional, a convex set $K$ is absorbing if and
only if $0\in \operatorname{int}(K)$. We are particularly interested
in the following two subsets of $\K$:
\begin{itemize}
\item $\K_{acs}(\R^d)$:  absorbing, convex, symmetric,
  and closed subsets of $\R^d$;
\item $\bcs$: bounded, convex, symmetric, and closed
  subsets of $\R^d$.
\end{itemize}

We generalize the norm on $\R^d$ by introducing the concept of a
seminorm. 

\begin{definition}
 A {\it seminorm} is a function $p: \R^d \to [0,\infty)$ that
 satisfies the following properties:  for all $u,\,v \in \R^d$ and
 $\alpha \in \R$,
 \begin{align}
\label{min1}
p(u+v) &\le p(u) + p(v), \\
\label{min2}
p(\alpha v) &= |\alpha| p(v).
 \end{align}
A seminorm is a norm if $p(v)=0$ if and only if $v=0$. 
\end{definition}

\begin{definition} \label{mink}
Given $K \in \acs$ define the corresponding {\it Minkowski functional} $p_K: \R^d \to [0,\infty)$ by
\[
p_K(v) = \inf \{ r>0: v/r \in K \}.
\]
\end{definition}

\begin{definition}
Given a seminorm $p$, define the unit ball of $p$ to be the set
\[ K(p) = \{ v \in \R^d : p(v) \leq 1 \}. \]
\end{definition}

By properties \eqref{min1} and \eqref{min2}, the unit ball $K(p)$
is a convex, absorbing, symmetric set.  In fact, using the Minkowski
functional, there is a one-to-one correspondence between sets
$K\in \acs$ and seminorms $p$.  For a proof of the following result,
see~\cite[p. 210]{MR2302906} or~\cite[Theorems 1.34 and 1.35]{Ru}.

\begin{theorem}\label{min}
  Given any $K\in \acs$, the Minkowski functional $p_K$ satisfies
  seminorm properties \eqref{min1} and \eqref{min2}.  Conversely, given
  any seminorm $p$, the unit ball $K(p)\in \acs$.  
This correspondence between sets in $\acs$ and seminorms is one-to-one.
\end{theorem}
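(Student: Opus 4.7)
The plan is to verify the three parts of the theorem separately: first show $p_K$ is a seminorm whenever $K\in\acs$; then show $K(p)\in\acs$ whenever $p$ is a seminorm; and finally verify that the maps $K\mapsto p_K$ and $p\mapsto K(p)$ are mutual inverses.

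For the first direction, fix $K\in\acs$. Since $K$ is absorbing, $p_K(v)<\infty$ for every $v\in\R^d$, and since $K$ is symmetric and $\{r>0:v/r\in K\}=\{r>0:(\alpha v)/(|\alpha|r)\in K\}$ for $\alpha\neq 0$, the homogeneity property \eqref{min2} follows immediately. For the triangle inequality \eqref{min1}, take $r,s>0$ with $u/r,v/s\in K$; convexity gives
\[
\frac{u+v}{r+s}=\frac{r}{r+s}\cdot\frac{u}{r}+\frac{s}{r+s}\cdot\frac{v}{s}\in K,
\]
so $p_K(u+v)\le r+s$, and taking infima yields $p_K(u+v)\le p_K(u)+p_K(v)$.

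For the second direction, fix a seminorm $p$. The set $K(p)$ is symmetric by \eqref{min2} with $\alpha=-1$, and it is convex because if $p(u),p(v)\le 1$ and $0\le\lambda\le 1$, then $p(\lambda u+(1-\lambda)v)\le\lambda p(u)+(1-\lambda)p(v)\le 1$. To see $K(p)$ is absorbing, note that for any $v$, taking any $t>p(v)$ gives $p(v/t)=p(v)/t<1$, hence $v\in tK(p)$. For closedness, the key observation is that $p$ is continuous: expanding $v=\sum_{i=1}^d v_i e_i$ and applying \eqref{min1}, \eqref{min2} yields
\[
p(v)\le\sum_{i=1}^d |v_i|\,p(e_i)\le C|v|,
\]
and by the reverse triangle inequality $|p(u)-p(v)|\le p(u-v)\le C|u-v|$, so $p$ is Lipschitz; then $K(p)=p^{-1}([0,1])$ is closed.

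For the bijection, I would compute each composition directly. On the one hand, $p_{K(p)}(v)=\inf\{r>0:p(v/r)\le 1\}=\inf\{r>0:p(v)\le r\}=p(v)$ by \eqref{min2}. On the other hand, to show $K(p_K)=K$, the inclusion $K\subset K(p_K)$ is immediate from $v\in K\Rightarrow p_K(v)\le 1$. Conversely, if $p_K(v)\le 1$, pick $r_n\downarrow p_K(v)$ with $v/r_n\in K$; writing $v/\max(r_n,1)=r_n^{-1}\max(r_n,1)^{-1}\cdot r_n\cdot(v/r_n)$ and using convexity with $0\in K$ when $r_n<1$, we obtain a sequence in $K$ converging to $v$, so closedness of $K$ gives $v\in K$. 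The main subtlety to watch is the boundary case $p_K(v)=1$, where one really needs the closedness hypothesis on $K$; this is the only place in the argument where the topology of $K$ enters essentially, and it is what forces us to restrict to closed sets in the definition of $\acs$.
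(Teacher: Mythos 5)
Your argument is correct. Note that the paper does not actually prove Theorem \ref{min}; it defers to the cited references (\cite[p.~210]{MR2302906}, \cite[Theorems 1.34, 1.35]{Ru}), and your verification is essentially the standard argument found there: the convexity computation for the triangle inequality of $p_K$, direct checks of symmetry, convexity, absorbency for $K(p)$, and the two composition identities $p_{K(p)}=p$, $K(p_K)=K$ for bijectivity. The one place you use finite dimensionality is the Lipschitz bound $p(v)\le C|v|$ to get continuity of $p$ and hence closedness of $K(p)$; this is perfectly fine in $\R^d$ (and simpler than the general topological-vector-space treatment in the references). Two small points worth a line each: homogeneity at $\alpha=0$ and the fact that $0\in K$ (which you use both for the convex-combination trick and implicitly when passing from the infimum to admissible radii) follow because $K$ is nonempty, symmetric, and convex. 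Also, the displayed identity $v/\max(r_n,1)=r_n^{-1}\max(r_n,1)^{-1}\cdot r_n\cdot(v/r_n)$ is garbled (the two sides differ by a factor of $r_n$); what you want is $v/\max(r_n,1)=\tfrac{r_n}{\max(r_n,1)}\,(v/r_n)+\bigl(1-\tfrac{r_n}{\max(r_n,1)}\bigr)\cdot 0\in K$, after which $v/\max(r_n,1)\to v$ and closedness of $K$ finishes the boundary case $p_K(v)=1$, exactly as you say.
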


Since $\R^d$ is finite dimensional, all norms on it are equivalent.
Therefore, given a norm $p$, $K(p)$ is bounded.  Conversely, if $K\in
\abcs$, then $p_K$ is a norm~\cite[p.~210]{MR2302906}.  This gives the
following corollary to Theorem~\ref{min}

\begin{corollary} \label{cor:min-norm}
  There is a one-to-one correspondence between norms on $\R^d$ and the
  set  $\abcs$, given by the map $K\mapsto p_K$.
\end{corollary}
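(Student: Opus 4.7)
The plan is to piggyback on Theorem~\ref{min}, which already supplies a one-to-one correspondence between $\acs$ and seminorms via $K\mapsto p_K$, with inverse $p\mapsto K(p)$. It therefore suffices to verify that under this bijection, the subclass $\abcs\subset\acs$ corresponds precisely to the subclass of norms among seminorms; equivalently, that the definiteness property $p_K(v)=0\Rightarrow v=0$ is equivalent to $K$ being bounded.

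For the forward direction, suppose $K\in\abcs$. Since $K$ is bounded, choose $R>0$ with $K\subset R\overline{\mathbf{B}}$. If $p_K(v)=0$, then by the definition of the Minkowski functional, for every $\varepsilon>0$ there exists $0<r<\varepsilon$ with $v/r\in K$, hence $|v|\le rR<\varepsilon R$. Letting $\varepsilon\to 0$ yields $v=0$, so $p_K$ is a norm.

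For the reverse direction, suppose $p$ is a norm on $\R^d$. Since $\R^d$ is finite dimensional, all norms on it are equivalent, so there exists $c>0$ such that $p(v)\ge c|v|$ for every $v\in\R^d$. Consequently $v\in K(p)$ forces $|v|\le 1/c$, so $K(p)$ lies in a ball and is therefore in $\abcs$. Combined with Theorem~\ref{min}, this establishes the claimed bijection.

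The main obstacle is minor: the corollary is essentially a restriction of the $\acs$-to-seminorm bijection, the only nontrivial ingredient being the classical equivalence of norms in finite dimensions, which controls boundedness in one direction and definiteness in the other.
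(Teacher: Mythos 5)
Your proof is correct and follows essentially the same route as the paper: restrict the bijection of Theorem~\ref{min} and observe that, by the equivalence of norms on the finite-dimensional space $\R^d$, boundedness of $K$ corresponds exactly to definiteness of $p_K$ (the paper cites a reference for the direction you argue directly, but the structure is the same).
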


\medskip

There is another correspondence between convex sets and seminorms, one
which will be more useful for our purposes below.  To state it, we
need to introduce the concept of the dual seminorm and the polar of a
convex set.  The proof of the following result follows at once from
the properties of a seminorm. 

\begin{lemma} \label{lemma:dual-seminorm}
  Given a seminorm $p$, define $p^* : \R^d \rightarrow [0,\infty)$ by
\[
p^*(v)= \sup_{w\in \R^d, \ p(w) \le 1} |\langle v, w \rangle|.
\]
Then $p^*$ is a seminorm.  If $p$ is a norm, the definition may be
written as
\begin{equation} \label{eqn:dual-norm}
 p^*(v)
  = \sup_{w \in \R^d, w \neq 0}
  \frac{ | \langle v, w \rangle |}{|p(w)|}. 
\end{equation}
\end{lemma}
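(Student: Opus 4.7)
The plan is to verify the two defining seminorm axioms \eqref{min1} and \eqref{min2} directly from the definition of $p^*$, and then establish the equivalent formula \eqref{eqn:dual-norm} under the additional assumption that $p$ is a norm.

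For subadditivity, I would fix any $w \in \R^d$ with $p(w) \le 1$ and apply the ordinary triangle inequality on $\R$ together with the bilinearity of the inner product:
\[
|\langle u+v, w\rangle| \le |\langle u, w\rangle| + |\langle v, w\rangle| \le p^*(u) + p^*(v).
\]
Taking the supremum over all admissible $w$ on the left yields \eqref{min1}. Absolute homogeneity \eqref{min2} follows by factoring: for $\alpha \in \R$, $|\langle \alpha v, w\rangle| = |\alpha|\,|\langle v, w\rangle|$, so taking the supremum over $\{w : p(w) \le 1\}$ produces $p^*(\alpha v) = |\alpha| p^*(v)$ (the $\alpha = 0$ case being trivial). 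For finiteness of $p^*$ when $p$ is a norm, I would invoke Corollary \ref{cor:min-norm} to conclude that $K(p) \in \abcs$, so that $K(p) \subset R\overline{\mathbf B}$ for some $R > 0$; then Cauchy--Schwarz gives $p^*(v) \le R|v| < \infty$.

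For the equivalent formula \eqref{eqn:dual-norm}, assuming now that $p$ is a norm, I would use the standard rescaling argument in both directions. For any $w \ne 0$, the vector $w/p(w)$ lies in $K(p)$, so
\[
\frac{|\langle v, w\rangle|}{p(w)} = \left|\left\langle v, \frac{w}{p(w)}\right\rangle\right| \le p^*(v),
\]
giving one inequality after taking the supremum over $w \neq 0$. Conversely, for any $w \in K(p)$ with $w \ne 0$, since $p(w) \le 1$ we have $|\langle v, w\rangle| \le |\langle v, w\rangle|/p(w)$, and the value $w = 0$ contributes only $0$ to the supremum defining $p^*$; hence $p^*(v)$ is bounded above by the supremum in \eqref{eqn:dual-norm}.

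There is no serious obstacle here: the argument is a direct unpacking of the definition. The one subtle point worth isolating is why the hypothesis that $p$ is a norm (not just a seminorm) is needed for \eqref{eqn:dual-norm}: if $p$ had a nontrivial kernel, then scaling any kernel vector would leave it in $\{p \le 1\}$ and could drive the supremum to $+\infty$ whenever $v$ is not annihilated by that kernel, and moreover the division $w/p(w)$ in the alternative expression would be undefined on the kernel. The norm hypothesis simultaneously restores finiteness and legitimizes the rescaling.
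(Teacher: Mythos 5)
Your proof is correct and is exactly the routine verification the paper has in mind: the paper offers no written proof, stating only that the lemma ``follows at once from the properties of a seminorm,'' and your direct check of \eqref{min1}, \eqref{min2}, finiteness via Corollary~\ref{cor:min-norm}, and the rescaling argument for \eqref{eqn:dual-norm} supplies precisely that. Your closing remark about why the norm hypothesis is needed for \eqref{eqn:dual-norm} (nontrivial kernel forcing the supremum to blow up and making $w/p(w)$ undefined) is a worthwhile observation that the paper leaves implicit.
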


\begin{definition}\label{polar}
Given $K \in \mathcal K_{cs}(\R^d)$, define its {\it polar set} by
\[
K^\circ= \{ v\in \R^d: |\langle v , w \rangle| \le 1 \quad\text{for all } w \in K\}.
\]
\end{definition}

A polar set can be thought of as the ``dual'' of a convex set.  This
is made precise by the  following result; for a proof, see \cite[Theorem 14.5]{Rock}.

\begin{theorem}\label{pol}
Let $K \in \mathcal K_{cs}(\R^d)$. The following statements hold:
\begin{enumerate}[(a)]
\item
If $K\in \acs$, then $K^\circ \in \bcs$. 
\item
If $K \in \bcs$, then $K^\circ \in \acs$. 
\item
$(K^\circ)^{ \circ}=K$.
\item If $K$ is bounded and absorbing, then $p_K$ is a norm and
\[
p_{K^\circ} = (p_K)^*.
\]
\end{enumerate}
\end{theorem}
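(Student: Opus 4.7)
The plan is to handle the four parts separately: (a) and (b) are direct verifications from the definition of the polar set, (c) is the bipolar theorem obtained via Hahn--Banach separation, and (d) is a short calculation relating the Minkowski functional to the support function.

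For (a) and (b), I would first observe that
\[
K^\circ = \bigcap_{w \in K}\{v \in \R^d : |\langle v,w\rangle| \le 1\}
\]
is an intersection of closed, convex, symmetric slabs, so $K^\circ$ is automatically closed, convex, and symmetric. To prove boundedness in (a), I would use that $K$ is absorbing: for each standard basis vector $e_i$ pick $t_i > 0$ with $t_i e_i \in K$; then any $v \in K^\circ$ satisfies $t_i|v_i| = |\langle v, t_i e_i\rangle| \le 1$, which gives a uniform coordinate bound. To prove $K^\circ$ is absorbing in (b), I would use that $K$ is bounded: if $K \subset R\overline{\mathbf B}$, then $\sup_{w \in K}|\langle v,w\rangle| \le R|v|$ for every $v$, so $v/(R|v|) \in K^\circ$ whenever $v \ne 0$, while $0 \in K^\circ$ trivially.

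For (c), the inclusion $K \subset (K^\circ)^\circ$ is immediate from the definition. For the reverse inclusion I would argue by contradiction: if $x \notin K$, since $K$ is closed and convex I apply the Hahn--Banach separation theorem to produce $w_0 \in \R^d$ and $\alpha \in \R$ with $\langle w_0, x\rangle > \alpha \ge \langle w_0, y\rangle$ for all $y \in K$. Symmetry of $K$ then forces $s := \sup_{y \in K}|\langle w_0, y\rangle| = \sup_{y \in K}\langle w_0, y\rangle \le \alpha$, and this supremum is finite. When $s > 0$, the rescaled functional $w_0/s$ lies in $K^\circ$ while $|\langle w_0/s, x\rangle| > \alpha/s \ge 1$; when $s = 0$, symmetry forces $\langle w_0, \cdot\rangle \equiv 0$ on $K$, so every scalar multiple $\lambda w_0$ lies in $K^\circ$, and taking $\lambda$ large gives $|\langle \lambda w_0, x\rangle| > 1$. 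In either case $x \notin (K^\circ)^\circ$. I expect this rescaling step to be the main obstacle: one must exclude both $s = \infty$ and the degenerate possibility $s = 0$, and it is symmetry of $K$ together with the strictness of the separation that makes both cases tractable.

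For (d), Theorem~\ref{min} gives $K = \{w : p_K(w) \le 1\}$, and boundedness of $K$ ensures $p_K$ is a norm. A direct calculation then yields
\[
p_{K^\circ}(v) = \inf\{r > 0 : v/r \in K^\circ\} = \inf\bigl\{r > 0 : \sup\nolimits_{w \in K}|\langle v,w\rangle| \le r\bigr\} = \sup\nolimits_{w \in K}|\langle v,w\rangle|,
\]
which equals $(p_K)^*(v)$ by the definition in Lemma~\ref{lemma:dual-seminorm}.
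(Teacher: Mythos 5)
Your proposal is correct, but it takes a different route from the paper for the simple reason that the paper does not prove this theorem at all: it cites \cite[Theorem 14.5]{Rock} and moves on. Your argument is a complete, self-contained substitute, and the details check out. Parts (a) and (b) are exactly the right direct verifications (closedness, convexity and symmetry of $K^\circ$ from writing it as an intersection of symmetric slabs; boundedness from absorbency applied to the basis vectors; absorbency from $K\subset R\overline{\mathbf B}$). Part (c) is the finite-dimensional bipolar theorem via strict separation of a point from a closed convex set, and you correctly identify and dispose of the two delicate points: $s=\sup_{y\in K}|\langle w_0,y\rangle|$ is finite because symmetry forces $s\le\alpha$, and the degenerate case $s=0$ (e.g.\ $K$ contained in a proper subspace) is handled by scaling $w_0$; note also that $0\in K$ (by convexity and symmetry) gives $\alpha\ge 0$, which is what makes $\langle w_0,x\rangle>0$ in that case. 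Part (d) uses that for closed $K\in\acs$ the unit ball of $p_K$ is $K$ itself (the content of the one-to-one correspondence in Theorem~\ref{min}), and your computation $p_{K^\circ}(v)=\sup_{w\in K}|\langle v,w\rangle|$ is essentially the identity $h_K=p_{K^\circ}$ that the paper isolates separately as Lemma~\ref{sf}, combined with $(p_K)^*=h_K$. What the two approaches buy: the paper's citation keeps the preliminaries short, while your proof makes the section self-contained, with Hahn--Banach separation in $\R^d$ as the only external input.
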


As a corollary to Theorem~\ref{pol} and Corollary~\ref{cor:min-norm}
we have the following.

\begin{corollary} \label{cor:dual-norm}
  Given a norm $p$, then the dual seminorm $p^*$ is a norm.  Moreover,
  $p^{**}=(p^*)^*=p$.
\end{corollary}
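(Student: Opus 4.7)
The plan is to reduce the claim to the duality results already established for convex sets in Theorem~\ref{pol}, using the one-to-one correspondence between norms and sets in $\abcs$ from Corollary~\ref{cor:min-norm}.

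First I would pass from the norm $p$ to its unit ball $K = K(p)$. By Theorem~\ref{min} this is a set in $\acs$, and because $p$ is a norm (not merely a seminorm) the remarks before Corollary~\ref{cor:min-norm} give that $K$ is bounded, hence $K \in \abcs$. I would then apply Theorem~\ref{pol}(a) and (b) to the polar set: since $K \in \acs$, we have $K^\circ \in \bcs$, and since $K \in \bcs$, we have $K^\circ \in \acs$. Thus $K^\circ \in \abcs$ as well.

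Next, Theorem~\ref{pol}(d) applies to $K$ and identifies $p^* = (p_K)^* = p_{K^\circ}$. Since $K^\circ \in \abcs$, Corollary~\ref{cor:min-norm} tells us that $p_{K^\circ}$ is a norm, so $p^*$ is a norm. To obtain $p^{**} = p$, I would apply Theorem~\ref{pol}(d) a second time, this time to the set $K^\circ \in \abcs$: this gives $p^{**} = (p_{K^\circ})^* = p_{(K^\circ)^\circ}$. Finally, Theorem~\ref{pol}(c) asserts $(K^\circ)^\circ = K$, so $p^{**} = p_K = p$.

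There is really no substantial obstacle here; the proof is a direct translation between the ``norm'' side and the ``convex set'' side of the correspondence, and the entire content is already contained in Theorem~\ref{pol} combined with Corollary~\ref{cor:min-norm}. The only minor point to check carefully is that $K^\circ$ lies in the intersection $\abcs$ (so that both assertions of Theorem~\ref{pol}(d) apply to it), which is why one needs both (a) and (b) of Theorem~\ref{pol}.
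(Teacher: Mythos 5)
Your argument is correct and is exactly the route the paper intends: it states the corollary as an immediate consequence of Theorem~\ref{pol} and Corollary~\ref{cor:min-norm}, which is precisely the unit-ball/polar translation you carry out (including the point that $K(p)^\circ\in\abcs$ so that Theorem~\ref{pol}(d) can be applied twice). No gaps; your write-up simply makes explicit the details the paper leaves to the reader.
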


We can now state another characterization of seminorms in terms of
convex sets.  The proof is an immediate consequence of
Theorems~\ref{min} and~\ref{pol}.  

\begin{theorem}\label{eqfun}
  The mapping $ K \mapsto p_{K^\circ}$ defines a one-to-one
correspondence between bounded, convex, symmetric sets and
seminorms on $\R^d$.  More precisely,
 if $K \in \bcs$, then $p_{K^\circ}$ is a seminorm; conversely, 
 if $p$ is a seminorm, then $K(p)^\circ \in  \bcs$.
\end{theorem}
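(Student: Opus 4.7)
The plan is to obtain Theorem~\ref{eqfun} by simply composing the two bijections that are already on the table: the Minkowski functional correspondence $K \mapsto p_K$ between $\acs$ and seminorms (Theorem~\ref{min}), and the polarity involution $K \mapsto K^\circ$ which swaps $\acs$ and $\bcs$ (Theorem~\ref{pol}). Concretely, I read the map $K \mapsto p_{K^\circ}$ as the composition
\[
\bcs \;\xrightarrow{\;(\,\cdot\,)^\circ\;}\; \acs \;\xrightarrow{\;p_{(\,\cdot\,)}\;}\; \{\text{seminorms on } \R^d\},
\]
and both arrows are bijections, so the composition is too.

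First I would verify the two direct claims in the statement. For the forward direction, fix $K \in \bcs$; Theorem~\ref{pol}(b) gives $K^\circ \in \acs$, and then Theorem~\ref{min} tells us that $p_{K^\circ}$ is a seminorm. For the converse, fix a seminorm $p$; Theorem~\ref{min} says $K(p) \in \acs$, and then Theorem~\ref{pol}(a) yields $K(p)^\circ \in \bcs$.

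Next I would establish bijectivity by explicitly exhibiting the inverse map $p \mapsto K(p)^\circ$. Starting from $K \in \bcs$ and passing through $K \mapsto p_{K^\circ} \mapsto K(p_{K^\circ})^\circ$, I use the identity $K(p_L) = L$ from the Minkowski correspondence (applied to $L = K^\circ \in \acs$) to get $K(p_{K^\circ}) = K^\circ$, and then Theorem~\ref{pol}(c) gives $(K^\circ)^\circ = K$. Starting from a seminorm $p$ and passing through $p \mapsto K(p)^\circ \mapsto p_{(K(p)^\circ)^\circ}$, Theorem~\ref{pol}(c) collapses $(K(p)^\circ)^\circ = K(p)$, and then Theorem~\ref{min} returns $p_{K(p)} = p$.

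There is essentially no obstacle here beyond bookkeeping: the one thing worth double-checking is that the symmetry hypothesis is preserved at each stage, since $\bcs$ and $\acs$ both build symmetry into the definition and the polar of a symmetric set is symmetric (clear from Definition~\ref{polar}), while symmetry of $K(p)$ follows from $p(-v) = p(v)$. Once these are in place, the proof is a one-line diagram chase and I would present it as such, without reproducing the content of Theorems~\ref{min} and~\ref{pol}.
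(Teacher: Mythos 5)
Your proposal is correct and matches the paper, which proves Theorem~\ref{eqfun} precisely by citing Theorems~\ref{min} and~\ref{pol} as an immediate consequence; your diagram-chase through $\bcs \to \acs \to \{\text{seminorms}\}$ just spells out that composition. The explicit verification of the inverse via $(K^\circ)^\circ = K$ and $p_{K(p)} = p$ is sound and consistent with how the paper uses these facts.
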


The next result gives some important properties of seminorms induced
by polar sets.

\begin{theorem} \label{eqnfun-bis}
  The following are true:
  
\begin{enumerate}[(a)]
\item
Let $K_1, K_2 \in \bcs$. Then $K_1 + K_2 \in \bcs$ and 
\[
p_{(K_1 + K_2)^\circ}=p_{K_1^\circ}+p_{K_2^\circ}.
\]
\item
Let $K\in\bcs$ and $\alpha \in \R$. Then $\alpha K = |\alpha| K \in\bcs$ and
\[
p_{(\alpha K)^\circ}= |\alpha| p_{K^\circ}.
\]
\item
Let $\{K_i\}_{i\in \N}\subset\bcs$. If $K=\clconv (\bigcup_{i\in \N} K_i)$ is bounded, then 
\[
p_{K^\circ} = \sup_{i\in \N} p_{(K_i)^\circ}.
\]
\item
Let $\{K_i\}_{i\in \N} \subset\bcs$ be a family of nested sets, with
$K_{i+1}\subset K_i$ for all $i$.   If $K=\bigcap_{i\in \N} K_i$, then 
\[
p_{K^\circ} = \inf_{i\in \N} p_{(K_i)^\circ}.
\]

\end{enumerate}
\end{theorem}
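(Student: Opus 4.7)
The plan is to reduce all four assertions to a single identity for $K\in\bcs$: unwinding the definition of $K^\circ$ and of the Minkowski functional gives
\[
p_{K^\circ}(v) = \inf\{r>0 : |\langle v,w\rangle|\le r \text{ for all } w\in K\} = \sup_{w\in K}|\langle v,w\rangle| = \sup_{w\in K}\langle v,w\rangle,
\]
where the last equality uses $-K = K$. Thus $p_{K^\circ}$ coincides with the classical support function $h_K$, and each part of the theorem becomes a standard property of support functions.

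For (a), I would first verify that $K_1+K_2\in\bcs$: boundedness, convexity, and symmetry pass through the Minkowski sum trivially, and closedness follows because in $\R^d$ the sets $K_i$ are compact, hence so is their Minkowski sum (continuous image of $K_1\times K_2$). The identity for the Minkowski functional then reduces to
\[
\sup_{w\in K_1+K_2}\langle v,w\rangle = \sup_{w_1\in K_1}\langle v,w_1\rangle + \sup_{w_2\in K_2}\langle v,w_2\rangle,
\]
which is immediate from the linearity of $\langle v,\cdot\rangle$. For (b), the case $\alpha=0$ gives $\{0\}$ on both sides; for $\alpha\ne 0$, symmetry of $K$ yields $\alpha K=|\alpha|K$, and the scaling passes directly through the support function.

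For (c), I would first check that $K\in\bcs$: it is closed and convex by definition, bounded by hypothesis, and symmetric because each $K_i$ is, so $-\bigcup_i K_i=\bigcup_i K_i$ and taking $\clconv$ preserves this. Then I would use the standard fact that the support function is invariant under passing to the closed convex hull (because $\langle v,\cdot\rangle$ is linear and continuous), combined with $h_{\bigcup_i K_i}(v)=\sup_i h_{K_i}(v)$, to conclude.

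The only part requiring genuine work is (d); this is where I expect the main obstacle. The inequality $p_{K^\circ}(v)\le\inf_i p_{K_i^\circ}(v)$ follows at once from $K\subset K_i$. For the reverse, I would use a compactness argument: since each $K_i$ is compact, there is a maximizer $w_i\in K_i$ with $\langle v,w_i\rangle = h_{K_i}(v)$. All the $w_i$ lie in the compact set $K_1$, so some subsequence $w_{i_k}$ converges to a point $w^*$. Because the $K_i$ are nested and closed, $w^*\in K_j$ for every $j$, hence $w^*\in K$. Since the sequence $h_{K_i}(v)$ is monotonically decreasing with infimum $\alpha:=\inf_i h_{K_i}(v)$, we get $\langle v,w^*\rangle = \lim_k h_{K_{i_k}}(v) = \alpha$, which yields $h_K(v)\ge \alpha$. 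A preliminary step is to confirm $K\in\bcs$ (nonemptiness of the intersection follows from the finite intersection property for compact sets) so that $p_{K^\circ}$ is defined.
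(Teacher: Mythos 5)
Your proposal is correct and follows essentially the same route as the paper: you identify $p_{K^\circ}$ with the support function $h_K$ (the paper's Lemma~\ref{sf}), derive (a)--(c) from the standard behavior of support functions under Minkowski sums, scaling, and closed convex hulls, and prove the hard direction of (d) by the same nestedness-plus-compactness argument (your direct extraction of maximizers $w_i$ and a convergent subsequence is just the contrapositive-free version of the paper's argument by contradiction). The extra checks you include -- that $K_1+K_2$, $\clconv(\bigcup_i K_i)$, and $\bigcap_i K_i$ indeed lie in $\bcs$ -- are welcome details the paper leaves implicit.
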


To prove Theorem \ref{eqnfun-bis} we introduce the concept of a support
function.

\begin{definition}\label{supf}
Given $K \in \bcs$, its support function $h_K: \R^d \to [0,\infty)$ is
defined to be
\[
h_K(v) = \sup_{w\in K} \langle v, w \rangle.
\]
\end{definition}

Note that since $K$ is symmetric, we can write $|\langle v, w
\rangle|$ in the definition of the support function. 

\begin{lemma}\label{sf}
  Given $K\in \bcs$, $h_K= p_{K^\circ}$. 
\end{lemma}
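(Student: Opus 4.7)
The plan is to unwind the definitions on both sides and observe they are expressing the same condition. For $v \in \R^d$ and $r > 0$, the condition $v/r \in K^\circ$ is, by Definition~\ref{polar}, equivalent to $|\langle v/r, w\rangle| \le 1$ for every $w \in K$, i.e.\ $|\langle v, w\rangle| \le r$ for every $w \in K$. Taking the supremum over $w \in K$, this is equivalent to $\sup_{w\in K}|\langle v, w\rangle| \le r$. Since $K$ is symmetric, $-w$ ranges over $K$ as $w$ does, so $\sup_{w\in K}|\langle v,w\rangle| = \sup_{w\in K}\langle v,w\rangle = h_K(v)$, the remark made right after Definition~\ref{supf}.

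Plugging this back into Definition~\ref{mink} gives
\[
p_{K^\circ}(v) = \inf\{r > 0 : v/r \in K^\circ\} = \inf\{r > 0 : h_K(v) \le r\}.
\]
When $h_K(v) > 0$ the infimum equals $h_K(v)$, and when $h_K(v) = 0$ the set is $(0,\infty)$ whose infimum is $0 = h_K(v)$; in either case $p_{K^\circ}(v) = h_K(v)$.

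The only real subtlety to address is that the infimum in the Minkowski functional is meaningful, i.e.\ that the set over which it is taken is nonempty for each $v$. This is guaranteed because $K \in \bcs$ is bounded, so $h_K(v)$ is finite and hence the set $\{r > 0 : h_K(v) \le r\}$ is a nonempty half-line (or all of $(0,\infty)$ if $h_K(v) = 0$). Equivalently, by Theorem~\ref{pol}(b), $K^\circ \in \acs$ is absorbing, so indeed $v/r \in K^\circ$ for all sufficiently large $r$. No deeper tools are needed; the lemma is a direct translation between the polar-set description and the support-function description of the same dual object, and there is no substantive obstacle.
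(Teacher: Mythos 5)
Your proof is correct and follows essentially the same route as the paper's: unwinding Definitions~\ref{mink}, \ref{polar}, and \ref{supf} to identify $\inf\{r>0: |\langle v,w\rangle|\le r \text{ for all } w\in K\}$ with $\sup_{w\in K}|\langle v,w\rangle| = h_K(v)$. The extra remarks on the $h_K(v)=0$ case and nonemptiness of the set are fine but not a departure from the paper's argument.
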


\begin{proof}
  By Definitions \ref{mink}, \ref{polar}, and \ref{supf}, for any
  $v\in \R^d$,
\[
\begin{aligned}
p_{K^\circ}(v) &= \inf\{ r>0 : v/r \in K^\circ\}
= \inf \{ r>0: |\langle v/r, w \rangle | \le 1 \text{ for all }w\in K \}\\
&= \inf \{ r>0: |\langle v, w \rangle | \le r \text{ for all }w\in K \} = \sup \{ |\langle v, w \rangle |: w \in K \} = h_K(v). 
\qedhere
\end{aligned}
\]
\end{proof}

\begin{proof}[Proof of Theorem~\ref{eqnfun-bis}]
  To prove $(a)$, first note that by Lemma~\ref{sf} applied twice,
  \[ p_{(K_1+K_2)^\circ}(v)
    =
    \sup_{w\in K_1+K_2} \langle v, w \rangle
    = \sup_{w_1\in K_1} \langle v, w_1 \rangle + \sup_{w_2\in K_2}
    \langle v, w_2 \rangle
    = p_{K_1^\circ}(v) + p_{K_2^\circ}(v). \]
  Part $(b)$ is proved similarly.  To prove $(c)$ we use
  Lemma~\ref{sf} and the definition of the convex hull:
  \begin{multline*}
    p_{K^\circ}(v)
    = \sup_{w\in K } \langle v, w \rangle 
    = \sup\bigg\{ \sum_{i=1}^k  \alpha_i \langle v, w_i \rangle : w_i \in
    K_i, k>0, \alpha_i \geq 0, \sum_{i=1}^k \alpha_i =1 \bigg\} \\
    = \sup\bigg\{ \sum_{i=1}^k  \alpha_i  p_{K_i^\circ}(v)
    : k>0, \alpha_i \geq 0, \sum_{i=1}^k \alpha_i =1\bigg\}
    = \sup_i p_{K_i^\circ}(v). 
  \end{multline*}

  Finally, to prove $(d)$, first note that for each $i$, $K\subset K_i$,
  so we have 
  \[  p_{K^\circ}(v)
    =
    \sup_{w\in K } \langle v, w \rangle
    \leq
    \sup_{w_i\in K_i } \langle v, w_i \rangle
    =
    p_{K_i^\circ}(v). \]
  Hence,  $p_{K^\circ}(v) \leq \inf_i p_{K_i^\circ}(v)$.

 To prove that equality holds, suppose to the contrary that this is a
 strict inequality.  Let
 \[ \epsilon = \inf_i p_{K_i^\circ}(v) - p_{K^\circ}(v) > 0. \]
Hence, for each $i$,
$ p_{K_i^\circ}(v) - p_{K^\circ}(v) \geq \epsilon$. 
By Lemma~\ref{sf}, for each $i$ there exists $w_i\in K_i$ such that 
$\langle v, w_i \rangle + \frac{\epsilon}{2} > p_{K_i^\circ}(v)$.
Hence, 
\[ \langle v, w_i \rangle - p_{K^\circ}(v) > \frac{\epsilon}{2}.  \]
Since the $K_i$ are nested, by passing to a subsequence we may assume
that the $w_i$ converge to a point $w\in K$ as $i\rightarrow \infty$.
Therefore, by continuity,
\[ \langle v, w \rangle - p_{K^\circ}(v) \geq
  \frac{\epsilon}{2},   \]
which contradicts the fact that $\langle v, w \rangle \leq
h_k(v)=p_{K^\circ}(v)$.  Hence, equality holds.
\end{proof}

\medskip

We now consider the weighted geometric mean of two norms.  These results will
be very important in the proof of reverse factorization in
Section~\ref{section:factorization}.  Let $p_0,\, p_1$ be two norms.
For $0<t<1$, define for all $v\in \R^d$,
\begin{equation} \label{eqn:geo-mean}
 p_t(v) = p_0(v)^{1-t}p_1(v)^t. 
\end{equation}
The function $p_t$ need not be a norm, though it is homogenous:
$p_t(\alpha v) =|\alpha| p_t(v)$.  However, we can still use the
the definition in Lemma~\ref{lemma:dual-seminorm} to define $p_t^*$,
which will be norm.

\begin{lemma} \label{lemma:geometric-mean}
  Given norms $p_0,\,p_1$ and $0<t<1$, define $p_t$ by
  \eqref{eqn:geo-mean}.   If we define $p_t^*$ by
  equation~\eqref{eqn:dual-norm}, then $p_t^*$ is a norm.
\end{lemma}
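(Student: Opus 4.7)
The plan is to check the three defining axioms of a norm directly from the formula
\[
p_t^*(v) = \sup_{w \in \R^d,\ w \neq 0} \frac{|\langle v, w \rangle|}{p_t(w)},
\]
noting that the quotient makes sense because $p_0$ and $p_1$ are genuine norms, so $p_t(w) = p_0(w)^{1-t} p_1(w)^t > 0$ whenever $w \neq 0$. The key observation is that this formula never invokes subadditivity of the denominator; it only needs strict positivity of $p_t(w)$ off the origin together with a finiteness bound. Thus although $p_t$ itself is generally not a seminorm (it may fail to be subadditive), its ``dual'' is automatically a norm.

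First I would establish finiteness. Since $\R^d$ is finite-dimensional, $p_0$ and $p_1$ are both equivalent to the Euclidean norm; pick $c>0$ with $p_0(w), p_1(w) \geq c|w|$ for all $w$. Then
\[
p_t(w) = p_0(w)^{1-t} p_1(w)^t \geq c^{1-t} c^t |w| = c |w|,
\]
and Cauchy--Schwarz yields $p_t^*(v) \leq |v|/c < \infty$ for every $v \in \R^d$. Homogeneity is then immediate from $|\langle \alpha v, w \rangle| = |\alpha||\langle v, w\rangle|$, giving $p_t^*(\alpha v) = |\alpha| p_t^*(v)$.

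Next I would verify the triangle inequality. For any $v_1, v_2 \in \R^d$ and $w \neq 0$,
\[
\frac{|\langle v_1 + v_2, w \rangle|}{p_t(w)} \leq \frac{|\langle v_1, w \rangle|}{p_t(w)} + \frac{|\langle v_2, w \rangle|}{p_t(w)} \leq p_t^*(v_1) + p_t^*(v_2),
\]
and taking the supremum on the left gives $p_t^*(v_1 + v_2) \leq p_t^*(v_1) + p_t^*(v_2)$. Finally, for positive-definiteness, clearly $p_t^*(0) = 0$; conversely, if $v \neq 0$, testing the supremum against the single choice $w = v$ (which has $p_t(v) > 0$) gives $p_t^*(v) \geq |v|^2/p_t(v) > 0$.

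There is no real obstacle here; the mildly delicate point is recognizing that the subadditivity of $p_t$ is not needed, and that finite-dimensionality of $\R^d$ (which forces equivalence of $p_0, p_1$ with $|\cdot|$) is what guarantees $p_t^*(v) < \infty$. Once finiteness is in hand, the remaining axioms drop out mechanically from the formula.
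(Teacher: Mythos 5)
Your proof is correct and follows essentially the same route as the paper's: the paper also observes that $p_t(w)\neq 0$ for $w\neq 0$ makes the supremum well defined and then notes that the norm axioms follow directly from the bilinearity and positivity of the Euclidean inner product. You have simply written out explicitly (finiteness via equivalence of norms on $\R^d$, homogeneity, the triangle inequality, and testing $w=v$ for definiteness) what the paper leaves as immediate.
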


\begin{proof}
  First note that since $p_0,\,p_1$ are norms, if $w\neq 0$,
  $p_t(w)\neq 0$, so $p_t^*$ is well defined.  That it is a norm then
  follows immediately from the properties of the Euclidean inner
  product.
\end{proof}

By Corollary~\ref{cor:dual-norm} we also have that $p_t^{**}$ is a norm; though
it is not equal to $p_t$ we will be able to use it in place of $p_t$. 

\begin{lemma} \label{lemma:double-dual-geo-mean}
  Given norms $p_0,\,p_1$ and $0<t<1$, define $p_t$ by
  \eqref{eqn:geo-mean}.   Then for all $v\in \R^d$, $p_t^{**}(v) \leq
  p_t(v)$. 
\end{lemma}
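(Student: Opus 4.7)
The plan is to unwind the definitions of the two successive duals and observe that the desired inequality $p_t^{**}(v) \le p_t(v)$ is essentially a tautological consequence of how $p_t^*$ is defined. The whole content of the lemma is a one-line duality estimate; the only care needed is that $p_t$ is not itself a norm, only positively homogeneous and positive on $\R^d\setminus\{0\}$.

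First I would fix $v\in\R^d$ and note that for any $w\ne 0$, by definition~\eqref{eqn:dual-norm} applied to $p_t$ (which is legitimate since $p_0,p_1$ are norms, hence $p_t(u)>0$ whenever $u\ne 0$),
\[
p_t^*(w) \;=\; \sup_{u\ne 0}\frac{|\langle u,w\rangle|}{p_t(u)}
\;\ge\; \frac{|\langle v,w\rangle|}{p_t(v)},
\]
taking $u=v$ (the case $p_t(v)=0$, i.e.\ $v=0$, is trivial since then $p_t^{**}(v)=0$). Rearranging gives the key pointwise bound
\[
|\langle v,w\rangle| \;\le\; p_t(v)\,p_t^*(w) \qquad \text{for all } w\ne 0.
\]

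Next I would invoke Lemma~\ref{lemma:geometric-mean} to know that $p_t^*$ is genuinely a norm, so that $p_t^{**}$ is defined and may be expressed via~\eqref{eqn:dual-norm} as $p_t^{**}(v)=\sup_{w\ne 0}|\langle v,w\rangle|/p_t^*(w)$. Dividing the bound above by $p_t^*(w)$ and taking the supremum over $w\ne 0$ yields $p_t^{**}(v)\le p_t(v)$, completing the proof.

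The only point that requires any thought, and the reason the statement is not entirely vacuous, is that $p_t$ is in general not subadditive and therefore not a norm, so one cannot appeal directly to the involutivity $p^{**}=p$ of Corollary~\ref{cor:dual-norm}. The argument above sidesteps this because the pointwise duality inequality $|\langle v,w\rangle|\le p_t(v)\,p_t^*(w)$ follows from the definition of $p_t^*$ alone, using only positivity and positive homogeneity of $p_t$, and does not rely on convexity. This is also consistent with the general heuristic that $p_t^{**}$ should coincide with the largest lower semicontinuous, sublinear minorant of $p_t$, which is automatically dominated by $p_t$.
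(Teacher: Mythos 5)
Your proof is correct and follows essentially the same duality unwinding as the paper's: the paper picks a near-optimal $w$ with an $\epsilon$ and uses the infimum form of $1/p_t^*(w)$, while you prove the pointwise bound $|\langle v,w\rangle|\le p_t(v)\,p_t^*(w)$ and take the supremum directly, which is only a cosmetic rearrangement of the same argument.
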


\begin{proof}
  Fix $\epsilon>0$; by the
  definition of the dual norm, there exists $w\in \R^d$ such that
  \[ p_t^{**}(v)  \leq (1+\epsilon) \frac{ | \langle v, w \rangle
      |}{p_t^*(w)}. \]
  By the definition of $p_t^*$,
  \[ \frac{1}{p_t^*(w)}
    = \inf_{u\in \R^d, u\neq 0} \frac{p_t(u)}
    {|\langle w, u \rangle|}
    \leq \frac{p_t(v)}{|\langle w, v \rangle|}. \]
  If we combine these inequalities we get $p_t^{**}(v) \leq
  (1+\epsilon)p_t(v)$; since $\epsilon$ is arbitrary the desired
  inequality holds.
\end{proof}

\begin{remark}
  As a consequence of this result, we have that the unit ball of $p_t^{**}$ is the
  convex hull of the set $\{ v \in \R^d : p_t(v) \leq 1\}$.  Since we
  do not need this fact, we omit the details.
\end{remark}

To prove the next result about the dual of $p_t^*$, we need a lemma
which follows from the existence of the John
ellipsoid~\cite[Theorem~3.13]{TV}.  Since we prove this result in
detail for measurable norm functions in Section~\ref{normf} (see
Theorem~\ref{thm:matrix-norm} and
Proposition~\ref{prop:dual-matrix-norm}) we omit the simpler proof
here.

\begin{lemma} \label{lemma:norm-matrix}
  The following hold for all $v\in \R^d$:
  \begin{enumerate}

    \item Given a norm $p$, there exists a positive definite matrix $A$ such
  that $p(v)\approx |Av|$, where the implicit constants depend only on
  $d$.

  \item Given any invertible matrix $A\in \M_d$, $p(v)=|Av|$ is a norm and
    $p^*(v)= |(A^*)^{-1}v|$.

  \end{enumerate}
\end{lemma}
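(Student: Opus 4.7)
The plan is to prove part (2) first as a direct computation, then deduce part (1) from the John ellipsoid theorem.

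For part (2), given an invertible $A$, the function $p(v)=|Av|$ inherits subadditivity and absolute homogeneity from the Euclidean norm, and it is definite because $A$ is injective. To identify $p^*$, I would apply formula \eqref{eqn:dual-norm} and substitute $u = Aw$:
\[
p^*(v) = \sup_{w\neq 0} \frac{|\langle v, w\rangle|}{|Aw|}
       = \sup_{u\neq 0} \frac{|\langle v, A^{-1}u\rangle|}{|u|}
       = \sup_{u\neq 0} \frac{|\langle (A^*)^{-1}v, u\rangle|}{|u|}
       = |(A^*)^{-1}v|,
\]
where the last equality follows from the Cauchy--Schwarz inequality with equality attained at $u = (A^*)^{-1}v/|(A^*)^{-1}v|$.

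For part (1), since $p$ is a norm on $\R^d$, Corollary \ref{cor:min-norm} gives that its unit ball $K(p)=\{v:p(v)\le 1\}$ belongs to $\abcs$. The John ellipsoid theorem (e.g., \cite[Theorem~3.13]{TV}) then produces a symmetric ellipsoid $E$ with $E\subset K(p)\subset \sqrt{d}\,E$. Any such origin-centered ellipsoid has the form $E = A^{-1}\overline{\mathbf B}$ for a unique positive definite matrix $A$, and a direct computation shows that the Minkowski functional of $E$ equals $v\mapsto |Av|$. Since Minkowski functionals reverse inclusions and scale by $1/\alpha$ under dilation by $\alpha$, the nested containment yields
\[
\tfrac{1}{\sqrt{d}}\,|Av| \;\le\; p(v) \;\le\; |Av| \qquad\text{for all } v\in \R^d,
\]
which is the claimed equivalence with constants depending only on $d$.

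The main (and only genuine) obstacle is the invocation of John's ellipsoid theorem for part (1); everything else is bookkeeping. Since the paper already defers the more delicate \emph{measurable} analog to Section \ref{normf} (Theorem \ref{thm:matrix-norm} and Proposition \ref{prop:dual-matrix-norm}), at this point it is legitimate to quote John's theorem directly, so in this pointwise setting the argument reduces to the two short steps above.
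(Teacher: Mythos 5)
Your proof is correct, and it follows essentially the same route the paper takes: the paper omits the pointwise argument precisely because it proves the measurable analogue (Theorem~\ref{thm:matrix-norm} and Proposition~\ref{prop:dual-matrix-norm}) by the same two ingredients you use, namely the John ellipsoid applied to the unit ball $K(p)$ with the Minkowski-functional computation $p_{A^{-1}\overline{\mathbf B}}(v)=|Av|$, and the duality identity identifying the dual of $v\mapsto|Av|$ with $v\mapsto|(A^*)^{-1}v|$. Your direction of the inequalities $\tfrac{1}{\sqrt d}|Av|\le p(v)\le|Av|$ and the substitution $u=Aw$ in the dual-norm supremum are both handled correctly, so there is nothing to add.
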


The following result shows two positive definite matrices are
simultaneously congruent to diagonal matrices, see
\cite[Ex. 1.6.1]{bhatia}.  For completeness we include the short proof.

\begin{lemma}\label{simd}
Let $A, B \in \Sd$. Then, there exists an
invertible matrix $S$, and diagonal matrices $D_A$ and $D_B$ such that
\begin{equation}\label{simd1}
 A= S^*D_A S, \qquad B= S^* D_B S.
\end{equation}
In particular, we can choose $D_A$ to be the identity matrix.

\end{lemma}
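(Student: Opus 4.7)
The plan is to give the standard simultaneous-congruence-diagonalization argument, treating the case where $A$ is positive definite first (which is the setting needed for the ``in particular'' conclusion), and then noting the general positive semidefinite case reduces to it.

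First, since $A \in \Sd$ is positive definite, the spectral theorem gives an orthogonal matrix $U_A$ and a diagonal matrix $\Lambda_A$ with strictly positive entries such that $A = U_A \Lambda_A U_A^*$. This lets us define the invertible symmetric square root $A^{1/2} = U_A \Lambda_A^{1/2} U_A^*$ and its inverse $A^{-1/2}$. I would then introduce the auxiliary matrix
\[
C = A^{-1/2} B A^{-1/2},
\]
which is symmetric because both $A^{-1/2}$ and $B$ are symmetric. Applying the spectral theorem to $C$ produces an orthogonal matrix $U$ and a diagonal matrix $D_B$ with $U^* C U = D_B$.

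Next I would set $T = A^{-1/2} U$, which is invertible as a product of invertible matrices. A direct computation gives
\[
T^* A T = U^* A^{-1/2} A A^{-1/2} U = U^* U = I,
\qquad
T^* B T = U^* A^{-1/2} B A^{-1/2} U = U^* C U = D_B.
\]
Setting $S = T^{-1}$ then yields $A = S^* I S$ and $B = S^* D_B S$, which is exactly \eqref{simd1} with $D_A = I$. This establishes the ``in particular'' statement.

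For the general case where $A$ is only positive semidefinite, I would reduce to the preceding argument by a small perturbation: consider $A_\varepsilon = A + \varepsilon I$, which is positive definite for every $\varepsilon > 0$, simultaneously congruence-diagonalize $A_\varepsilon$ and $B$ as above, and pass to a convergent subsequence of the resulting $S_\varepsilon$'s after a suitable normalization (using that the orthogonal group is compact and rescaling to control the diagonal factor). Since the lemma is only applied in the paper in contexts where $A$ can be taken positive definite (to obtain the identity diagonal), I expect this reduction is not actually needed, and the main substantive step is simply the diagonalization of the conjugated matrix $C = A^{-1/2} B A^{-1/2}$ by the spectral theorem — which is routine rather than an obstacle.
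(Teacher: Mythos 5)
Your argument is essentially identical to the paper's proof: both diagonalize the symmetric matrix $A^{-1/2}BA^{-1/2}$ via the spectral theorem and take $S$ to be (up to transposing the orthogonal factor) $U A^{1/2}$, so that $D_A = \mathbf{I}$; your detour through $T = A^{-1/2}U$ and $S = T^{-1}$ is just a cosmetic repackaging of the same computation. The closing perturbation sketch for singular $A$ is unnecessary, since the paper (as its preamble ``two positive definite matrices'' indicates) works with $A$ invertible, exactly as in your main case --- and indeed the ``$D_A = \mathbf{I}$'' conclusion forces $A$ to be positive definite anyway.
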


\begin{proof} Since the matrix $A^{-1/2} B A^{-1/2}$ is symmetric, there exists an orthogonal matrix $U$ and diagonal matrix $D_B$ such that $A^{-1/2} B A^{-1/2}=U^*D_BU$. Let $S=UA^{1/2}$ and $D_A=\mathbf I$, where $\mathbf I$ is the identity matrix. Then, a simple calculation shows that
\begin{align*}
S^* D_A S &= A^{1/2} U^* U A^{1/2} = A, \\
S^* D_B S & = A^{1/2} U^* (U A^{-1/2} B A^{-1/2} U^*) U A^{1/2} = B. \qedhere
\end{align*} 
\end{proof}

\begin{prop} \label{prop:geometric-double-dual}
  Given norms $p_0,\,p_1$ and $0<t<1$, define $p_t$ by
  \eqref{eqn:geo-mean}.   Then for all $v\in \R^d$,
  \begin{equation} \label{eqn:geometric-double-dual1}
    p_t^{**}(v) \approx \big( p_0^*(\cdot)^{1-t}p_1^*(\cdot)^t\big)^{*} (v). 
  \end{equation}
  The implicit equivalence constant depends only on  $d$.
\end{prop}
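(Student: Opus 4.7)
The plan is to reduce the inequality, via the John ellipsoid and simultaneous diagonalization, to an explicit geometric computation. By Lemma~\ref{lemma:norm-matrix}(1), choose symmetric positive definite matrices $A_0, A_1$ with $p_i(v) \approx |A_i v|$, whence $p_i^*(w) \approx |A_i^{-1}w|$ with constants depending only on $d$. Apply Lemma~\ref{simd} to $A_0^2, A_1^2 \in \Sd$ to obtain an invertible matrix $S$ and a positive diagonal matrix $D = \diag(d_1,\dots,d_d)$ with $A_0^2 = S^*S$ and $A_1^2 = S^*DS$; then $|A_0 v| = |Sv|$, $|A_1 v| = |D^{1/2}Sv|$, $|A_0^{-1}w| = |S^{-*}w|$, and $|A_1^{-1}w| = |D^{-1/2}S^{-*}w|$. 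Define $s(u) := |u|^{1-t}|D^{1/2}u|^t$ and $\tilde s(u) := |u|^{1-t}|D^{-1/2}u|^t$, so that $p_t(v) \approx s(Sv)$ and $p_0^*(w)^{1-t}p_1^*(w)^t \approx \tilde s(S^{-*}w)$. A change of variables in the defining suprema (valid for any positive, positively homogeneous function) shows that these equivalences pass to the duals and double-duals: $p_t^{**}(v) \approx s^{**}(Sv)$ and $(p_0^*(\cdot)^{1-t}p_1^*(\cdot)^t)^*(v) \approx \tilde s^*(Sv)$. It therefore suffices to prove $s^{**}(\xi) \approx \tilde s^*(\xi)$ with constants depending only on $d$, and I will show both are equivalent to the Euclidean norm $N(\xi) := |D^{t/2}\xi|$.

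The heart of the proof is a geometric pinching. By H\"older's inequality applied coordinatewise,
\begin{equation*}
N(\xi)^2 = \sum_i (\xi_i^2)^{1-t}(d_i\xi_i^2)^t \le \Bigl(\sum_i \xi_i^2\Bigr)^{1-t}\Bigl(\sum_i d_i\xi_i^2\Bigr)^t = s(\xi)^2,
\end{equation*}
so the sublevel set $K := \{\xi : s(\xi)\le 1\}$ lies in the ellipsoid $E := \{\xi : N(\xi)\le 1\}$, and hence $\clconv(K)\subset E$. A direct calculation yields $s(\pm d_i^{-t/2}e_i) = 1$, so the cross-polytope $P := \conv\{\pm d_i^{-t/2}e_i\}$ inscribed in $E$ satisfies $P\subset \clconv(K)$; Cauchy--Schwarz gives $E\subset \sqrt{d}\,P$. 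Chaining these inclusions, $\clconv(K) \subset E \subset \sqrt{d}\,\clconv(K)$. Since positive homogeneity of $s$ yields $s^*(w) = h_K(w) = h_{\clconv(K)}(w)$, Theorem~\ref{pol} identifies $s^{**}$ with the Minkowski functional of $\clconv(K)$, and the inclusion chain translates to $s^{**}(\xi) \approx N(\xi)$.

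Running the identical argument with $D^{-1}$ in place of $D$ yields $\tilde s^{**} \approx |D^{-t/2}\cdot|$. Since $\tilde s^*$ is a norm by Lemma~\ref{lemma:geometric-mean}, the identity $\tilde s^* = (\tilde s^{**})^*$ combined with Lemma~\ref{lemma:norm-matrix}(2) (which gives $(|D^{-t/2}\cdot|)^* = |D^{t/2}\cdot| = N$) yields $\tilde s^* \approx N$. Combining, $s^{**} \approx N \approx \tilde s^*$, and transporting back through $S$ gives~\eqref{eqn:geometric-double-dual1}. I expect the main obstacle to be the geometric pinching $P \subset \clconv(K) \subset E \subset \sqrt{d}\,P$: H\"older supplies the upper bound $K\subset E$, while the lower bound hinges on the observation that the vertices of the cross-polytope inscribed in $E$ happen to lie on the (generally non-convex) level set $\partial K$. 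Everything else is careful bookkeeping of equivalence constants through two dualizations and a linear change of variables.
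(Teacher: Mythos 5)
Your proof is correct, and its outer skeleton coincides with the paper's: replace $p_0,p_1$ by Euclidean norms via the John ellipsoid (Lemma~\ref{lemma:norm-matrix}), simultaneously diagonalize via Lemma~\ref{simd}, and reduce to an explicit computation in the diagonalizing coordinates with only a $\sqrt{d}$ loss. Where you genuinely diverge is in how that core computation is executed. The paper stays at the level of dual norms: it evaluates $p_t$ on the basis $\{S^{-1}e_i\}$ to get the lower bound \eqref{no1}, uses the triangle inequality over $\{S^*e_i\}$ for the upper bound \eqref{no2}, combines the resulting $\ell^\infty$ and $\ell^1$ expressions into the $\ell^2$ formula \eqref{no3}, and then identifies $\big(p_0^*(\cdot)^{1-t}p_1^*(\cdot)^t\big)^*$ by re-running the same single-dual computation for $p_0^*,p_1^*$ in \eqref{no4}. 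You instead argue at the level of bodies: you identify $s^{**}$ with the gauge of $\clconv\{s\le 1\}$ and sandwich that body between the ellipsoid $\{|D^{t/2}\cdot|\le 1\}$ (H\"older) and the inscribed cross-polytope with vertices $\pm d_i^{-t/2}e_i$ (Cauchy--Schwarz), and you dispatch the right-hand side through the identity $\tilde s^{*}=(\tilde s^{**})^*$ (valid since $\tilde s^*$ is a norm, by Lemma~\ref{lemma:geometric-mean} and Corollary~\ref{cor:dual-norm}) rather than by a second direct computation. The two arguments are essentially polar images of one another --- your cross-polytope-versus-ellipsoid comparison is the paper's $\max$-versus-sum comparison in \eqref{no1}--\eqref{no3} seen on the unit body --- but your version makes the geometry of the sublevel set explicit and avoids repeating the dual-norm estimate, at the cost of the small extra check that $\clconv\{s\le 1\}$ is bounded and absorbing so that Lemma~\ref{sf} and Theorem~\ref{pol} apply; that check is immediate since the body sits inside the ellipsoid and contains the cross-polytope. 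Your constant bookkeeping (John-ellipsoid constants plus two $\sqrt{d}$ losses, all passed through order-reversing dualizations and the linear change of variables) is sound and yields the claimed dependence on $d$ alone, matching the paper's conclusion \eqref{no5}.
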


\begin{proof}
Given the two norms $p_0$ and $p_1$, by  Lemma~\ref{lemma:norm-matrix}
there exist positive definite matrices
$C$ and $D$ such that $p_0(v)\approx |Cv|$ and $p_1(v) \approx |Dv|$,
where the implicit constants depend only on $d$.   Let $A=C^2$, $B=D^2$; then we
have that
\[ p_0(v) \approx |A^{1/2}v| = \langle Av, v \rangle^{\frac{1}{2}}, \quad
  p_1(v) \approx |B^{1/2}v| = \langle Bv, v \rangle^{\frac{1}{2}}.  \]
By Lemma \ref{simd} there exists an
invertible matrix $S$ and diagonal matrices $D_A$ and $D_B$ such that 
\eqref{simd1} holds.
Then for all $v\in \R^d$,  $p_0(v) \approx|D_A^{1/2}Sv|$ and
$p_1(v) \approx|D_B^{1/2}Sv|$.  
Let $D_A^{1/2} = \diag(\lambda_1,\ldots,\lambda_d)$ and
  $D_B^{1/2}=\diag(\mu_1,\ldots,\mu_d)$.

  Since $\{S^*e_i\}_{i=1}^d$ is a basis, we
  can write $w\in \R^d$ as
  \[ w = \sum_{i=1}^d b_i S^*e_i. \]
 If we let $v=S^{-1}e_i$,
  then $p_t(v) = \lambda_i^{1-t}\mu_i^t$. Hence, 
  \begin{equation}\label{no1}
  p_t^*(w) = \sup_{\substack{v\in \R^d\\v\neq 0}} \frac{|\langle w, v \rangle|}{p_t(v)}
    \gtrsim \max_{1\leq i \leq d}  \frac{|\langle w, S^{-1}e_i \rangle|}{p_t(S^{-1}e_i)}
    = \max_{1\leq i \leq d}  \frac{|b_i|}{\lambda_i^{1-t}\mu_i^t}.
    \end{equation}
    
Similarly, since $\{S^{-1}e_i\}$ is a basis, we can write any $v\in \R^d$ as
  \[ v = \sum_{i=1}^d c_i S^{-1}e_i.  \]
Since
\[
p_t(v) \approx \bigg(\sum_{i=1}^d |c_i|^2 \lambda_i^2 \bigg)^{(1-t)/2}
\bigg(\sum_{i=1}^d |c_i|^2 \mu_i^2 \bigg)^{t/2}
\]
we have $p_t(v) \gtrsim |c_i| \lambda_i^{1-t} \mu_i^t$ for any $v\in \R^d$ such that $|\langle v,S^* e_i \rangle | = |c_i|$. Thus, 
\begin{equation}\label{no2}
  p_t^*(w) \le \sum_{i=1}^d |b_i| p_t^*(S^* e_i)  \le \sum_{i=1}^d \frac{|b_i|}{\lambda_i^{1-t}\mu_i^t}.
    \end{equation}
Combining  \eqref{no1} and \eqref{no2}  yields 
\begin{equation}\label{no3}
p_t^*(w) \approx \bigg(\sum_{i=1}^d \bigg(\frac{|b_i|}{\lambda_i^{1-t}\mu_i^t}\bigg)^2\bigg)^{1/2} = |D_{A}^{-(1-t)/2}D_B^{-t/2} (S^*)^{-1}w|.
\end{equation}

 Define $q(w)= p_0^*(w)^{1-t}p_1^*(w)^t$.  By Lemma~\ref{lemma:norm-matrix},
 \[ p_0^*(w) \approx |D_A^{-1/2}(S^*)^{-1}w| \quad\text{and}\quad
  \qquad p_1^*(w) \approx
   |D_A^{-1/2}(S^*)^{-1}w|.  
 \]
 Hence, applying \eqref{no3} for $p_0^*$ and $p_1^*$ yields its dual analogue
\begin{equation}\label{no4}
q^*(v) \approx \bigg(\sum_{i=1}^d ({|c_i|}{\lambda_i^{1-t}\mu_i^t})^2\bigg)^{1/2} = |D_{A}^{(1-t)/2}D_B^{t/2} Sv|.
\end{equation}
Combining \eqref{no3} and \eqref{no4} with Lemma \ref{lemma:norm-matrix} yields for any $v\in \R^d$,
\begin{equation}\label{no5}
p_t^{**}(v) \approx |D_{A}^{(1-t)/2}D_B^{t/2} Sv| \approx q^*(v). \qedhere
\end{equation}
\end{proof}

Finally we connect the concept of a weighted geometric mean of norms with that of matrices \cite{bhatia}.

\begin{definition}\label{geme}
Let $A$ and $B$ be two symmetric positive definite matrices. For $0<t<1$ define the weighted geometric mean of $A$ and $B$ by
\[
A\#_tB = A^{1/2} (A^{-1/2} B A^{-1/2} )^t A^{1/2}.
\]
\end{definition}

Lemma \ref{wgm} gives an equivalent definition of the weighted geometric mean.

\begin{lemma}\label{wgm}
Let $A, B \in \Sd$. Suppose that for some
invertible matrix $S$, and diagonal matrices $D_A$ and $D_B$ we have
\[
A= S^*D_A S, \qquad B= S^* D_B S.
\]
Then, the weighted geometric mean of $A$ and $B$ satisfies 
\begin{equation}\label{wgm2}
 A\#_t B = S^* (D_A)^{1-t} (D_B)^t S.
\end{equation}
\end{lemma}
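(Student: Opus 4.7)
The key subtlety is that the unique positive definite square root $A^{1/2}$ is generally \emph{not} equal to $S^* D_A^{1/2} S$, since $S$ is only assumed to be invertible (not orthogonal). Thus, one cannot simply substitute the decompositions into the definition $A \#_t B = A^{1/2}(A^{-1/2} B A^{-1/2})^t A^{1/2}$. My plan is to introduce an auxiliary orthogonal matrix that bridges the given decomposition and the positive square root, and then read off $M^t := (A^{-1/2} B A^{-1/2})^t$ via functional calculus.

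Concretely, I would set $Q := D_A^{1/2} S A^{-1/2}$ and first verify that $Q$ is orthogonal by the one-line calculation
\[
Q^* Q = A^{-1/2} S^* D_A S A^{-1/2} = A^{-1/2} A A^{-1/2} = I.
\]
Next, using $S A^{-1/2} = D_A^{-1/2} Q$ and the fact that diagonal matrices commute, I would compute
\[
A^{-1/2} B A^{-1/2} = (S A^{-1/2})^* D_B (S A^{-1/2}) = Q^* D_A^{-1/2} D_B D_A^{-1/2} Q = Q^*(D_A^{-1} D_B) Q.
\]
Because $Q$ is orthogonal and $D_A^{-1} D_B$ is a positive diagonal matrix, this is the spectral decomposition of $A^{-1/2} B A^{-1/2}$, and functional calculus gives
\[
(A^{-1/2} B A^{-1/2})^t = Q^*(D_A^{-1} D_B)^t Q = Q^* D_A^{-t} D_B^t Q.
\]

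To finish, I would use the two identities $A^{1/2} Q^* = S^* D_A^{1/2}$ and $Q A^{1/2} = D_A^{1/2} S$, both immediate from the definition of $Q$ and the symmetry of $A^{-1/2}$ and $D_A^{1/2}$, to collapse the sandwich:
\[
A \#_t B = A^{1/2} (Q^* D_A^{-t} D_B^t Q) A^{1/2} = (S^* D_A^{1/2})(D_A^{-t} D_B^t)(D_A^{1/2} S) = S^* D_A^{1-t} D_B^t S,
\]
which is exactly \eqref{wgm2}.

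The main obstacle is conceptual rather than technical: one must resist the temptation to treat $S^* D_A^{1/2} S$ as a stand-in for $A^{1/2}$, and instead discover the orthogonal matrix $Q$ that diagonalizes $A^{-1/2} B A^{-1/2}$ in a way compatible with the given decomposition. Once $Q$ is in hand, every remaining step is a routine matrix manipulation, and as a byproduct one sees that the construction is essentially a congruence-invariance argument for $\#_t$ tailored to the hypothesis of the lemma.
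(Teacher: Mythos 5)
Your proof is correct, but it takes a genuinely different route from the paper's. The paper argues via Riemannian geometry: it invokes the facts from \cite[Chapter~6]{bhatia} that the positive definite matrices form a manifold in which $t\mapsto A\#_t B$ is the unique geodesic joining $A$ to $B$, that the congruence map $\Gamma_{S^{-1}}(X)=(S^{-1})^*XS^{-1}$ preserves lengths of paths and hence sends geodesics to geodesics, and that for the commuting endpoints $D_A,D_B$ the geodesic is $t\mapsto D_A^{1-t}D_B^t$; uniqueness of geodesics then forces \eqref{wgm2}. You instead prove the identity by direct functional calculus: the matrix $Q=D_A^{1/2}SA^{-1/2}$ is orthogonal precisely because $S^*D_AS=A$, it diagonalizes $A^{-1/2}BA^{-1/2}$ as $Q^*D_A^{-1}D_BQ$, so the $t$-th power can be read off, and the sandwich collapses using $A^{1/2}Q^*=S^*D_A^{1/2}$ and $QA^{1/2}=D_A^{1/2}S$; all of these identities check out. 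Your argument is more elementary and self-contained --- it is in effect a hands-on verification of the congruence equivariance of $\#_t$ in exactly the situation needed, with no appeal to the geodesic characterization --- whereas the paper's proof buys conceptual context (it explains why \eqref{wgm2} must hold: $\#_t$ is congruence-equivariant and reduces to the entrywise weighted geometric mean on commuting matrices) at the price of importing nontrivial results from \cite{bhatia}. One small caveat applies to both proofs: although the lemma is stated for $A,B\in\Sd$ (positive semidefinite in the paper's notation), the definition of $A\#_tB$ in Definition~\ref{geme}, your use of $A^{-1/2}$ and $D_A^{-1}$, and the paper's manifold argument all require $A$ and $B$ to be positive definite, so that implicit hypothesis should be understood.
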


\begin{proof} We need to recall some useful facts about the set $\Sd$
  of symmetric positive $d\times d$ matrices from
  \cite[Chapter~6]{bhatia}.  The set $\Sd$ is an open subset of the
  space of all $d\times d$ symmetric matrices, which is equipped with
  the inner product $ \langle A,B \rangle = \operatorname{tr} A^*
  B$. Hence, $\Sd$ is a differentiable manifold equipped with a
  natural Riemannian metric. By \cite[Theorem 6.1.6]{bhatia}, there
  exists a unique geodesic path joining any two points $A,B \in \Sd$,
  which has a parametrization $A \#_t B$, $0\le t \le 1$. For each
  $d\times d$ invertible matrix $X$, define the congruence transformation
\[
\Gamma_X: \Sd \to \Sd, \qquad \Gamma_X(A)= X^* A X,\ A \in \Sd.
\]
By \cite[Lemma 6.1.1]{bhatia}, $\Gamma_X$ preserves lengths of
differentiable paths in $\Sd$. Hence, if the $\gamma: [0,1] \to \Sd$
is a geodesic path in $\Sd$, so is $\Gamma_X \circ \gamma$.

Let $\gamma(t)=A \#_t B$, $0\le t \le 1$, be a geodesic path between $A$ and $B$. Then,
\[
\Gamma_{S^{-1}} (\gamma(t)) = (S^{-1})^* A \#_t B S^{-1}
\]
is a geodesic path between the diagonal matrices $D_A$ and
$D_B$. Since the matrices $D_A$ and $D_B$ commute, by
\cite[Proposition 6.1.5 {\em et seq.}]{bhatia}, their geodesic path is given by
$t \mapsto (D_A)^{1-t}(D_B)^t$. Hence, since geodesic paths are
unique, we have
\[ (S^{-1})^* A \#_t B S^{-1} = (D_A)^{1-t}(D_B)^t, \qquad 0 \le t \le 1,
\]
which yields \eqref{wgm2}.
\end{proof}

As a consequence Proposition \ref{prop:geometric-double-dual} and Lemma \ref{wgm} we have the following corollary.

\begin{corollary}\label{dwgm} Suppose that $A,B \in \Sd$ and the norms $p_0$ and $p_1$ are given by 
\[
p_0(v)=|A^{1/2}v|\quad\text{and}\quad p_1(v)=|B^{1/2}v|
\qquad\text{for }v\in \R^d.
\]
Then the double dual of the weighted geometric mean $p_t$ \eqref{eqn:geo-mean} satisfies
\[
(p_t)^{**}(v) \approx |(A\#_t B)^{1/2} v| \qquad\text{for }v\in \R^d.
\]
\end{corollary}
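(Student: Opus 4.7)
The plan is to combine the key identity in the proof of Proposition \ref{prop:geometric-double-dual} — specifically the equivalence \eqref{no5}, namely $p_t^{**}(v) \approx |D_A^{(1-t)/2} D_B^{t/2} Sv|$ — with the closed form for the weighted geometric mean $A\#_t B$ supplied by Lemma \ref{wgm}. Since the hypothesis of the corollary gives $p_0(v) = |A^{1/2} v| = \langle Av, v\rangle^{1/2}$ and likewise for $p_1$ with $B$, the matrices $A$ and $B$ play exactly the roles they have in Proposition \ref{prop:geometric-double-dual} (modulo absolute constants already absorbed in $\approx$).

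First I would invoke Lemma \ref{simd} to obtain an invertible $S$ and diagonal positive definite $D_A, D_B$ with $A = S^* D_A S$ and $B = S^* D_B S$. Then Lemma \ref{wgm} yields
\[
A \#_t B = S^* D_A^{1-t} D_B^{t} S.
\]
Next I would compute the norm on the right-hand side of the corollary directly: for any $v \in \R^d$,
\[
|(A \#_t B)^{1/2} v|^2 = \langle (A \#_t B) v, v \rangle = \langle D_A^{1-t} D_B^{t} Sv, Sv \rangle = |D_A^{(1-t)/2} D_B^{t/2} S v|^2,
\]
where in the last step I use that $D_A$ and $D_B$ are positive diagonal (hence commute, and have well-defined positive square roots). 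So $|(A\#_t B)^{1/2} v| = |D_A^{(1-t)/2} D_B^{t/2} S v|$ exactly, not just up to a constant.

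Finally I would quote \eqref{no5} from the proof of Proposition \ref{prop:geometric-double-dual} to conclude
\[
(p_t)^{**}(v) \approx |D_A^{(1-t)/2} D_B^{t/2} Sv| = |(A\#_t B)^{1/2} v|,
\]
with implicit constant depending only on $d$. There is essentially no obstacle: all the analytic work has been done in Proposition \ref{prop:geometric-double-dual}, and the only thing to verify is the algebraic identification of $|D_A^{(1-t)/2} D_B^{t/2} Sv|$ with $|(A\#_t B)^{1/2} v|$, which is immediate from Lemma \ref{wgm} together with the fact that $\langle X^* D X \cdot, \cdot \rangle = |D^{1/2} X \cdot|^2$ for positive diagonal $D$. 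The mildly delicate point worth flagging is that $D_A^{(1-t)/2} D_B^{t/2} S$ is not itself symmetric, so one must pass through the quadratic-form identity rather than try to factor $(A \#_t B)^{1/2}$ as a product.
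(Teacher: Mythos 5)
Your proposal is correct and matches the paper's own argument: the paper also deduces the corollary immediately from \eqref{no5} and \eqref{wgm2} via the same quadratic-form computation $|(A\#_t B)^{1/2}v|^2 = \langle (D_A)^{1-t}(D_B)^t Sv, Sv\rangle = |D_A^{(1-t)/2}D_B^{t/2}Sv|^2$. No gaps; your remark about passing through the quadratic form rather than factoring $(A\#_t B)^{1/2}$ is exactly the right observation.
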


\begin{proof}
This is an immediate consequence of \eqref{no5} and \eqref{wgm2} since
\[
|(A\#_t B)^{1/2} v|^2 = \langle (A\#_t B)v, v\rangle = \langle (D_A)^{1-t} (D_B)^t Sv, Sv \rangle = |D_{A}^{(1-t)/2}D_B^{t/2} Sv|^2.\qedhere
\]
\end{proof}

\section{Convex-set valued functions}
\label{section:convex-set-valued-functions}

\subsection*{Measurable convex-set valued functions}
In this section we develop the properties of measurable convex-set valued
functions.  Recall 
our standing assumption  that $(\Omega,\mathcal A,\mu)$ is a positive,
$\sigma$-finite, and complete measure space. We start with a definition of measurability of functions taking values in closed sets $\K(\R^d)$.

\begin{definition} \label{defn:measurable}
 Given a function $F: \Omega \to \K(\R^d)$, we say that $F$ is
 measurable if for every open set $U\subset \R^d$,  $F^{-1}(U)= \{x \in \Omega: F(x) \cap U \ne \emptyset\} \in \mathcal
 A$.
\end{definition}

We shall employ the following characterization of closed-set valued measurable
functions.  For a proof, see~\cite[Theorems~8.1.4,~8.3.1]{MR2458436}.
The equivalence of {\em (i)} and {\em (iv)} is known as the Castaing representation theorem.

\begin{theorem}\label{char}
Given $F: \Omega \to \K(\R^d)$, the following are equivalent:

\begin{enumerate}[(i)]
\item $F$ is measurable;
  
\item the graph of $F$, given by
\[
\operatorname{Graph}(F)=\{(x,v) \in \Omega \times \R^d: v \in F(x) \},
\]
belongs to the product $\sigma$-algebra $\mathcal A \otimes \mathcal B$, where $\mathcal B$ is the Borel $\sigma$-algebra of $\R^d$;

\item for any $v\in \R^d$, the distance map $x\mapsto d(v,F(x))$ is measurable;

\item there exists a sequence of measurable selection functions
  $f_k:\Omega \to \R^d$, $k \in \N$, of $F$ such that for all $x\in \Omega$,
\begin{equation}\label{char2}
F(x) = \overline{\{f_k(x): k \in \N\}}.
\end{equation}
\end{enumerate}
\end{theorem}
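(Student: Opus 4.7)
The plan is to prove the cycle (i) $\Leftrightarrow$ (iii) $\Rightarrow$ (iv) $\Rightarrow$ (ii) $\Rightarrow$ (i), which gives all four equivalences. The observation driving everything is that open sets in $\R^d$ are countable unions of open balls with rational centers and radii, and that a closed set $F(x)$ meets an open ball $B(v,r)$ if and only if $d(v,F(x))<r$.

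The equivalence (i) $\Leftrightarrow$ (iii) is the easiest step. For (i) $\Rightarrow$ (iii), fix $v\in \R^d$ and $r>0$; the set $\{x:d(v,F(x))<r\} = F^{-1}(B(v,r))$ is measurable by hypothesis, which shows that $x\mapsto d(v,F(x))$ is measurable. Conversely, given an open $U\subset \R^d$, write it as $U=\bigcup_{k}B(v_k,r_k)$ with $v_k\in \Q^d$ and $r_k\in \Q_{>0}$. Then
\[
F^{-1}(U)=\{x:F(x)\cap U\ne\emptyset\}=\bigcup_{k}\{x:d(v_k,F(x))<r_k\},
\]
a countable union of measurable sets.

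For (iii) $\Rightarrow$ (iv), the idea is to construct the selections by a Kuratowski--Ryll-Nardzewski style inductive refinement. Fix a countable dense set $\{v_j\}\subset \R^d$. For each $j$, partition $\Omega$ into the measurable sets where $F(x)$ meets $\overline{B}(v_j,2^{-m})$ (using (iii)) and successively refine to build, on each piece, a selection converging pointwise to some point of $F(x)$. A clean way to realize this is to define $f_j(x)$ to be the nearest-point projection of $v_j$ onto $F(x)$ (noting $F(x)$ is closed and nonempty so the projection is well defined up to an approximate choice, which one makes measurable by the above refinement argument). The measurability of each $f_j$ uses measurability of $x\mapsto d(v,F(x))$ via the graph approach below. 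Since $\{v_j\}$ is dense and $F(x)$ is closed, $\overline{\{f_j(x):j\in\N\}}=F(x)$, giving the Castaing representation.

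For (iv) $\Rightarrow$ (ii), observe that
\[
\operatorname{Graph}(F)=\Bigl\{(x,v)\in \Omega\times\R^d : \inf_{k\in\N}|v-f_k(x)|=0\Bigr\},
\]
because $\{f_k(x)\}$ is dense in the closed set $F(x)$. The function $(x,v)\mapsto |v-f_k(x)|$ is $\mathcal A\otimes \mathcal B$-measurable (jointly continuous in $v$, measurable in $x$), so the infimum over $k$ is measurable and its zero set lies in $\mathcal A\otimes \mathcal B$. Finally, for (ii) $\Rightarrow$ (i), given an open $U$, note that $F^{-1}(U)$ is the projection onto $\Omega$ of the measurable set $\operatorname{Graph}(F)\cap(\Omega\times U)$. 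Projections of measurable sets need not be measurable in general, but under the hypothesis that $(\Omega,\mathcal A,\mu)$ is $\sigma$-finite and complete, the measurable projection (or Aumann projection) theorem applies and gives $F^{-1}(U)\in\mathcal A$.

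The main obstacle is the step (iii) $\Rightarrow$ (iv), the Castaing representation itself: producing a countable family of measurable selections whose closure recovers $F(x)$ pointwise requires either a careful inductive construction (Kuratowski--Ryll-Nardzewski) or invocation of the measurable projection theorem. The step (ii) $\Rightarrow$ (i) is routine only because we have assumed completeness of $\mu$; without completeness the projection could fail to be measurable, and the equivalence would break down. Since both of these tools are standard in the convex-set analysis literature, the proof would cite~\cite{MR2458436} for the technical details and focus the exposition on the elementary equivalences (i)$\Leftrightarrow$(iii) and (iv)$\Rightarrow$(ii).
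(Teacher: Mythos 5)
Your outline is correct in substance, but note that the paper does not prove this theorem at all: it simply cites \cite[Theorems~8.1.4, 8.3.1]{MR2458436}, so there is no internal proof to match. What you add is a proof of the elementary links, and those parts are fine: (i)$\Leftrightarrow$(iii) via $\{x: d(v,F(x))<r\}=F^{-1}(B(v,r))$ and rational balls, (iv)$\Rightarrow$(ii) via the zero set of the jointly measurable function $(x,v)\mapsto \inf_k|v-f_k(x)|$, and (ii)$\Rightarrow$(i) via the measurable projection theorem, where you correctly identify that completeness (and $\sigma$-finiteness) of $(\Omega,\mathcal A,\mu)$ is exactly what makes the projection land in $\mathcal A$ — this matches the paper's standing assumptions. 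The one place your sketch is not self-contained is (iii)$\Rightarrow$(iv): the nearest-point projection of $v_j$ onto the closed, generally nonconvex set $F(x)$ is multivalued, so "define $f_j(x)$ to be the nearest-point projection" is not yet a selection, and the parenthetical appeal to "the graph approach below" is circular as stated, since (iv)$\Rightarrow$(ii) cannot be used to build the selections. Making a measurable single-valued choice (e.g.\ the lexicographically minimal nearest point, or the Kuratowski--Ryll-Nardzewski refinement you mention) is precisely the content of the Castaing theorem; since you explicitly delegate that step to \cite{MR2458436}, as the paper itself does for the whole statement, this is a presentational wobble rather than a gap, but if you intend the proof to be read as complete you should either carry out the measurable-choice construction or state plainly that (iii)$\Rightarrow$(iv) is quoted from the reference.
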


We will denote the set of all measurable selection functions for a
convex-set valued function $F$, that is all measurable functions $f$ such
that $f(x)\in F(x)$ a.e.,  by $S^0(\Omega, F)$. Note that by  
{\em (iv)}, this set is non-empty.

Measurability is preserved by taking the intersection or the convex
hull of the union of a sequence of measurable closed-set valued functions.
For a proof, see~\cite[Theorems~8.2.2,~8.2.4]{MR2458436}.

\begin{theorem}\label{cup}
Given a family $F_k: \Omega \to \K(\R^d)$, $k\in\N$, of measurable
maps,  the convex hull union map $G: \Omega \to \K(\R^d)$ defined by
\[
G(x) = \clconv\bigg(\bigcup_{k\in \N} F_k(x) \bigg)
\]
is measurable. Likewise, the intersection map $H: \Omega \to \K(\R^d)$ defined by
\[
H(x) = \bigcap_{k\in \N} F_k(x)
\]
is measurable.
\end{theorem}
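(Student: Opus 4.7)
The plan is to leverage two different characterizations of measurability supplied by Theorem \ref{char}: the graph characterization (ii) for the intersection map $H$, which converts intersection of sets into intersection of graphs, and the Castaing selector characterization (iv) for the convex-hull map $G$, which lets one take convex combinations pointwise on measurable selectors.

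For $H$, I would compute
\[
\operatorname{Graph}(H)=\{(x,v)\in \Omega\times \R^d : v\in F_k(x)\text{ for every }k\} = \bigcap_{k\in\N}\operatorname{Graph}(F_k).
\]
By Theorem \ref{char} applied in the direction (i)$\Rightarrow$(ii), each $\operatorname{Graph}(F_k)\in \mathcal A\otimes \mathcal B$, and the product $\sigma$-algebra is closed under countable intersection, so $\operatorname{Graph}(H)\in \mathcal A\otimes \mathcal B$. Applying Theorem \ref{char} in the reverse direction (ii)$\Rightarrow$(i) then yields measurability of $H$.

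For $G$, I would start by applying Theorem \ref{char}(i)$\Rightarrow$(iv) to each $F_k$ to obtain measurable selector sequences $\{f_{k,j}\}_{j\in\N}$ with $F_k(x)=\overline{\{f_{k,j}(x):j\in\N\}}$ for every $x$. Enumerate the countable doubly indexed family $\{f_{k,j}\}_{k,j\in\N}$ as $\{g_m\}_{m\in\N}$, and let $\mathcal{C}$ denote the countable collection of all rational convex combinations $h=\sum_{i=1}^{\ell} q_i g_{m_i}$, where $\ell\in\N$, $m_i\in\N$, $q_i\in\mathbb{Q}\cap[0,1]$, and $\sum_i q_i=1$. Each such $h$ is measurable as a finite linear combination of measurable functions. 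The core claim is that
\[
G(x)=\overline{\{h(x):h\in\mathcal{C}\}} \quad\text{for every }x\in\Omega.
\]
The inclusion $\supseteq$ is immediate since each $h(x)$ is a convex combination of points in $\bigcup_k F_k(x)\subset G(x)$. For $\subseteq$, an arbitrary point of $G(x)$ is a limit of finite convex combinations $\sum_{i=1}^\ell \lambda_i v_i$ with $v_i\in F_{k_i}(x)$; the $v_i$ may be approximated by $g_{m_i}(x)$ for suitable indices $m_i$, and the coefficients $\lambda_i$ may be approximated by rationals $q_i$ with $\sum_i q_i=1$ via density of $\mathbb{Q}^\ell$ in the probability simplex. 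Then $\sum_i q_i g_{m_i}\in\mathcal{C}$ and its value at $x$ approximates $\sum_i \lambda_i v_i$. Once the claim is established, Theorem \ref{char}(iv)$\Rightarrow$(i) finishes the proof.

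The main technical subtlety lies in the double approximation for $G$: the selector indices $m_i$ and the rational coefficients $q_i$ must be chosen once and for all, independently of $x$, so that the single countable family $\mathcal{C}$ works uniformly in $x$. This is precisely why the Castaing characterization is the right tool: because both the selectors and the rational coefficients are drawn from countable sets, the collection of all rational finite convex combinations of selectors is itself a countable measurable family whose pointwise closure recovers $G(x)$ exactly.
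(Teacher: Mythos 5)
Your proof is correct. The paper itself does not prove Theorem \ref{cup}; it simply cites \cite[Theorems~8.2.2, 8.2.4]{MR2458436}, so what you have done is reconstruct a self-contained argument, and it is essentially the standard one behind those references. Both halves check out: for $H$ the identity $\operatorname{Graph}(H)=\bigcap_k \operatorname{Graph}(F_k)$ together with the equivalence (i)$\Leftrightarrow$(ii) of Theorem \ref{char} settles measurability, and for $G$ the countable family of rational convex combinations of the Castaing selectors is indeed a family of measurable selections of $G$ (each value lies in the convex set $G(x)$) whose pointwise closure is $G(x)$, since for fixed $x$ the double approximation of the points $v_i$ by selector values and of the coefficients $\lambda_i$ by rationals is a purely finite-dimensional estimate. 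Two remarks worth making explicit. First, the implication (ii)$\Rightarrow$(i) that you invoke for $H$ is the one place where the standing hypothesis that $(\Omega,\A,\mu)$ is complete and $\sigma$-finite is genuinely used; the graph characterization fails for general measurable spaces, so your proof of the intersection statement is tied to that hypothesis, exactly as in \cite{MR2458436}. Second, in the inclusion $\subseteq$ for $G$ you should note (as you implicitly do) that the indices $m_i$ and rationals $q_i$ are allowed to depend on $x$, the target point, and $\varepsilon$; only the countable family $\mathcal C$ itself must be chosen independently of $x$, which is what makes Theorem \ref{char}(iv) applicable. With those points spelled out, your argument is a complete substitute for the cited proofs.
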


As a consequence  of Theorem~\ref{cup} we can prove that the polar of a
measurable map with values in convex symmetric sets $\K_{cs}(\R^d)$ is again measurable.

\begin{theorem}\label{pol-measure}
  Given a measurable map $F: \Omega \to \K_{cs}(\R^d)$, the polar map
  $F^\circ: \Omega \to \K_{cs}(\R^d)$, defined by
  $F^\circ(x) = F(x)^\circ$, $x\in \Omega$, is also measurable.
\end{theorem}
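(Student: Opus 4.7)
The plan is to realize $F^\circ$ as a countable intersection of measurable slab-valued maps and then invoke Theorem~\ref{cup}. First, applying the Castaing representation (Theorem~\ref{char}\emph{(iv)}) to $F$, I would fix measurable selection functions $f_k : \Omega \to \R^d$, $k \in \N$, with
\[
F(x) = \overline{\{f_k(x) : k \in \N\}}
\qquad \text{for every } x \in \Omega.
\]
Since, for fixed $v$, the map $w \mapsto |\langle v, w \rangle|$ is continuous, the defining inequality of the polar set extends from a dense subset to the whole closure. Consequently, for every $x \in \Omega$,
\[
F^\circ(x)
= \bigcap_{k \in \N} G_k(x),
\qquad
G_k(x) := \{ v \in \R^d : |\langle v, f_k(x) \rangle| \le 1 \}.
\]
Each $G_k(x)$ is a closed, convex, symmetric slab (or all of $\R^d$ if $f_k(x)=0$) that contains the origin, so $G_k(x) \in \K_{cs}(\R^d)$, and the intersection is non-empty (it contains $0$), hence lies in $\K_{cs}(\R^d)$.

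Next I would verify that each $G_k : \Omega \to \K(\R^d)$ is measurable. The cleanest route is through the graph characterization, Theorem~\ref{char}\emph{(ii)}: the function $(x,v) \mapsto |\langle v, f_k(x) \rangle|$ is $(\mathcal{A} \otimes \mathcal{B})$-measurable since $f_k$ is $\mathcal{A}$-measurable and the inner product is continuous (hence Borel) in $v$ for each fixed $x$. Therefore
\[
\operatorname{Graph}(G_k)
= \{ (x,v) \in \Omega \times \R^d : |\langle v, f_k(x)\rangle| \le 1\}
\in \mathcal{A} \otimes \mathcal{B}.
\]
(Alternatively, one may use Theorem~\ref{char}\emph{(iii)}: for fixed $v$, $d(v, G_k(x))$ equals $\max\{0,(|\langle v, f_k(x)\rangle|-1)/|f_k(x)|\}$ on $\{f_k \ne 0\}$ and $0$ on $\{f_k=0\}$, which is manifestly measurable.)

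With measurability of each $G_k$ in hand, the intersection map
\[
x \mapsto \bigcap_{k\in\N} G_k(x) = F^\circ(x)
\]
is measurable by the second statement of Theorem~\ref{cup}. The only point requiring care is making sure the countable intersection yields a non-empty closed set at every $x$, so that Theorem~\ref{cup} applies, but this is automatic because $0 \in F^\circ(x)$ for all $x$. The main potential obstacle is the \emph{density upgrade} in the first display: one must justify replacing the supremum over $w \in F(x)$ in the polar definition by a supremum over the countable dense subset $\{f_k(x)\}$, and this is precisely where the continuity of $|\langle v, \cdot \rangle|$ combined with closedness of the constraint $\{|\langle v, \cdot\rangle| \le 1\}$ is used.
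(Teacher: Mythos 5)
Your proposal is correct and follows essentially the same route as the paper's proof: Castaing selections, the slab-valued maps $G_k$ (the paper's $F_k$), measurability via the graph characterization in Theorem~\ref{char}, and the intersection result in Theorem~\ref{cup}. The only difference is that you spell out the density upgrade $F^\circ(x)=\bigcap_k G_k(x)$, which the paper leaves implicit.
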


\begin{proof}
By Theorem \ref{char}({\em iv}), there exists measurable selection functions
$f_k\in S^0(\Omega,F)$, $k \in \N$, such that \eqref{char2} holds.
For each $k\geq 1$, define $F_k: \Omega \to \K_{cs}(\R^d)$ by
\[ F_k(x)=\{v\in \R^d: |\langle v, f_k(x) \rangle| \le 1 \}. \]
Since the mapping from $\Omega \times \R^d$ to $\R$ given by
$(x,v) \mapsto \langle v, f_k(x) \rangle$ is measurable on the product
$\sigma$-algebra $\mathcal A \otimes \mathcal B$, the graph of $F_k$
is measurable, so by Theorem~\ref{char}({\em ii}) $F_k$ is a measurable
convex-set valued function.  But 
by Definition \ref{polar},
\[
F(x)^\circ = \bigcap_{k\in \N} F_k(x),
\]
so by Theorem \ref{cup}, $F^\circ $ is measurable as well.
\end{proof}

When a convex-set valued map $F: \Omega \to \K_b(\R^d)$ takes values in
compact sets, we have yet another equivalent definition of
measurability.  Recall that the
Hausdorff distance between two nonempty compact sets
$K_1,K_2 \subset \R^d$ is defined by
\begin{equation}\label{hd}
d(K_1,K_2)=\max\{ \sup_{v\in K_1} \inf_{w\in K_2} |v-w|, \sup_{v\in K_2} \inf_{w\in K_1} |v-w| \}.
\end{equation}
It is well-known that the collection of nonempty compact sets
$\K_b(\R^d)$ equipped with the Hausdorff distance is a complete and
separable metric space; see, for instance,~\cite[Theorem II.8]{CV}.
Both $\K_{bc}(\R^d)$ and $\bcs$ are closed subsets of $\K_b(\R^d)$:
see \cite[Theorem 1.8.5]{MR1216521}.  We can characterize the
measurability of compact-set valued mappings in terms of this
topology; this result is due to Castaing and Valadier \cite[Theorem
III.2]{CV}.

\begin{theorem}\label{CV}
  Given $F:\Omega \to \mathcal K_b(\R^d)$, $F$ is measurable in the
  sense of Definition~\ref{defn:measurable} if and only if $F$ is measurable as a
  function into $ \K_b(\R^d)$ with the Hausdorff topology. That is, if
  $U \subset \K_b(\R^d)$ is open in the Hausdorff topology, then $F^{-1}(U)$ is measurable.
\end{theorem}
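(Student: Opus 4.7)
The plan is to prove both implications by reducing everything to measurability of scalar distance functions, taking advantage of the fact that $(\K_b(\R^d), d)$ is a separable metric space (so every Hausdorff-open set is a countable union of Hausdorff-open balls). The easier direction is that Hausdorff-measurability implies Definition~\ref{defn:measurable}-measurability: for each open $U \subset \R^d$, the hitting set $\mathcal U = \{K \in \K_b(\R^d) : K \cap U \ne \emptyset\}$ is Hausdorff-open, because if $K \in \mathcal U$ and $v \in K \cap U$ with Euclidean ball $B(v,r) \subset U$, then $d(K, K') < r$ forces by \eqref{hd} the existence of $w \in K'$ with $|v - w| < r$, placing $w$ in $U$ and $K'$ in $\mathcal U$. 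Hence $F^{-1}(U) = F^{-1}(\mathcal U) \in \mathcal A$ by hypothesis.

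For the harder direction, assume $F$ is measurable in the sense of Definition~\ref{defn:measurable}. I would first show that for every fixed $K_0 \in \K_b(\R^d)$ the scalar function $x \mapsto d(F(x), K_0)$ is $\mathcal A$-measurable. Unpacking the Hausdorff distance via \eqref{hd} gives
\begin{equation*}
d(F(x), K_0) = \max\Big\{\sup_{v \in F(x)} d(v, K_0),\ \sup_{w \in K_0} d(w, F(x))\Big\}.
\end{equation*}
For the second supremum, I would pick a countable dense subset $\{w_j\} \subset K_0$ (possible since the compact set $K_0$ is separable) and use the $1$-Lipschitz continuity of $w \mapsto d(w, F(x))$ to rewrite it as $\sup_j d(w_j, F(x))$; each $d(w_j, F(x))$ is measurable by Theorem~\ref{char}({\em iii}). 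For the first supremum, I would invoke the Castaing representation from Theorem~\ref{char}({\em iv}) to fix a sequence of measurable selections $\{f_k\}$ of $F$ with $F(x) = \overline{\{f_k(x) : k \in \N\}}$; the $1$-Lipschitz continuity of $v \mapsto d(v, K_0)$ then yields $\sup_{v \in F(x)} d(v, K_0) = \sup_k d(f_k(x), K_0)$, a countable supremum of measurable scalar functions. So $x \mapsto d(F(x), K_0)$ is measurable, and consequently $\{x : d(F(x), K_0) < r\} \in \mathcal A$ for every $r > 0$ and $K_0 \in \K_b(\R^d)$.

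To conclude, by separability of $(\K_b(\R^d), d)$ every Hausdorff-open set $\mathcal V \subset \K_b(\R^d)$ is a countable union of Hausdorff-open balls $\{K' : d(K', K_0) < r\}$; taking countable unions of the measurable preimages produced above shows $F^{-1}(\mathcal V) \in \mathcal A$, which is Hausdorff-measurability. The main technical obstacle is the first supremum in the display above: it is a priori uncountable, and replacing it with a countable supremum relies crucially on both the existence of the Castaing selections (Theorem~\ref{char}({\em iv})) and the continuity of $v \mapsto d(v, K_0)$. Everything else is a routine compatibility check between the hitting-set topology of Definition~\ref{defn:measurable} and the Hausdorff metric topology on $\K_b(\R^d)$.
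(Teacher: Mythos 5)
Your argument is correct, but note that the paper does not prove Theorem~\ref{CV} at all: it is quoted from Castaing and Valadier \cite[Theorem III.2]{CV}, so you have supplied a proof where the paper relies on a citation. Your two directions are both sound. The observation that the hitting set $\{K\in\K_b(\R^d): K\cap U\neq\emptyset\}$ of an open $U\subset\R^d$ is open in the Hausdorff metric is exactly what makes the ``easy'' implication work, and your reduction of the other implication to measurability of $x\mapsto d(F(x),K_0)$ is the standard route: the half of the Hausdorff distance taken over $K_0$ becomes a countable supremum via a dense subset of the compact set $K_0$ together with Theorem~\ref{char}({\em iii}), and the half taken over $F(x)$ becomes a countable supremum via the Castaing selections of Theorem~\ref{char}({\em iv}) and the $1$-Lipschitz continuity of $v\mapsto d(v,K_0)$. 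The final step, passing from measurability of preimages of Hausdorff balls to preimages of arbitrary Hausdorff-open sets, correctly uses that $(\K_b(\R^d),d)$ is separable, hence second countable, which the paper records (citing \cite[Theorem II.8]{CV}); this is the only place separability is genuinely needed, and you invoke it in the right spot. In short, the proof is complete and is essentially the argument one finds in Castaing--Valadier; what it buys, compared with the paper's bare citation, is a self-contained verification using only Theorem~\ref{char} and elementary metric-space facts already available in Section~\ref{section:convex-set-valued-functions}.
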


We also need a characterization of the measurability of
functions taking values in the set of subspaces of $\R^d$.  

\begin{theorem}\label{BR}
  Let $F:\Omega \to \mathcal K(\R^d)$ be such that $F(x)$ is a
  (linear) subspace of $\R^d$ for all $x\in \Omega$. For each
  $x\in \Omega$, let $P(x)\in \Md$ be the matrix of the orthogonal
  projection of $\R^d$ onto $F(x)$. Then $F$ is measurable in the
  sense of  Definition~\ref{defn:measurable} if and only if the matrix-valued mapping
  $P: \Omega \to \Md$ is measurable.
\end{theorem}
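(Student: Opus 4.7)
My plan is to prove the two directions separately, with the forward direction ($F$ measurable $\Rightarrow$ $P$ measurable) using the distance characterization from Theorem~\ref{char}({\em iii}), and the reverse direction using the graph characterization from Theorem~\ref{char}({\em ii}). The key observation that makes the forward direction clean is that for a \emph{subspace} $F(x)$, the projection length and the distance function are linked by Pythagoras, which lets us bypass any construction of a measurable orthonormal frame of $F(x)$.

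For the forward direction, assume $F$ is measurable. Fix $v\in\R^d$. Since $F(x)$ is a closed subspace, the vector $P(x)v$ is the unique nearest point in $F(x)$ to $v$, and $v-P(x)v\in F(x)^\perp$. Pythagoras then gives
\[
|P(x)v|^2 = |v|^2 - d(v,F(x))^2.
\]
By Theorem~\ref{char}({\em iii}), $x\mapsto d(v,F(x))$ is measurable, so $x\mapsto |P(x)v|^2$ is measurable for every $v$. To recover the matrix entries, I will use polarization together with idempotency and symmetry of $P(x)$: since $\langle P(x)e_i,P(x)e_j\rangle = \langle P(x)^2 e_i,e_j\rangle = \langle P(x)e_i,e_j\rangle$, expanding $|P(x)(e_i+e_j)|^2$ yields
\[
2\langle P(x)e_i,e_j\rangle = |P(x)(e_i+e_j)|^2 - |P(x)e_i|^2 - |P(x)e_j|^2.
\]
Each term on the right is measurable in $x$ by the previous identity, so every entry of $P(x)$ is measurable, and hence $P:\Omega\to\Md$ is measurable.

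For the reverse direction, assume $P$ is measurable. Then the map $\Omega\times\R^d\to\R^d$, $(x,v)\mapsto v-P(x)v$, is measurable on the product $\sigma$-algebra $\mathcal A\otimes\mathcal B$ (it is measurable in $x$ for fixed $v$ and continuous, in fact linear, in $v$ for fixed $x$, so a standard Carath\'eodory argument applies). Since $F(x)$ is the range of $P(x)$ and $P(x)$ is a projection, $v\in F(x)$ if and only if $P(x)v=v$, so
\[
\operatorname{Graph}(F) = \{(x,v)\in\Omega\times\R^d : v-P(x)v = 0\}
\]
is the preimage of $\{0\}$ under a measurable map, hence belongs to $\mathcal A\otimes\mathcal B$. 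Applying Theorem~\ref{char}({\em ii}) concludes that $F$ is measurable.

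I do not expect a serious obstacle here: the main subtlety would have been dealing with the fact that $\dim F(x)$ can vary with $x$, which would make any attempt to build a measurable orthonormal basis awkward (Gram--Schmidt on a Castaing sequence has to handle vanishing steps). The Pythagoras identity sidesteps this entirely, since it accesses $|P(x)v|$ without ever choosing coordinates inside $F(x)$. The only thing I would double-check in writing the proof is the joint measurability of $(x,v)\mapsto v-P(x)v$ used in the reverse direction, which is routine from the measurability of each $P_{ij}$ and the completeness of $\mathcal A$.
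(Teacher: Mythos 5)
Your proof is correct, but it takes a genuinely different route from the paper's. For the forward direction the paper builds the projection explicitly: it takes a Castaing sequence of measurable selections $\{f_k\}$ with $F(x)=\overline{\{f_k(x)\}}$, runs Gram--Schmidt pointwise to get a measurable orthogonal spanning family with norms $0$ or $1$ (this is exactly how it absorbs the varying dimension you worried about), and writes $P(x)$ in terms of that frame; the byproduct---a measurable orthonormal-type frame for a measurable range function---is reused later in the paper (e.g.\ in the proofs of Lemma~\ref{jes} and Theorem~\ref{mj}), so the construction earns its keep. Your argument instead uses only Theorem~\ref{char}({\em iii}): the Pythagoras identity $|P(x)v|^2=|v|^2-d(v,F(x))^2$ plus polarization with $P^2=P=P^*$ recovers every matrix entry without ever choosing coordinates inside $F(x)$, which is shorter and cleanly sidesteps the degenerate Gram--Schmidt steps. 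For the converse the paper uses the Castaing direction ({\em iv})$\Rightarrow$({\em i}) with the countable selections $f_q(x)=P(x)q$, $q\in\mathbb Q^d$, and $F(x)=\overline{\{P(x)q\}}$, whereas you identify $\operatorname{Graph}(F)=\{(x,v):v-P(x)v=0\}$ and invoke ({\em ii})$\Rightarrow$({\em i}); both are valid, though note your route leans on the completeness of $(\Omega,\A,\mu)$ built into the graph criterion (a standing assumption here, and one you correctly flag), while the paper's rational-selection argument is equally short and needs only the Castaing representation. In short: your proof is a correct, more coordinate-free alternative; the paper's is more constructive and supplies a measurable frame that it exploits elsewhere.
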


Theorem~\ref{BR} is actually a special case of the theory of range
functions, which were introduced and studied by Helson in the context
of shift-invariant subspaces \cite{He1}. In general, a range function
takes values in the set of closed subspaces of a separable Hilbert
space.  A range function is defined to be measurable precisely if the
projection map is measurable.  Theorem~\ref{BR} can be proved using
results about multiplication-invariant spaces in \cite{BR} although
the assumption that $L^2(\Omega,\mu)$ is a separable Hilbert space is
needed. To avoid this extra assumption, we give a short direct
proof.

\begin{proof}
Suppose first that  $F$ is measurable in the sense of Definition~\ref{defn:measurable}.   Then there
  exists a sequence of measurable selection  functions $\{f_k\}_{k\in \N}$
such that  \eqref{char2} holds. For each $x\in \Omega$,  apply
  Gram-Schmidt orthogonalization to the vectors $\{f_k(x)\}_{k\in \N}$ to obtain a
  collection of orthogonal vectors $\{g_k(x)\}_{k\in \N}$ that span
  $F(x)$ and whose norms are either $0$ or $1$. Since each $g_k$ is a
  finite linear combination of the functions $f_k$, we have that each
  $g_k: \Omega \to \R^d$ is measurable.   The  orthogonal projection
  $P$ is given by
  $P(x)v=\sum_{k\in \N} \langle v,g_k(x)\rangle v$ for $v\in
  \R^d$, and so $P$ is a measurable matrix-valued function.

  Conversely, suppose the function 
  $P: \Omega \to \Md $ is measurable and takes its  values in the set of orthogonal
  projections.   Define a countable collection of measurable selection
  functions $f_q(x) = P(x)q$ which are indexed by $q\in\mathbb Q^d$.
  Since
  \[ F(x)=P(x)(\R^d) = \overline{ \{ P(x)q: q\in\mathbb Q^d \} },  \]
by Theorem~\ref{char}  the mapping $F$ is measurable in the sense of Definition~\ref{defn:measurable}.
\end{proof}

\medskip

It is well known that given a set $K \in \bcs$, there exists a unique
ellipsoid $E$ of maximal volume such that $E \subset K \subset
\sqrt{d}E$.  This is referred to as the John ellipsoid~\cite[Theorem
3.13]{TV}.    Given a convex-set valued function $F$ taking values in $\bcs$,
we can define an associated function $G$ such that $G(x)$ is the John
ellipsoid of $F(x)$.  It turns out that this mapping is measurable in
the sense of Definition~\ref{defn:measurable}:
see Lemma~\ref{je} below.  For our purposes, we state this result in a
slightly different form.  We note in passing that the measurability of
the John ellipsoid has been implicitly assumed in the literature; see,
for instance,~\cite[Proposition~1.2]{MR2015733}.

\begin{theorem}\label{mj}
  Suppose that $F : \Omega \rightarrow \bcs$ is measurable in the
  sense of Definition~\ref{defn:measurable}. Then
  there exists a measurable matrix-valued mapping $W: \Omega \to \Md$
  such that:
\begin{enumerate}[(i)]

\item the columns of the matrix $W(x)$ are mutually orthogonal; 

\item for all $x\in \Omega$,
  \[ W(x) \overline{{\mathbf B}} \subset F(x) \subset \sqrt{d} W(x) \overline{{\mathbf B}}.  \]
\end{enumerate}
\end{theorem}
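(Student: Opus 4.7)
The plan is to obtain $W(x)$ as a measurable matrix parametrization of the John ellipsoid of $F(x)$, extracted from a measurable spectral decomposition of the ellipsoid's shape matrix. Since each $F(x)\in\bcs$ is symmetric, the uniqueness clause of the John ellipsoid theorem forces the John ellipsoid $E(x)$ of $F(x)$ to be symmetric as well, and hence centered at the origin. Consequently there is a unique invertible $A(x)\in\Sd$ such that $E(x)=A(x)\overline{\mathbf B}$, and by definition of the John ellipsoid the sandwich $E(x)\subset F(x)\subset\sqrt{d}\,E(x)$ holds.

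Next, I would invoke the forthcoming Lemma~\ref{je} to conclude that $x\mapsto E(x)$ is measurable as a $\bcs$-valued map; by Theorem~\ref{CV} this is equivalent to measurability into $\bcs$ equipped with the Hausdorff topology. The assignment sending a centered ellipsoid $A\overline{\mathbf B}$ (with $A\in\Sd$ invertible) back to $A$ is continuous in that topology, since $A^2$ is determined by the support function $h_{A\overline{\mathbf B}}(v)^2=\langle A^2v,v\rangle$ evaluated at any countable dense set of vectors $v\in\R^d$, and the positive-definite square root is continuous on $\Sd$. Therefore $x\mapsto A(x)$ is a measurable map into $\Sd$.

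Once $A(x)$ is in hand, I would produce a measurable spectral decomposition $A(x)=U(x)D(x)U(x)^T$, where $U(x)$ is orthogonal and $D(x)=\diag(\lambda_1(x),\ldots,\lambda_d(x))$ with $\lambda_i(x)>0$, and set $W(x)=U(x)D(x)$. The columns of $W(x)$ are then $\lambda_i(x)u_i(x)$ for an orthonormal system $\{u_i(x)\}$, so they are pairwise orthogonal. Moreover,
\[
W(x)\overline{\mathbf B}=U(x)D(x)\overline{\mathbf B}=U(x)D(x)U(x)^T\overline{\mathbf B}=A(x)\overline{\mathbf B}=E(x),
\]
using $U(x)^T\overline{\mathbf B}=\overline{\mathbf B}$, and the required inclusions in (ii) then follow directly from the John sandwich.

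The main obstacle is the measurability of the factors $U(x)$ and $D(x)$, since the spectral factorization is not continuous in $A$ at matrices with repeated eigenvalues. I would handle this by stratifying $\Omega$ into countably many measurable pieces on which the multiplicity pattern of the spectrum of $A(x)$ is constant. On each stratum the individual eigenvalues $\lambda_i(x)$ are measurable as ordered roots of the characteristic polynomial of $A(x)$, the eigenspace $\ker(A(x)-\lambda_i(x)I)$ has constant dimension, and its orthogonal projector is a measurable matrix-valued function; by Theorem~\ref{BR} this makes the eigenspace a measurable subspace-valued map, and then Theorem~\ref{char}(iv) together with Gram--Schmidt applied to a Castaing representation produces a measurable orthonormal basis of that eigenspace. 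Concatenating these bases across the distinct eigenvalues and gluing the strata together yields the measurable orthogonal matrix $U(x)$, and hence the desired $W(x)$.
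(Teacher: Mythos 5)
Your argument has a genuine gap: it silently assumes that every value $F(x)$ is full-dimensional. You assert that the John ellipsoid $E(x)$ of $F(x)$ is of the form $A(x)\overline{\mathbf B}$ for a \emph{unique invertible} $A(x)\in\Sd$, and you invoke Lemma~\ref{je}, which is stated for maps into $\abcs$, i.e.\ with absorbing values. But the hypothesis of Theorem~\ref{mj} only puts $F(x)\in\bcs$: the set $F(x)$ may be lower-dimensional (a segment, a degenerate ellipsoid, even $\{0\}$), and the theorem demands the sandwich $W(x)\overline{\mathbf B}\subset F(x)\subset \sqrt d\,W(x)\overline{\mathbf B}$ for \emph{all} $x\in\Omega$. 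At such points there is no ellipsoid of positive $d$-dimensional volume inside $F(x)$, no invertible shape matrix, and your continuity argument recovering $A(x)$ from the support function via $h_{A\overline{\mathbf B}}(v)^2=\langle A^2v,v\rangle$ only returns a singular positive semidefinite matrix whose ball image need not reproduce the relative John ellipsoid of $F(x)$ inside its span. This degenerate case is precisely where the paper's proof does most of its work: it forms $J(x)=\operatorname{span}F(x)$, shows the dimension sets $\Omega_k=\{x:\dim J(x)=k\}$ are measurable, chooses a measurable orthonormal basis of $J(x)$ on each $\Omega_k$ (via Theorem~\ref{BR} and Gram--Schmidt), transports $F$ isometrically to an absorbing map into $\K_{bcs}(\R^k)$, applies the full-dimensional result there, and lifts back. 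Your proposal could be repaired by a stratification of exactly this kind (in the spirit of your eigenvalue-multiplicity stratification), but as written it does not address it, and the uniqueness/continuity claims on which it rests fail off the absorbing stratum.

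On the non-degenerate stratum your route is sound and genuinely different from the paper's in one respect: the paper's Lemma~\ref{jes} builds the orthogonal columns of $W(x)$ directly, by iterated measurable selections of maximal semi-axes (Lemma~\ref{jem}), whereas you recover the shape matrix $A(x)\in\Sd$ from the support function and then perform a measurable spectral factorization $A=UDU^T$, setting $W=UD$; the multiplicity-stratified construction of measurable $U$ and $D$ can be made rigorous (and is in the spirit of the measurable diagonalization the paper later cites in Theorem~\ref{thm:Sd-weight}). So the eigen-decomposition step is an acceptable alternative to Lemmas~\ref{jem}--\ref{jes}, but the missing treatment of lower-dimensional values of $F$ is a real omission, not a cosmetic one.
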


The proof of Theorem \ref{mj} requires three lemmas.  First, let $\E$
be the set of all ellipsoids in $\R^d$ (possibly lower
dimensional):
\[
\E = \{P \overline{{\mathbf B}}: P \in \Md \} \subset \bcs.
\]
We note that by a compactness argument (see the proof of
Lemma~\ref{je} below) we have that $\mathcal E$
is a closed subset of $\mathcal K_b(\R^d)$ with respect to the
Hausdorff distance \eqref{hd}.

\begin{lemma}\label{je}
  Given a measurable convex-set valued function
  $F : \Omega \rightarrow \abcs$, there exists a measurable mapping
  $G: \Omega \rightarrow \abcs$ such that $G(x) \in \mathcal E$ and for
  all $x\in \Omega$,
\begin{equation}\label{je0}
G(x) \subset F(x) \subset \sqrt{d} G(x).
\end{equation}
\end{lemma}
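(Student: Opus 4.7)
The plan is to define $G$ pointwise via the John ellipsoid and then establish its measurability by a continuity argument. For each $x\in\Omega$, since $F(x)\in\abcs$ is a symmetric convex body containing a neighborhood of the origin (by the absorbing property), John's theorem \cite[Theorem~3.13]{TV} produces a unique ellipsoid of maximal volume $J(F(x))\in\mathcal E$ with $J(F(x)) \subset F(x) \subset \sqrt{d}\, J(F(x))$. Setting $G(x) = J(F(x))$, the inclusions \eqref{je0} hold automatically, and $G(x) \in \abcs$ since a centered, full-dimensional ellipsoid is absorbing, bounded, convex, symmetric, and closed.

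The key step is to show that the John-ellipsoid map $J: \abcs \to \mathcal E$ is continuous with respect to the Hausdorff metric on $\mathcal K_b(\R^d)$. Once this is done, measurability of $F$ in the sense of Definition~\ref{defn:measurable} is equivalent, by Theorem~\ref{CV}, to its measurability as a map into $(\mathcal K_b(\R^d),d_H)$; the composition $G=J\circ F$ is then Hausdorff-measurable, and a second application of Theorem~\ref{CV} returns measurability of $G$ in the sense of Definition~\ref{defn:measurable}.

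To prove continuity, suppose $K_n \to K$ in Hausdorff distance with $K_n, K \in \abcs$. The family $\{J(K_n)\}$ is uniformly bounded, so by Blaschke's selection theorem every subsequence has a further Hausdorff-convergent sub-subsequence with limit $E^\ast$. Since $\mathcal E$ is closed in $\mathcal K_b(\R^d)$ (as noted in the paper) and $J(K_{n_k}) \subset K_{n_k} \to K$, we find $E^\ast \in \mathcal E$ with $E^\ast \subset K$. To conclude $E^\ast = J(K)$ by the uniqueness clause of John's theorem, it suffices to show $|E^\ast| = |J(K)|$. The upper bound $|E^\ast|\le|J(K)|$ follows from $E^\ast \subset K$ together with continuity of Lebesgue measure under Hausdorff convergence. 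For the lower bound, given $\epsilon>0$, the shrunken ellipsoid $(1-\epsilon)J(K)$ is a compact subset of $\mathrm{int}(K)$ and hence lies in $K_n$ for $n$ large, so $|J(K_n)| \ge (1-\epsilon)^d |J(K)|$, and letting $\epsilon\to 0$ gives $\liminf |J(K_n)|\ge|J(K)|$. Since every subsequential Hausdorff limit of $\{J(K_n)\}$ equals $J(K)$, the full sequence converges.

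The main technical obstacle is the lower-semicontinuity step for the volume functional, which rests on the convexity-dependent fact that compact subsets of the interior of a convex body are contained in every sufficiently close Hausdorff approximation. An alternative route would parametrize $\mathcal E$ by matrices in $\mathcal S_d$ via $E = P\overline{\mathbf B}$ and construct $G$ directly through a Castaing selection for $F$, but the continuity-plus-composition approach is the cleaner option given the tools already assembled in Section~\ref{section:convex-set-valued-functions}.
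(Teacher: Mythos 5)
Your proof is correct, but it reaches measurability by a genuinely different mechanism than the paper. You prove that the John-ellipsoid map $J$ is continuous on $\abcs$ with respect to the Hausdorff metric (via Blaschke selection, closedness of $\mathcal E$, volume continuity on convex bodies, and uniqueness of the maximal ellipsoid), and then obtain measurability of $G=J\circ F$ by composing with Theorem~\ref{CV} on both ends. The paper instead never proves continuity of $J$: it fixes a countable dense family of invertible matrices $P_i$, builds by induction an explicit sequence of measurable mappings $G_i$ (at each step keeping the ellipsoid $P_{i+1}\overline{\mathbf B}$ if it fits inside $F(x)$ and has larger volume), and shows $G_i(x)\to G(x)$ pointwise in Hausdorff distance by the same Blaschke-plus-uniqueness argument, after which Theorem~\ref{CV} applies. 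Your route is more modular and yields a reusable fact (Hausdorff continuity of the John map on convex bodies), at the cost of the convexity-dependent lemma that a compact subset of $\operatorname{int}(K)$ is eventually contained in convex Hausdorff approximants (a separation argument, essential since it fails for non-convex approximants, which you correctly flag); the paper's route is more constructive and only needs the approximation property \eqref{je2} for the specific dense family. One small slip in your write-up: the inequality $|E^\ast|\le|J(K)|$ follows purely from $E^\ast\subset K$ and maximality of the John ellipsoid, while continuity of volume on convex bodies is what lets you pass from $\liminf_n |J(K_n)|\ge |J(K)|$ to $|E^\ast|\ge |J(K)|$ along the convergent subsequence; the argument as a whole is nonetheless sound, since volume is continuous on convex bodies even though it is only upper semicontinuous on general compact sets.
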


\begin{proof}
For each $x\in \Omega$, define $G(x)$ to be the John ellipsoid; as
noted above, this is the unique ellipsoid of maximal volume that
satisfies~\eqref{je0}.  To complete the proof we only have to show
that $G: \Omega \to \abcs$ is measurable in the sense of
Definition~\ref{defn:measurable}.  

Let $P_1,P_2,\ldots$ be a dense collection of invertible matrices in
$\Md$. In particular, for any ellipsoid
$E=P \overline{{\mathbf B}} \in \mathcal E$ of positive volume and any
$\epsilon>0$ there exists $i\in \N$ such that
\begin{equation}\label{je2}
 P_i \overline{{\mathbf B}} \subset E \subset  (1+\epsilon) P_i \overline{{\mathbf B}}.
\end{equation}
We now define a sequence of measurable functions $G_i: \Omega \to
\mathcal E$, $i\in \N$,  by induction. Let
\[
G_1(x) = \begin{cases}
P_1 \overline{{\mathbf B}} & \text{if } P_1 \overline{{\mathbf B}}  \subset F(x),\\
\{0\} & \text{otherwise.}
\end{cases}
\]
For any invertible $P\in \Md$ we have that 
\[
\{x\in \Omega: P\overline{{\mathbf B}} \not \subset F(x) \} = \{x\in \Omega: F(x) \cap P(\R^d \setminus \overline{{\mathbf B}}) \ne \emptyset \} 
\]
is measurable by Definition~\ref{defn:measurable}.  Therefore, $G_1$
is a measurable function.  Suppose for some $i\geq 1$, we have defined
measurable functions $G_1, \ldots, G_i$.   Define
\[
G_{i+1}(x) = \begin{cases}
P_{i+1} \overline{{\mathbf B}} & \text{if } P_{i+1} \overline{{\mathbf B}} \subset F(x) \text{ and } \ld(P_{i+1}\overline{{\mathbf B}}) > \ld(G_{i}(x)),\\
G_i(x) & \text{otherwise.}
\end{cases}
\]
(Recall that $\ld$ denotes Lebesgue measure on $\R^d$.)
To show that $G_{i+1}$ is measurable, first note that the volume
functional $ K \mapsto \ld(K)$ is a continuous mapping of
$\mathcal K_b(\R^d)$ to $[0,\infty)$ \cite[Theorem
1.8.16]{MR1216521}. Hence, since $G_i: \Omega \to \bcs$ is measurable,
so is $\ld(G_i): \Omega \to [0,\infty)$ by Theorem \ref{CV}.  But then for any open set~$U$,
\begin{align*}
&   \{ x\in \Omega : G_{i+1}(x) \cap U \neq \emptyset \} \\
&  \qquad     =
  \big(\{ x\in \Omega : \ld(G_i(x)) \geq \ld(P_{i+1} \overline{{\mathbf B}}) \}
  \cap \{ x \in \Omega : G_i(x) \cap U \neq \emptyset \} \big)\\
&  \qquad  \qquad \cup
  \big(\{ x\in \Omega : \ld(G_i(x)) < \ld(P_{i+1} \overline{{\mathbf B}}) \}\\
& \qquad \qquad \qquad   \cap \{ x \in \Omega : P_{i+1}\overline{\mathbf{B}} \subset F(x) \text{ and }
  P_{i+1}\overline{\mathbf{B}} \cap U \neq \emptyset \}\big).
\end{align*}
Since each set on the right-hand side is measurable, we conclude that
$G_{i+1}$ is a measurable function.

To complete the proof, we need to show that $G_i(x)$ converges to
$G(x)$ in the Hausdorff distance~\eqref{hd}.  Because then $G$ is a
measurable function with respect to the Hausdorff topology, and so by
Theorem~\ref{CV} is measurable in the sense of
Definition~\ref{defn:measurable}.  We will prove this by
contradiction.  Fix $x\in \Omega$ and suppose to the contrary that
$G_i(x)$ does not converge to $G(x)$.  Since $G(x)$ is the maximal
ellipsoid contained in $F(x)$, by \eqref{je2} and the
definition of the $G_i$'s we have that as $i\rightarrow \infty$, 
\[
\ld(G_i(x)) \to \ld(G(x)).
\]
By the Blaschke selection theorem \cite[Theorem
1.8.6]{MR1216521},  $\{E\in \mathcal E: E \subset F(x)\}$ is a compact subset of
$\mathcal K_b(\R^d)$. Hence, some subsequence $G_{i_j}(x)$ converges as
$j\to \infty$ to an ellipsoid $E' \in\mathcal E$, and by assumption
$E'\neq G(x)$.  But we have $\ld(E')=\ld(G(x))$, and this contradicts the
fact that the John ellipsoid $G(x)$ is unique.  Thus $G_i(x)\rightarrow
G(x)$ and our proof is complete.
\end{proof}

\begin{lemma}\label{jem}
  Let $F: \Omega \rightarrow \mathcal K_b(\R^d)$ be a measurable
  mapping. Then there exists a measurable mapping $v: \Omega \to \R^d$
  such that for all $x\in \Omega$
\[
v(x) \in F(x) \qquad\text{and}\qquad |v(x)|= \sup\{ |v|: v\in F(x) \}.
\]
\end{lemma}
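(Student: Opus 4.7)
The plan is to apply Castaing's representation theorem (Theorem~\ref{char}(iv)) twice: first to extract the pointwise maximal norm $m(x) := \sup\{|v| : v \in F(x)\}$ as a measurable function, and second to pick a measurable selection from the (closed-set valued) argmax map.

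First, I would apply Theorem~\ref{char}(iv) to $F$ to obtain measurable selections $f_k : \Omega \to \R^d$, $k \in \N$, such that $F(x) = \overline{\{f_k(x) : k \in \N\}}$ for every $x \in \Omega$. Setting $m(x) := \sup_{k \in \N} |f_k(x)|$ gives a measurable function on $\Omega$. Since $F(x)$ is compact and $|\cdot|$ is continuous, density forces
\[
m(x) = \sup_{v \in F(x)} |v| = \max_{v \in F(x)} |v|,
\]
so the supremum is attained pointwise.

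Next, I would define the argmax multifunction $G : \Omega \to \K(\R^d)$ by $G(x) := \{v \in F(x) : |v| = m(x)\}$. By the previous step each $G(x)$ is a nonempty closed subset of $\R^d$. To verify that $G$ is measurable in the sense of Definition~\ref{defn:measurable}, I would use the graph criterion of Theorem~\ref{char}(ii) and write
\[
\operatorname{Graph}(G) \;=\; \operatorname{Graph}(F) \,\cap\, \{(x,v) \in \Omega \times \R^d : |v| - m(x) = 0\}.
\]
The first set belongs to $\A \otimes \B$ by Theorem~\ref{char}(ii) applied to $F$. For the second, the map $(x,v) \mapsto |v| - m(x)$ on $\Omega \times \R^d$ is $\A \otimes \B$-measurable because $v \mapsto |v|$ is Borel on $\R^d$ and $x \mapsto m(x)$ is $\A$-measurable on $\Omega$; hence its zero set lies in $\A \otimes \B$. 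Thus $\operatorname{Graph}(G) \in \A \otimes \B$, and a second use of Theorem~\ref{char}(ii) shows that $G$ is measurable. Finally, Theorem~\ref{char}(iv) applied to $G$ yields a measurable selection $v \in S^0(\Omega,G)$. By construction $v(x) \in F(x)$ and $|v(x)| = m(x) = \sup\{|w| : w \in F(x)\}$, which is the claim.

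The only point requiring genuine care is the measurability of $G$: one must justify that the ``slicing'' of $\operatorname{Graph}(F)$ by a moving level set $\{|v| = m(x)\}$ remains in the product $\sigma$-algebra. This is handled uniformly by the joint measurability argument above; once that is in place, the rest of the argument is a clean double application of the Castaing machinery already recorded in Theorem~\ref{char}.
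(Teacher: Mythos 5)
Your proof is correct, and it follows the same overall skeleton as the paper (Castaing selections to get the measurable function $m(x)=\sup\{|v|:v\in F(x)\}$, then the argmax multifunction, then a selection), but it diverges in the two technical steps. For measurability of the argmax map, the paper writes it as $F(x)\cap m(x)\mathbf{S}$ (with $\mathbf S$ the unit sphere) and invokes Theorem~\ref{cup} on intersections of measurable set-valued maps, whereas you go through the graph criterion, Theorem~\ref{char}(\emph{ii}); your route is fine, but note that the implication from graph measurability back to measurability is exactly where the standing hypothesis that $(\Omega,\A,\mu)$ is complete and $\sigma$-finite is used, so it is worth flagging that dependence (the intersection route sidesteps it, though the paper assumes completeness throughout anyway). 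For the final extraction, you simply apply Theorem~\ref{char}(\emph{iv}) a second time to the argmax map, which is shorter and perfectly legitimate; the paper instead runs a coordinate-by-coordinate maximization to pin down a canonical single-valued selection, which is more constructive but not logically necessary for the statement. The only points of genuine care in your argument --- that $G(x)$ is nonempty (compactness of $F(x)$) and closed, and that $(x,v)\mapsto |v|-m(x)$ is $\A\otimes\B$-measurable --- are all addressed, so the proof stands.
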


\begin{proof}
  Let $\{f_k\}_{k\in \N}$ be a sequence of measurable selection functions such
  that \eqref{char2} holds.   Define 
  $g_0: \Omega \to [0,\infty)$ by
  \[ g_0(x) = \sup \{ |v| : v \in F(x) \}
    = \sup  \{  |f_k(x)| : k\in \N \}.  \]
  Then $g_0$ is measurable.  Now define  $F_0: \Omega \rightarrow \mathcal K_b(\R^d)$ by
\[
F_0(x) = \{v \in F(x): |v| = g_0(x) \} = F(x) \cap g_0(x) \mathbf S,
\]
where $\mathbf S = \{u\in \R^d: |u|=1\}$. Then by Theorem \ref{cup},
$F_0$ is measurable since $F$ and $g_0\mathbf{S}$ are.

We  now show that we can choose $v(x)$ from $F_0(x)$ in such a way
that $v(x)$ is a measurable function.  We  do this iteratively by
choosing the vectors $v$ that are maximal in each coordinate.
For $v=(v_1,\ldots,v_d) \in \R^d$, let $P_1(v)=v_1$ be the projection
onto the first coordinate.  Since $P_1$ is continuous, if we define
$g_1: \Omega \to [0,\infty)$  by
\[ g_1(x)= \sup\{ v_1: (v_1,\ldots,v_d)\in F(x) \} = \sup_k
  P_1(f_k(x)), \]
then $g_1$ is measurable.   
 Define
$F_1: \Omega \rightarrow \mathcal K_b(\R^d)$ by
\[
F_1(x) = \{(v_1,\ldots,v_d) \in F_0(x): v_1 = g_1(x) \} = F(x) \cap (\{g_1(x)\}\times \R^{d-1}).
\]
By Theorem \ref{cup}, $F_1$ is measurable.

We repeat this argument:  by induction, for each $i\geq 1$ we  define
$F_{i+1}: \Omega \rightarrow \mathcal K_b(\R^d)$ such that
$F_{i+1}(x)$ consists of all points in $F_i(x)$ that have a maximal
$i+1$ coordinate.  Here we mean maximal in norm:  in $g_1$ we fixed
$v_1$ to be positive in order to be explicit, but in the subsequent steps the maximal coordinate could be
negative.  After $d$ steps this yields a measurable function
$F_d: \Omega \to \K_b(\R^d)$.  By the maximality of each coordinate
we must have that  $F_d(x)$ is a singleton: i.e., $F_d(x)=\{v(x)\}$
for some measurable  function $v: \Omega \to \R^d$.
\end{proof}

\begin{lemma}\label{jes}
  Let $G: \Omega \rightarrow \bcs$ be a measurable mapping such that
  $G(x) \in \mathcal E$ for all $x\in \Omega$. Then there exists a
  measurable mapping $W: \Omega \to \Md$ such that:
\begin{enumerate}[(i)]
\item
the columns of the matrix $W(x)$ are mutually orthogonal;
\item 
$G(x)=W(x) \overline{{\mathbf B}}$ for all $x\in \Omega$.
\end{enumerate}
\end{lemma}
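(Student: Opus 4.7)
The plan is to extract the principal semi-axes of the ellipsoid $G(x)$ one at a time using Lemma~\ref{jem}, and assemble them into the columns of $W(x)$.

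First, I would apply Lemma~\ref{jem} to $G$ to obtain a measurable function $w_1: \Omega \to \R^d$ with $w_1(x) \in G(x)$ and $|w_1(x)| = \sup\{|v| : v \in G(x)\}$. For each fixed $x$, writing $G(x) = P(x)\overline{\mathbf B}$ and $A(x) = P(x) P(x)^*$ (not assumed measurable in $x$), a Lagrange multiplier argument maximizing $\langle P^*P u, u\rangle$ subject to $|u|\le 1$ shows that $w_1(x)$ is an eigenvector of $A(x)$ for the largest eigenvalue, with $|w_1(x)|^2 = \lambda_{\max}(A(x))$.

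Next, the hyperplane-valued map $H_1(x) := w_1(x)^\perp$ has graph $\{(x,v)\in \Omega\times\R^d : \langle v, w_1(x)\rangle = 0\}$, which lies in $\mathcal A \otimes \mathcal B$, so $H_1$ is measurable by Theorem~\ref{char}. Hence $G_1(x) := G(x) \cap H_1(x)$ is measurable by Theorem~\ref{cup} and lies in $\mathcal K_b(\R^d)$. Geometrically, $G_1(x)$ is again an ellipsoid in $\mathcal E$ whose principal axes are the eigendirections of $A(x)$ orthogonal to $w_1(x)$ (including any remaining directions inside a multidimensional top eigenspace). Iterating, for each $i=2,\ldots,d$ I apply Lemma~\ref{jem} to $G_{i-1}$ to obtain a measurable $w_i: \Omega \to \R^d$ of maximal norm in $G_{i-1}(x)$, and set $G_i(x) := G_{i-1}(x) \cap w_i(x)^\perp$. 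The $w_i(x)$ are pairwise orthogonal by construction (and some may vanish if $G(x)$ is degenerate).

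Finally, assemble $W(x) := [\,w_1(x)\mid w_2(x) \mid \cdots \mid w_d(x)\,] \in \M_d$. This yields a measurable matrix-valued map with mutually orthogonal columns, and $G(x) = W(x)\overline{\mathbf B}$ because the $w_i(x)$ form a complete orthogonal system of principal semi-axes of $G(x)$ with the correct multiplicities (the zero columns correspond to directions outside the range of $P(x)$). The main subtlety is the case of repeated eigenvalues of $A(x)$: the vector selected by Lemma~\ref{jem} inside a multidimensional top eigenspace is not canonical, but the perpendicular-slice construction is robust to this ambiguity, since the remaining directions inside that eigenspace are recovered by the next iterations applied to the slice $G_i(x)$.
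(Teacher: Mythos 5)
Your proposal is correct and takes essentially the same approach as the paper: both arguments iteratively extract a maximal-norm measurable selection via Lemma~\ref{jem}, slice the ellipsoid by the orthogonal complement of the vectors already chosen, and assemble the resulting mutually orthogonal semi-axes as the columns of $W(x)$ so that $G(x)=W(x)\overline{\mathbf B}$. The only minor differences are that you establish measurability of the slices $G(x)\cap w_1(x)^\perp\cap\cdots\cap w_i(x)^\perp$ by a direct graph argument through Theorem~\ref{char}, where the paper instead forms the span $J_i(x)$ and invokes the range-function result Theorem~\ref{BR}, and that your eigenvalue/Lagrange-multiplier discussion spells out in more detail the paper's terser claim that the chosen vectors are exactly the principal semi-axes (with the repeated-eigenvalue ambiguity handled by the slicing, as you note).
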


\begin{proof}
  We will construct the columns $v_1,\ldots v_d: \Omega \to \R^d$ of $W$
  inductively. Let $v_1$ be the vector-valued function given by Lemma \ref{jem} corresponding
  to $G$. Define the
  mapping $J_1: \Omega \to \K(\R^d)$ by
  $J_1(x)= \operatorname{span}\{v_1(x)\}$.  By Theorem~\ref{char} it
  is measurable, since the collection of linear multiples of $v_1(x)$
  by rational numbers forms a countable collection of measurable
  selection functions.
Then by Theorem
  \ref{BR}, $J_1$ is a measurable
  range function: i.e., the associated projection matrix is a
  measurable function. Hence, so is orthogonal projection, and thus the range function
  $J_1^\perp: \Omega \to \K(\R^d)$, defined as the orthogonal complement
  $J_1^\perp(x) = (J_1(x))^\perp$, is measurable.

  We now proceed by induction.  If for some $i\geq 1$ we have defined
  measurable, 
  vector-valued functions $v_1,\ldots, v_i$, then we can define a
  mapping 
  \[J_i: \Omega \to \K(\R^d),
  \qquad J_i(x)= \operatorname{span}\{v_1(x),\ldots,v_i(x)\}.
  \]
  The map
  $J_i$ is measurable by Theorem~\ref{char} since linear combinations
  of the vectors $v_1,\ldots,v_i$ with rational coefficients form a
  countable family of measurable selection functions.
Define
the measurable mapping  $G_{i}:\Omega \to \bcs$ by
\[
G_{i}(x) = G(x) \cap J_i(x)^\perp.
\]
We can now apply Lemma \ref{jem} to get a vector-valued function
$v_{i+1}$ which is orthogonal to $v_1,\ldots,v_i$. For every
$x\in \Omega$ the vectors $v_1(x),\ldots, v_d(x)$ define semi-axes of an
ellipsoid; since at every step we chose $v_i$ to be maximal, they are
given in decreasing order and the
ellipsoid must equal $G(x)$.   Hence, if $W(x)$ is the $d\times d$
matrix with columns $v_i$, then
we have $G(x) = W(x) \overline{{\mathbf B}}$.
\end{proof}

\begin{proof}[Proof of Theorem \ref{mj}]
If the function $F$ in our hypothesis is absorbing for all $x$, then
the desired conclusion follows from Lemmas~\ref{je} and~\ref{jes}.
Since this need not be the case, we need to consider the ``dimension''
of $F$ at each point.  More precisely, we argue as follows.  Given  an arbitrary measurable
mapping $F: \Omega \rightarrow \bcs$, define the new function
\[
  J(x) = \operatorname{span} F(x) = \bigcup_{r>0} r F(x) = \clconv\bigg(
  \bigcup_{\substack{r>0 \\ r\in \Q}} rF(x) \bigg);
\]
then by Theorem~\ref{cup}, $J$ is measurable.  
For $k=0,\ldots,d$ define the sets
\[
\Omega_k = \{x\in \Omega: \dim J(x) = k \}.
\]
Equivalently, if we let $P$ be the measurable projection matrix in
Theorem~\ref{BR}, then $\Omega_k$ is the set where $P(x)$ has rank
$k$. 
Since the
rank can be computed by taking the determinant of all the $k\times k$
minors, it is a measurable mapping, and so $\Omega_k$ is a measurable
set. 
Therefore, to complete the proof it will  suffice to show the
conclusion for each restriction $F|_{\Omega_k}$, $k=1,\ldots,d$.

Fix $k$.  By \cite[Theorem 2 in Section 1.3]{He2} we can find
measurable functions $w_1, \ldots,w_k: \Omega_k \to \R^d$ such that
$w_1(x),\ldots,w_k(x)$ form an orthonormal basis of $J(x)$ for
$x\in \Omega_k$. This follows from the Gram-Schmidt process as in the
proof of Theorem~\ref{BR}.  Denote the collection of $s\times t$
matrices by $\M_{s\times t}$.  Let $M_k(x) \in \M_{d\times k}$ be the matrix
whose columns are the vectors $w_1(x),\ldots,w_k(x)$. Then $M_k(x)$ is
an isometry of $\R^k$ onto $J(x)$ and the transpose
$M_k^*(x) \in \M_{k\times d}$ is its inverse.
Consequently, $F_k: \Omega_k \to \K_{bcs}(\R^k)$, defined by
$F_k(x)=M_k^*(x)F(x)$, $x\in\Omega_k$, is a measurable convex-set valued
mapping such that $F_k(x)$ is absorbing. Therefore, we can apply Lemmas~\ref{je} and~\ref{jes} to get a measurable mapping
$W_k: \Omega_k \to \M_{k}$ such that the columns of
$W_k(x)$ are mutually orthogonal and $F_k(x)=W_k(x) \overline{\mathbf B}_k$,
where $\overline{\mathbf B}_k$ is the closed unit ball in $\R^k$.

Finally, define
$W(x)=M_k(x)\circ W_k(x)\circ P_k \in \M_{d}$ for
$x\in \Omega_k$, where $P_k \in \Md$ is the coordinate
projection of $\R^d$ onto $\R^k$. Then the columns of $W(x)$ are
orthogonal and
\[
  W(x)\overline{\mathbf B} = (M_k(x)\circ W_k(x))\overline{\mathbf
    B}_k = M_k(x) F_k(x)=F(x) \qquad x\in \Omega_k.
\]
This defines the required mapping $W: \Omega_k \to \M_{d}$;
combining these functions we get the desired mapping on $\Omega$.
\end{proof}

\subsection*{Integrals of convex-set valued maps}
In this section we define the integral of convex-set valued functions
using the Aumann integral.  We follow the treatment given in~
\cite[Section 8.6]{MR2458436}.  As before, the underlying measure
space is $(\Omega,\A,\mu)$.

\begin{definition}\label{au}
 Suppose $F: \Omega \to \K(\R^d)$ is a measurable map. Define the {\it
   set of all integrable selection functions} of $F$ by
\[
S^1(\Omega,F)  = \{ f\in L^1(\Omega,\R^d): f \in  S^0(\Omega,F) \}.
\]
The Aumann integral of $F$ is the set of integrals of integrable
selection functions of $F$, i.e.,
\[
  \int _\Omega F\, d\mu = \bigg\{ \int_\Omega f \,d\mu: f\in S^1(\Omega,F)
  \bigg\}.
\]
\end{definition}

{\em A priori} a measurable map $F$ may not have any integrable
selection functions.  We therefore introduce a class of maps for which
this set is non-empty. 

\begin{definition}
  A measurable closed-set valued function $F: \Omega \to \K(\R^d)$ is 
    integrably bounded if there exists a non-negative function
  $k\in L^1(\Omega,\R)$ such that
\begin{equation}\label{au3}
F(x) \subset k(x)\mathbf B \qquad\text{for a.e. }x\in\Omega.
\end{equation}
If $\Omega$ is a metric space (in particular if $\Omega=\R^n$) we say
$F$ is locally integrably bounded if this holds for $k\in
L^1_{loc}(\Omega,\R)$. 
\end{definition}

Below, we will want to treat the integral of a vector-valued function
as the integral of a convex-set valued function.  We will be able to
do this using the following lemma.

\begin{lemma} \label{lemma:vector-int}
Let $f \in L^1(\Omega ,\R^d)$. Then, the convex-set
  valued map
\begin{equation}\label{ints1}
F(x) = \conv\{ f(x), - f(x) \}, \qquad x\in \Omega,
\end{equation}
is measurable and integrably bounded. Moreover, its Aumann integral satisfies
\begin{equation}\label{ints2}
\int_\Omega F d\mu = \bigg\{ \int_\Omega k f d\mu: k \in L^\infty(\Omega; \R), ||k||_\infty \le 1 \bigg\}.
\end{equation}
\end{lemma}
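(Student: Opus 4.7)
The plan is to verify the three assertions in turn: measurability of $F$, integrable boundedness of $F$, and the Aumann integral identity~\eqref{ints2}.

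For measurability I would apply Theorem~\ref{char}(\textit{iv}) by producing a Castaing representation directly. Fix an enumeration $\{r_k\}_{k\in\N}$ of $\mathbb{Q}\cap[-1,1]$ and set $f_k(x):=r_k f(x)$; each $f_k$ is measurable and $\{f_k(x):k\in\N\}$ is dense in $[-1,1]\cdot f(x)=F(x)$ for every $x\in\Omega$. Integrable boundedness is immediate: $F(x)\subset |f(x)|\overline{\mathbf{B}}$, so $h(x):=|f(x)|+1\in L^1(\Omega,\R)$ satisfies $F(x)\subset h(x)\mathbf{B}$.

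For \eqref{ints2} I would argue by double inclusion. The easier direction $\supset$ runs as follows: for any $k\in L^\infty(\Omega,\R)$ with $\|k\|_\infty\le 1$, the product $kf$ lies in $L^1(\Omega,\R^d)$ with $\|kf\|_1\le\|f\|_1$, and the identity
\[
k(x)f(x)=\tfrac{1+k(x)}{2}f(x)+\tfrac{1-k(x)}{2}(-f(x))
\]
expresses $k(x)f(x)$ as a convex combination of $\pm f(x)$, placing it in $F(x)$. Hence $kf\in S^1(\Omega,F)$ and $\int_\Omega kf\,d\mu$ belongs to the Aumann integral.

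The main obstacle will be the reverse inclusion $\subset$: given $g\in S^1(\Omega,F)$, I must produce a measurable scalar $k$ with $\|k\|_\infty\le 1$ and $g=kf$ a.e. The plan is to define
\[
k(x):=\begin{cases}\langle g(x),f(x)\rangle/|f(x)|^2 & \text{if } f(x)\ne 0,\\ 0 & \text{if } f(x)=0,\end{cases}
\]
which is manifestly measurable as the ratio of measurable scalar functions off the measurable set $\{f=0\}$. Since $g(x)\in F(x)=[-1,1]\cdot f(x)$ for a.e.\ $x$, on $\{f\ne 0\}$ one has $g(x)=t(x)f(x)$ for a unique $t(x)\in[-1,1]$; taking the inner product of both sides with $f(x)$ and dividing by $|f(x)|^2$ recovers $k(x)=t(x)$, so $g=kf$ a.e.\ and $|k|\le 1$ a.e., while on $\{f=0\}$ both $g$ and $kf$ vanish. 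Therefore $k\in L^\infty(\Omega,\R)$ with $\|k\|_\infty\le 1$ and $\int_\Omega g\,d\mu=\int_\Omega kf\,d\mu$, completing the identification.
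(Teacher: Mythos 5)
Your argument follows essentially the same route as the paper: a Castaing representation $\{r_kf\}$ with $r_k$ dense in $[-1,1]$ gives measurability via Theorem~\ref{char}, and the integral identity comes from identifying $S^1(\Omega,F)$ with $\{kf:\|k\|_\infty\le 1\}$; your explicit formula $k=\langle g,f\rangle/|f|^2$ is in fact a welcome refinement, since it supplies the measurability of $k$ that the paper's proof leaves implicit. One small slip: your bounding function $h=|f|+1$ need not lie in $L^1(\Omega,\R)$ when $\mu(\Omega)=\infty$ (e.g.\ $\Omega=\R^n$ with Lebesgue measure); replace it by $2|f|$ (or simply $|f|$ if the closed unit ball is used), which bounds $F$ and is integrable, so the integrable boundedness still holds.
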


\begin{proof}
Let $\{\alpha_i\}_{i\in\N}$ be a dense subset of the interval $[-1,1]$. Then,
\[
F(x) = \overline{\{ \alpha_i f(x): i\in \N\}}.
\]
Hence, by Theorem \ref{char} (see also \cite[Theorem
8.2.2]{MR2458436}) $F$ is measurable as a convex-set valued
mapping. Moreover, it is clear that if $f\in S^1(\Omega, F)$, then it 
must be of the form $g(x)=k(x) f(x)$, where $|k(x)|\le 1.$ Hence,
\eqref{ints2} follows from the definition of the Aumann integral.
\end{proof}

When the measure $\mu$ is non-atomic, the integral of any closed-set valued
map $F$ is convex, even when the values of $F$ are not necessarily
convex.  For proof of this highly non-trivial result,
see~\cite[Theorem 8.6.3]{MR2458436}.

\begin{theorem}\label{ci}
Suppose that the measure $\mu$ is nonatomic.
Given a measurable mapping  $F: \Omega \to \K(\R^d)$, let
$K=\int_\Omega F\, d\mu$ be the Aumann integral of $F$.  Then $K$ is
a convex, though not necessarily closed, subset of $\R^d$. In addition, if
$F$ is integrably bounded, then $K \subset  \K_{bc}$.
\end{theorem}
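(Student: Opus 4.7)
The plan is to prove the two assertions separately, with Lyapunov's convexity theorem for nonatomic vector measures as the crucial ingredient that activates the nonatomicity of $\mu$.

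For convexity, fix $v_1, v_2 \in K$ and $\lambda \in (0,1)$, and write $v_i = \int_\Omega f_i\, d\mu$ for some $f_i \in S^1(\Omega, F)$. Forming the pointwise convex combination $\lambda f_1 + (1-\lambda) f_2$ does not work because $F(x)$ is not assumed convex, so the combination need not be a selection. Instead, I would consider the $\R^{2d}$-valued vector measure
\[
\nu(A) = \bigg(\int_A f_1\, d\mu,\ \int_A f_2\, d\mu\bigg), \qquad A \in \A,
\]
which is nonatomic because $\mu$ is. Lyapunov's theorem ensures that the range of $\nu$ is a convex subset of $\R^{2d}$, and since it contains both $\nu(\emptyset)=(0,0)$ and $\nu(\Omega)=(v_1,v_2)$, it contains $\lambda(v_1,v_2)$. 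This furnishes a measurable set $A$ with $\int_A f_i\, d\mu = \lambda v_i$ for $i=1,2$, after which the spliced function $g = f_1\chi_A + f_2\chi_{\Omega\setminus A}$ is a selection (since $g(x)$ equals $f_1(x)$ or $f_2(x)$ at each point, both in $F(x)$) with integral $\lambda v_1 + (1-\lambda) v_2 \in K$.

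For the second claim, assume $F(x)\subset k(x)\mathbf B$ with $k \in L^1(\Omega)$. Boundedness of $K$ is immediate, since every $v = \int_\Omega f\,d\mu \in K$ satisfies $|v| \leq \int_\Omega k\, d\mu < \infty$. For closedness, take $v_n \to v$ with $v_n = \int_\Omega f_n\, d\mu$ and $f_n\in S^1(\Omega,F)$. The bound $|f_n|\leq k$ makes the sequence uniformly integrable, so the Dunford--Pettis theorem yields a subsequence $f_{n_j}\rightharpoonup f$ weakly in $L^1(\Omega,\R^d)$. Weak continuity of integration forces $\int_\Omega f\,d\mu=v$, so everything reduces to showing $f \in S^1(\Omega, F)$.

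This last step is the main obstacle, since $F$ is only closed-valued. If $F$ were convex-valued, a Mazur's lemma argument would finish: finite convex combinations of the $f_{n_j}$ would still be selections, would converge strongly in $L^1$, hence pointwise a.e.\ along a further subsequence, and closedness of $F(x)$ would give $f(x) \in F(x)$ a.e. For a general closed-valued $F$ one must additionally invoke the identity $\int_\Omega F\, d\mu = \int_\Omega \clconv(F)\, d\mu$, valid for nonatomic $\mu$ — this is again a consequence of Lyapunov and is precisely the substance of \cite[Theorem~8.6.3]{MR2458436}. Applying the Mazur--Dunford--Pettis argument to the convex-valued map $\clconv(F)$ produces $f\in S^1(\Omega,\clconv(F))$ with $\int_\Omega f\,d\mu = v$, and the displayed identity then returns $v$ to $K$.
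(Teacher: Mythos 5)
The paper does not actually prove Theorem~\ref{ci}: it is quoted from \cite[Theorem~8.6.3]{MR2458436} with the remark that the result is highly non-trivial, and only the convex-valued analogue (Theorem~\ref{cci}) is proved in the text. So your proposal has to be judged as an independent argument, and it is about half of one. The convexity part is complete and correct: the vector measure $\nu(A)=\big(\int_A f_1\,d\mu,\int_A f_2\,d\mu\big)$ is finite and absolutely continuous with respect to the nonatomic $\mu$, hence nonatomic, so Lyapunov's theorem produces a set $A$ with $\int_A f_i\,d\mu=\lambda v_i$, and the spliced function $f_1\chi_A+f_2\chi_{\Omega\setminus A}$ is an integrable selection with integral $\lambda v_1+(1-\lambda)v_2$. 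This is the standard Aumann--Lyapunov argument, and it correctly identifies where nonatomicity enters (the pointwise convex combination fails precisely because $F(x)$ need not be convex). Boundedness under integrable boundedness is immediate, as you say.

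The closedness step, however, is not self-contained. Your Dunford--Pettis plus Mazur argument only yields a limit $f\in S^1(\Omega,\clconv F)$, i.e.\ $v\in\int_\Omega \clconv(F)\,d\mu$; this portion coincides with the paper's proof of Theorem~\ref{cci}. To return $v$ to $K$ you invoke the identity $\int_\Omega F\,d\mu=\int_\Omega\clconv(F)\,d\mu$, citing it as the substance of \cite[Theorem~8.6.3]{MR2458436} --- but that identity is part of the very theorem under discussion, so as written the closedness half is circular rather than proved. It is also the genuinely hard part, and the second place where nonatomicity is indispensable: a proof requires a further Lyapunov-type argument, e.g.\ writing an integrable selection of $\conv F$ pointwise as a measurable Carath\'eodory combination of $d+1$ selections of $F$ and then using Lyapunov to replace this convex combination by a single spliced selection of $F$ with the same integral. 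If you supply that lemma, your argument is complete; without it you have established convexity and boundedness but deferred closedness to essentially the same citation the paper itself relies on.
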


In this paper we are primarily interested in convex-set valued
mappings. In this case, the assumption that $\mu$ is nonatomic can be
dropped and we have the following result.

\begin{theorem}\label{cci}
  Given a measurable mapping $F: \Omega \to \bcs$, let
  $K=\int_\Omega F \,d\mu$ be the Aumann integral of $F$. Then $K$ is a
  convex, symmetric set in $\R^d$, and so
  $\overline{K}\in\mathcal K_{cs}(\R^d)$. In addition, if $F$ is
  integrably bounded, then $K=\overline{K}\in \bcs$.
\end{theorem}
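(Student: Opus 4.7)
The plan is to deduce each of the three claims—convexity, symmetry, and closedness/boundedness under the integrable bound—directly from the corresponding pointwise property of $F(x)$ together with a suitable manipulation of integrable selections.

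\emph{Convexity and symmetry.} For convexity, take $v_0,v_1\in K$ with $v_i=\int_\Omega f_i\,d\mu$ and $f_i\in S^1(\Omega,F)$. For $\lambda\in[0,1]$ the function $g=\lambda f_0+(1-\lambda)f_1$ is measurable and integrable, and since $F(x)$ is convex for every $x$, $g(x)\in F(x)$ pointwise, so $g\in S^1(\Omega,F)$ and $\int g\,d\mu=\lambda v_0+(1-\lambda)v_1\in K$. For symmetry, use that $F(x)\in\bcs$ implies $-F(x)=F(x)$: if $f\in S^1(\Omega,F)$ then so is $-f$, and hence $-K=K$. Consequently $\overline{K}\in\mathcal K_{cs}(\R^d)$.

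\emph{Boundedness under the integrable bound.} Assume $F(x)\subset k(x)\mathbf B$ with $k\in L^1(\Omega,\R)$. Every $f\in S^1(\Omega,F)$ satisfies $|f(x)|\le k(x)$ a.e., so $|\int_\Omega f\,d\mu|\le \|k\|_{L^1}$, giving $K\subset \|k\|_{L^1}\overline{\mathbf B}$.

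\emph{Closedness.} This is the only substantive step. Suppose $v_n\in K$ with $v_n\to v$, and pick $f_n\in S^1(\Omega,F)$ with $\int_\Omega f_n\,d\mu=v_n$. Since $|f_n|\le k\in L^1$, the family $\{f_n\}$ is uniformly integrable, so by the Dunford--Pettis theorem (applied coordinatewise) some subsequence, still denoted $\{f_n\}$, converges weakly in $L^1(\Omega,\R^d)$ to some $f$. Weak convergence gives $\int_\Omega f\,d\mu=\lim_n v_n=v$, and testing against indicator functions yields $|f(x)|\le k(x)$ a.e., so $f\in L^1(\Omega,\R^d)$. It remains to verify $f(x)\in F(x)$ a.e. Apply Mazur's lemma to produce finite convex combinations $g_m=\sum_j \alpha_{m,j}f_j$ converging strongly to $f$ in $L^1$; after passing to a subsequence, $g_m(x)\to f(x)$ for a.e.\ $x$. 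Since $F(x)$ is convex each $g_m(x)\in F(x)$, and since $F(x)$ is closed the pointwise limit $f(x)\in F(x)$ a.e. Hence $f\in S^1(\Omega,F)$ and $v\in K$, which shows $K=\overline{K}$.

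The main obstacle is the closedness argument: the convexity and symmetry are formal consequences of the pointwise structure of $F$, but to pass to the limit among selections one cannot use strong $L^1$ compactness, and so one must invoke weak $L^1$ compactness via the integrable majorant $k$ and then recover a pointwise-a.e.\ limit through Mazur's lemma in order to use the closedness of each $F(x)$. This technique is specific to the convex-valued setting, which is why, unlike Theorem~\ref{ci}, no assumption on $\mu$ being nonatomic is needed.
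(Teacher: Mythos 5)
Your proposal is correct and follows essentially the same route as the paper: convexity and symmetry via pointwise operations on integrable selections, boundedness from the $L^1$ majorant, and closedness by combining equi-integrability, the Dunford--Pettis theorem, and Mazur's lemma with the pointwise closedness and convexity of $F(x)$. The only cosmetic difference is that the paper first records that $S^1(\Omega,F)$ is closed and convex in $L^1$ and then applies Mazur's lemma to that set, whereas you fold the same a.e.-subsequence argument directly into the limit step.
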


\begin{proof}
  This result follows from the corresponding properties of the
  integrable selection functions.  Since $F(x) \in \bcs$ for all $x\in \Omega$, if
  $f,\,g\in S^1(\Omega,F)$, then $-f \in S^1(\Omega,F)$ and for
  $0<\lambda <1$, $\lambda f+(1-\lambda)g\in S^1(\Omega,F)$.  Hence, it
  follows from Definition~\ref{au} that $K$ is a convex, symmetric set
  in $\R^d$.

Now suppose that $F$ is integrably bounded by $k\in L^1(\Omega)$.   If
$f\in S^1(\Omega,F)$, then $f(x)\in F(x)$, so $|f(x)|\leq k(x)$ a.e.
In particular,
\[ \bigg|\int_\Omega f(x)\,d\mu \bigg| \leq \int_\Omega k(x)\,dx, \]
and so $K$ is bounded.

Finally, to show that $K$ is closed, first note that $S^1(\Omega,F)$
is closed in $L^1(\Omega)$.  For if $f\in L^1(\Omega)$ is a limit
point, there exists a sequence $\{f_n\}_{n\in\N}$ that converges to $f$ in $L^1$
and (by passing to a subsequence) pointwise almost everywhere.  Since
$f_n(x)\in F(x)$ and $F(x)$ is closed, $f(x)\in F(x)$, so $f\in
S^1(\Omega,F)$.

Second, given any $\epsilon>0$, there exists $\delta>0$ such that if
$E\subset \Omega$ satisfies $|E|<\delta$, then
$\int_E k\,d\mu <\epsilon$.  Therefore, if we replace $\Omega$ by $E$
in the above argument, then we have that that $S^1(\Omega,F)$ is
equi-integrable.  Hence, by the Dunford-Pettis theorem (see
\cite[Theorem~IV.8.9,  p.~292]{MR1009162} or \cite[Theorem III.C.12]{Woj}), $S^1(\Omega,F)$ is weakly compact in
$L^1(\Omega)$.  Let $v$ be a limit point of $K$.  Then there exists a sequence
$\{f_n\}_{n\in\N}$ in $S^1(\Omega,F)$ such that
$\int_\Omega f_n\,d\mu \rightarrow v$ as $n\rightarrow \infty$.  By weak compactness, if we pass to a
subsequence, there exists $f\in L^1(\Omega)$ such that
$f_n\rightarrow f$ weakly; in particular, $\int_\Omega f\,d\mu = v$.
Moreover, by Mazur's lemma~\cite[Corollary~3.8]{MR2759829}, there
exists a sequence $\{g_k\}_{k\in \N}$, where each $g_k$ is a convex combination
of the functions $f_n$, that converges to $f$ in $L^1$ norm.  However,
as we noted above, $S^1(\Omega,F)$ is convex, and so each $g_k$ is
contained in it.  Therefore, since $S^1(\Omega,F)$ is closed,
$f\in S^1(\Omega,F)$, and so $v\in K$.  Thus $K$ is closed, which
completes our proof.
\end{proof}
%


We now show that integrable selection functions are additive.
Our proof is adapted from~\cite[Theorem~1.4]{MR0507504}.

\begin{theorem} \label{prop:selection-additive}
Suppose that  $F_i : \Omega \rightarrow \bcs$, $i=1,\,2$, are measurable and
integrably bounded.  Then
\[ S^1(\Omega, F_1+F_2)= S^1(\Omega, F_1)+S^1(\Omega,F_2). \]
\end{theorem}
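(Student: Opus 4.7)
The plan is to prove both inclusions, with the forward inclusion being essentially immediate and the reverse inclusion requiring a measurable selection argument. For the forward direction, if $f_i \in S^1(\Omega, F_i)$ for $i=1,2$, then $(f_1+f_2)(x) \in F_1(x)+F_2(x)$ pointwise a.e., and $f_1+f_2\in L^1(\Omega,\R^d)$ as a sum of two $L^1$ functions; this gives $S^1(\Omega,F_1)+S^1(\Omega,F_2) \subseteq S^1(\Omega,F_1+F_2)$.

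For the reverse inclusion, fix $f\in S^1(\Omega, F_1+F_2)$. The strategy is to carve out an $f_1\in S^1(\Omega,F_1)$ such that $f-f_1\in S^1(\Omega,F_2)$. To do this I would introduce the auxiliary convex-set valued function
\[
G(x) = F_1(x) \cap \bigl(f(x) - F_2(x)\bigr), \qquad x\in\Omega.
\]
Since $f(x)\in F_1(x)+F_2(x)$ a.e., the set $G(x)$ is non-empty a.e.; redefining $G$ on the null exceptional set (e.g.\ to $\{0\}$) leaves it non-empty everywhere. Each $G(x)$ is closed and convex because it is the intersection of two closed convex sets.

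The main technical step is to verify that $G$ is measurable in the sense of Definition \ref{defn:measurable}. First, the translated map $x\mapsto f(x)-F_2(x)$ is measurable: its graph in $\Omega\times\R^d$ is the preimage of $\operatorname{Graph}(F_2)$ under the product-measurable map $(x,v)\mapsto (x,f(x)-v)$, hence lies in $\A\otimes\B$, and measurability then follows from Theorem \ref{char}(ii). Next, $G$ is the intersection of the two measurable closed-set valued maps $F_1$ and $f-F_2$, so Theorem \ref{cup} gives that $G$ is measurable.

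Having established this, I apply the Castaing representation (Theorem \ref{char}(iv)) to obtain a measurable selection $f_1$ of $G$, and set $f_2 = f - f_1$. By construction $f_1(x)\in F_1(x)$ and $f_2(x)\in F_2(x)$ a.e. Integrability follows because $F_1,F_2$ are integrably bounded: if $F_i(x)\subset k_i(x)\mathbf{B}$ with $k_i\in L^1(\Omega)$, then $|f_1(x)|\le k_1(x)$ and $|f_2(x)|\le k_2(x)$, so $f_1,f_2\in L^1(\Omega,\R^d)$. This exhibits the decomposition $f=f_1+f_2$ with $f_i\in S^1(\Omega,F_i)$, completing the reverse inclusion. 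The only subtle point is the measurability of $G$, and the argument above reduces this cleanly to the measurability results already established in the section.
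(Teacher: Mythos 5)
Your proof is correct, and it takes a genuinely different route from the paper's. You reduce everything to a measurable selection of the intersection map $G(x)=F_1(x)\cap\bigl(f(x)-F_2(x)\bigr)$: nonemptiness a.e.\ comes from $f(x)\in F_1(x)+F_2(x)$ (note the Minkowski sum of two compact sets is closed, so no closure issues arise), closedness and convexity are clear, measurability of $x\mapsto f(x)-F_2(x)$ follows from the graph criterion in Theorem \ref{char}(ii) (or even more simply from the Castaing representation $f-g_j$, with $g_j$ selections of $F_2$), and Theorem \ref{cup} plus Theorem \ref{char}(iv) then produce $f_1$, with $f_2=f-f_1$ integrable either by the integrable bounds or simply because $f,f_1\in L^1$. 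The only points needing care are the ones you flag: the exceptional set where $G(x)=\emptyset$ is a null set, hence measurable by the completeness of $(\Omega,\A,\mu)$, so redefining $G$ there is legitimate, and the resulting selection is a selection of $F_1$ only a.e., which is all that membership in $S^1$ requires. The paper instead follows Hiai--Umegaki: it approximates $f$ pointwise by sums $h_n+k_n$ of finitely many Castaing selections of $F_1$ and $F_2$, then uses equi-integrability, the Dunford--Pettis theorem, and Mazur's lemma, together with the closedness and convexity of $S^1(\Omega,F_i)$ established in the proof of Theorem \ref{cci}, to extract the limit decomposition. Your argument is more elementary and direct (no weak compactness or convex-combination passage, and it visibly uses only $\sigma$-finiteness and completeness plus the measurability toolkit of the section); the paper's functional-analytic argument is the one that transfers to more general settings (e.g.\ infinite-dimensional or decomposable-set frameworks) where a pointwise intersection-and-select argument is harder to justify, and it recycles machinery the paper has already set up.
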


\begin{proof}
One direction is immediate:  if $f_i\in S^1(\Omega,F_i)$, $i=1,\,2$, then $f_1+f_2\in S^1(\Omega,F_1+F_2)$.  To prove the converse, note
that by Theorem~\ref{char}, we have sequences of selection functions
$\{f_k\}_{k\in\N} \subset S^1(\Omega, F_1)$, $\{g_j\}_{j\in\N} \subset
S^1(\Omega, F_2)$ such that for each $x\in \Omega$,
\[ F_1(x) + F_2(x) = \overline{\{f_k(x)+g_j(x): j,k \in \N\}}. \]
Therefore, if we fix $f \in S^1(\Omega,F_1+F_2)$, there exist
sequences of measurable functions $\{h_n\}_{n\in\N}$ and $\{k_n\}_{n\in\N}$ such that
for almost every $x\in \Omega$ and $n\in \N$,
$h_n(x) \in \{f_1(x),\ldots,f_n(x)\}$,
$k_n(x) \in \{g_1(x),\ldots,g_n(x)\}$, and $h_n(x) + k_n(x) \to f(x)$
as $n\to \infty$.   The construction of these functions follows the
argument in \cite[Lemma~1.3]{MR0507504} with $p=1$, which yields a
sequence that converges in norm; by passing to a subsequence we get a
sequence that converges pointwise almost everywhere.  

We now argue as in the proof of
Theorem~\ref{cci}. There we showed that
$S^1(\Omega,F_i)$, $i=1,2$, is closed and weakly compact subset of
$L^1(\Omega)$. Hence, by passing to a subsequence, there exists
$h , k \in L^1(\Omega)$ such that $h_n \to h$ and $k_n \to k$ weakly
in $L^1(\Omega)$ as $n\to \infty$. By Mazur's lemma we can replace
the functions $h_n$ and $k_n$ by convex combinations of them to get
sequences that converge in $L^1(\Omega)$ norm. Since the sets $S^1(\Omega,F_i)$,
$i=1,2$, are convex and closed, we have $h\in S^1(\Omega,F_1)$ and
$k\in S^1(\Omega, F_2)$.  Therefore,
$f = h+ k \in S^1(\Omega, F_1)+S^1(\Omega,F_2)$ and this completes our
proof.
\end{proof}

As a consequence of Theorem~\ref{prop:selection-additive} we can prove that the Aumann
integral is linear and monotonic.  

\begin{theorem}\label{add}
 Suppose that $F_i: \Omega \to \bcs$, $i=1,2$, are measurable and integrably bounded. Then, for any $\alpha_i \in \R $, $i=1,2$, we have
\[
\int_{\Omega} (\alpha_1 F_1+\alpha_2 F_2)\, d\mu = 
\alpha_1 \int_{\Omega} F_1 \,d\mu +\alpha_2 \int_\Omega F_2 \,d\mu.
\]
Moreover, if $F_1(x) \subset F_2(x)$ for all $x\in \Omega$, then
\[ \int_\Omega F_1\,d\mu \subset \int_\Omega F_2\,d\mu.  \]
\end{theorem}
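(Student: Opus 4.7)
The plan is to deduce Theorem~\ref{add} directly from Theorem~\ref{prop:selection-additive} together with an elementary scaling argument, and to obtain monotonicity essentially from the definition of the Aumann integral.

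First I would handle the scalar multiplication piece: for any $\alpha \in \R$ and measurable, integrably bounded $F: \Omega \to \bcs$, I claim $\int_\Omega \alpha F\, d\mu = \alpha \int_\Omega F\, d\mu$. For $\alpha=0$ both sides are $\{0\}$. For $\alpha\ne 0$, since $F(x)$ is symmetric we have $\alpha F(x) = |\alpha| F(x)$, and the map $g \mapsto \alpha g$ is a bijection from $S^1(\Omega, F)$ onto $S^1(\Omega, \alpha F)$. Passing this bijection through Definition~\ref{au} gives the claim.

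Next I would prove additivity, i.e. $\int_\Omega (F_1+F_2)\,d\mu = \int_\Omega F_1\,d\mu + \int_\Omega F_2\,d\mu$, where the ``heavy lifting'' has already been done by Theorem~\ref{prop:selection-additive}. The inclusion $\supset$ is trivial: if $f_i \in S^1(\Omega, F_i)$, then $f_1+f_2 \in S^1(\Omega, F_1+F_2)$ (pointwise), and linearity of the ordinary integral over $\R^d$ gives $\int f_1\,d\mu + \int f_2\,d\mu \in \int_\Omega(F_1+F_2)\,d\mu$. For the inclusion $\subset$, fix $v\in\int_\Omega(F_1+F_2)\,d\mu$, so $v=\int_\Omega f\,d\mu$ for some $f \in S^1(\Omega, F_1+F_2)$. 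By Theorem~\ref{prop:selection-additive}, $f = f_1+f_2$ with $f_i \in S^1(\Omega, F_i)$, so $v = \int_\Omega f_1\,d\mu + \int_\Omega f_2\,d\mu \in \int_\Omega F_1\,d\mu + \int_\Omega F_2\,d\mu$. Combining scaling and additivity gives the full linearity statement for any $\alpha_1,\alpha_2\in\R$.

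Finally, monotonicity is immediate from the definition: if $F_1(x)\subset F_2(x)$ for every $x\in\Omega$, then any $f \in S^1(\Omega, F_1)$ satisfies $f(x)\in F_1(x)\subset F_2(x)$ almost everywhere, hence $f \in S^1(\Omega, F_2)$, so $S^1(\Omega, F_1) \subset S^1(\Omega, F_2)$ and therefore $\int_\Omega F_1\,d\mu \subset \int_\Omega F_2\,d\mu$.

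The only genuinely delicate step is the $\subset$ direction of additivity, but this is exactly the content of Theorem~\ref{prop:selection-additive} (whose proof required Mazur's lemma together with the weak compactness of $S^1(\Omega,F_i)$ established in Theorem~\ref{cci}); once that theorem is invoked, the present proof is purely formal.
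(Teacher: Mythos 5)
Your proof is correct and follows essentially the same route as the paper: the inclusion $\subset$ in additivity is delegated to Theorem~\ref{prop:selection-additive}, scalar multiplication (including the trivial case $\alpha=0$) is read off from Definition~\ref{au}, and monotonicity comes from $S^1(\Omega,F_1)\subset S^1(\Omega,F_2)$. The only difference is that you spell out the bijection $g\mapsto\alpha g$ and both inclusions explicitly, which the paper leaves as immediate.
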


\begin{proof}
The monotonicity of the Aumann integral follows at once from
Definition~\ref{au}  and from the fact
that if $F_1\subset F_2$, then $S^1(\Omega, F_1) \subset
S^1(\Omega,F_2)$. 
  
To show that it is linear, note first that it is immediate from Definition~\ref{au} that 
\[ \int_{\Omega} \alpha_i F_i \,d\mu = \alpha_i \int_\Omega F_i
  \,d\mu.  \]
Finally, by Theorem~\ref{prop:selection-additive} we have that
\begin{multline*}
\int_{\Omega} F_1+ F_2\, d\mu 
= \bigg\{ \int_{\Omega} f_1+f_2\,d\mu : f_i \in S^1(\Omega,F_i), i=1,2
\bigg\} 
=
\int_{\Omega} F_1  \,d\mu + \int_{\Omega}
  F_2  \,d\mu. 
\end{multline*}
\end{proof}

\begin{corollary} \label{cor:integral-additive}
  Given a locally integrably bounded function $F  :\R^n \rightarrow
  \bcs$ and bounded sets $A\subset B$,
  \[ \int_ AF(x)\,dx \subset \int_B F(x) \,dx. \]
\end{corollary}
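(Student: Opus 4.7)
\smallskip

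The plan is to reduce the inclusion to the monotonicity part of Theorem~\ref{add} by extending $F|_A$ and $F|_B$ to all of $\R^n$ in such a way that one extension is pointwise contained in the other. For a bounded measurable set $E \subset \R^n$, define
\[
F_E(x) = \begin{cases} F(x) & x \in E, \\ \{0\} & x \notin E. \end{cases}
\]
First I would check that $F_E : \R^n \to \bcs$ is measurable and integrably bounded: measurability follows because the preimage of any open set $U$ under $F_E$ is $(F^{-1}(U) \cap E) \cup (E^c \cap \{x : 0 \in U\})$, and integrable boundedness follows from the hypothesis that $F$ is locally integrably bounded by some $k \in L^1_{\rm loc}(\R^n,\R)$, since then $k \chi_B \in L^1(\R^n,\R)$ dominates both $F_A$ and $F_B$ (using $A \subset B$).

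Next, I would verify the identification $\int_E F(x)\,dx = \int_{\R^n} F_E(x)\,dx$, interpreting $\int_E F\,dx$ as the Aumann integral over the measure space $(E, \mathcal A|_E, \mu|_E)$. Since $F(x) \in \bcs$ is symmetric and convex, $0 \in F(x)$ for every $x$, so any $f \in S^1(E,F)$ extends by $0$ on $E^c$ to an element $\tilde f \in S^1(\R^n, F_E)$ with $\int_{\R^n} \tilde f\,dx = \int_E f\,dx$. Conversely, any $g \in S^1(\R^n, F_E)$ satisfies $g(x) = 0$ a.e.\ on $E^c$, so its restriction to $E$ lies in $S^1(E,F)$ with the same integral. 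This gives the equality of the two Aumann integrals.

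Finally, since $A \subset B$, we have $F_A(x) \subset F_B(x)$ for every $x \in \R^n$ (the inclusion is trivial on $A$ since $F_A = F_B = F$ there, trivial on $B \setminus A$ since $\{0\} \subset F(x)$, and trivial on $B^c$). Both $F_A$ and $F_B$ are measurable and integrably bounded by the first step, so the monotonicity statement in Theorem~\ref{add} gives
\[
\int_A F(x)\,dx = \int_{\R^n} F_A(x)\,dx \subset \int_{\R^n} F_B(x)\,dx = \int_B F(x)\,dx,
\]
which is the desired conclusion. The only non-trivial point is the identification in the second step, and this is exactly where the symmetry of the values of $F$ (so that $0$ lies in every $F(x)$) is used to pass freely between selection functions on $E$ and selection functions on $\R^n$ that vanish outside $E$; no new obstacle arises beyond that.
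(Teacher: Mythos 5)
Your proof is correct and is essentially the paper's argument: the paper simply notes that $F(x)\chi_A(x) \subset F(x)\chi_B(x)$ pointwise and invokes the monotonicity part of Theorem~\ref{add}, which is exactly your reduction via the cutoff functions $F_A$ and $F_B$. The additional verifications you supply (measurability of $F_E$, integrable boundedness via $k\chi_B$, and the identification of $\int_E F\,dx$ with $\int_{\R^n} F\chi_E\,dx$ through zero-extension of selection functions) are routine details the paper leaves implicit.
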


\begin{proof}
Since $F(x)\chi_A(x) \subset F(x)\chi_B(x)$, this follows at once from Theorem~\ref{add}.
\end{proof}

Finally, we prove versions of H\"older's inequality and Minkowski's
inequality for the Aumann integral.    The proof
requires one lemma.

\begin{lemma} \label{lemma:zero-int}
  Given $F : \Omega \rightarrow \bcs$ measurable,
  \begin{equation}\label{zint}
  \int_\Omega  F(x)\,d\mu = \{0 \} 
  \end{equation}
  if and only if $F(x)=\{0\}$ a.e.
\end{lemma}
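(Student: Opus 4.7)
\medskip

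\noindent\textbf{Proof proposal for Lemma \ref{lemma:zero-int}.}
The reverse implication is immediate: if $F(x)=\{0\}$ a.e., then $S^1(\Omega,F)$ consists only of the a.e.-zero function, and so its Aumann integral equals $\{0\}$.

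For the forward direction, my plan is to show that \emph{every} measurable selection of $F$ vanishes a.e., and then invoke the Castaing representation in Theorem~\ref{char}(iv) to conclude $F(x)=\{0\}$ a.e. The starting observation is that since each $F(x)\in\bcs$ is convex, symmetric, and nonempty, we have $0\in F(x)$ for all $x$ (take $\tfrac{1}{2}v+\tfrac{1}{2}(-v)$ for any $v\in F(x)$). Consequently, for any $f\in S^0(\Omega,F)$ and any measurable $B\subset\Omega$, the product $f\chi_B$ still belongs to $S^0(\Omega,F)$, because it coincides with $f$ on $B$ and with $0$ on $B^c$.

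Next I reduce to integrable selections using $\sigma$-finiteness. Fix $f\in S^0(\Omega,F)$ and write $\Omega=\bigcup_m \Omega_m$ with $\mu(\Omega_m)<\infty$. For $n,m\in\N$ let $A_{n,m}=\Omega_m\cap\{x:|f(x)|\le n\}$, so that $f\chi_{A_{n,m}}\in L^1(\Omega,\R^d)$ and, by the previous paragraph, $f\chi_{A_{n,m}}\in S^1(\Omega,F)$. Moreover, for every measurable $B\subset\Omega$, the function $f\chi_{A_{n,m}}\chi_B$ also lies in $S^1(\Omega,F)$. By the hypothesis $\int_\Omega F\,d\mu=\{0\}$, its integral is zero:
\[
\int_B f\chi_{A_{n,m}}\,d\mu=0\qquad\text{for every measurable }B.
\]
Applying this coordinate-by-coordinate, each component of $f\chi_{A_{n,m}}$ vanishes a.e. Letting $n,m\to\infty$, since $A_{n,m}\uparrow\Omega$, we obtain $f=0$ a.e.

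Finally, by Theorem~\ref{char}(iv) there is a sequence of selections $\{f_k\}_{k\in\N}\subset S^0(\Omega,F)$ with $F(x)=\overline{\{f_k(x):k\in\N\}}$ for every $x$. The argument above shows each $f_k=0$ a.e., and since a countable union of null sets is null, for a.e.\ $x$ we have $f_k(x)=0$ for all $k$, hence $F(x)=\overline{\{0\}}=\{0\}$. The only real subtlety, and the place where the hypothesis $F(x)\in\bcs$ is used in an essential way, is the closure property $S^0(\Omega,F)\cdot\chi_B\subset S^0(\Omega,F)$, which relies precisely on $0\in F(x)$ from the convex-symmetric structure; without this, truncation could destroy selections and the argument would not close.
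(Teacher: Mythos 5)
Your proof is correct and follows essentially the same route as the paper's: both arguments hinge on the observation that $0\in F(x)$ (by convexity and symmetry), so selections can be multiplied by indicator-type factors and remain selections, whence the hypothesis forces every integrable selection to vanish a.e., and the Castaing representation of Theorem~\ref{char}(iv) then gives $F(x)=\{0\}$ a.e. The only differences are cosmetic: you pass from $S^0$ to $S^1$ by truncating over sets of finite measure where $|f|\le n$ and test against indicators $\chi_B$ coordinatewise, while the paper multiplies by a strictly positive weight to gain integrability and tests by splitting $f$ according to the sign of $\langle f(x),v\rangle$ for each fixed $v\in\R^d$.
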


\begin{proof}
Suppose that \eqref{zint} holds. Let $f\in S^1(\Omega,F)$. 
Take any $v\in \R^d$.
 Then 
 \[f^+=f\chi_{\{ x\in \Omega :
   \langle f(x), v \rangle \geq 0\}}
   \quad\text{and}\quad 
   f^+=f\chi_{\{ x\in \Omega :
  \langle  f(x),v \rangle < 0\}}
  \]
   are also selection functions since $0\in F(x)$.
  By Definition~\ref{au} we deduce that
  \[
   \int_\Omega  \langle f^+(x), v \rangle \,d\mu =
   \int_\Omega  \langle f^-(x), v \rangle \,d\mu =0.
   \]
  Hence, $ \langle f^+(x), v \rangle =  \langle f^-(x), v \rangle=0$
  for a.e. $x\in \Omega$, and hence  $ \langle f(x), v \rangle = 0$
  for a.e. $x\in \Omega$. Since $v\in \R^d$ is arbitrary we have that
  all the integrable selection functions of $F$ are trivial. If $f$ is an arbitrary selection of $F$, then there exists a strictly positive function $k$ on $\Omega$ such that $kf \in S^1(\Omega,F)$. By the previous argument $k(x)f(x)=0$ a.e. Hence, Theorem~\ref{char} yields that $F(x)=\{0\}$ a.e. The converse implication is trivial.
\end{proof}

\begin{prop} \label{prop:convex-Holder}
 Let $\rho$ be a norm on $\R^d$ and fix $1<p<\infty$.  Suppose $H :
 \Omega \rightarrow \bcs$  and 
 $f,\,g : \Omega \rightarrow [0,\infty)$ are measurable, and $f^pH$
 and $g^{p'}H$ are integrably bounded.  
Then
 \[ \rho\bigg(\int_\Omega f(x)g(x)H(x)\,d\mu \bigg)
   \leq \rho\bigg(\int_\Omega f(x)^pH(x)\,d\mu
   \bigg)^{\frac{1}{p}}
   \rho\bigg(\int_\Omega g(x)^{p'}H(x)\,d\mu
   \bigg)^{\frac{1}{p'}}. \]
\end{prop}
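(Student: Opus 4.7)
The plan is to reduce the set-valued inequality to the scalar H\"older inequality by dualizing with $\rho^*$ (using Corollary~\ref{cor:dual-norm}) and passing through the pointwise support functions of $H(x)$. Throughout, $\rho$ applied to a set $K\in\bcs$ is understood as $\sup_{v\in K}\rho(v)$.

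First I would verify that the three Aumann integrals are genuine elements of $\bcs$. The sets $\int_\Omega f^pH\,d\mu$ and $\int_\Omega g^{p'}H\,d\mu$ lie in $\bcs$ by Theorem~\ref{cci}; for $fgH$ one checks integrable boundedness via the scalar Young inequality $f(x)g(x)|v|\le p^{-1}f(x)^p|v|+p'^{-1}g(x)^{p'}|v|$ for $v\in H(x)$. Duality and Theorem~\ref{prop:selection-additive}-style manipulation then give
\[
\rho\bigg(\int_\Omega fgH\,d\mu\bigg)=\sup_{h\in S^1(\Omega,H)}\sup_{\rho^*(w)\le 1}\int_\Omega f(x)g(x)\langle h(x),w\rangle\,d\mu.
\]
Next, introduce the support function $s(x,w):=\sup_{v\in H(x)}\langle v,w\rangle=h_{H(x)}(w)$. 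Using the Castaing representation (Theorem~\ref{char}(iv)) $H(x)=\overline{\{h_k(x):k\in\N\}}$, the function $s(\cdot,w)=\sup_k\langle h_k(\cdot),w\rangle$ is measurable and, by symmetry of $H(x)$, nonnegative. Since $|\langle h(x),w\rangle|\le s(x,w)$ for every selection $h$, the scalar H\"older inequality yields
\[
\int_\Omega fg\langle h,w\rangle\,d\mu\le \int_\Omega fg\,s(\cdot,w)\,d\mu\le\bigg(\int_\Omega f^p s(\cdot,w)\,d\mu\bigg)^{1/p}\bigg(\int_\Omega g^{p'}s(\cdot,w)\,d\mu\bigg)^{1/p'}.
\]

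The final step, and the main technical obstacle, is to show that
\[
\int_\Omega f^p s(\cdot,w)\,d\mu\le \rho\bigg(\int_\Omega f^pH\,d\mu\bigg)\quad\text{whenever }\rho^*(w)\le 1,
\]
and similarly for $g^{p'}$. For this I would construct measurable selections realizing the support function: for each $n$ let $\sigma_n(x)\in\{1,\dots,n\}$ be the smallest index attaining $\max_{k\le n}\langle h_k(x),w\rangle$ (measurable by standard arguments), and set $\tilde h_n(x):=h_{\sigma_n(x)}(x)$. Then $\tilde h_n$ is a measurable selection of $H$ with $\langle\tilde h_n(\cdot),w\rangle\nearrow s(\cdot,w)$, and since $f^pH$ is integrably bounded we have $f^p\tilde h_n\in S^1(\Omega,f^pH)$. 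Monotone convergence together with the duality estimate $\langle v,w\rangle\le\rho(v)\rho^*(w)$ gives
\[
\int_\Omega f^ps(\cdot,w)\,d\mu=\lim_n\bigg\langle\int_\Omega f^p\tilde h_n\,d\mu,\,w\bigg\rangle\le\sup_n\rho\bigg(\int_\Omega f^p\tilde h_n\,d\mu\bigg)\le\rho\bigg(\int_\Omega f^pH\,d\mu\bigg),
\]
since each $\int_\Omega f^p\tilde h_n\,d\mu\in \int_\Omega f^pH\,d\mu$. Applying the analogous bound to $g^{p'}$ and taking suprema over $h$ and $w$ on both sides of the H\"older estimate completes the proof. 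The delicate point is precisely this measurable-selection step, which is what bridges the pointwise-defined support function and the Aumann integral.
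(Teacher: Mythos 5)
Your proof is correct, but it follows a genuinely different route from the paper. The paper's argument is the direct set-valued transcription of the classical normalization proof: after disposing of the degenerate case with Lemma~\ref{lemma:zero-int} and normalizing both right-hand factors to $1$ by homogeneity, it uses the pointwise Young inequality to get the inclusion $f(x)g(x)H(x)\subset \tfrac1p f(x)^pH(x)+\tfrac1{p'}g(x)^{p'}H(x)$ (valid since $H(x)$ is convex, symmetric, and contains $0$), then invokes linearity and monotonicity of the Aumann integral (Theorem~\ref{add}) and subadditivity of $\rho$ over Minkowski sums -- three lines, no duality, no selection arguments. You instead dualize against $\rho^*$ via Corollary~\ref{cor:dual-norm}, reduce to scalar H\"older with the support function $s(x,w)=h_{H(x)}(w)$ as weight, and close the loop with a Castaing-type measurable argmax selection to show $\int_\Omega f^p s(\cdot,w)\,d\mu\le\rho\big(\int_\Omega f^pH\,d\mu\big)$ for $\rho^*(w)\le1$. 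That key step is sound (your $\tilde h_n$ are measurable selections, $f^p\tilde h_n\in S^1(\Omega,f^pH)$, and the passage to the limit is justified since $|f^p\langle\tilde h_n,w\rangle|\le k|w|$ with $k\in L^1$), but note it essentially re-proves a special case of the paper's Lemma~\ref{sf} and Theorem~\ref{sn}, which identify $\int_\Omega f(x)^p h_{H(x)}(w)\,d\mu(x)$ with the support function of the Aumann integral $\int_\Omega f^pH\,d\mu$; citing those would let you skip the bespoke selection construction. Two small points to tidy: in your opening identity the supremum should run over measurable selections $h\in S^0(\Omega,H)$ with $fgh$ integrable (equivalently over $S^1(\Omega,fgH)$), not over $S^1(\Omega,H)$, since $H$ itself need not be integrably bounded (the fix is a routine truncation/$\sigma$-finiteness argument); and the monotone limit $\langle\tilde h_n(\cdot),w\rangle\nearrow s(\cdot,w)$ may start negative, so invoke dominated convergence or MCT with an integrable lower bound. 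Overall, the paper's proof buys brevity and avoids all measurability issues, while yours makes the duality with support functions explicit -- a perspective the paper exploits elsewhere (Theorem~\ref{sn}) but not here.
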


\begin{proof}
  The proof is an adaptation of the standard proof of H\"older's inequality for scalar functions.  Since $f^pH$
 and $g^{p'}H$ are integrably bounded, the integrals on the righthand
 side are finite.  If either is equal to $0$, then by
 Lemma~\ref{lemma:zero-int}, either $fH=\{0\}$ or $gH=\{0\}$ a.e., so
 the lefthand side is $0$ as well.  Therefore, since the desired
  inequality is homogeneous, we
  may assume without loss of generality that
  \[ \rho\bigg(\int_\Omega f(x)^pH(x)\,d\mu
   \bigg) = 
   \rho\bigg(\int_\Omega g(x)^{p'}H(x)\,d\mu
   \bigg) = 1. \]
 Then by Young's inequality and Theorem~\ref{add},
 \begin{multline*}
   \rho\bigg(\int_\Omega f(x)g(x)H(x)\,d\mu \bigg)
   \leq
   \rho\bigg(\int_\Omega \frac{1}{p}f(x)^pH(x)\,d\mu
   + \int_\Omega \frac{1}{p'}g(x)^{p'}H(x)\,d\mu
   \bigg) \\
   \leq
   \frac{1}{p}\rho\bigg(\int_\Omega f(x)^pH(x)\,d\mu
   \bigg) + 
   \frac{1}{p'}\rho\bigg(\int_\Omega g(x)^{p'}H(x)\,d\mu
   \bigg)
   = 1.\qedhere
 \end{multline*}
\end{proof}

\begin{prop} \label{prop:convex-minkowski}
Let $\rho$ be a norm on $\R^d$ and fix $1<p<\infty$.  Suppose $H :
 \Omega \rightarrow \bcs$  and 
 $f,\,g : \Omega \rightarrow [0,\infty)$ are measurable, and $f^pH$
 and $g^{p}H$ are integrably bounded.    Then
  \[ \rho\bigg(\int_\Omega [f(x)+g(x)]^pH(x)\,d\mu \bigg)^{\frac{1}{p}}
   \leq \rho\bigg(\int_\Omega f(x)^pH(x)\,d\mu
   \bigg)^{\frac{1}{p}} 
   + \rho\bigg(\int_\Omega g(x)^{p}H(x)\,d\mu
   \bigg)^{\frac{1}{p}}. \]
\end{prop}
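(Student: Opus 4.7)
The plan is to mimic the standard scalar proof of Minkowski's inequality, using the Hölder inequality for Aumann integrals just established (Proposition~\ref{prop:convex-Holder}) together with the linearity and monotonicity of the Aumann integral (Theorem~\ref{add}). Throughout, when $\rho$ is applied to a bounded symmetric convex set $K \subset \R^d$ it is interpreted as $\rho(K) := \sup_{v \in K} \rho(v)$; this extension is positively homogeneous and monotone with respect to inclusion, and inherits the triangle inequality on Minkowski sums, $\rho(K_1 + K_2) \leq \rho(K_1) + \rho(K_2)$, from the scalar triangle inequality applied to decompositions $v = v_1 + v_2 \in K_1 + K_2$.

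The first step is the algebraic identity $(f+g)^p = f(f+g)^{p-1} + g(f+g)^{p-1}$. Combined with linearity of the Aumann integral this gives
\[
\int_\Omega (f+g)^p H\,d\mu = \int_\Omega f(f+g)^{p-1} H\,d\mu + \int_\Omega g(f+g)^{p-1} H\,d\mu.
\]
Applying $\rho$ and the sub-additivity noted above yields
\[
\rho\!\left(\int_\Omega (f+g)^p H\,d\mu\right) \leq \rho\!\left(\int_\Omega f(f+g)^{p-1} H\,d\mu\right) + \rho\!\left(\int_\Omega g(f+g)^{p-1} H\,d\mu\right).
\]

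To the first term on the right I would apply Proposition~\ref{prop:convex-Holder} with exponents $p$ and $p'$, using $(p-1)p' = p$, to obtain
\[
\rho\!\left(\int_\Omega f(f+g)^{p-1} H\,d\mu\right) \leq \rho\!\left(\int_\Omega f^p H\,d\mu\right)^{1/p} \rho\!\left(\int_\Omega (f+g)^p H\,d\mu\right)^{1/p'},
\]
and symmetrically for the $g$-term. This requires checking that $(f+g)^p H$ is integrably bounded; that follows from the pointwise estimate $(f+g)^p \leq 2^{p-1}(f^p + g^p)$ together with the hypothesis that $f^p H$ and $g^p H$ are integrably bounded.

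Writing $A$, $B$, $C$ for the values of $\rho$ applied to $\int_\Omega (f+g)^p H\,d\mu$, $\int_\Omega f^p H\,d\mu$, $\int_\Omega g^p H\,d\mu$ respectively, the previous displays combine to give $A \leq (B^{1/p} + C^{1/p}) A^{1/p'}$. If $A > 0$, dividing by $A^{1/p'}$ produces the claimed inequality. If $A = 0$, then by Lemma~\ref{lemma:zero-int} we have $(f+g)^p H = \{0\}$ a.e.; since $f,g \geq 0$ this forces $f^p H = g^p H = \{0\}$ a.e., so $B = C = 0$ and the inequality holds trivially. The proof is essentially routine once Proposition~\ref{prop:convex-Holder} is in hand; the only real bookkeeping items are verifying the integrable-boundedness hypotheses for each Hölder application and handling the degenerate case $A = 0$.
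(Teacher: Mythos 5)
Your proof is correct and takes essentially the same route as the paper: the decomposition $(f+g)^p = f(f+g)^{p-1}+g(f+g)^{p-1}$, linearity of the Aumann integral, subadditivity of $\rho$ over Minkowski sums, and Proposition~\ref{prop:convex-Holder} with $(p-1)p'=p$, together with the same integrable-boundedness check via $(f+g)^p \leq 2^{p-1}(f^p+g^p)$. The only cosmetic difference is your use of Lemma~\ref{lemma:zero-int} in the degenerate case, where the paper simply observes that the inequality is trivial when the left-hand side vanishes.
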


\begin{proof}
  The proof is again an adaptation of the standard proof of Minkowski's inequality for scalar
  functions. First note that since
  \[ [f+g]^pH \subset 2^{p-1}(f^p+g^p)H = 2^{p-1}f^pH+2^{p-1}g^pH, \]
  $[f+g]^pH$ is integrably bounded, so the left hand side of the
  inequality is finite.  We may also assume without generality that it
  is positive since otherwise there is nothing to prove.  But then,
  by Proposition~\ref{add},
  \begin{align*}
    & \rho\bigg(\int_\Omega [f(x)+g(x)]^pH(x)\,d\mu
    \bigg) \\
    & \qquad \quad = \rho\bigg(\int_\Omega f(x)[f(x)+g(x)]^{p-1} H(x)\,d\mu
      + \int_\Omega g(x)[f(x)+g(x)]^{p-1}
      H(x)\,d\mu\bigg)\\
    & \qquad \quad \leq  \rho\bigg(\int_\Omega f(x)[f(x)+g(x)]^{p-1}
      H(x)\,d\mu\bigg)
      + \rho\bigg(\int_\Omega g(x)[f(x)+g(x)]^{p-1}
      H(x)\,d\mu\bigg) \\
     & \qquad \quad \leq  \rho\bigg(\int_\Omega f(x)^pH(x)\,d\mu
       \bigg)^{\frac{1}{p}}\rho\bigg(\int_\Omega [f(x)+g(x)]^pH(x)\,d\mu
       \bigg)^{\frac{1}{p'}} \\
    & \qquad \qquad \qquad +
      \rho\bigg(\int_\Omega g(x)^pH(x)\,d\mu
       \bigg)^{\frac{1}{p}}\rho\bigg(\int_\Omega [f(x)+g(x)]^pH(x)\,d\mu
       \bigg)^{\frac{1}{p'}}.
  \end{align*}
The last step follows from  Proposition \ref{prop:convex-Holder} since $(p-1)p'=p$.
  The desired inequality now follows immediately.
\end{proof}
\section{Seminorm functions}
\label{section:seminorm}

In this section we introduce seminorm functions, which we will use
below to
define $L^p$ spaces of convex-set valued functions.   We will show an
equivalence between the Aumann integral of such a function and the
seminorm associated with the convex bodies.    We begin with a definition.

\begin{definition}\label{normf}
A seminorm function $\rho$ on $\Omega$ is a mapping $\rho: \Omega \times \R^d \to [0,\infty)$ such that:
\begin{enumerate}[(i)]
\item $x\mapsto \rho_x(v)=\rho(x,v)$ is a
  measurable function for any $v\in \R^d$, 
\item for all $x\in \Omega$, $\rho_x(\cdot)$ is a seminorm on $\R^d$.
\end{enumerate}
\end{definition}

Our first result shows that there is a one-to-one correspondence
between seminorm functions and measurable convex-set valued maps
$\Omega \to \bcs$. For a variant of Theorem \ref{mc} for bounded
convex-set valued mappings into separable Banach space, see \cite[Theorem
8.2.14]{MR2458436}, and a version for compact convex-set valued mappings
into a locally convex, metrizable, separable space, see \cite[Theorem
III.15]{CV}.

\begin{theorem}\label{mc} Suppose that $\rho: \Omega \times \R^d \to
  [0,\infty)$ is a seminorm function. Then the convex-set valued mapping
  $F: \Omega \to \bcs$ defined for each $x\in \Omega$ by 
\begin{equation}\label{mc0}
F(x)=\{v\in \R^d: \rho_x(v) \le 1\}^\circ
\end{equation}
is measurable. Conversely, given a measurable mapping $F: \Omega \to
\bcs$, define a function $\rho: \Omega \times \R^d \to [0,\infty)$ by
\begin{equation}\label{mc1}
\rho_x(v)=p_{F(x)^\circ}(v)
\qquad (x,v) \in \Omega \times \R^d.
\end{equation}
Then $\rho$ is a seminorm function. Moreover, the correspondence
between seminorm functions $\rho$ and convex-set valued mappings $F$ is
one-to-one.
\end{theorem}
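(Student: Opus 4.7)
The plan is to treat the two directions separately and then use the bipolar identity from Section 2 to glue them into a bijection.

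For the forward direction, suppose $\rho$ is a seminorm function. For each $x$ the unit ball $K(\rho_x)=\{v:\rho_x(v)\le1\}$ lies in $\acs$ by Theorem \ref{min}, so Theorem \ref{pol}(a) places $F(x)=K(\rho_x)^\circ$ in $\bcs$, as required for the target of the map. To get measurability, I would use the slab representation
\[
F(x)=\bigcap_{v\in\mathbb{Q}^d}\{w\in\R^d:|\langle v,w\rangle|\le\rho_x(v)\},
\]
which follows from the definition of the polar and the fact that $\rho_x$ is continuous (any seminorm on $\R^d$ is Lipschitz with respect to the Euclidean norm), so passing from all $v$ to a dense countable set $\mathbb{Q}^d$ is harmless. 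For each fixed $v\in\mathbb{Q}^d$, the slab $H_v(x)=\{w:|\langle v,w\rangle|\le\rho_x(v)\}$ has graph $\{(x,w):|\langle v,w\rangle|-\rho_x(v)\le0\}\in\A\otimes\B$, since $(x,w)\mapsto\langle v,w\rangle$ is Borel in $w$ and constant in $x$, while $x\mapsto\rho_x(v)$ is measurable by Definition \ref{normf}(i). Theorem \ref{char}(ii) then makes $H_v$ a measurable closed-set-valued function, and Theorem \ref{cup} yields measurability of the countable intersection $F$.

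For the converse, assume $F:\Omega\to\bcs$ is measurable. Since $F(x)\in\bcs$, Theorem \ref{pol}(b) gives $F(x)^\circ\in\acs$, and then Theorem \ref{min} makes $\rho_x(\cdot)=p_{F(x)^\circ}(\cdot)$ a seminorm, verifying Definition \ref{normf}(ii). For measurability in $x$, I would apply Lemma \ref{sf} to identify
\[
\rho_x(v)=p_{F(x)^\circ}(v)=h_{F(x)}(v)=\sup_{w\in F(x)}\langle v,w\rangle,
\]
and then invoke the Castaing representation in Theorem \ref{char}(iv) to obtain measurable selection functions $\{f_k\}_{k\in\N}\subset S^0(\Omega,F)$ with $F(x)=\overline{\{f_k(x):k\in\N\}}$. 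Continuity of $w\mapsto\langle v,w\rangle$ then gives $\rho_x(v)=\sup_k\langle v,f_k(x)\rangle$, a countable supremum of scalar functions that are measurable in $x$, hence measurable.

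Bijectivity is immediate from the bipolar identity $K^{\circ\circ}=K$ of Theorem \ref{pol}(c). Starting from $\rho$ and forming $F(x)=K(\rho_x)^\circ$, one recovers $p_{F(x)^\circ}=p_{K(\rho_x)^{\circ\circ}}=p_{K(\rho_x)}=\rho_x$; conversely, starting from $F$ and forming $\rho_x=p_{F(x)^\circ}$, the identity $\{v:p_{F(x)^\circ}(v)\le1\}=F(x)^\circ$ and Theorem \ref{pol}(c) give back $K(\rho_x)^\circ=F(x)^{\circ\circ}=F(x)$. The one step that warrants real care is verifying that each slab $H_v$ qualifies as a measurable closed-set-valued function in the sense of Definition \ref{defn:measurable}; the graph criterion in Theorem \ref{char}(ii) reduces this to a routine joint measurability check, after which everything else follows from the machinery of Section 2 together with Theorems \ref{char} and \ref{cup}.
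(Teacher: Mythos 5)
Your proof is correct, but it takes a genuinely different route from the paper's in both directions. For the forward direction, the paper first upgrades Definition~\ref{normf}(i) to joint measurability of $\rho$ on $\mathcal A\otimes\mathcal B$ (property ({\em i-a})) via a Hahn--Banach argument with a countable dense family of functionals $l_i$ and the sets $A_{l_i}$, then observes that $\operatorname{Graph}(F^\circ)=\rho^{-1}([0,1])$ is product-measurable, applies Theorem~\ref{char}(ii) to $F^\circ$, and finishes with Theorem~\ref{pol-measure}. You instead write $F(x)$ directly as the countable intersection of slabs $H_v(x)=\{w:|\langle v,w\rangle|\le\rho_x(v)\}$, $v\in\mathbb Q^d$ (the identification $K(\rho_x)^\circ=\{w:|\langle v,w\rangle|\le\rho_x(v)\ \forall v\}$ and the reduction to rational $v$ by continuity of $\rho_x$ are both fine, including the degenerate case $\rho_x(v)=0$), and then use the graph criterion for each slab plus Theorem~\ref{cup}; this is essentially the mechanism of the paper's proof of Theorem~\ref{pol-measure}, transplanted here, and it is shorter and more elementary. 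For the converse, the paper again goes through $F^\circ$, graphs, and a scaling argument to get product measurability, whereas you use Lemma~\ref{sf} and the Castaing representation to write $\rho_x(v)=\sup_k\langle v,f_k(x)\rangle$, a countable supremum of measurable functions -- which is exactly what Definition~\ref{normf}(i) demands, and is the same device the paper itself uses later (e.g.\ Lemma~\ref{lemma:norm-measure} and \eqref{eqn:sn3}). Your bijectivity argument via the bipolar identity is the content behind Theorem~\ref{eqfun} and is complete. The only thing your route does not deliver is the stronger joint measurability ({\em i-a}) and the explicit representation \eqref{mc6} with the sets $A_{l_i}$, which the paper's proof produces as a byproduct and reuses later (notably in Lemma~\ref{pl}); since the theorem as stated does not require these, your proof of the statement itself is complete.
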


\begin{proof}
  Suppose first that $\rho: \Omega \times \R^d \to [0,\infty)$ is a seminorm
  function. To show that $F$ is measurable, we will first prove that
  $\rho$ satisfies a stronger version of ({\em i}) in Definition \ref{normf}:
\begin{enumerate}
\item[({\em i-a})]
$\rho: \Omega \times \R^d \to [0,\infty)$ is measurable with respect
to the product $\sigma$-algebra $\mathcal A \otimes \mathcal B$, where
$\mathcal B $ is the Borel $\sigma$-algebra on $\R^d$.
\end{enumerate}
We could derive this from the fact that $\rho$ is a Carath\'eodory
map, using \cite[Lemma 8.2.6]{MR2458436}.  Instead, however, we will
give a direct proof of ({\em i-a}).
Let $\mathcal D = \{v_i\}_{i \in \N}$ be a countable dense subset of
$\R^d$. For any functional $l\in (\R^d)^*$, the set
\begin{equation}\label{mc2}
A_l= \{x\in \Omega: |l(v)| \le \rho_x(v) \quad\text{ for all }v\in \R^d\}
\end{equation}
is a measurable subset of $\Omega$. To see this, note that $A_l$ can
be written as a countable intersection of measurable sets in $\mathcal A$:
\[
A_l = \bigcap_{l=1}^\infty \{ x\in \Omega: |l(v_i)| \le \rho_x(v_i) \}.
\]

Now let $\mathcal D' = \{l_i\}_{i\in \N}$ be a countable dense subset
of functionals on $\R^d$ given by $l_i(v) = \langle v, v_i \rangle$,
$v\in \R^d$. For any norm $p$ on $\R^d$ we claim that
\begin{equation}\label{mc4}
p(v) = \sup \{ |l_i(v)|: i\in \N \text{ is such that }|l_i(w)| \le p(w) \text{ for all }w\in \R^d\}.
\end{equation}
To see this, fix $v\in \R^d$ and let $E=\{\alpha v : \alpha \in \R\}$
  be the subspace generated by $v$.  Define the linear functional
  $\lambda$ on $E$ by $\lambda(\alpha v)=\alpha p(v)$.  Then by the
  Hahn-Banach theorem, $\lambda$ extends to an element of $(\R^d)^*$
  such that $|\lambda(w)|\leq p(w)$ for all $w\in \R^d$.  The
  identity~\eqref{mc4} now follows from the density of $\mathcal D'$. 

  To prove ({\em i-a}), assume for the moment that for all $x\in \Omega$,
  $\rho_x(\cdot)$ is a norm on $\R^d$. Then, if we combine \eqref{mc2} and
  \eqref{mc4} we get that
\begin{equation}\label{mc6}
  \rho_x(v) = \sup_{i\in \N} |l_i(v)| \chi_{A_{l_i}}(x)
  \qquad\text{for all }(x,v) \in \Omega \times \R^d.
\end{equation}
Hence, $\rho$ is measurable with respect to the $\sigma$-algebra
$\mathcal A \otimes \mathcal B$. Finally, if $\rho$ is an arbitrary
seminorm function, define the sequence of norm
functions
\[
\rho^i(x,v) = \rho_x(v) + \tfrac{1}{i} |v| \qquad (x,v) \in \Omega \times \R^d.
\]
Each $\rho^i$ is measurable with respect to the $\sigma$-algebra
$\mathcal A \otimes \mathcal B$, and so their limit $\rho$ is
measurable as well. This proves ({\em i-a}).

We can now prove that $F: \Omega \to \bcs$  defined by \eqref{mc0} is
measurable.   By ({\em i-a})
\begin{equation}\label{gr}
\operatorname{Graph}(F^\circ)=\rho^{-1}([0,1]) = \{ (x,v) \in \Omega
\times \R^d : \rho_x(v)\leq 1 \}
\end{equation}
is a measurable set in $\mathcal A \otimes \mathcal B$. By
Theorem~\ref{char},  $F^\circ$ is measurable. Consequently, $F=(F^\circ)^\circ$ is a measurable
convex-set valued mapping by Theorem \ref{pol}.

\medskip

The converse is much easier to prove. Suppose $F: \Omega \to \bcs$ is
measurable and  define $\rho$
by \eqref{mc1}. By Theorem \ref{pol-measure},
$F^\circ: \Omega \to \K_{cs}(\R^d)$ is measurable. Hence, by
\eqref{gr}, $\rho^{-1}([0,t])$ is a measurable subset in
$\mathcal A \otimes \mathcal B$ for $t=1$. By scaling, the same is
true for any $t>0$. Since the $\sigma$-algebra of open sets in
$[0,\infty)$ is generated by sets of the form $[0,t]$, by a standard
measure theory argument 
$\rho^{-1}(U)$ is measurable for any open set $U \subset
[0,\infty)$. Thus, $\rho$ is measurable in the sense of ({\em i-a}) and
so $\rho$ is a seminorm function. Finally, the one-to-one
correspondence is a consequence of Theorems \ref{eqfun} and \ref{pol-measure}.
\end{proof}

As a corollary of Theorem \ref{pol-measure} and Theorem~\ref{mc}
we have the following.

\begin{corollary} \label{cor:dual-norm-measurable}
  If $\rho: \Omega \times \R^d \to
  [0,\infty)$ is a norm function, then $\rho^*: \Omega \times \R^d \to
  [0,\infty)$, defined by $\rho_x^*(v) = (\rho_x)^*(v)$, is a measurable
norm function.
\end{corollary}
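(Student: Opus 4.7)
The plan is to combine polar duality (Theorem \ref{pol}) with the correspondence between norm functions and measurable convex-set valued maps (Theorem \ref{mc}). The pointwise claim that each $\rho_x^*$ is a norm is immediate from Corollary \ref{cor:dual-norm}, so all the real content lies in verifying the measurability condition (i) of Definition \ref{normf}, namely that $x \mapsto \rho_x^*(v)$ is measurable for each fixed $v \in \R^d$.

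First I would let $F: \Omega \to \bcs$ be the measurable convex-set valued map corresponding to $\rho$ via Theorem \ref{mc}, so that $F(x) = \{w \in \R^d : \rho_x(w) \le 1\}^\circ$ and $\rho_x = p_{F(x)^\circ}$. Because $\rho$ is a norm function (rather than merely a seminorm function), its unit ball $K(\rho_x) = F(x)^\circ$ actually lies in $\abcs$, and hence so does $F(x)$ by Theorem \ref{pol}(a) and (c). Next, apply Theorem \ref{pol-measure} to conclude that $F^\circ : \Omega \to \bcs$ is again a measurable convex-set valued map, and then feed $F^\circ$ into the converse direction of Theorem \ref{mc} to produce a measurable seminorm function $\tilde\rho$ satisfying $\tilde\rho_x(v) = p_{(F^\circ(x))^\circ}(v) = p_{F(x)}(v)$, where the second equality uses $F(x)^{\circ\circ} = F(x)$ from Theorem \ref{pol}(c).

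It then remains to identify $\tilde\rho_x$ with $\rho_x^*$. For this I would apply Theorem \ref{pol}(d) to the bounded, absorbing set $K := F(x)^\circ$, which gives $p_{F(x)} = p_{K^\circ} = (p_K)^* = (p_{F(x)^\circ})^* = \rho_x^*$. Hence $\rho_x^*(v) = \tilde\rho_x(v)$ is measurable in $x$ for each fixed $v$, and combined with the pointwise norm property this shows that $\rho^*$ is a measurable norm function. The one delicate point is ensuring $F(x)^\circ$ lies in $\abcs$ rather than merely $\bcs$, since this is exactly what is needed to invoke Theorem \ref{pol}(d); but this follows automatically from the hypothesis that $\rho$ is a norm function, and so is not a serious obstacle.
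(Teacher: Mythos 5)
Your argument is correct and follows exactly the route the paper intends: the paper states this corollary as an immediate consequence of Theorem~\ref{pol-measure} and Theorem~\ref{mc}, and your proposal simply fills in those same steps (measurability of the polar map, the correspondence of Theorem~\ref{mc}, and the identification $p_{F(x)} = (p_{F(x)^\circ})^* = \rho_x^*$ via Theorem~\ref{pol}). No gaps; the check that $F(x)^\circ$ is bounded and absorbing, needed for Theorem~\ref{pol}(d), is handled correctly by the norm hypothesis.
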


The following lemma, whose proof makes use of seminorm functions, will
be used below.  

\begin{lemma}\label{pl}
  Every measurable mapping $F: \Omega \to \bcs$ is the pointwise limit
  of simple measurable mappings with respect to the Hausdorff distance
  on $\bcs$.
\end{lemma}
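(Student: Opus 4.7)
The plan is to exploit the fact (stated earlier in Section~\ref{section:convex-set-valued-functions}) that $\bcs$, equipped with the Hausdorff distance~\eqref{hd}, is a complete separable metric space, being a closed subset of $\K_b(\R^d)$. By Theorem~\ref{CV}, our hypothesis that $F$ is measurable in the sense of Definition~\ref{defn:measurable} is equivalent to $F$ being measurable as a map from $\Omega$ into the metric space $(\bcs, d_H)$. Once this equivalence is in hand, the lemma reduces to the standard fact that any measurable map into a separable metric space is a pointwise limit of simple functions; I would carry out this classical construction explicitly, taking care to verify measurability in the sense of Definition~\ref{defn:measurable} at the end.

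Concretely, first I would fix a countable dense subset $\{K_i\}_{i\in\N} \subset \bcs$. For each fixed $i$, the map $K \mapsto d_H(K,K_i)$ is continuous on $\bcs$, so by the equivalence of measurability notions from Theorem~\ref{CV} the composition $x \mapsto d_H(F(x),K_i)$ is a measurable scalar function. For each $n \in \N$ I would then define an index-valued function $i_n : \Omega \to \{1,\dots,n\}$ by
\[
i_n(x) = \min\bigg\{ i \le n : d_H(F(x), K_i) = \min_{1 \le j \le n} d_H(F(x), K_j) \bigg\},
\]
which is measurable since it is determined by finitely many measurable inequalities. Setting $F_n(x) = K_{i_n(x)}$, the map $F_n$ takes only finitely many values in $\bcs$, so it is simple.

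To check that $F_n$ is measurable in the sense of Definition~\ref{defn:measurable}, I would observe that for any open $U \subset \R^d$,
\[
F_n^{-1}(U) = \bigcup_{\substack{1 \le i \le n \\ K_i \cap U \ne \emptyset}} \{ x \in \Omega : i_n(x) = i \},
\]
a finite union of measurable sets. Pointwise convergence $F_n(x) \to F(x)$ in Hausdorff distance is immediate from density: given $x$ and $\epsilon > 0$, pick $j$ with $d_H(K_j, F(x)) < \epsilon$, and for every $n \ge j$ we have $d_H(F_n(x), F(x)) = \min_{i \le n} d_H(K_i, F(x)) \le d_H(K_j, F(x)) < \epsilon$.

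The only real obstacle is the bookkeeping needed to connect the set-theoretic definition of measurability in Definition~\ref{defn:measurable} with the topological measurability into the metric space $(\bcs, d_H)$; this is precisely the content of Theorem~\ref{CV}, which I would invoke at both ends of the argument—first to know that $x \mapsto d_H(F(x), K_i)$ is measurable (so that the selector $i_n$ is a bona fide measurable function), and implicitly to view the resulting simple mappings $F_n$ as legitimate measurable convex-set valued functions in the sense used throughout the paper. No other result beyond what has already been established in Sections~\ref{section:prelim}--\ref{section:convex-set-valued-functions} is required.
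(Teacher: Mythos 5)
Your argument is correct, but it is genuinely different from the paper's. You prove the lemma abstractly: using Theorem~\ref{CV} to identify measurability in the sense of Definition~\ref{defn:measurable} with Borel measurability into the separable metric space $(\K_b(\R^d),d_H)$ (of which $\bcs$ is a closed, hence separable, subset), and then running the classical ``nearest point in a finite piece of a countable dense set'' construction; your verification that each $F_n$ is again measurable in the sense of Definition~\ref{defn:measurable} and that $d_H(F_n(x),F(x))=\min_{i\le n}d_H(K_i,F(x))\to 0$ is sound, and nothing you use goes beyond Sections~\ref{section:prelim}--\ref{section:convex-set-valued-functions}. The paper instead builds the simple approximants from the seminorm-function representation of Theorem~\ref{mc}: for absorbing $F$ it truncates the supremum $\rho_x(v)=\sup_i|l_i(v)|\chi_{A_{l_i}}(x)$ over the countable family of functionals to get norm functions $\rho^n\nearrow\rho$, whose associated convex-set valued functions $F_n$ are simple, increase, satisfy $F_n(x)\subset F(x)$, and converge in Hausdorff distance by the monotone convergence criterion for convex bodies; the general case follows by fattening with $\tfrac1n\overline{\mathbf B}$ and a diagonal argument. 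The trade-off: your route is more elementary and independent of the seminorm machinery of Section~\ref{section:seminorm}, whereas the paper's construction yields the extra structural information that the approximation can be taken monotone increasing from inside ($F_n(x)\subset F_{n+1}(x)\subset F(x)$), which is exactly what is reused later in the proof of Theorem~\ref{sn}; your nearest-point approximants need not be nested or contained in $F(x)$, so they prove the lemma as stated but would not serve that later purpose. One small point worth making explicit in a final write-up is that $\bcs$ is separable because it sits inside the separable space $\K_b(\R^d)$, so the countable dense family $\{K_i\}$ indeed exists.
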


\begin{proof} Suppose first that $F: \Omega \to \abcs$. Then the
  corresponding seminorm function
  $\rho: \Omega \times \R^d \to [0,\infty)$ from Theorem \ref{mc} is
  actually a norm function. With the same notation as in the proof of
  Theorem \ref{mc},  for each $n\in\N$ we define the seminorm function
\[
\rho^n(x,v) = \sup_{1\leq i \leq n} |l_i(v)| \chi_{A_{l_i}}(x)
  \qquad\text{for all }(x,v) \in \Omega \times \R^d.
\]
Let $F_n: \Omega \to \bcs$ be the corresponding convex-valued
function. Clearly, $F_n$ is a simple measurable function. Since
$\rho^n(x,v) \nearrow \rho_x(v)$ as $n\to\infty$ for all
$(x,v)\in \Omega \times \R^d$, we have that
\[
F_1(x) \subset F_2(x) \subset \cdots \qquad\text{and}\qquad F(x)= \bigcup_{n\in \N} F_n(x).
\]
For any $x\in \Omega$, $F_n(x) \in \abcs$ for sufficiently large $n$. 
By the characterization of the convergence of convex bodies in~\cite[Theorem~1.8.7]{MR1216521},
we have that $F_n(x) \to F(x)$ as $n\to \infty$ with
respect to the Hausdorff distance in $\bcs$.

Finally, let $F:\Omega \to \bcs$ be any measurable function.  Define
the measurable functions $G_n: \Omega \to \abcs$, $n\in \N$, by
$G_n(x)=F_n(x)+\frac{1}{n}\overline {\mathbf B}$. Since each $G_n$ is a
pointwise limit of simple measurable mappings, a Cantor diagonalization argument
shows that so is $F$.
\end{proof}

The next result extends the correspondence between convex-set valued
mappings and seminorm functions in Theorem \ref{mc} to their
respective integrals.

\begin{theorem}\label{sn} Let $F: \Omega \to \bcs$ be a measurable
  mapping, and  let $\rho$ be the corresponding seminorm function
  given by  \eqref{mc1}. Then $F$ is integrably bounded if and only if
  for all $v\in \R^d$,
\begin{equation}\label{sn0}
p(v):= \int_\Omega \rho_x(v) d\mu(x)<\infty.
\end{equation}
In this case $p$ is a seminorm  which coincides with the Minkowski
functional of the polar set of $\int_\Omega F d\mu$. In other words,
\begin{equation}\label{sn2}
  \int_\Omega F d\mu
  =
  \bigg\{v\in\R^d: \int_\Omega \rho_x(v) d\mu(x) \le 1 \bigg\}^\circ.
\end{equation}
\end{theorem}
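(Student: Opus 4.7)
The plan is to reduce everything to the support function identity $\rho_x(v) = h_{F(x)}(v)$, which holds by Lemma \ref{sf} since by hypothesis $\rho_x = p_{F(x)^\circ}$. This will let me translate statements about the Aumann integral into statements about suprema of integrals of linear functionals over $S^1(\Omega,F)$.

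First I would establish the equivalence between integrable boundedness and finiteness of $p(v)$ for all $v$. If $F(x) \subset k(x)\overline{\mathbf B}$ for some $k \in L^1(\Omega)$, then $\rho_x(v) = h_{F(x)}(v) \le k(x)|v|$, so $p(v) \le |v|\,\|k\|_{L^1} < \infty$. Conversely, if $p(v) < \infty$ for all $v$, then since $F(x)$ is symmetric, for any $w \in F(x)$ the estimate $|\langle w, e_i\rangle| \le h_{F(x)}(e_i) = \rho_x(e_i)$ gives $|w| \le \sqrt{d}\max_i \rho_x(e_i)$, and the function $x \mapsto \sqrt{d}\max_i \rho_x(e_i)$ lies in $L^1(\Omega)$ by Definition~\ref{normf}(i) together with the assumption $p(\pm e_i) < \infty$. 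The seminorm properties of $p$ follow immediately from those of each $\rho_x$ and the linearity of the integral.

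Now set $K = \int_\Omega F\,d\mu$; by Theorem~\ref{cci}, $K \in \bcs$. The identity \eqref{sn2} amounts (via Theorem~\ref{mc} and the one-to-one correspondence there) to the equality $p_{K^\circ}(v) = p(v)$ for all $v \in \R^d$. By Lemma~\ref{sf} and the definition of the Aumann integral,
\[
p_{K^\circ}(v) = h_K(v) = \sup_{w \in K} \langle v, w\rangle = \sup_{f \in S^1(\Omega,F)} \int_\Omega \langle v, f(x)\rangle\,d\mu.
\]
The inequality $p_{K^\circ}(v) \le p(v)$ is immediate: for $f \in S^1(\Omega,F)$, $\langle v, f(x)\rangle \le h_{F(x)}(v) = \rho_x(v)$, and integration preserves the bound.

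The reverse inequality is the main obstacle and requires a measurable-selection argument. Using Theorem~\ref{char}(iv), I would fix a Castaing representation $F(x) = \overline{\{f_k(x) : k \in \N\}}$; since $F$ is integrably bounded, each $f_k \in S^1(\Omega,F)$. By continuity of $w\mapsto \langle v, w\rangle$, $\rho_x(v) = \sup_k \langle v, f_k(x)\rangle$. For each $n \in \N$, I would partition $\Omega$ into measurable pieces according to which index in $\{1,\dots,n\}$ first attains $\max_{1\le k \le n}\langle v, f_k(x)\rangle$, and define $g_n(x)$ to be the corresponding $f_k(x)$ on that piece; then $g_n$ is measurable, $g_n \in S^1(\Omega,F)$, and $\langle v, g_n(x)\rangle \nearrow \rho_x(v)$ pointwise. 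Monotone convergence (with domination by $|v|k(x)$ to justify finiteness) then gives
\[
\int_\Omega \langle v, g_n(x)\rangle\,d\mu \longrightarrow p(v),
\]
so $p_{K^\circ}(v) \ge p(v)$. Combining the two inequalities gives $p_{K^\circ} = p$, and then \eqref{sn2} follows from the characterization $K = \{v : p_{K^\circ}(v) \le 1\}^\circ$ already established in Theorem~\ref{mc}.
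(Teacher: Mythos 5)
Your proof is correct, but it takes a genuinely different route from the paper. You reduce everything to the scalar identity $\rho_x(v)=h_{F(x)}(v)$ and prove $p_{K^\circ}=p$ for $K=\int_\Omega F\,d\mu$ directly: one inequality by integrating $\langle v,f(x)\rangle\le\rho_x(v)$ over selections, the other by building near-optimal selections $g_n$ from a Castaing representation (first-attaining-index partition) and passing to the limit by scalar monotone/dominated convergence; closedness and boundedness of $K$, needed to invoke Lemma~\ref{sf} and the bipolar theorem, come from Theorem~\ref{cci}. The paper instead proves the identity first for simple mappings via Theorem~\ref{eqnfun-bis}(a)(b), then approximates a general $F$ monotonically from inside (in the absorbing case) and from outside by $F_j=F+\tfrac{k}{j}\mathbf B$ (in the general case), invoking the set-valued dominated convergence theorem for Kuratowski limits together with Theorem~\ref{eqnfun-bis}(c),(d). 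Your argument is shorter and more elementary: it needs no set-valued convergence theorem, no simple-function approximation, and no case split between absorbing and non-absorbing $F$; its only nontrivial input beyond Section~\ref{section:prelim} is Theorem~\ref{cci} (whose proof uses Dunford--Pettis/Mazur), which is legitimately available at this point. The paper's route, while heavier, runs parallel to machinery (Lemma~\ref{pl}, monotone limits of convex bodies) that it reuses elsewhere. Two small points: the final step should be credited to Theorem~\ref{min} and Theorem~\ref{pol}(c) (unit ball of $p_{K^\circ}$ is $K^\circ$, then bipolar) rather than to Theorem~\ref{mc}, which merely encodes the same correspondence; and in your integrable-boundedness argument it is worth stating explicitly that $k(x)=\sqrt d\max_i\rho_x(e_i)$ is measurable by Definition~\ref{normf}(i) and integrable because it is dominated by $\sqrt d\sum_i\rho_x(e_i)$, whose integral is $\sqrt d\sum_i p(e_i)<\infty$.
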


\begin{proof}
  We first consider the special case when $F: \Omega \to \bcs$ is a
  simple mapping: i.e., $F$ takes only finitely many values
  $K_1,\ldots,K_m \in \bcs$. Hence, it can be written in the form
\[
F(x) = \sum_{i=1}^m \chi_{A_i}(x) K_i \qquad x\in \Omega,
\]
where $A_1,\ldots, A_m$ are disjoint measurable sets such that
$\bigcup_{i=1}^m A_i = \Omega$. The corresponding seminorm function
$\rho$ also takes on finitely many values and satisfies
\[
\rho_x(v)= \sum_{i=1}^m \chi_{A_i}(x) p_{(K_i)^\circ}(v), \qquad (x,v) \in \Omega \times \R^d.
\]

The mapping $F$ is integrably bounded if and only if
$\mu(A_i) <\infty$ for any $i$ such that $K_{i} \ne \{0\}$. In this case the
Aumann integral of $F$ equals
\[
K=\int_\Omega F d\mu = \sum_{i=1}^m \mu(A_i) K_i,
\]
where we use the convention that $\mu(A_i) K_i=\{0\}$ if
$\mu(A_i)=\infty$ and $K_i=\{0\}$.  Hence, by Theorem
\ref{eqnfun-bis}({\em a})({\em b}) the seminorm $p$ given by \eqref{sn0} satisfies
\[
p= \sum_{i=1}^m \mu(A_i) p_{(K_i)^\circ}= p_{K^\circ}.
\]
Thus, \eqref{sn0} and \eqref{sn2} hold exactly when $F$ is integrably
bounded.  This proves Theorem \ref{sn} for simple mappings $F: \Omega
\to \bcs$.

\medskip

Now fix a general measurable mapping
$F: \Omega \to \mathcal K_{bcs}(\R^d)$ and let $\rho$ be the
associated seminorm function.  Suppose first that $F$ is integrably
bounded.  We need to show that for all $v\in \R^d$, $x\mapsto
\rho_x(v)$ is in $L^1(\Omega)$.    Since $F$ is integrably bounded, there
exists $k\in L^1$ such that for all $x$, $F(x)\subset k(x)\mathbf{B}$.  But
then by Lemma~\ref{sf},
\begin{equation} \label{eqn:sn3}
  \rho_x(v) = p_{F(x)^\circ}(v) = h_{F(x)}(v)
  = \sup_{w\in F(x)} \langle v, w \rangle 
  \leq \sup_{w\in k(x)\mathbf{B}} \langle v, w \rangle
  = k(x)|v|. 
\end{equation}
It is immediate that $x \mapsto \rho_x(v)$ is in $L^1$.

Conversely,  suppose that \eqref{sn0} holds for all $v\in \R^d$. Define 
$k:\Omega \to [0,\infty)$ by
\[
k(x):=\sup_{v\in\R^d, \ |v|=1} \rho_x(v), \qquad x\in\Omega.
\]
Then,  since $v$ is a convex combination of the standard basis vectors $\pm e_i$, $i=1,\ldots, d$, by the
triangle inequality
\[
\int_\Omega k(x)d \mu(x) 
\le 
\int_{\Omega} \sum_{i=1}^d \rho_x(e_i) d\mu(x) 
= 
\sum_{i=1}^d p(e_i) <\infty.
\]
Thus, $k\in L^1(\Omega)$.  Furthermore, if we let $\rho_k$ be the
seminorm function defined by $x\mapsto k(x)|\cdot|$, then
$\rho_x(v)\leq \rho_k(v)$, and arguing as we did in~\eqref{eqn:sn3} we
conclude that $F$ is integrably bounded.

\medskip

We now prove \eqref{sn2}. We will first prove the special case
where 
$F(x) \subset \R^d$ is absorbing for all $x\in \R^d$.
If we argue as we did in the proof of Lemma~\ref{pl}, then we have
that there exists a sequence of simple, convex-set value mappings
$F_n : \Omega \rightarrow \K_{bcs}(\R^d)$ such that $F_n(x)\subset
F_{n+1}(x)$ for all $n\in \N$ and
\[ F(x) = \bigcup_{n\in \N} F_n(x). \]
We  now apply
the Lebesgue dominated convergence theorem for
convex-set valued mappings \cite[Theorem 8.6.7]{MR2458436}  to the
sequence $\{F_n\}_{n\in\N}$  to get that
\begin{equation}\label{sn16}
  K = \int_\Omega F \,d\mu
  = \lim_{n\to \infty} \int_\Omega F_n \,d\mu 
  = \overline{\bigcup_{n\in \N} \int_\Omega F_n \,d\mu }
  = \clconv\bigg(\bigcup_{n \in \N} K_n\bigg), 
\end{equation}
where   $K_n = \int_\Omega F_n d\mu$.
Here we interpret the limit in the middle term as the Kuratowski
limit of closed sets~\cite[Section~1.1]{MR2458436}. Similarly, if we
let $\rho^n$  be the seminorm associated with $F_n$ (by Theorem~\ref{mc}), and if we apply the 
monotone convergence theorem to the sequence $\{\rho^n\}_{n\in\N}$, we
get that for all $v\in \R^d$,
\begin{equation}\label{sn18}
  p(v)=\int_\Omega \rho_x(v) \,d\mu(x)
  = \lim_{n\to \infty} \int_\Omega \rho_x^n(v) \,d\mu(x)
  = \sup_{n\in\N} p_n(v),
\end{equation}
where $p_n(v)=\int_\Omega \rho_x^n(v)$.
As we proved above for simple functions, for each
$n\in\N$,
\begin{equation}\label{sn20}
K_n  = \bigg\{v\in\R^d: p_n(v) \le 1 \bigg\}^\circ.
\end{equation}
Therefore, 
by Theorems \ref{eqfun} and~\ref{eqnfun-bis}(c),
\[ p(v)  = \sup_{n\in\N} p_n(v) = \sup_{n\in\N} p_{K_n^\circ}(v) =
  p_{K^\circ}(v). \]
It follows at once that~\eqref{sn2} holds.

\medskip

Finally, we consider the general case where $F: \Omega \to \bcs$ is an
arbitrary integrably bounded mapping.  For each $j\in \N$,  define a new
convex-set valued mapping $F_j = F+  \tfrac{k}{j} \mathbf B$ and its
corresponding seminorm function $\rho^j$.  Then $F_j$ is
integrably bounded and absorbing, and so $\rho^j_x(\cdot)$ is a norm for all
$x\in \Omega$.   Therefore, by the previous case,
\begin{equation}\label{sn22}
  K_j = \int_\Omega F_j \,d\mu
  = \bigg\{v\in\R^d: \int_\Omega \rho^j_x(v) \,d\mu(x) \le 1 \bigg\}^\circ.
\end{equation}
The convex sets $K_j$ form a nested, decreasing sequence, so again 
by the Lebesgue dominated convergence theorem for convex-set valued mappings
\cite[Theorem 8.6.7]{MR2458436} we have that
\[
K=\int_\Omega F d\mu = \lim_{j \to \infty} \int_\Omega F_j \,d\mu
=\bigcap_{j\in \N} \int_\Omega F_j \,d\mu =\bigcap_{j\in \N} K_j,
\]
where the limit is the Kuratowski limit.
By the dominated convergence theorem applied to the sequence
$\rho^j$, we have that for
all $v\in \R^d$, 
\[
  p(v)
  = \lim_{j \to \infty} \int_\Omega \rho^j_x(v) \,d\mu(x)
  = \inf_{j\in \N} \int_\Omega \rho^j_x(v)\, d\mu(x)
  = \inf_{j\in \N}p_{(K_j)^\circ}(v). \]
Therefore, by Theorems \ref{eqfun} and \ref{eqnfun-bis}({\em d}), the identity~\eqref{sn2} follows at once.
\end{proof}

\subsection*{$L^p$ spaces of convex-set valued functions}
In this section we define a natural generalization of the space $L^p(\Omega,\rho)$ of vector-valued functions $f: \Omega \to \R^d$ equipped with the norm
\[
  \|f\|_{L^p(\Omega,\rho)}
  = \bigg( \int_\Omega \rho_x(f(x))^p d\mu(x) \bigg)^{\frac{1}{p}} <\infty.\]
Recall our standing assumption that $(\Omega,\mathcal A,\mu)$ is a
positive, $\sigma$-finite, and complete measure space. Given a fixed
seminorm function $\rho: \Omega \times \R^d \to [0,\infty)$ we will
define the space $L^p_{\K} (\Omega,\rho)$ of convex-set valued mappings
$F:\Omega \to \bcs$. 
To do so, we first prove a basic measurability lemma. For any seminorm
$p$ on $\R^d$ and $K\in \bcs$, define $p(K)=\sup\{ p(v): v\in K\}$.

\begin{lemma} \label{lemma:norm-measure}
  Let $\rho: \Omega \times \R^d \to [0,\infty)$ be a seminorm function
  and let $F: \Omega \to \bcs$ be a measurable convex-set valued
  mapping. Then
\[ 
x\mapsto \rho_x(F(x)) = \sup\{ \rho_x(v) : v \in F(x) \} \]
is a measurable  function from  $\Omega$ to $[0,\infty)$.
\end{lemma}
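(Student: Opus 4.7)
The plan is to combine the Castaing representation of $F$ (Theorem~\ref{char}\emph{(iv)}) with the joint measurability of $\rho$ established in the proof of Theorem~\ref{mc} (property \emph{(i-a)}), and then exploit the continuity of the seminorm in its vector argument.

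First I would invoke Theorem~\ref{char}\emph{(iv)} to obtain a countable family $\{f_k\}_{k\in\N}$ of measurable selection functions for $F$ such that
\[
F(x) = \overline{\{f_k(x): k \in \N\}} \qquad\text{for all } x\in\Omega.
\]
Next, since each seminorm $\rho_x(\cdot)$ on $\R^d$ is continuous (indeed $|\rho_x(v)-\rho_x(w)| \le \rho_x(v-w) \le \big(\sum_i \rho_x(e_i)\big)|v-w|$), the density of $\{f_k(x)\}$ in $F(x)$ yields
\[
\rho_x(F(x)) = \sup\{\rho_x(v) : v \in F(x)\} = \sup_{k\in\N} \rho_x(f_k(x)).
\]

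It therefore suffices to show that each function $x\mapsto \rho_x(f_k(x))$ is measurable, because the countable supremum of measurable functions is measurable. For this I would appeal to property \emph{(i-a)} established inside the proof of Theorem~\ref{mc}, which says that $\rho : \Omega\times\R^d\to[0,\infty)$ is measurable with respect to the product $\sigma$-algebra $\mathcal A\otimes\mathcal B$. Since $f_k:\Omega\to\R^d$ is measurable, the map $x\mapsto (x,f_k(x))$ is measurable from $(\Omega,\mathcal A)$ into $(\Omega\times\R^d,\mathcal A\otimes\mathcal B)$. Composing with $\rho$ shows that $x\mapsto \rho(x,f_k(x)) = \rho_x(f_k(x))$ is measurable, completing the argument.

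The only delicate point is the appeal to joint measurability of $\rho$; property \emph{(i)} of Definition~\ref{normf} only guarantees measurability in $x$ for each fixed $v$, and one needs the stronger joint statement to compose with the measurable selection $f_k$. Fortunately this is exactly what is proved as \emph{(i-a)} inside Theorem~\ref{mc}, so I would simply cite that step. There is no remaining obstacle.
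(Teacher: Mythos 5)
Your proof is correct, and it shares the paper's skeleton: both arguments invoke the Castaing representation from Theorem~\ref{char}\emph{(iv)} and reduce the claim to the measurability of the countable supremum $\sup_{k}\rho_x(f_k(x))$, using the continuity of $\rho_x$ in $v$ to pass from the dense set of selections to all of $F(x)$. Where you diverge is in the justification of the single remaining step, the measurability of $x\mapsto\rho_x(f_k(x))$. The paper stays at the level of Definition~\ref{normf}\emph{(i)}: it observes that for a simple vector-valued function $f=\sum_i v_i\chi_{A_i}$ the map $x\mapsto\rho_x(f(x))=\sum_i\rho_x(v_i)\chi_{A_i}(x)$ is measurable, and then writes $f_k$ as a pointwise limit of such simple functions (continuity of $\rho_x$ again giving measurability in the limit). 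You instead cite the joint measurability of $\rho$ with respect to $\mathcal A\otimes\mathcal B$ (property \emph{(i-a)} established inside the proof of Theorem~\ref{mc}) and compose with the measurable map $x\mapsto(x,f_k(x))$; since Theorem~\ref{mc} precedes this lemma and does not depend on it, there is no circularity, and the composition argument is sound. The trade-off: your route is shorter once \emph{(i-a)} is granted, but it leans on a fact proved only in the course of another proof rather than stated as a standalone result; the paper's simple-function approximation is slightly longer but needs nothing beyond the defining property \emph{(i)} of a seminorm function, so it is self-contained. Your Lipschitz estimate $|\rho_x(v)-\rho_x(w)|\le\rho_x(v-w)\le\bigl(\sum_i\rho_x(e_i)\bigr)|v-w|$ correctly supplies the continuity that the paper uses only implicitly.
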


\begin{proof}
If $f=\sum v_i\chi_{A_i}$ is a simple, vector-valued function, then
the map $x\mapsto \rho_x(f(x))= \sum_i \rho_x(v_i)\chi_{A_i}(x)$, is
measurable by the definition of seminorm functions.  
By Theorem \ref{char} there exists a sequence of measurable
selection functions
  $\{f_k\}_{k\in \N}$ of $F$ such that~\eqref{char2} holds.
Since for each $k\ge 1$, the function $f_k: \Omega \to \R^d$ is a
pointwise limit of simple measurable functions,  so by the above
observation we have that
\[
x\mapsto \rho_x(F(x)) = \sup_{k\in \N} \rho_x(f_k(x))
\]
is measurable.
\end{proof}

\begin{definition}
Suppose that $\rho$ is a seminorm function on $\Omega$. For each $p$,
$0<p<\infty$, define the Lebesgue space of convex-set valued mappings
$L^p_{\K}(\Omega,\rho)$ to be the set of measurable mappings
$F:\Omega \to \bcs$ such that
\[ \|F\|_{L_\K^p(\Omega,\rho)}= \|F\|_p
  = \bigg(\int_\Omega \rho_x(F(x))^p d\mu(x) \bigg)^{\frac{1}{p}}<\infty.
\]
 When $p=\infty$, define $L^\infty(\Omega,\rho)$ to be the set of all
 such $F$ that satisfy
 \[
 \|F\|_{L_\K^\infty(\Omega,\rho)}= \|F\|_\infty = \esssup_{x\in \Omega} \rho_x(F(x))<\infty.
 \]
\end{definition}

A straightforward argument shows that $\|\cdot \|_p$, $1\le p \le
\infty$, satisfies the usual properties of a seminorm:
\begin{enumerate}
\item
if $F(x)=\{0\}$ for a.e.~$x \in \Omega$, then $\|F\|_p =0$, and if $\rho$ is
a norm for almost every $x$, then the converse holds;
\item
$\|\alpha F\|_p = |\alpha| \|F\|_p$ for any $F\in L^p_{\K}$ and $\alpha\in \R$;
\item
$\|F+G\|_p \le \|F\|_p + \|G\|_p$ for any $F,\,G\in L^p_{\K}$.
\end{enumerate}
However, unlike its classical vector-valued analog,
$L^p_{\K}(\Omega,\rho)$ is {\bf not} a vector space because $\bcs$
equipped with the Minkowski addition is only a semigroup: the additive
inverse does not exist. Nevertheless, we have that
$L^p_{\K}(\Omega,\rho)$ is a complete metric space.

Recall that as we noted above, the set of nonempty, compact, convex
sets equipped with the Hausdorff distance \eqref{hd} is a complete
metric space. Given a norm $\rho_x$ on $\R^d$ and two compact sets
$K_1,K_2 \subset \R^d$,  define the corresponding Hausdorff distance function
\begin{equation} \label{eqn:hausdorff-dist-func}
d_{H,x}(K_1,K_2)=\max\{ \sup_{v\in K_1} \inf_{w\in K_2} \rho_{x}(v-w), \sup_{v\in K_2} \inf_{w\in K_1} \rho_{x}(v-w) \}
\end{equation}
If in \eqref{eqn:hausdorff-dist-func} the sets $K_1$ and $K_2$ are replaced by  countable dense subsets, this function is measurable. 
By Lemma~ \ref{pl},  any
measurable convex-set valued mapping $F:\Omega \to \bcs$ is a pointwise
limit of simple measurable mappings with respect to the Hausdorff topology
on $\bcs$.  Hence, given any $F,\,G \in L^p_{\K}(\Omega,\rho)$, we have that
$x\mapsto  d_{H,x}(F(x), G(x))$ is measurable, so we can define
the distance function
\begin{equation} \label{metric}
d_p(F,G)= \bigg(\int_\Omega d_{H,x}(F(x), G(x))^p d\mu(x) \bigg)^{\frac{1}{p}}.
\end{equation}
Note that for any $F \in L^p_{\K}(\Omega,\rho)$, $\|F\|_p = d_p(F,\{0\})$.
Moreover, we have that $d_p$ is a metric and we have the following
analogue of the classical result for vector-valued $L^p_\K(\Omega,\rho)$
spaces.

\begin{theorem} \label{thm:complete-metric-space}
Given  a norm function $\rho :\Omega \times [0,\infty) \rightarrow
\R^d$, 
  the space $L^p_{\K}(\Omega,\rho)$, $1\le p\le \infty$, equipped with
  $d_p$ is a complete metric space (after identifying functions that
  are equal to $\{0\}$ a.e). In addition, this metric is invariant and
  homogeneous: that is, for $F,\,G,\,H \in L^p_{\K}$ and $\alpha \in \R$, 
\begin{gather}\label{lp1}
d_p(F+H,G+H) =d_p(F,G) \\
\label{lp2}
d_p(\alpha F, \alpha G)  = |\alpha|d_p (F,G).
\end{gather}
\end{theorem}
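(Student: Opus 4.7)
The plan is to establish the metric axioms, then invariance and homogeneity, and finally completeness, reducing each claim to a pointwise statement on $\bcs$ followed by an integration argument.

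First I would verify that $d_p$ is a metric. Non-negativity and symmetry are immediate from the definition \eqref{metric}. For nondegeneracy, since $\rho$ is a norm function, $d_{H,x}$ is a genuine metric on $\bcs$ for a.e.~$x$, so $d_p(F,G)=0$ forces $d_{H,x}(F(x),G(x))=0$ a.e., whence $F(x)=G(x)$ a.e. (this is where we identify functions equal $\{0\}$ a.e., or rather equal a.e., to get antisymmetry). The triangle inequality follows by combining the pointwise triangle inequality for $d_{H,x}$ (valid since $\rho_x$ is a norm) with the classical Minkowski inequality in scalar $L^p(\Omega,\mu)$ applied to the nonnegative measurable functions $x\mapsto d_{H,x}(F(x),G(x))$ (measurable by the remark following \eqref{eqn:hausdorff-dist-func}).

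Next, both \eqref{lp1} and \eqref{lp2} reduce to pointwise identities on $\bcs$ that I would integrate over $\Omega$. Homogeneity $d_{H,x}(\alpha K_1,\alpha K_2)=|\alpha|\,d_{H,x}(K_1,K_2)$ is immediate from the homogeneity of $\rho_x$. For translation invariance under Minkowski addition, $d_{H,x}(K_1+K,K_2+K)=d_{H,x}(K_1,K_2)$ on convex bodies, I would use the representation via support functions: since the support function is additive under Minkowski sums, $h_{K_1+K}-h_{K_2+K}=h_{K_1}-h_{K_2}$ on the $\rho_x^*$-unit sphere, and the Hausdorff distance induced by the norm $\rho_x$ equals the supremum of $|h_{K_1}-h_{K_2}|$ over that sphere (compare Definition \ref{supf} and Lemma \ref{sf}).

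For completeness I would run the standard Riesz--Fischer argument. Given a Cauchy sequence $\{F_n\}$ in $L^p_\K$, extract a subsequence $\{F_{n_k}\}$ with $d_p(F_{n_k},F_{n_{k+1}})\le 2^{-k}$, and set $g_k(x)=d_{H,x}(F_{n_k}(x),F_{n_{k+1}}(x))$. Minkowski's inequality in scalar $L^p(\Omega)$ gives $\|\sum_k g_k\|_{L^p}<\infty$, so $\sum_k g_k(x)<\infty$ for a.e.~$x$. Hence for a.e.~$x$ the sequence $\{F_{n_k}(x)\}$ is Cauchy in $(\bcs,d_{H,x})$; since $\rho_x$ is equivalent to the Euclidean norm, $d_{H,x}$ is equivalent to the standard Hausdorff metric on $\K_b(\R^d)$, which is complete, and $\bcs$ is a closed subset (as noted just before Theorem \ref{CV}), so the limit $F(x)\in\bcs$ exists. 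Fatou's lemma applied to $d_{H,x}(F_{n_k}(x),F(x))\le\sum_{j\ge k}g_j(x)$ then yields $d_p(F_{n_k},F)\to 0$, and a standard triangle-inequality argument upgrades this to $d_p(F_n,F)\to 0$ for the full Cauchy sequence.

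The main obstacle I expect is the measurability of the limit $F$: one must show that the pointwise Hausdorff limit of a sequence of measurable $\bcs$-valued maps is measurable in the sense of Definition \ref{defn:measurable}. This is precisely where Theorem \ref{CV} is essential, since it identifies our notion of measurability with measurability into the metric space $(\K_b(\R^d),d_H)$, and pointwise limits in a separable metric space of measurable maps are measurable by a standard argument. I would also need to ensure, via the countable dense selection remark after \eqref{eqn:hausdorff-dist-func} together with the Castaing representation in Theorem \ref{char}, that $x\mapsto d_{H,x}(F(x),G(x))$ is genuinely measurable so that \eqref{metric} is well defined throughout.
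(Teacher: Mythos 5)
Your proposal is correct and follows essentially the same route as the paper: the metric axioms via the pointwise triangle inequality for $d_{H,x}$ plus scalar Minkowski, invariance and homogeneity from the corresponding pointwise properties of $d_{H,x}$, and completeness by the Riesz--Fischer subsequence argument using closedness of $\bcs$ in the Hausdorff topology and Theorem \ref{CV} for measurability of the pointwise limit. The only differences are elaborations the paper leaves implicit (the support-function proof of translation invariance, Fatou in place of dominated convergence at the final step), and, like the paper, you may simply note that the easier case $p=\infty$ is handled separately.
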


\begin{proof}
  The proof that $d_p$ is a metric is straightforward:  it follows from the
  triangle inequality for the Hausdorff distance $d_{H,x}$ and Minkowski's
  inequality on the scalar-valued spaces $L^p(\Omega)$.  Properties
  \eqref{lp1} and \eqref{lp2} then follow immediately from the analogous
  properties for the Hausdorff distance $d_{H,x}$.

  It remains to prove that $L^p_{\K}(\Omega,\rho)$ is complete.  We
  will do this in the case $1\le p<\infty$ by adapting the proof of
  the classical Riesz-Fischer theorem. The case $p=\infty$ is
  much easier: the proof is similar to that of the  completeness of
  $L^\infty(\Omega)$ and we leave the details to the reader.

  Let $\{F_n\}_{n\in\N}$ be a Cauchy sequence in
  $L^p_{\K}(\Omega,\rho)$. Then there exists a strictly increasing
  sequence $\{n_i\}_{i\in \N}$ such that for $i\geq 1$, 
$d_p(F_{n_{i+1}},F_{n_i})< 2^{-i}$. 
Let $\mathbf B_x=\{v\in \R^d: \rho_x(v)\le 1\}$.  For each $k\in \N$,
define $g_k:\Omega \to [0,\infty)$ by 
\[
g_k(x) = \sum_{i=1}^k d_{H,x}(F_{n_{i+1}}(x),F_{n_i}(x)), \qquad x\in \Omega,
\]
and define $g:\Omega \to [0,\infty]$ by 
\[
g(x) = \sum_{i=1}^\infty d_{H,x}(F_{n_{i+1}}(x),F_{n_i}(x)), \qquad x\in \Omega
. 
\]
We claim that  $g(x)<\infty$ for a.e.~$x\in
\Omega$.   To see this, note that 
by  Minkowski's inequality,
\begin{multline*}
  \|g_k\|_p
  = \bigg(\int_\Omega \bigg(\sum_{i=1}^k
  d_{H,x}(F_{n_{i+1}}(x),F_{n_i}(x)) \bigg)^p d\mu(x) \bigg)^{\frac{1}{p}}  \\
\le 
\sum_{i=1}^k d_p(F_{n_{i+1}},F_{n_i}) 
\le
\sum_{i=1}^k 2^{-i} <1.
\end{multline*}
Then by Fatou's lemma we have that
\[
  \|g\|^p_p
  \le \liminf_{k\to\infty} \|g_k\|_p^p
  \le 1.
\]

For each $k\in \N$,
define $h_k:\Omega \to [0,\infty]$ by 
\[
h_k(x)= \sum_{i=k}^\infty d_{H,x}(F_{n_{i+1}}(x),F_{n_i}(x)), \qquad x\in \Omega. 
\]
For any $i,j \ge k$, the triangle inequality implies that
\[
d_{H,x}(F_{n_{i}}(x),F_{n_j }(x)) \le h_k(x) \leq g(x).
\]
Since $g(x)<\infty $ for a.e.~$x\in \Omega$, we have $h_k(x) \to 0$ as
$k\to \infty$. Hence, for a.e.~$x\in \Omega$, the sequence
$\{F_{n_i}(x)\}_{i\in \N}$ is Cauchy in $\bcs$ with respect to the
Hausdorff distance $d_{H,x}$ given by \eqref{eqn:hausdorff-dist-func}.
Since $\bcs$ is a closed subset of $\K_b(\R^d)$ in the Hausdorff
topology, the sequence $\{F_{n_i}(x)\}_{i\in \N}$ converges to some
set $F(x)\in \bcs$ for a.e.~$x\in\Omega$. Since $F: \Omega \to \bcs$
is the pointwise a.e.~limit of measurable functions $F_{n_i}$,
$i\in \N$, $F$ is measurable as well.  Finally, we have that $F$ is
the limit of $\{F_n\}_{n\in\N}$ in $L^p_{\K}$.  Since
$F_{n_i}(x)\rightarrow F(x)$ in $d_{H,x}$ distance, by the triangle
inequality,
\[ d_{H,x}(F(x),F_{n_k}(x))
  \leq \lim_{i\rightarrow \infty} d_{H,x}(F_{n_i}(x),F_{n_k}(x)) \leq
  h_k(x). \]
Therefore, as $k\rightarrow \infty$,  by the Lebesgue dominated convergence theorem
we have that
\begin{multline*}
\lim_{k\rightarrow \infty}  d_p(F,F_{n_k})^p
  = \lim_{k\rightarrow \infty}  \int_\Omega d_{H,x}(F(x), F_{n_k}(x))^p d\mu(x)  \\
  \le \lim_{k\rightarrow \infty}  \int_\Omega \bigg(\sum_{i=k}^\infty
  d_{H,x}(F_{n_{i+1}}(x),F_{n_i}(x)) \bigg)^p d\mu(x)
  = 0. 
\end{multline*}
Finally, since $\{F_n\}_{n\in\N}$ is a Cauchy sequence in
$L^p_{\K}(\Omega,\rho)$, by a standard argument we have that
$d_p(F,F_n) \to 0$ as $n\to \infty$. This completes the proof.
\end{proof}

\begin{remark}
  While $L^p_{\K}(\Omega,\rho)$ is not a Banach space, one can show that it
  is a convex cone of some Banach space. By the R\aa dstr\"om embedding
  theorem \cite[Theorem 1]{MR0045938}, the collection of
  all nonempty, compact convex subsets of a normed, real vector space
  (endowed with the Hausdorff distance) can be isometrically embedded
  as a convex cone in a normed real vector-space. Thus, for
  a.e.~$x$, the Hausdorff distance $d_{H,x}$ comes from a
  certain norm $\|\cdot \|_x$ on a vector space $V$, which
  consists of equivalence classes of pairs of compact convex sets
  under the relation:
\[
(K_1,K_2) \sim (K_3,K_4) \quad \text{if and only if} \quad  K_1+K_4=K_2+K_3.
\]
The space $V$ with norm $\|\cdot\|_x$ is a complete separable normed
space; all the norms $\|\cdot\|_x$ are mutually equivalent since they
come from equivalent norms $\rho_x$ on a finite dimensional space
$\R^d$.  Thus, $L^p_{\K}(\Omega,\rho)$ can be identified with a
weighted vector-valued space $L^p(\Omega, \|\cdot\|_x)$ consisting of
all measurable functions $f:\Omega \to V$ such that
\[
\|f\|_p = \bigg(\int_\Omega \|f(x)\|_x^p d\mu(x) \bigg)^{\frac{1}{p}}<\infty \}.
\]
Since we will not use this fact elsewhere, we leave the details to the interested reader.
\end{remark}

\subsection*{Matrix weights and seminorms}
Let $A : \Omega \rightarrow \Md$ be a measurable matrix mapping.
Then we can define a seminorm function $\rho_A$ by
\[ \rho_A(x,v) = |A(x)v|, \qquad  x\in \Omega, \, v \in \R^d. \]
Clearly, for each $x$, $\rho_A(x,\cdot)$ is a seminorm, and since $A$
is measurable, the map $x\mapsto \rho_A(x,v)$ is measurable for all
$v$.  Moreover, in defining seminorms it suffices to restrict
ourselves to measurable, positive semidefinite matrix  mappings $W : \Omega \rightarrow \Sd$.

\begin{theorem} \label{thm:Sd-weight}  Given a measurable matrix mapping $A : \Omega \rightarrow \Md$,
  there exists a measurable matrix mapping $W : \Omega \rightarrow
  \Sd$ such that for all $x\in \Omega$ and $v\in \R^d$,
  $\rho_A(x,v)=\rho_W(x,v)$.  If $A$ is invertible for a.e. $x\in
  \Omega$, then $W$ is positive definite almost everywhere.
\end{theorem}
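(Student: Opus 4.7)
The natural choice is $W(x) = (A(x)^*A(x))^{1/2}$, the positive semidefinite square root of $A(x)^*A(x)$. The plan is to verify that this $W$ has the required properties.

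First, I would check the pointwise identity $\rho_W = \rho_A$. For any $x\in \Omega$ and $v \in \R^d$, since $W(x)$ is symmetric,
\[
\rho_W(x,v)^2 = |W(x)v|^2 = \langle W(x)^*W(x)v, v \rangle = \langle W(x)^2 v, v \rangle = \langle A(x)^*A(x)v, v\rangle = |A(x)v|^2 = \rho_A(x,v)^2.
\]
This uses only that $W(x)^2 = A(x)^*A(x)$, which is the defining property of the positive semidefinite square root.

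Second, I would establish measurability of $W$. The map $A \mapsto A^*A$ from $\Md$ to $\Sd$ is a polynomial in the entries, hence continuous, so $x \mapsto A(x)^*A(x)$ is a measurable $\Sd$-valued mapping. It then suffices to know that the square root map $\Sd \to \Sd$, $B \mapsto B^{1/2}$, is continuous. This is a standard fact from the continuous functional calculus on symmetric matrices: the eigenvalues depend continuously on $B$, and the associated spectral projections can be chosen to vary continuously on the set where the square root is applied since $t \mapsto \sqrt{t}$ is a continuous function on $[0,\infty)$. Composing these continuous maps with the measurable mapping $x \mapsto A(x)^*A(x)$ yields that $W$ is measurable.

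Third, I would address the invertibility clause. If $A(x)$ is invertible, then for any $v \neq 0$,
\[
\langle A(x)^*A(x) v, v \rangle = |A(x)v|^2 > 0,
\]
so $A(x)^*A(x)$ is positive definite. Since the square root of a positive definite matrix is positive definite (its eigenvalues are the positive square roots of the eigenvalues of $A(x)^*A(x)$), we conclude $W(x) \in \Sd$ is positive definite at every such $x$.

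The only mildly technical step is the continuity of the matrix square root on $\Sd$; if one wishes to avoid invoking it, an alternative is to approximate $B^{1/2}$ by polynomials in $B$, for instance by Newton iteration $X_{n+1} = \tfrac{1}{2}(X_n + B X_n^{-1})$ started from $X_0 = I + B$, which converges uniformly on bounded subsets of $\Sd$ and preserves measurability at each step. Either route is routine, so I expect no real obstacle.
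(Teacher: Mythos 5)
Your construction is the same as the paper's: both take $W(x)=(A(x)^*A(x))^{1/2}$, and your direct computation $|W(x)v|^2=\langle W(x)^2v,v\rangle=\langle A(x)^*A(x)v,v\rangle=|A(x)v|^2$ even streamlines the paper's use of the polar decomposition $A=UW$. The only genuine difference is how measurability of the square root is justified. The paper invokes a measurable diagonalization lemma (a measurable orthogonal $U(x)$ with $U^tVU$ diagonal) and defines $V^{1/2}=UD^{1/2}U^t$; you instead compose the measurable map $x\mapsto A(x)^*A(x)$ with the square-root map on $\Sd$, using its continuity (or, alternatively, approximation by measurable iterates). Both routes work, and yours is arguably more self-contained. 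Two small caveats: your parenthetical justification of continuity via ``spectral projections varying continuously'' is imprecise, since spectral projections do not vary continuously where eigenvalues collide; the correct statement is simply that the continuous functional calculus $B\mapsto f(B)$ is continuous on $\Sd$ for continuous $f$ (e.g.\ because $\sqrt{t}$ is a uniform limit of polynomials on compact subsets of $[0,\infty)$, which also yields measurability directly, with no continuity argument needed). And the Heron/Newton iterates $X_{n+1}=\tfrac12(X_n+BX_n^{-1})$ are rational functions of $B$ rather than polynomials, and converge only linearly at a zero eigenvalue of $B$ --- but they do remain positive definite and converge pointwise, which is all you need for measurability. The invertibility clause is handled identically in both arguments.
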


\begin{proof}
  Given a matrix $A\in \Md$, it is well-known that if we form the polar
  decomposition of $A$ we can write $A=UW$, where $U$ is orthogonal
  and $W\in \Sd$.  Further, if $A$ is invertible, then $W$ is positive
  definite.  But then, for any $v\in \R^d$,
  \[ |Av|= |UWv|= |Wv|. \]
  Therefore, it suffices to show that we can take $W$ to be a
  measurable function.   We can define $W$ by $W=(A^tA)^{1/2}$, so we
  need to show that we can measurably define the square root of a
  postive semidefinite matrix.

  Let $V : \Omega \rightarrow  \Sd$ be a measurable mapping, then
  by~\cite[Lemma~2.3.5]{MR1350650} there exists a measurable matrix
  mapping $U$ such that $U(x)$ is orthogonal and $U^t(x)V(x)U(x)$ is
  diagonal.  Denote this matrix by $D=\diag(\lambda_1,\ldots,\lambda_d)$,
  and define its square root to be the diagonal matrix
  $D^{1/2}=\diag(\lambda_1^{1/2},\ldots,\lambda_d^{1/2})$.   If we now
  define $V^{1/2}= UD^{1/2}U^t$, then  $V^{1/2}$ is measurable and
  $V^{1/2}V^{1/2}=V$. 
\end{proof}

Conversely, given a norm function $\rho_x$, we can associate to it a
matrix norm $\rho_W$.  This result was proved
in~\cite[Proposition~1.2]{MR2015733}.  For completeness, and to
emphasize the role of measurability, we include the short proof.

\begin{theorem} \label{thm:matrix-norm}
Let $\rho$ be a norm function.  Then there exists a measurable matrix
mapping $W : \Omega \rightarrow \Sd$ such that for a.e. $x\in \Omega$,
$W$ is positive definite, and for every $v\in \R^d$,
\[ \rho_W(x,v) \leq \rho(x,v) \leq \sqrt{d} \rho_W(x,v). \]
\end{theorem}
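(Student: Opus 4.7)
The strategy is to realize $\rho_x$ via its unit ball and apply the measurable John ellipsoid theorem. First I would define $K: \Omega \to \abcs$ by $K(x) = \{v \in \R^d : \rho_x(v) \le 1\}$; since $\rho_x$ is a norm for each $x$, the set $K(x)$ is bounded, absorbing, convex, and symmetric. To verify measurability of $K$ in the sense of Definition~\ref{defn:measurable}, I would appeal to the joint measurability of $\rho$ on $\A \otimes \B$ proved inside Theorem~\ref{mc} (property ({\em i-a})): this shows that $\operatorname{Graph}(K) = \rho^{-1}([0,1])$ is a measurable subset of $\Omega \times \R^d$, so Theorem~\ref{char}({\em ii}) delivers the measurability of $K$.

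Next I would apply Theorem~\ref{mj} to $K$ to obtain a measurable matrix mapping $W_0 : \Omega \to \Md$ whose columns are mutually orthogonal and which satisfies
\[
W_0(x)\overline{\mathbf B} \subset K(x) \subset \sqrt{d}\,W_0(x)\overline{\mathbf B}, \qquad x \in \Omega.
\]
Since $K(x)$ is absorbing, the inner ellipsoid must be full-dimensional, so every column of $W_0(x)$ is nonzero and $W_0(x)$ is invertible. Substituting $v = W_0(x)u$ into the left inclusion and $v = \sqrt{d}\,W_0(x)u$ into the right, and then invoking the positive homogeneity of $\rho_x$, the inclusions translate into the pointwise inequalities
\[
\rho_x(v) \le |(W_0(x))^{-1} v| \le \sqrt{d}\,\rho_x(v), \qquad v \in \R^d.
\]
Setting $A(x) = (W_0(x))^{-1}/\sqrt{d}$ and using that matrix inversion is continuous on the open set of invertible matrices, $A$ is a measurable matrix mapping that satisfies $|A(x)v| \le \rho_x(v) \le \sqrt{d}\,|A(x)v|$ for every $v \in \R^d$.

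Finally, since $A(x)$ need not be symmetric, I would invoke Theorem~\ref{thm:Sd-weight} to replace $A$ by a measurable $W : \Omega \to \Sd$ with $\rho_W(x,v) = \rho_A(x,v)$ for all $(x,v)$; because $A(x)$ is invertible everywhere, the same theorem guarantees that $W(x)$ is positive definite almost everywhere. Substituting this identity into the previous display yields the required bound $\rho_W(x,v) \le \rho(x,v) \le \sqrt{d}\,\rho_W(x,v)$.

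I do not anticipate any serious obstacle, since all the hard analytic work has already been done in earlier sections. The only mildly delicate points are (a) confirming that $K$ itself (and not merely its polar via Theorem~\ref{pol-measure}) is measurable, and (b) translating the John-ellipsoid inclusions into reciprocal norm inequalities through homogeneity; both are short. The core input is the measurable selection argument behind Theorem~\ref{mj}.
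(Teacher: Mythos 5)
Your argument is correct and follows essentially the same route as the paper's proof: form the unit ball $K(x)$ of $\rho_x$, obtain its measurability from the joint measurability established in Theorem~\ref{mc} (the paper phrases this via $K^\circ$ and Theorem~\ref{pol-measure}, but it is the same content), apply the measurable John ellipsoid Theorem~\ref{mj}, invert, and symmetrize via Theorem~\ref{thm:Sd-weight}. Your explicit rescaling $A=(W_0)^{-1}/\sqrt{d}$ cleanly handles the placement of the constant $\sqrt{d}$, which the paper treats only implicitly.
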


\begin{proof}
Let
\[ K(x) = \{ v\in \R^d : \rho_x(v)\leq 1 \} \]
be the unit ball of $\rho_x$, $x\in \Omega$.  Then by Corollary~\ref{cor:min-norm},
$K(x) \in \abcs$.  Further,
$K : \Omega \rightarrow \abcs$ is a measurable mapping.  To see this,
note that by Theorem~\ref{mc}, $K^\circ$ is measurable, so by
Theorem~\ref{pol-measure}, $K$ is measurable.  Therefore, by Theorem~\ref{mj} there exists a measurable
matrix mapping $A : \Omega \rightarrow \Md$ such that
\begin{equation}\label{mno}
A(x)\overline{\mathbf{B}} \subset K(x) \subset \sqrt{d} A(x)\overline{\mathbf{B}}.
\end{equation}
We therefore have that for $x\in \Omega$
and $v\in \R^d$,
\[ p_{A(x)\mathbf{B}}(v) \leq \rho(x,v) \leq \sqrt{d} p_{A(x)\mathbf{B}}(v),  \]
where $p_{A(x)\overline{\mathbf{B}}}$ is the Minkowski functional of
$A(x)\overline{\mathbf{B}}$. (See Definition~\ref{mink}.)
It follows from \eqref{mno} that $A$ is invertible.  Thus,
\begin{multline*}
  p_{A(x)\mathbf{B}}(v) = \inf\{ r>0 : \frac{v}{r}\in
  A(x)\overline{\mathbf{B}} \} \\
  = \inf \{ r>0 : A^{-1}(x)v \in r\overline{\mathbf{B}} \} = |A^{-1}(x)v| =
  \rho_{A^{-1}}(x,v). 
\end{multline*}
  Finally, by Theorem~\ref{thm:Sd-weight}, there exists a measurable,
  positive definite
  matrix mapping $W : \Omega \rightarrow \Sd$ such that
  $\rho_W(x,v)=\rho_{A^{-1}}(x,v)$.  This completes the proof.
  \end{proof}

  \begin{prop} \label{prop:dual-matrix-norm}
    If $W : \Omega \rightarrow \Md$ is invertible a.e., then
    for a.e. $x\in \Omega$ and every $v\in \R^d$,
    $\rho_W^*(x,v) = \rho_{(W^*)^{-1}}(x,v)$.  In particular, if $W$
    is symmetric a.e., then $\rho_W^*=\rho_{W^{-1}}$. 
  \end{prop}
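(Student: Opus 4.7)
The plan is to reduce everything to a pointwise computation at a fixed $x \in \Omega$ where $W(x)$ is invertible (which is a co-null set by hypothesis). For such $x$, the seminorm $\rho_W(x,\cdot)$ is in fact a norm since $|W(x)v| = 0$ forces $v=0$, so the dual $\rho_W^*(x,\cdot)$ admits the quotient expression from equation~\eqref{eqn:dual-norm}:
\[
\rho_W^*(x,v) = \sup_{w \in \R^d,\, w\neq 0} \frac{|\langle v, w\rangle|}{|W(x)w|}.
\]
Measurability of the resulting function $x \mapsto \rho_W^*(x,v)$ is already guaranteed by Corollary~\ref{cor:dual-norm-measurable}, so no measurability argument is required.

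The key step is a change of variables in the supremum. Since $W(x)$ is invertible, the map $w \mapsto u = W(x)w$ is a bijection of $\R^d\setminus\{0\}$ onto itself. Substituting $w = W(x)^{-1} u$ and using the defining property of the transpose,
\[
\langle v,\, W(x)^{-1} u\rangle = \langle (W^*(x))^{-1} v,\, u\rangle,
\]
the supremum transforms into
\[
\rho_W^*(x,v) = \sup_{u\neq 0} \frac{|\langle (W^*(x))^{-1} v,\, u\rangle|}{|u|} = |(W^*(x))^{-1} v| = \rho_{(W^*)^{-1}}(x,v),
\]
where the middle equality is Cauchy--Schwarz, with equality attained by taking $u$ parallel to $(W^*(x))^{-1}v$. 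The symmetric case follows immediately by specializing $W^*=W$.

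There is essentially no obstacle beyond this pointwise linear-algebra calculation: the hypothesis of a.e.\ invertibility is precisely what legitimizes both the change of variables $u = W(x)w$ on all of $\R^d\setminus\{0\}$ and the appearance of $(W^*(x))^{-1}$ on the right. No machinery from earlier sections is needed except the definition of the dual norm and the measurability already recorded in Corollary~\ref{cor:dual-norm-measurable}.
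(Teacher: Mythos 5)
Your argument is correct: at a.e.\ $x$ the invertibility of $W(x)$ makes $\rho_W(x,\cdot)$ a genuine norm, so the quotient form \eqref{eqn:dual-norm} applies, and the substitution $u = W(x)w$ together with $\langle v, W(x)^{-1}u\rangle = \langle (W^*(x))^{-1}v, u\rangle$ and Cauchy--Schwarz gives exactly $\rho_W^*(x,v) = |(W^*(x))^{-1}v|$. The remark about measurability is harmless but not needed, since the proposition asserts only a pointwise identity. Your route differs from the paper's in framing rather than in substance: the paper never touches the supremum formula, but instead identifies the unit ball of $\rho_W(x,\cdot)$ as $W^{-1}(x)\overline{\mathbf{B}}$, computes its polar to be $W^*(x)\overline{\mathbf{B}}$ via Theorem~\ref{pol} (which encodes $p_{K^\circ}=(p_K)^*$), and concludes by the one-to-one correspondence between norms and unit balls in Corollary~\ref{cor:min-norm}. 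Both proofs ultimately turn on the same adjoint identity for the inner product; yours is the more elementary, self-contained computation (change of variables plus Cauchy--Schwarz), while the paper's stays inside the convex-body/polar-set machinery it has built, which is the language it reuses throughout (e.g.\ in Theorem~\ref{thm:matrix-norm} and Corollary~\ref{cor:matrix-norm-A1}), so its version of the argument doubles as a record of what the unit balls of $\rho_W$ and $\rho_W^*$ actually are --- a fact it invokes again later. Either proof is acceptable.
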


  \begin{proof}
    Arguing as in the proof of Theorem~\ref{thm:matrix-norm}, we have
    that the unit ball of $\rho_W$ is $K(x)=W^{-1}(x)\overline{\mathbf{B}}$, and
    so by Theorem~\ref{pol}, the unit ball of $\rho_W^*$ is
    \begin{multline*}
      K(x)^\circ
      =
      \{ v\in \R^d : |\langle v, W^{-1}(x)y \rangle|\leq 1, y \in
      \overline{\mathbf{B}} \} \\
      =
      \{ v\in \R^d : |\langle (W^*)^{-1}(x)v, y \rangle|\leq 1, y \in
      \overline{\mathbf{B}} \}
      =
      W^*(x)\overline{\mathbf{B}}. 
    \end{multline*}
    As above, $W^*(x)\overline{\mathbf{B}}$ is the unit ball of $\rho_{(W^*)^{-1}}$.  
    By Corollary~\ref{cor:min-norm}, if two norms have the same unit ball, they
    are the same norm, so   $\rho_W^*(x,v) = \rho_{(W^*)^{-1}}(x,v)$.
  \end{proof}

  \medskip

  We refer to the matrix $W$ in Theorem~\ref{thm:matrix-norm} as the
  matrix weight associated with the norm function $\rho$.   Then we
  have that a function $F \in L^p_\K(\Omega,\rho)$ if and only if $F\in
  L^p_\K(\Omega,\rho_W)$, and
  \[ \|F\|_{L^p_\K(\Omega,\rho_W)}
    \leq \|F\|_{L^p_\K(\Omega,\rho)}
      \leq \sqrt{d} \|F\|_{L^p_\K(\Omega,\rho_W)}. \]
    We will thus be able to pass between these spaces depending on
    which is most convenient.  The spaces $L^p_\K(\Omega,\rho_W)$ are
    referred to as matrix weighted spaces; for simplicity we will
    often denote them by $L^p_\K(\Omega, W)$.   Closely connected to
    these spaces are the matrix-weighted spaces of vector-valued
    functions, which we will denote $L^p(\Omega, W)$.  This space can
    be identified with a subset of $L^p_\K(\Omega, W)$ using the
    mapping defined in Lemma~\ref{lemma:vector-int}.

\section{The maximal operator on convex-set valued functions}
\label{section:maximal}

In this section we generalize the Hardy-Littlewood maximal operator to
the setting of convex-set valued functions.  Throughout this section, we
will take our underlying measure space to be $\R^n$ equipped with
Lebesgue measure.  We will use the standard Euclidean norm on $\R^d$,
and given a set $K\subset \R^d$, we define the norm of a set by
\begin{equation}\label{norm}
 |K| = \sup\{ |v| : v \in K \}.
\end{equation}
Hereafter, by a cube $Q$ we will always mean a cube whose sides are
parallel to the coordinate axes.  Unless we indicate otherwise, all
integrals are taken with respect to the Lebesgue measure $\ln$ of
$\R^n$. The volume of the cube $Q$ is denoted by $\ln(Q)$ rather than
the customary $|Q|$ to avoid ambiguity with the norm of a set given by
\eqref{norm}.

\subsection*{Averaging operators}
We first consider the simpler case of averaging operators.  
Given a function $F :
\R^n \rightarrow \bcs$ that is locally integrably bounded, if we fix a cube
$Q$,  then we define the averaging operator  $A_Q$ by
\[ A_Q F(x) =  \avgint_Q F(y)\,dy \cdot \chi_Q(x)
  = \frac{1}{\ln(Q)}\int_Q F(y)\,dy \cdot \chi_Q(x). \]

Therefore, $A_QF(x)$ is the ``average'' of $F$ on $Q$ if $x\in Q$, and
is the set $\{0\}$ otherwise.  However, the associated convex set can
be quite different from $F$, even if it is the convex-set valued
function associated to a vector-valued function (as in
Lemma~\ref{lemma:vector-int}).  For example, let $f : \R \rightarrow
\R^2$ be defined by
\begin{equation} \label{eqn:vec-function}
 f(x) = \begin{cases}
    (1,1)^t, & x\geq 0; \\
    (-1,1)^t, & x < 0. 
  \end{cases}
\end{equation}
and let $F(x) = \clconv\{ f(x), -f(x)\}$.  Let $Q=[-1,1]$.  Then for
$x\in Q$
\[ A_QF(x) = \bigg\{ \avgint_Q k(y)f(y)\,dy :  k \in L^\infty(Q),
  \|k\|_\infty \leq 1 \bigg\}. \]
Fix any $k \in L^\infty(Q)$; then
\[ \avgint_Q k(y)f(y)\,dy
  =
  \frac{1}{2}\int_{-1}^0 k(y)\,dy \begin{pmatrix}-1 \\ 1 \end{pmatrix}
  +
  \frac{1}{2}\int_{0}^1 k(y)\,dy \begin{pmatrix}1 \\
    1 \end{pmatrix}. \]
The integrals are constants with values in $[-1,1]$ so without loss of
generality we may assume that $k$ is constant on $[-1,0)$ and $[0,1]$;
denote these values by $a$ and $b$.  Hence,
\[  A_QF(x) = \bigg\{  \frac{a}{2}\begin{pmatrix}-1 \\ 1 \end{pmatrix}
  + \frac{b}{2}\begin{pmatrix}1 \\ 1 \end{pmatrix} :  |a|,\,|b| \leq 1\bigg\}. \]
It follows immediately that $A_QF(x)$ is equal to the square with
vertices $(\pm 1,0)$, $(0,\pm 1)$.

\medskip

By Theorem~\ref{cci},
$A_Q : \R^n \rightarrow \bcs$ and is a measurable mapping. The
averaging operators are linear operators in the sense of Lemma
\ref{lemma:avg-op-linear} below. We use this terminology, even
though $\bcs$ is not a vector space,  because of the compelling form of
the identities \eqref{lin1} and \eqref{lin2}. Lemma
\ref{lemma:avg-op-linear} is an immediate consequence of the linearity
of the Aumann integral, Theorem~\ref{add}.

\begin{lemma} \label{lemma:avg-op-linear}
  Given any cube $Q$, the averaging operator $A_Q$ is linear:
  if $F, \, G  :
\R^n \rightarrow \bcs$  are  locally integrably bounded  mappings, and $\alpha \in
\R$, then 
  \begin{gather}
    A_Q(F+G)(x) = A_QF(x) + A_QG(x),\label{lin1} \\
    A_Q(\alpha F)(x) =\alpha A_QF(x).\label{lin2}
  \end{gather}
\end{lemma}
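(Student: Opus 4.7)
The plan is to reduce both identities directly to the linearity of the Aumann integral established in Theorem~\ref{add}. The only mild subtlety is that Theorem~\ref{add} is stated for integrably bounded mappings on the ambient measure space, while here $F$ and $G$ are only assumed to be locally integrably bounded on $\R^n$. I would handle this by working with the restrictions $F|_Q$ and $G|_Q$: since $F,G : \R^n \to \bcs$ are locally integrably bounded, there exists $k \in L^1_{\text{loc}}(\R^n,\R)$ with $F(y) \cup G(y) \subset k(y)\mathbf{B}$, and $k\chi_Q \in L^1(\R^n,\R)$ since $Q$ is bounded. Hence $F\chi_Q$ and $G\chi_Q$ are integrably bounded measurable mappings into $\bcs$ (measurability being immediate from Theorem~\ref{cup}).

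For the additivity identity, I would split according to whether $x \in Q$. If $x \notin Q$, then $\chi_Q(x) = 0$, so every term reduces to the set $\{0\}$ and the identity is trivial (using that $\{0\} + \{0\} = \{0\}$ under Minkowski addition). If $x \in Q$, then $\chi_Q(x) = 1$ and the identity reduces to
\[
\frac{1}{\ln(Q)}\int_Q (F+G)(y)\,dy
= \frac{1}{\ln(Q)}\int_Q F(y)\,dy + \frac{1}{\ln(Q)}\int_Q G(y)\,dy,
\]
which is exactly Theorem~\ref{add} applied with $\alpha_1 = \alpha_2 = 1$ to the integrably bounded mappings $F\chi_Q$ and $G\chi_Q$, followed by division of both sides by $\ln(Q)$ (which is valid because scalar multiplication of a convex set distributes over Minkowski sums).

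For the homogeneity identity \eqref{lin2}, the same dichotomy applies: for $x \notin Q$ both sides equal $\{0\}$ (using that $\alpha \{0\} = \{0\}$ for any $\alpha \in \R$), and for $x \in Q$ the identity follows from the second half of Theorem~\ref{add} (the case $\alpha_2 = 0$) applied to $F\chi_Q$. There is no real obstacle here — the statement is essentially a bookkeeping consequence of Theorem~\ref{add} together with the definition of $A_Q$. The only point worth emphasizing in the write-up is that multiplication of a convex set by the scalar $\chi_Q(x) \in \{0,1\}$ commutes with both Minkowski sums and scalar multiples, so the characteristic function can be brought inside or outside the operations freely.
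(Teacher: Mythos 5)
Your proposal is correct and follows the same route as the paper, which simply observes that the lemma is an immediate consequence of the linearity of the Aumann integral (Theorem~\ref{add}); you merely spell out the harmless bookkeeping (restriction to $Q$, the cases $x\in Q$ versus $x\notin Q$, and the scalar $1/\ln(Q)$) that the paper leaves implicit.
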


For $1\leq p < \infty$, if $F\in L^p_\K(\R^d,|\cdot|)$, then $F$ is
locally integrably bounded:  if we define $k(x)=|F(x)|$, then by
definition, $k\in L^p(\R^n)$, and so $k\in L^1_{loc}(\R^n)$.  Since
$F(x) \subset k(x)\mathbf{B}$, $F$ is locally
integrably bounded.  In particular,  averaging operators are
well-defined on $L^p_\K(\R^d,|\cdot|)$.

\begin{prop} \label{prop:avg-op}
  Given a cube $Q$, for $1\leq p\leq \infty$, $A_Q : L^p_\K(\R^n,|\cdot|)
  \rightarrow L^p_\K(\R^n,|\cdot|)$, and $\|A_QF\|_p \leq \|F\|_p$. 
\end{prop}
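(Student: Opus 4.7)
The plan is to estimate the Euclidean norm of the convex set $A_Q F(x)$ pointwise by a scalar average of $|F(\cdot)|$, and then reduce the full $L^p_\K$ bound to the classical Jensen inequality applied to the scalar function $|F(\cdot)|$.

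First I would check that $A_Q F$ is a well-defined element of $\bcs$ and a measurable convex-set valued mapping. Since $F\in L^p_\K(\R^n,|\cdot|)$, the scalar function $k(y)=|F(y)|$ lies in $L^p(\R^n)$ and hence in $L^1(Q)$, so the mapping $y\mapsto F(y)\chi_Q(y)$ is integrably bounded (by $k$). Therefore Theorem~\ref{cci} applies, giving that $\int_Q F(y)\,dy$ is a set in $\bcs$; multiplying by $\chi_Q(x)$ yields a measurable $\bcs$-valued mapping.

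The key pointwise bound is
\[
  \Big|\avgint_Q F(y)\,dy\Big| \;\le\; \avgint_Q |F(y)|\,dy.
\]
To see this, let $v\in\avgint_Q F(y)\,dy$. By Definition~\ref{au}, $v=\avgint_Q f(y)\,dy$ for some $f\in S^1(Q,F)$. Since $f(y)\in F(y)$, we have $|f(y)|\le |F(y)|$, and the classical triangle inequality for the Euclidean norm on vector-valued integrals gives $|v|\le \avgint_Q |f(y)|\,dy \le \avgint_Q |F(y)|\,dy$. Taking the supremum over $v$ proves the claim.

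With this in hand, the $L^p$ estimate is immediate from Jensen's inequality for scalar functions. For $1\le p<\infty$,
\[
\|A_QF\|_p^p
= \int_Q \Big|\avgint_Q F(y)\,dy\Big|^p dx
= \ln(Q)\Big|\avgint_Q F(y)\,dy\Big|^p
\le \ln(Q)\Big(\avgint_Q |F(y)|\,dy\Big)^p,
\]
and Jensen's inequality bounds the last quantity by $\int_Q |F(y)|^p\,dy \le \|F\|_p^p$. For $p=\infty$, the same pointwise bound yields
\[
\|A_QF\|_\infty
= \Big|\avgint_Q F(y)\,dy\Big|
\le \avgint_Q |F(y)|\,dy
\le \esssup_{y\in Q}|F(y)|
\le \|F\|_\infty.
\]
There is no real obstacle here; the only nontrivial point is the passage from the Aumann integral to a scalar average, which is handled by the selection-based argument above and only uses Definition~\ref{au}.
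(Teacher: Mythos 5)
Your proof is correct and follows essentially the same route as the paper: both reduce the norm of the Aumann average to the scalar average $\avgint_Q |F(y)|\,dy$ via selection functions and then apply Jensen/H\"older, with the same treatment of $p=\infty$. The only cosmetic difference is that the paper channels the bound through the measurable maximal selection $v_F$ from Lemma~\ref{jem} with $|v_F(y)|=|F(y)|$, whereas you use $|f(y)|\le |F(y)|$ directly together with the measurability of $x\mapsto |F(x)|$ (Lemma~\ref{lemma:norm-measure}), which is an equally valid shortcut.
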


\begin{proof}
Fix $p$, $1\leq p<\infty$.  By the definition of the norm in $L^p_\K(\R^n,|\cdot|)$,
  \[ \|A_QF\|_{L^p_\K(\R^n,|\cdot|)}
    = \bigg(\int_\subRn \bigg| \avgint_Q F(y)\,dy \cdot
    \chi_Q(x)\bigg|^p \,dx\bigg)^{\frac{1}{p}}
    = \bigg|\avgint_Q F(y)\,dy \bigg| \ln(Q)^{\frac{1}{p}}. \]
  Since $F$ is locally integrably bounded, by Lemma~\ref{jem} there
  exists a selection function $v_F \in S^1(Q,F)$ such that
  $|v_F(x)|=|F(x)|$ for all $x\in Q$.  In particular, given any
  selection function $f\in S^1(Q,F)$, $|f(x)|\leq |v_F(x)|$. Therefore,
  \begin{multline*}
    \bigg|\avgint_Q F(y)\,dy \bigg|
    = \sup\bigg\{ \bigg|\avgint_Q f(y)\,dy \bigg|
    : f\in S^1(Q,F) \bigg\} \\
    \leq \avgint_Q |v_F(y)|\,dy
    \leq \bigg(\avgint_Q |v_F(y)|^p\,dy\bigg)^{\frac{1}{p}}
    = \|F\|_{L^p(\R^n,|\cdot|)} \ln(Q)^{-\frac{1}{p}}. 
  \end{multline*}
  If we combine these estimates we get the desired inequality.

  When $p=\infty$, the proof is similar but simpler.  By the
  definition of the $\|\cdot\|_\infty$ norm, for a.e. $x$, $|v_F(x)|=|F(x)|\leq
  \|F\|_{L^\infty(\R^n,|\cdot|)}$.  Hence, arguing as above,
  \[  \|A_QF\|_{L^\infty(\R^n,|\cdot|)}
    \leq  \avgint_Q |v_F(y)|\,dy  \leq
    \|F\|_{L^\infty(\R^n,|\cdot|)}. \]
\end{proof}

\begin{remark}
  We can also define the averaging operator by taking averages over
  balls $B$ instead of cubes.  Every result above remains true for
  these averaging operators.
\end{remark}

\subsection*{The convex-set valued maximal operator} 
We now extend the definition of the Hardy-Littlewood maximal
operator to convex-set valued functions.

\begin{definition}
  Given a locally integrably bounded function $F  :\R^n \rightarrow
  \bcs$,  define the maximal operator acting on $F$ by
  \[ MF(x) = \clconv\bigg(\bigcup_Q A_Q F(x) \bigg), \]
where the union is taken over all cubes $Q$ whose sides are parallel to
the coordinate axes.
\end{definition}

It is immediate from the definition that since $F(x)\in \bcs$,
$MF(x)\in \K_{cs}$.   The set $MF(x)$ can be a considerably larger
set than $F(x)$.  For example, if we let $f$ be the vector-valued
function~\eqref{eqn:vec-function} and define $F$ as before, then for
$x>0$, arguing as we did above, we can show that
\[ MF(x) = \clconv\bigg\{ \frac{-as}{t-s}\begin{pmatrix}-1 \\ 1 \end{pmatrix}
  + \frac{bt}{t-s}\begin{pmatrix}1 \\ 1 \end{pmatrix} :  |a|,\,|b| \leq
  1; t,\, s \in \R, s<0<x<t \bigg\}. \]
(The case $0\leq s<x<t$ should also be included, but it is easy to check that
it does not add anything to the set.)  If we reparameterize by setting
$s=-rt$, $0<r<\infty$, and then making the change of variables
$v=\frac{1}{1+r}$, we get that
\[ MF(x) = \clconv\bigg\{ \begin{pmatrix}-a \\ a \end{pmatrix}
  + v\begin{pmatrix}a+b \\ -a+b \end{pmatrix} :  |a|,\,|b| \leq
  1; 0< v <1\bigg\}. \]
By varying the parameters, it is straightforward to see that we get
all points in the square with vertices $(\pm 1, \pm 1)$.

\medskip

\begin{lemma} \label{lemma:Mf-sublinear}
  The maximal operator is sublinear:  for any  locally integrably bounded mappings $F, \, G  :
\R^n \rightarrow \bcs$ and $\alpha \in
\R$,
\[ 
  M(F+G)(x)  \subset MF(x) + MG(x), \qquad
  M(\alpha F)(x) = \alpha MF(x). 
\]
Further, the maximal operator is monotone: if $F(x) \subset G(x)$ for
all $x$, then $MF(x) \subset MG(x)$.
\end{lemma}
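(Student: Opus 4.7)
The plan is to reduce each of the three claims to the corresponding property of the averaging operators $A_Q$ established in Lemma~\ref{lemma:avg-op-linear} (and, for monotonicity, to the monotonicity of the Aumann integral in Theorem~\ref{add}), and then pass to the closed convex hull.

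For homogeneity, Lemma~\ref{lemma:avg-op-linear} gives $A_Q(\alpha F)(x) = \alpha A_Q F(x)$ for every cube $Q$. Multiplication by $\alpha$ is a linear homeomorphism of $\R^d$ (with the convention that it collapses everything to $\{0\}$ when $\alpha=0$), so it commutes with taking unions, with the convex hull operation, and with closure. Hence
\[
M(\alpha F)(x) = \clconv\bigg(\bigcup_Q \alpha A_Q F(x)\bigg) = \alpha\, \clconv\bigg(\bigcup_Q A_Q F(x)\bigg) = \alpha M F(x),
\]
with equality throughout.

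For subadditivity, Lemma~\ref{lemma:avg-op-linear} also gives $A_Q(F+G)(x) = A_QF(x) + A_QG(x)$. By the very definition of $M$, $A_QF(x) \subset MF(x)$ and $A_QG(x) \subset MG(x)$; since the Minkowski sum respects inclusion, $A_Q(F+G)(x) \subset MF(x) + MG(x)$ for every $Q$. The set $MF(x)+MG(x)$ is convex, so taking the union over cubes and then the closed convex hull yields
\[
M(F+G)(x) = \clconv\bigg(\bigcup_Q A_Q(F+G)(x)\bigg) \subset \overline{MF(x) + MG(x)}.
\]
Strictly speaking the closure is needed on the right in general, since the Minkowski sum of two closed convex sets in $\R^d$ need not be closed. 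This is the one (mild) obstacle in the argument, but in the bounded-valued setting used throughout the remainder of the paper we have $MF(x), MG(x) \in \bcs$, in which case the Minkowski sum is automatically closed and the closure may be dropped to recover the stated inclusion.

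For monotonicity, if $F(x) \subset G(x)$ pointwise, then $S^1(Q,F) \subset S^1(Q,G)$ for every $Q$, and the monotonicity clause of Theorem~\ref{add} gives $A_QF(x) \subset A_QG(x)$. Taking the union over all cubes and then the closed convex hull preserves inclusion, so $MF(x) \subset MG(x)$, completing the proof.
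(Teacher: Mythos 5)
Your argument is, in essence, the paper's own proof: homogeneity and subadditivity are reduced to Lemma~\ref{lemma:avg-op-linear} together with the additivity of the convex hull under Minkowski sums, and monotonicity to Theorem~\ref{add}. The one point where you go beyond the paper is the closure discussion in the subadditivity step, and there your justification does not hold up: the lemma only assumes $F,G$ are locally integrably bounded with values in $\bcs$, and then $MF(x)$ need \emph{not} be bounded --- the paper records only that $MF(x)\in\K_{cs}(\R^d)$. For example, with $n=1$ and $F(x)=|x|\,\overline{\mathbf B}$ one gets $MF(x)=\R^d$, and $F(x)=|x|\conv\{-e_1,e_1\}$ yields the proper unbounded set $MF(x)=\R e_1$. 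So you cannot drop the closure on $MF(x)+MG(x)$ by asserting $MF(x),MG(x)\in\bcs$.

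The step is nevertheless easily repaired, and the repaired version is exactly what the paper's displayed identity $\clconv\big(\bigcup_Q A_QF(x)+\bigcup_Q A_QG(x)\big)=MF(x)+MG(x)$ silently uses: the Minkowski sum of two closed, convex, \emph{symmetric} subsets of $\R^d$ is always closed. Indeed, the recession cone of a closed convex symmetric set is a symmetric convex cone, hence a linear subspace equal to the lineality space, so each of $MF(x)$ and $MG(x)$ decomposes as (linear subspace) $+$ (compact convex set); a sum of a subspace and a compact set is closed, and hence so is $MF(x)+MG(x)$. With this observation in place of your boundedness claim (or, alternatively, by noting that every point of $\conv\big(\bigcup_Q A_Q(F+G)(x)\big)$ already lies in $MF(x)+MG(x)$ and that this latter set is closed by the argument just given), your proof is correct and coincides with the paper's.
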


\begin{proof}
  Sublinearity follows from Lemma~\ref{lemma:avg-op-linear} and the
  linearity of the convex hull with respect to Minkowski sum:
  \begin{multline*}
    M(F+G)(x) = \clconv\bigg(\bigcup_Q \big[A_QF(x) +A_QG(x)
    \big]\bigg) \\
    \subset \clconv\bigg( \bigcup_Q A_QF(x) + \bigcup_Q A_QG(x) \bigg)
    = MF(x) + MG(x).
  \end{multline*}
  Similarly,
  \[ M(\alpha F)(x) = \clconv\bigg(\bigcup_Q \alpha A_QF(x) \bigg)
    = \alpha MF(x). \]
Monotonicity follows from the definition of the maximal
  operator and Theorem~\ref{add}. 
\end{proof}

Below we will also need a version of sublinearity that generalizes the
fact that in the scalar case, for $1<p<\infty$, the operator
$M_pf(x)=M(|f|^p)(x)^{\frac{1}{p}}$ is sublinear.  

\begin{lemma} \label{lemma:M_p-sublinear}
  Given $1<p<\infty$,  a locally integrably bounded mapping $H  :
\R^n \rightarrow \bcs$, non-negative functions $f,\,g\in
L^p_{loc}(\R^n) $, and a norm $\rho$, 
\[ \rho\big( M((f+g)^pH)(x)\big)^{\frac{1}{p}}
\leq \rho\big(M(f^pH)(x)\big)^{\frac{1}{p}}
+ \rho\big(M(g^pH)(x)\big)^{\frac{1}{p}}. \]
\end{lemma}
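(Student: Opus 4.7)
The plan is to reduce the lemma to Proposition \ref{prop:convex-minkowski}, the convex-set valued Minkowski inequality, applied on each cube separately, and then to take the supremum over cubes to recover the maximal operator. The closure and convex-hull operations in the definition of $M$ will be absorbed into a supremum because a norm is a convex, continuous function, so no real obstruction should arise at that step.

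First, for each cube $Q$ containing $x$, I will view the averaging operator $A_Q$ as the Aumann integral against the normalized measure $\ln(Q)^{-1}\chi_Q\,d\ln$. Since $f,g\ge 0$ lie in $L^p_{loc}(\R^n)$ and $H$ is locally integrably bounded, both $f^pH$ and $g^pH$ are integrably bounded on $Q$, so Proposition \ref{prop:convex-minkowski} applies and yields
\[
\rho\bigl(A_Q((f+g)^pH)(x)\bigr)^{1/p}
\leq \rho\bigl(A_Q(f^pH)(x)\bigr)^{1/p}+\rho\bigl(A_Q(g^pH)(x)\bigr)^{1/p}.
\]
For $x\notin Q$ both sides are $0$, so this is trivial. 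Using $A_Q(f^pH)(x)\subset M(f^pH)(x)$ (and similarly for $g$), together with the monotonicity of $\rho(\cdot)$ on sets, I bound the right-hand side by $\rho(M(f^pH)(x))^{1/p}+\rho(M(g^pH)(x))^{1/p}$, a quantity independent of $Q$.

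To conclude, I will show that for any norm $\rho$ on $\R^d$ and any set $E\subset\R^d$,
\[
\sup_{v\in\clconv(E)}\rho(v)=\sup_{v\in E}\rho(v).
\]
The bound $\rho(\sum\alpha_iv_i)\le\sum\alpha_i\rho(v_i)\le\max_i\rho(v_i)$ (from sublinearity of $\rho$) handles convex combinations, and continuity of $\rho$ on $\R^d$ handles the closure. Applying this with $E=\bigcup_{Q} A_Q((f+g)^pH)(x)$ gives
\[
\rho\bigl(M((f+g)^pH)(x)\bigr)=\sup_{Q}\rho\bigl(A_Q((f+g)^pH)(x)\bigr),
\]
and combining with the previous step produces the desired inequality after taking $p$-th roots.

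The only mildly nonobvious ingredient is the invisibility of $\clconv$ and $\bigcup$ to the supremum of a convex continuous function; everything else is a cube-by-cube application of Proposition \ref{prop:convex-minkowski}, so I do not expect a genuine obstacle.
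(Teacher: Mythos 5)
Your proposal is correct and is essentially the paper's own argument: the paper also applies Proposition~\ref{prop:convex-minkowski} cube by cube to the averages $A_Q$, and it handles the closed convex hull by the same observation you make, packaged there as the auxiliary operator $\widehat{M}F(x)=\overline{\bigcup_Q A_QF(x)}$ together with the identity $\rho(MF(x))=\rho(\widehat{M}F(x))$, proved exactly via $\rho(\sum\alpha_i v_i)\le\sum\alpha_i\rho(v_i)$ and continuity of $\rho$. The only cosmetic difference is that you compare each cube average directly with $M(f^pH)(x)$ and $M(g^pH)(x)$ by monotonicity rather than taking suprema of all three terms simultaneously, which changes nothing of substance.
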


\begin{proof}
We introduce an auxiliary operator that simplifies our
  argument.  Given a locally integrably bounded, convex-set valued
  function $F$, define
  \[ \widehat{M} F(x) = \overline{ \bigcup_{Q} A_Q F(x) }.  \]
  Note that in contrast to the maximal operator $M$, the operator
  $\widehat M$ may not be convex-set valued since the convex hull is not
  present in the definition of $\widehat{M}$. This is not a problem
  since the operators $M$ and $\widehat{M}$ share the same boundedness
  characteristics. Indeed, we claim that
  $\rho(M F(x))=\rho(\widehat{M} F(x))$.
  Clearly, $\rho(M F(x))  \geq \rho(\widehat{M}F(x))$.  To see the reverse inequality, fix $v$ in
  \[ \conv\bigg( \bigcup_{Q} A_Q F(x) \bigg).  \]
  Then we can write $v$ as the finite sum $v=\sum \alpha_i v_i$,
  where $v_i \in A_{Q_i}F(x)$, $Q_i \in \D$, $\alpha_i \ge0$, and $\sum\alpha_i=1$.
  But then it is immediate that
  \[ \rho(v) \leq \sum \alpha_i \rho(v_i) \leq \rho(\widehat{M} F(x)), \]
  and the desired inequality follows at once.

  Given this equality we can argue as follows:  by Proposition~\ref{prop:convex-minkowski},
\begin{align*}
\rho\big(\widehat{M}((f+g)^pH)(x)\big)^{\frac{1}{p}}
& =   \rho\bigg(\bigcup_{x\in Q} A_Q((f+g)^pH)(x)\bigg)^{\frac{1}{p}}
  \\
  & = \sup_Q  \rho\big( A_Q((f+g)^pH)(x)\big)^{\frac{1}{p}} \\
  &\leq \sup_Q \rho\big(A_Q(f^pH)(x)\big)^{\frac{1}{p}}
    + \sup_Q \rho\big(A_Q(g^pH)(x)\big)^{\frac{1}{p}} \\
  & = \rho\big(\widehat{M}(f^pH)(x)\big)^{\frac{1}{p}}
+ \rho\big(\widehat{M}(g^pH)(x)\big)^{\frac{1}{p}}. \qedhere
\end{align*}
 \end{proof}

We claim that $MF$ is a measurable function.  To
show this, let $\mathcal{Q}$ denote the countable set of all cubes with
edges parallel to the coordinate axes, all of whose vertices have
rational coordinates.

\begin{prop} \label{prop:MF-measurable}
  Given a locally integrably bounded function $F  :\R^n \rightarrow
  \bcs$,
  \[ MF(x) = \clconv\bigg(\bigcup_{P\in  \mathcal{Q}} A_P F(x)
    \bigg). \]
  Consequently, $MF : \R^n \rightarrow \K_{cs}(\R^d)$ is a measurable
  function.
\end{prop}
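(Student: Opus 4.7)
The plan is to establish the identity
\[
MF(x) = \clconv\bigg(\bigcup_{P \in \mathcal{Q}} A_P F(x) \bigg)
\]
and then read off measurability of $MF$ from Theorem~\ref{cup}. Since $\mathcal{Q}$ is a subfamily of all axis-parallel cubes, the inclusion $\supset$ is trivial. For the reverse inclusion, by the support-function characterization (Lemma~\ref{sf}) together with the formula for $h_{\clconv(\bigcup \cdot)}$ in Theorem~\ref{eqnfun-bis}(c), it suffices to show that for every $v\in\R^d$ and every cube $Q$ with $x\in Q$,
\[
h_{A_Q F(x)}(v) \leq \sup_{P \in \mathcal{Q}} h_{A_P F(x)}(v).
\]

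Next I would reduce this to a scalar statement. By Theorem~\ref{sn}, $h_{A_Q F(x)}(v) = \avgint_Q h_{F(y)}(v)\,dy$, and the analogous identity holds on each $P\in\mathcal{Q}$ with $x\in P$, while cubes $P\not\ni x$ contribute $h_{A_PF(x)}(v)=0$. Setting $g(y) := h_{F(y)}(v)$, the hypothesis that $F$ is locally integrably bounded gives $g(y)\leq |v|\cdot|F(y)|\in L^1_{loc}(\R^n)$, so the claim becomes the purely scalar assertion
\[
\avgint_Q g(y)\,dy \leq \sup_{P\in\mathcal{Q},\, P\ni x}\,\avgint_P g(y)\,dy.
\]
To prove this, I slightly enlarge $Q$ to the concentric cube $Q_\epsilon$ of sidelength $(1+\epsilon)\ell(Q)$, so that $x \in \operatorname{int}(Q_\epsilon)$, and choose cubes $P_k\in\mathcal{Q}$ with $x\in P_k$ and $\ln(P_k\triangle Q_\epsilon)\to 0$. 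A standard dominated convergence argument then gives $\avgint_{P_k} g\to\avgint_{Q_\epsilon} g$, and letting $\epsilon\to 0$ yields $\avgint_{Q_\epsilon} g\to\avgint_Q g$, which produces the required supremum bound.

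For the measurability assertion, fix $P\in\mathcal{Q}$ and let $K_P := \avgint_P F(y)\,dy$, which by Theorem~\ref{cci} applied to the integrably bounded restriction $F|_P$ belongs to $\bcs$. Then $A_PF$ is a simple convex-set valued map, taking the value $K_P$ on $P$ and $\{0\}$ off $P$, and it is measurable in the sense of Definition~\ref{defn:measurable}: for every open $U\subset\R^d$, the preimage $\{x : A_PF(x)\cap U\neq\emptyset\}$ is either $\R^n$, $P$, or $\emptyset$, depending on whether $0\in U$ and whether $K_P\cap U\neq\emptyset$. Applying Theorem~\ref{cup} to the countable family $\{A_PF\}_{P\in\mathcal{Q}}$ then shows that $MF$ is measurable. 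The main obstacle is really the boundary case $x\in\partial Q$, which one cannot touch directly with a rational cube through $x$; the enlargement $Q\rightsquigarrow Q_\epsilon$ resolves this cleanly and is what makes the scalar approximation argument above go through.
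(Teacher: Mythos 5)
Your argument is correct, and it takes a genuinely different route from the paper's. The paper stays entirely on the set side: given a cube $Q\ni x$ and $\epsilon>0$, it picks a rational cube $P\supset Q$ with $\ln(P)\le (1+\epsilon)\ln(Q)$, and since $0\in F(y)$, Corollary~\ref{cor:integral-additive} gives $\avgint_Q F(y)\,dy\subset (1+\epsilon)\avgint_P F(y)\,dy$; hence $MF(x)\subset (1+\epsilon)\clconv\big(\bigcup_{P\in\mathcal{Q}}A_PF(x)\big)$, and letting $\epsilon\to 0$ (the right-hand set being closed, convex, and containing $0$) finishes the identity. Circumscribing $Q$ by a rational cube makes your boundary concern vanish, since $x\in Q\subset P$ automatically; no interior point or symmetric-difference approximation is needed. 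You instead dualize: Theorem~\ref{sn} turns the inclusion into the scalar inequality $\avgint_Q g\le \sup\{\avgint_P g: P\in\mathcal{Q},\ x\in P\}$ for $g(y)=h_{F(y)}(v)$, which you then prove by an absolute-continuity argument. Your route nicely shows that such set identities can be tested one direction $v$ at a time on support functions, at the price of running the duality machinery; the paper's argument is shorter and purely geometric. Your measurability step is the same as the paper's (measurability of each $A_PF$ plus Theorem~\ref{cup}), just with the preimages written out.

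One caveat you should patch: $MF(x)$, and hence $K=\clconv\big(\bigcup_{P\in\mathcal{Q}}A_PF(x)\big)$, need not be bounded (e.g.\ $F(y)=|y|\,\overline{\mathbf{B}}$), so Theorem~\ref{eqnfun-bis}(c) and Lemma~\ref{sf}, which are stated for $\bcs$, do not literally apply to $K$. The fix is easy and stays within the paper's toolbox: each $A_QF(x)$ is bounded, and since $A_PF(x)\subset K$ for every $P\in\mathcal{Q}$, your scalar inequality gives $h_{A_QF(x)}(v)\le \sup_P h_{A_PF(x)}(v)\le h_K(v)$ for all $v$ (with $h_K$ possibly infinite). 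By symmetry, $K^\circ=\{w: h_K(w)\le 1\}\subset\{w: h_{A_QF(x)}(w)\le 1\}=\big(A_QF(x)\big)^\circ$, and taking polars and using Theorem~\ref{pol}(c), which is valid for all of $\K_{cs}(\R^d)$, yields $A_QF(x)\subset K$; taking the closed convex hull over all cubes $Q$ then gives $MF(x)\subset K$. So the issue lies only in the citation, not in the idea.
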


\begin{proof}
  Fix $x\in \R^n$; then it is immediate that
  \[ \clconv\bigg(\bigcup_{P\in  \mathcal{Q}} A_P F(x) \bigg)
    \subset MF(x). \]
  To prove the reverse inclusion, fix a cube $Q$ containing $x$.  Then
  for any $\epsilon>0$, there exists a cube $P\in \mathcal{Q}$
  containing $Q$ such that $\ln(P)\leq (1+\epsilon)\ln(Q)$.  Hence, by
  Corollary~\ref{cor:integral-additive},
  \[ \avgint_Q F(y)\,dy \subset \frac{\ln(P)}{\ln(Q)} \avgint_P F(y)\,dy
    \subset (1+\epsilon)\avgint_P F(y)\,dy. \]
  Therefore,
  \[ MF(x) \subset (1+\epsilon)  \clconv
    \bigg(\bigcup_{P\in  \mathcal{Q}} A_P F(x) \bigg).  \]
  Since $\epsilon>0$ is arbitrary, we get that equality holds.

  Finally, since each averaging operator $A_P$, $P\in \mathcal{Q}$, is
  measurable, by Theorem~\ref{cup}, $MF$ is a measurable function.
\end{proof}

\begin{lemma} \label{lemma:Mf-dominates-F}
   Given a locally integrably bounded function $F  :\R^n \rightarrow
  \bcs$, for almost every $x\in \R^n$, $F(x) \subset MF(x)$. 
\end{lemma}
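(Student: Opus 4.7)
The plan is to combine the Castaing representation theorem (Theorem~\ref{char}(iv)) with the classical scalar Lebesgue differentiation theorem, applied to a countable family of measurable selections of $F$. Since $F$ is locally integrably bounded, there exists $k \in L^1_{loc}(\R^n)$ with $F(x) \subset k(x)\mathbf{B}$ for a.e.\ $x$. By Theorem~\ref{char}(iv), choose measurable selection functions $\{f_j\}_{j\in\N}$ of $F$ such that for every $x$,
\[
F(x) = \overline{\{f_j(x): j\in\N\}}.
\]
Each $f_j$ satisfies $|f_j(x)| \leq k(x)$ for a.e.\ $x$, and hence $f_j \in L^1_{loc}(\R^n, \R^d)$.

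For each $j \in \N$, applying the classical Lebesgue differentiation theorem componentwise yields a null set $N_j \subset \R^n$ such that for every $x \notin N_j$,
\[
\lim_{Q \downarrow x} \avgint_Q f_j(y)\,dy = f_j(x),
\]
where the limit is taken over cubes $Q$ (with sides parallel to the axes) shrinking to $x$. Let $N = \bigcup_{j\in\N} N_j$, which is still a null set.

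Now fix $x \notin N$ and $j \in \N$. For any cube $Q$ containing $x$, the restriction $f_j|_Q$ lies in $S^1(Q, F)$, so by Definition~\ref{au},
\[
\int_Q f_j(y)\,dy \in \int_Q F(y)\,dy,
\]
and dividing by $\ln(Q)$ gives $\avgint_Q f_j(y)\,dy \in A_Q F(x) \subset MF(x)$. Since $MF(x)$ is closed by definition, passing to the limit as $Q \downarrow x$ yields $f_j(x) \in MF(x)$. This holds for every $j$, so $\{f_j(x) : j\in\N\} \subset MF(x)$, and since $MF(x)$ is closed,
\[
F(x) = \overline{\{f_j(x):j\in\N\}} \subset MF(x) \qquad \text{for every } x \notin N.
\]
There is no serious obstacle here; the only point that requires mild care is ensuring the selections are locally integrable (so that scalar Lebesgue differentiation applies), which follows directly from the local integrable boundedness of $F$.
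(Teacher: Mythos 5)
Your proof is correct and follows essentially the same route as the paper: a Castaing representation of $F$ by countably many (locally integrable) selections, the scalar Lebesgue differentiation theorem applied to each, the observation that $\avgint_Q f_j\,dy \in A_QF(x) \subset MF(x)$, and the closedness of $MF(x)$ to pass to the limit and recover all of $F(x)$. No gaps; the remark about local integrability of the selections is precisely the point the paper also uses.
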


\begin{proof}
  By Theorem~\ref{char}, we can write
  \[ F(x) = \overline{\{ f_k(x): k\in \N\} }, \]
    where $f_k \in S^0(\R^n,F)$.  Since $F$ is locally integrably
    bounded, $f_k \in L^1_{loc}(\R^n)$, so the restriction $f_k|_Q \in S^1(Q,F)$ for any
    cube $Q$.  Since the
    collection $\{f_k\}_{k\in \N}$ is countable, by the Lebesgue
    differentiation theorem, for almost every $x\in \R^n$,
    \[ f_k(x) = \lim_{\substack{x\in Q\\ \ln(Q)\rightarrow 0}}
      \avgint_Q f_k(y)\,dy.  \]
    By the definition of the Aumann integral,
    \[ \avgint_Q f_k(y)\,dy \in \avgint_Q F(y)\,dy, \]
    and therefore $f_k(x)$ is a limit point of
    \[ \bigcup_Q A_QF(x).  \]
    Since
    $MF$ has values in closed sets, $f_k(x) \in MF(x)$ and the desired inclusion follows.
  \end{proof}
  
\medskip

There are alternative definitions of the maximal operator
that are analogous to the ones from the classical theory.   Let
$Q(x,r)$ be the cube centered at $x$ with side length $r$. Then we can
define the centered maximal operator
\[ M^cF(x) = \clconv\bigg(\bigcup_{r>0}  A_{Q(x,r)}F(x)
\bigg).  \]
We can also define a maximal operator where the averages are over balls containing $x$ instead of cubes,
\[  \widebar{M}F(x) = \clconv\bigg(\bigcup_{B}  A_{B}F(x)
  \bigg),  \]
where
\[ A_BF(x) = \avgint_B F(y)\,dy \cdot \chi_B(x) \]
is the averaging operator defined with respect to balls.  Similarly we
can restrict to  balls centered at $x$,
\[ \widebar{M}^cF(x) = \clconv\bigg(\bigcup_{r>0}  A_{B(x,r)}F(x)
\bigg).  \]
All of these maximal operators are equivalent to the maximal operator
$M$ as originally defined.  This follows from
Corollary~\ref{cor:integral-additive}, using the fact that given a
point $x$ and cube $Q$, then $Q\subset Q(x,2\ell(Q))$, and the fact
that given a ball $B(x,r)$,
\[ Q(x,n^{-n/2}r) \subset B(x,r) \subset Q(x,2r). \]
Since we will not use this result, we leave the details to the
interested reader. 

\medskip

More important is a dyadic version of the convex-set valued maximal
operator.  Given the collection of dyadic cubes
\[ \D = \{ 2^{k}([0,1)^n+m) : k\in \Z, m\in \Z^n \}, \]
 we can define the dyadic maximal operator
\[ M^d F(x) = \clconv\bigg(\bigcup_{Q\in \D} A_{Q}F(x)
  \bigg).  \]
It is immediate that $M^d$ has all the same properties as the maximal
operator $M$.  Moreover, from the definition we  have that
for any locally integrably bounded convex-set valued function $F$,
$M^dF(x) \subset MF(x)$.

The converse inclusion is not true, but if we define a larger family
of dyadic operators, a closely related inclusion is true.  For $\tau
\in \{0,\pm 1/3\}^n$, define the translated dyadic grid
\[ \D^\tau = \{ 2^{k}([0,1)^n+m +(-1)^{k}\tau) : k\in \Z, m\in \Z^n \}. \]
Then $\D^0=\D$; moreover, all of the dyadic grids $\D^\tau$
have the same essential properties as $\D$.  (See~\cite{CruzUribe:2016ji,MR3092729}.)  We
define the generalized dyadic maximal operator
\[ M^\tau F(x) = \clconv\bigg(\bigcup_{Q\in \D^\tau} A_{Q}F(x)
  \bigg).  \]

\begin{lemma} \label{lemma:dyadic}
  Given a locally integrably bounded, convex-set valued function $F :
  \R^n \rightarrow \K_{bcs}(\R^d)$,
  \[ MF(x) \subset C\sum_{\tau\in \{0,\pm 1/3\}^n} M^\tau F(x), \]
  where the constant $C$ depends only on the dimension $n$.
\end{lemma}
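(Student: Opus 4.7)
The proof is a convex-set-valued version of the classical ``one-third trick'' from dyadic harmonic analysis. The underlying scalar fact (see \cite{CruzUribe:2016ji,MR3092729}) is that there is a constant $C_n$ depending only on the dimension such that for every axis-parallel cube $Q\subset\R^n$, one can find $\tau\in\{0,\pm 1/3\}^n$ and a cube $P\in\D^\tau$ with $Q\subset P$ and $\ln(P)\le C_n\ln(Q)$. This $\tau$ is built coordinate by coordinate: pick a dyadic scale $2^k\in[3\ell(Q),6\ell(Q))$, then for each coordinate direction choose $\tau_i\in\{0,\pm 1/3\}$ so that the projection of $Q$ fits inside a single interval of length $2^k$ from the one-dimensional grid $\D^{\tau_i}$.

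Given this, fix $x\in\R^n$ and an arbitrary cube $Q$ containing $x$, and let $\tau$ and $P\in\D^\tau$ be as above. Since $Q\subset P$ and $x\in P$, Corollary~\ref{cor:integral-additive} gives $\int_Q F(y)\,dy \subset \int_P F(y)\,dy$, and combining this with the homogeneity of Minkowski scalar multiplication (Theorem~\ref{add}) yields
\[
A_QF(x)=\frac{1}{\ln(Q)}\int_Q F(y)\,dy \subset \frac{\ln(P)}{\ln(Q)}\,A_PF(x) \subset C_n\,A_PF(x) \subset C_n M^\tau F(x).
\]
Because $F(y)\in\bcs$ is symmetric, $0\in F(y)$ for every $y$, hence $0\in A_RF(x)$ for every cube $R\ni x$ and consequently $0\in M^\sigma F(x)$ for each $\sigma\in\{0,\pm 1/3\}^n$. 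Since $K_1\subset K_1+K_2$ whenever $0\in K_2$, this gives
\[
C_n M^\tau F(x) \subset C_n\sum_{\sigma\in\{0,\pm 1/3\}^n} M^\sigma F(x).
\]

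The right-hand side is convex (Minkowski sums of convex sets are convex) and is the closed convex set we want to compare $MF(x)$ to; if necessary we replace it by its closure, which does not affect the statement. Taking the union over all cubes $Q\ni x$ and then the closed convex hull preserves the inclusion, so
\[
MF(x)=\clconv\Big(\bigcup_Q A_QF(x)\Big) \subset C_n\sum_{\sigma\in\{0,\pm 1/3\}^n} M^\sigma F(x),
\]
which is the desired estimate with $C=C_n$. The only nontrivial ingredient is the one-third trick itself, which is well documented in the references; once it is in hand, the passage from the scalar to the convex-set-valued setting requires nothing more than the monotonicity and scalar homogeneity of the Aumann integral together with the fact that symmetric convex bodies contain the origin.
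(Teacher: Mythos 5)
Your proof is correct and follows essentially the same route as the paper's: the one-third trick to absorb an arbitrary cube $Q$ into a cube $P\in\D^\tau$ of comparable volume, monotonicity and homogeneity of the Aumann integral to get $A_QF(x)\subset C_n A_PF(x)$, the fact that $0\in F(y)$ (hence $0\in M^\sigma F(x)$) to pass from a single $M^\tau F(x)$ to the Minkowski sum over all $\sigma$, and finally the additivity of the convex hull to conclude. The only cosmetic differences are that the paper simply cites the covering lemma with $\ell(P)\le 3\ell(Q)$ (so $C=3^n$) rather than sketching its construction, and it glosses over the closedness point you flag explicitly.
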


\begin{proof}
Fix $x\in \R^n$ and a cube $Q$ containing $x$.  Then there exists
$\tau\in \{0,\pm 1/3\}^n$ and a cube $P\subset \D^\tau$ such that
$Q\subset P$ and $\ell(P)\leq
3\ell(Q)$~\cite[Theorem~3.1]{CruzUribe:2016ji}.   Therefore,
\[ \avgint_Q F(y)\,dy \subset 3^n \avgint_P F(y)\,dy. \]
Since $0 \in F(y)$, $0\in \avgint_P F(y)\,dy$, and so
\[ \bigcup_Q A_Q F(x) 
\subset
3^n \bigcup_{\tau\in \{0,\pm 1/3\}^n} \bigcup_{P\in \D^\tau} A_PF(x)
\subset 3^n \sum_{\tau\in \{0,\pm 1/3\}^n} \bigcup_{P\in \D^\tau}
A_PF(x).  \]
By the linearity of the convex hull,
  \[ MF(x) \subset C\sum_{\tau\in \{0,\pm 1/3\}^n} M^\tau F(x).  \qedhere\]
\end{proof}

\subsection*{$L^p$ norm inequalities for the convex-set valued maximal operator}
In this section we prove strong and weak-type norm inequalities for
the convex-set valued maximal operator.

\begin{theorem} \label{thm:M-norm-ineq}
 For $1< p \leq \infty$, $M : L^p_\K(\R^n, |\cdot|) \rightarrow L^p_\K(\R^n,
 |\cdot|)$ is bounded.  When $p=1$, $M : L^1_\K(\R^n, |\cdot|) \rightarrow L^{1,\infty}_\K(\R^n,
 |\cdot|)$ is bounded. That is, for all $\lambda>0$ and $F \in L^1_\K(\R^n, |\cdot|)$, 
\[ \ln(\{x\in \R^n : |MF(x)|>\lambda \}) \leq 
\frac{C}{\lambda} \int_{\R^n} |F(x)|\,dx. \]
\end{theorem}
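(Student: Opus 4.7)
The plan is to reduce the convex-set valued estimate to the classical scalar Hardy--Littlewood maximal theorem applied to the scalar function $x \mapsto |F(x)|$. Here $|F|\colon \R^n\to[0,\infty)$ is measurable: apply Lemma~\ref{lemma:norm-measure} with the seminorm $\rho_x(v)=|v|$. Moreover, from the definition of $\|\cdot\|_{L^p_\K(\R^n,|\cdot|)}$, one has $\|F\|_{L^p_\K(\R^n,|\cdot|)} = \||F|\|_{L^p(\R^n)}$. So once we prove the pointwise domination
\begin{equation*}
|MF(x)| \;\leq\; M(|F|)(x) \qquad \text{for a.e. } x\in\R^n,
\end{equation*}
(where the right-hand side is the classical scalar Hardy--Littlewood maximal function), the strong-type bound for $1<p\leq\infty$ and weak-type $(1,1)$ bound for $p=1$ follow immediately by invoking the classical theorem.

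To establish the pointwise domination, the first step is to bound $|A_Q F(x)|$ for a single cube $Q\ni x$. By Definition~\ref{au}, every $v\in \avgint_Q F(y)\,dy$ is of the form $v=\avgint_Q f(y)\,dy$ for some $f\in S^1(Q,F)$; since $f(y)\in F(y)$ implies $|f(y)|\leq |F(y)|$, the scalar triangle inequality yields
\begin{equation*}
|v| \;\leq\; \avgint_Q |f(y)|\,dy \;\leq\; \avgint_Q |F(y)|\,dy.
\end{equation*}
Taking the supremum over $v$ gives $\bigl|\avgint_Q F(y)\,dy\bigr| \leq \avgint_Q |F(y)|\,dy$, and hence $|A_Q F(x)| \leq \avgint_Q |F(y)|\,dy\cdot\chi_Q(x)$.

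The second step passes from a single cube to the closed convex hull in the definition of $MF$. Any $v\in \conv\bigl(\bigcup_Q A_Q F(x)\bigr)$ is a finite convex combination $v=\sum_i \alpha_i v_i$ with $v_i\in A_{Q_i}F(x)$, so
\begin{equation*}
|v|\;\leq\;\sum_i \alpha_i |v_i| \;\leq\; \sup_{Q\ni x} \avgint_Q |F(y)|\,dy \;=\; M(|F|)(x).
\end{equation*}
Since $v\mapsto |v|$ is continuous, this bound persists under closure, proving the desired pointwise estimate.

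The final step is to invoke the classical results for the scalar Hardy--Littlewood maximal operator. For $1<p<\infty$, $\|M(|F|)\|_{L^p}\leq C_{n,p}\||F|\|_{L^p}$; for $p=\infty$, $\|M(|F|)\|_\infty\leq \||F|\|_\infty$; for $p=1$, $\ln(\{M(|F|)>\lambda\})\leq \tfrac{C_n}{\lambda}\||F|\|_{L^1}$. Combining these with the pointwise bound and the identity $\|F\|_{L^p_\K(\R^n,|\cdot|)} = \||F|\|_{L^p}$ completes the proof. The argument has no real obstacle: the entire difficulty is packaged into the pointwise domination, which follows from the definition of the Aumann integral and the convexity-preserving properties of the scalar norm. (Alternatively, one could avoid citing the scalar theorem and instead use Lemma~\ref{lemma:dyadic} to reduce to the translated dyadic maximal operators $M^\tau$, then run the Calder\'on--Zygmund stopping-cube argument directly on $|\avgint_Q F|$, but this route is strictly longer than the reduction above.)
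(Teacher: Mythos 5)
Your argument is correct, but it follows a different route from the paper. You reduce everything to the scalar Hardy--Littlewood theorem through the pointwise domination $|MF(x)| \leq M(|F|)(x)$, obtained by bounding $\bigl|\avgint_Q F\bigr| \leq \avgint_Q |F|$ via integrable selections and then observing that finite convex combinations and closures do not increase the Euclidean norm beyond the supremum; this is legitimate (the same convex-hull trick appears in the paper's treatment of the auxiliary operator $\widehat{M}$ in Lemma~\ref{lemma:M_p-sublinear}, and the single-cube estimate is the one in Proposition~\ref{prop:avg-op}), and it also shows $MF(x)$ is bounded a.e., so $MF$ genuinely lands in $L^p_\K(\R^n,|\cdot|)$. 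The paper instead works entirely inside the convex-set valued framework: it first reduces to the translated dyadic operators $M^\tau$ via Lemma~\ref{lemma:dyadic}, proves the weak $(1,1)$ bound by selecting maximal dyadic Calder\'on--Zygmund cubes for the set-valued averages, and then obtains the strong type by the usual splitting $F=F_1^\lambda+F_2^\lambda$ and the distribution-function integral. What your reduction buys is brevity and a clean appeal to a classical theorem; what the paper's self-contained argument buys is a demonstration that the dyadic stopping-time machinery transfers verbatim to convex-set valued functions, which is in the spirit of the rest of the paper (and it avoids even citing the scalar result). Both proofs yield the same constants up to dimensional factors, so there is no substantive loss in your approach.
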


\begin{proof}
  Our proof adapts the classic proof of the boundedness of the dyadic
  maximal operator, which uses the Calder\'on-Zygmund cubes, to the
  convex-set valued maximal operator.  For the theory of the scalar
  maximal operator, which extends to vector-valued functions without
  change, see~\cite{duoandikoetxea01,garcia-cuerva-rubiodefrancia85}.
  We begin with several reductions.  First, by
  Lemma~\ref{lemma:dyadic},
  \[ |MF(x)| \leq C\sum_{\tau\in \{0,\pm 1/3\}^n} |M^\tau F(x)|, \]
  and so it will suffice to prove the strong and weak-type
  inequalities for $M^\tau$.  In fact, given that all of the dyadic
  grids $\D^\tau$ have the same properties as the standard dyadic grid
  $\D$, it will suffice to prove them for the dyadic convex-set valued
  maximal operator, $M^d$.  Moreover, arguing as we did in the proof
  of Lemma~\ref{lemma:M_p-sublinear}, it will suffice
  to prove our estimates for the auxiliary operator
  $\widehat{M}^d$ with omitted convex hull, defined like $\widehat{M}$ in Lemma \ref{lemma:M_p-sublinear}, but only using dyadic cubes.

  First note that by Proposition~\ref{prop:avg-op}, for a.e. $x\in
  \R^n$,  $|A_QF(x)|\leq
  \|F\|_{L^\infty_\K(\R^n,|\cdot|)}$, and so we have that
  $\|\widehat{M}^d F\|_{L^\infty_\K(\R^n,|\cdot|)} \leq
  \|F\|_{L^\infty_\K(\R^n,|\cdot|)}$.

 We will now prove the weak $(1,1)$ inequality by adapting the
 Calder\'on-Zygmund decomposition to convex-set valued functions.  Fix
 $\lambda>0$ and define
 \[ \Omega_\lambda^d = \{ x\in \R^n : |\widehat{M}^dF(x)|>\lambda \}. \]
 If $\Omega_\lambda^d$ is empty, there is nothing to prove.
 Otherwise, given $x\in \Omega_\lambda^d$, there must exist a cube
 $Q\in \D$ such that $x\in Q$ and 
 \[ \bigg| \avgint_Q F(y)\,dy \bigg| > \lambda.  \]
 We claim that among all the dyadic cubes containing $x$, there must
 be a largest one with this property.  Arguing as we did above, we
 have that
 \[ \bigg| \avgint_Q F(y)\,dy \bigg| \leq \avgint_Q |F(y)|\,dy
   \leq \ln(Q)^{-1}\|F\|_{L^1_\K(\R^n,|\cdot|)}.  \]
 Since the right-hand side goes to $0$ as $\ln(Q)\rightarrow \infty$, we
 see that such a maximal cube must exist.   Denote this cube by
 $Q_x$.  Since the set of dyadic cubes is countable, we can enumerate
 the set $\{Q_x : x \in \Omega_\lambda^d \}$ by
 $\{Q_j\}_{j\in \N}$.   The cubes $Q_j$ must be disjoint, since if
 one was contained in the other, it would contradict the maximality.
By our choice of these cubes, $\Omega_\lambda^d \subset \bigcup_j
Q_j$.  Hence, we have that
 \begin{multline*}
  \ln(\Omega_\lambda^d) \leq \sum_j \ln(Q_j)
 \leq
     \frac{1}{\lambda} \sum_j \ln(Q_j) \bigg| \avgint_{Q_j} F(y)\,dy \bigg| \\
    \leq \frac{1}{\lambda} \sum_j\int_{Q_j} |F(y)|\,dy
   \leq \lambda^{-1}\|F\|_{L^1_\K(\R^n,|\cdot|)}.
 \end{multline*}
 
To complete the proof, fix $1<p<\infty$.  For each $\lambda>0$ we can
decompose $F=F^\lambda_1+F^\lambda_2$, where
\[  F^\lambda_1(x) = F(x)\chi_{ \{x\in \R^n : |F(x)|>\lambda/2\}},
    \quad
    F^\lambda_2(x) = F(x)\chi_{ \{x\in \R^n :
      |F(x)|\leq\lambda/2\}}.  \]
    Since the operator $\widehat{M}^d$ is bounded on $L^\infty_\K(\R^n,|\cdot|)$,
    by Lemma~\ref{lemma:Mf-sublinear},
    \[ |\widehat{M}^dF(x)| \leq |\widehat{M}^dF^\lambda_1(x)|+|\widehat{M}^dF^\lambda_2(x)|
      \leq |\widehat{M}^dF^\lambda_1(x)| +\lambda/2. \]
    Therefore, by the weak $(1,1)$ inequality and Fubini's theorem,
    \begin{align*}
      \|\widehat{M}^dF(x)\|_{L^p_\K(\R^n,|\cdot|)}^p
  &    \le p \int_0^\infty \lambda^{p-1} \ln(\{ x\in\R^n : 
      |\widehat{M}^dF^\lambda_1(x)|>\lambda/2\})\,d\lambda \\
&      \lesssim
      p \int_0^\infty \lambda^{p-2} \int_{\{ x\in \R^n :
        |F(x)|>\lambda/2\}} |F(x)|\,dx\,d\lambda \\
&      = p \int_{\R^n} |F(x)| \int_0^{2|F(x)|}
      \lambda^{p-2}\,d\lambda\,dx \\
&      = 2^{p-1} p' \|F\|_{L^p_\K(\R^n,|\cdot|)}^p,
    \end{align*}
where $1/p+1/p'=1$.
 \end{proof}

 \medskip

 Even though $L^p_\K(\R^n,|\cdot|)$ is not a normed vector space, the
maximal operator is still continuous.

\begin{corollary} \label{cor:max-continuous}
 For $1<p\leq \infty$, the maximal operator is continuous on $L^p_\K(\R^n,|\cdot|)$ with respect to the metric \eqref{metric}.
\end{corollary}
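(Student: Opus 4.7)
The plan is to prove the stronger statement that $M$ is in fact Lipschitz continuous on $L^p_\K(\R^n,|\cdot|)$. Specifically, I would establish the pointwise inequality
\[
d_H(MF(x),MG(x)) \leq M_s(h)(x) \quad \text{for a.e.\ } x \in \R^n,
\]
where $M_s$ is the classical scalar Hardy--Littlewood maximal operator and $h(y) := d_H(F(y),G(y))$ is the pointwise Hausdorff distance between $F$ and $G$ (which is a measurable scalar function, as can be verified by a countable-selection argument analogous to Lemma \ref{lemma:norm-measure}). Given this bound, applying the classical Hardy--Littlewood--Wiener inequality for $1 < p \leq \infty$ yields
\[
d_p(MF,MG) = \|d_H(MF,MG)\|_{L^p(\R^n)} \leq \|M_s h\|_{L^p(\R^n)} \leq C_p \|h\|_{L^p(\R^n)} = C_p\, d_p(F,G),
\]
which is Lipschitz, and hence continuous, dependence of $MF$ on $F$.

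To establish the pointwise inequality I would work with support functions. For any $K_1, K_2 \in \bcs$ one has the identity $d_H(K_1,K_2) = \sup_{|v| \leq 1} |h_{K_1}(v) - h_{K_2}(v)|$, so it is enough to control $|h_{MF(x)}(v) - h_{MG(x)}(v)|$ uniformly in $|v| \leq 1$. Two support-function identities are the key. First, for a cube $Q$ with $x \in Q$, Theorem \ref{sn} (applied to the seminorm function $y \mapsto \rho_y(v) = p_{F(y)^\circ}(v)$ and re-interpreted via Lemma \ref{sf}) gives
\[
h_{A_Q F(x)}(v) = \avgint_Q h_{F(y)}(v)\, dy.
\]
Second, using the countable representation $MF(x) = \clconv\big(\bigcup_{P \in \mathcal{Q}} A_P F(x)\big)$ from Proposition \ref{prop:MF-measurable} together with Theorem \ref{eqnfun-bis}(c), the support function distributes over the convex-hull closure of the countable union, giving
\[
h_{MF(x)}(v) = \sup_{\substack{P \in \mathcal{Q}\\ P \ni x}} \avgint_P h_{F(y)}(v)\, dy = M_s\big(h_{F(\cdot)}(v)\big)(x),
\]
and the analogous identity for $G$.

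Combining these identities with the sublinearity estimate $|M_s f - M_s g| \leq M_s(|f-g|)$ (which holds pointwise for non-negative scalar $f,g$ by applying $M_s$ to the bounds $f \leq g + |f-g|$ and $g \leq f + |f-g|$) and the elementary bound $|h_{F(y)}(v) - h_{G(y)}(v)| \leq |v| \cdot d_H(F(y),G(y))$ (immediate from the definitions for $F(y), G(y) \in \bcs$), one obtains
\[
|h_{MF(x)}(v) - h_{MG(x)}(v)| \leq M_s\big(|h_{F(\cdot)}(v) - h_{G(\cdot)}(v)|\big)(x) \leq |v|\, M_s(h)(x).
\]
Taking the supremum over $|v| \leq 1$ yields the required pointwise inequality. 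The main technical point is the support-function identity for the Aumann integral in the first display of the previous paragraph; once this is established via Theorem \ref{sn}, the rest of the argument is a routine combination of convex geometry with the scalar Hardy--Littlewood maximal theorem. Note that the boundedness of $M$ on $L^p_\K(\R^n,|\cdot|)$ (Theorem \ref{thm:M-norm-ineq}) guarantees that $MF(x), MG(x) \in \bcs$ almost everywhere, so the use of support functions is legitimate.
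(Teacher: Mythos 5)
Your proposal is correct, but it takes a genuinely different route from the paper. The paper's argument is softer and shorter: given $F_n \to F$ in $L^p_\K(\R^n,|\cdot|)$, it sets $H_n(x) = d_H(F_n(x),F(x))\overline{\mathbf{B}}$, uses sublinearity and monotonicity of $M$ (Lemma~\ref{lemma:Mf-sublinear}) to get $MF \subset MF_n + MH_n$ and $MF_n \subset MF + MH_n$, and then translation invariance \eqref{lp1} of the metric together with Theorem~\ref{thm:M-norm-ineq} applied to the ball-valued function $H_n$ gives $d_p(MF,MF_n) \le \|MH_n\|_p \le C\|H_n\|_p = C\,d_p(F,F_n)$ --- the same Lipschitz conclusion you reach, but with the constant supplied by the convex-set-valued maximal theorem rather than the scalar one. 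You instead linearize through support functions: the identity $h_{A_QF(x)}(v)=\avgint_Q h_{F(y)}(v)\,dy$ (Theorem~\ref{sn} combined with Lemma~\ref{sf}) and Theorem~\ref{eqnfun-bis}(c) applied to the countable representation in Proposition~\ref{prop:MF-measurable} identify $h_{MF(x)}(v)$ with a (rational-cube) scalar maximal function of $y\mapsto h_{F(y)}(v)$; since the difference of two suprema over the same family of cubes is dominated by the supremum of the differences, you get the pointwise bound $d_H(MF(x),MG(x)) \le M_s\big(d_H(F(\cdot),G(\cdot))\big)(x)$ and the scalar Hardy--Littlewood theorem finishes the proof. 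What your route buys is this sharper pointwise structural fact (and, taking $G\equiv\{0\}$, essentially an alternative proof of the strong-type bound in Theorem~\ref{thm:M-norm-ineq}, modulo the a.e.~boundedness of $MF$ needed to invoke Theorem~\ref{eqnfun-bis}(c)); what the paper's route buys is brevity and, as its remark notes, the observation that the same two-line argument shows any bounded sublinear operator on $L^p_\K$ is automatically continuous. Both proofs legitimately use Theorem~\ref{thm:M-norm-ineq} --- the paper to bound $\|MH_n\|_p$, yours only to guarantee $MF(x),MG(x)\in\bcs$ a.e. --- so neither is circular.
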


\begin{proof}
Let $d_H$ denote the Hausdorff distance defined
by~\eqref{eqn:hausdorff-dist-func} with respect to the Euclidean
metric.  Given compact sets $F,\,G\in \R^d$, if $d_H(F,G)\leq r$, then
it follows at once from the definition that
$F \subset G + r\overline{{\mathbf B}}$ and  $G \subset F
  +r\overline{{\mathbf B}}$.

Fix a sequence $\{F_n\}_{n\in\N}$ that converges to $F$ in
$L^p_\K(\R^n,|\cdot|)$.   For each $n\in \N$ define
\[ H_n(x) = d_{H}(F_n(x),F(x))\overline{{\mathbf B}}.  \]
Then $\|H_n\|_{L^p_\K(\R^n,|\cdot|)}\rightarrow 0$ as $n\rightarrow
\infty$, and
\[ F(x) \subset F_n(x) + H_n(x), \qquad F_n(x) \subset F(x) +
  H_n(x). \]
by Proposition~\ref{lemma:Mf-sublinear}  the maximal operator is sublinear,
so we have that
\[ MF(x) \subset MF_n(x) + MH_n(x), \qquad MF_n(x) \subset MF(x) +
  MH_n(x). \]
Therefore, by Theorem~\ref{thm:M-norm-ineq},
\[ d_p(MF,MF_n) \leq d_p(MF_n+MH_n,MF_n) =
  \|MH_n\|_{L^p_\K(\R^n,|\cdot|)} \leq C
  \|H_n\|_{L^p_\K(\R^n,|\cdot|)}.  \]
The desired conclusion follows at once.
\end{proof}

\begin{remark}
 The proof of Corollary~\ref{cor:max-continuous} is not specific to
 the maximal operator:  in fact, we have that any linear or sublinear
 operator that is bounded on $L^p_\K(\R^n,|\cdot|)$ is continuous.
\end{remark}

 \section{Matrix $\A_p$ weights and weighted norm inequalities}
 \label{section:matrix-weights}
 
In this section we extend Theorem~\ref{thm:M-norm-ineq} to the spaces
$L^p_\K(\R^n,\rho)$, where the norm function $\rho$ satisfies a
generalized Muckenhoupt $\A_p$ condition.  To prove our results, we first need to
develop the theory of $\A_p$ norms.  Throughout this
section, let $\rho : \R^n \times \R^d \rightarrow [0,\infty)$ be a
norm function, such that if $\rho_x(v)=\rho_{F(x)^\circ}(v)$ as in
\eqref{mc1}, then $F$ is locally integrably bounded.   Hence, by
Theorem~\ref{sn}, given any cube $Q$ and $v\in \R^d$,
\[ \int_Q \rho_x(v)\,dx < \infty.  \]

\subsection*{$\A_p$ norms and matrix $\A_p$ weights}
The classical Muckenhoupt $A_p$ condition~\eqref{eqn:Ap-defn} is defined in terms of
averages of scalar weights.  Here we will first define the
corresponding ``average'' of a norm.
Fix $1\leq p<\infty$ and suppose $\rho(\cdot,v)\in L^p_{loc}$ for all
$v\in \R^d$.  We define
$\rho_{p,Q} : \R^d \rightarrow
[0,\infty)$ by
\[ \langle \rho \rangle_{p,Q}(v)
  = \|\rho(\cdot,v)\|_{p,Q}=\bigg(\avgint_Q \rho_x(v)^p\,dx\bigg)^{\frac{1}{p}}. \]
Similarly, if $\rho(\cdot,v)\in L^\infty$ for all
$v\in \R^d$, we define
\[ \langle \rho \rangle_{\infty,Q}(v)
  = \|\rho(\cdot,v)\|_{\infty,Q}= \esssup_{x\in
    Q} \rho_x(v). \]
Since $\|\cdot\|_{p,Q}$, $1\leq p \leq \infty$,  is a norm, it follows
that $\langle \rho \rangle_{p,Q}$ is a norm.  Let $\rho_x^*$ be the dual norm function
and let $\langle\rho^*\rangle_{p,Q}$ be the average of the dual norm
(see Corollary~\ref{cor:dual-norm}).  These are related by the
following inequality.  When $1<p<\infty$,  this was
proved in~\cite[Proposition~1.1]{MR2015733}; for completeness we include the
short proof which immediately extends to $p=1$ and $p=\infty$. 

\begin{lemma} \label{lemma:average-dual}
Given a norm function $\rho : \R^n \times \R^d \rightarrow
[0,\infty)$ and $1 \leq p \leq \infty$, then for every cube $Q$ and $v\in \R^d$, 
\begin{equation} \label{eqn:avg-dual1}
  \langle\rho \rangle_{p,Q}^*(v) \leq \langle \rho^* \rangle_{p',Q}(v). 
\end{equation}
\end{lemma}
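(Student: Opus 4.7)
The plan is to prove the inequality by unwinding the definition of the dual norm and applying H\"older's inequality on the cube $Q$.

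First, I would start from the pointwise duality inequality: for every $x \in Q$ and every $v,w \in \R^d$,
\[
|\langle v, w \rangle| \leq \rho_x^*(v)\, \rho_x(w),
\]
which is immediate from Lemma~\ref{lemma:dual-seminorm} (the definition of the dual norm). Since the left-hand side is independent of $x$, averaging over $Q$ preserves it, giving
\[
|\langle v, w \rangle| = \avgint_Q |\langle v, w \rangle|\,dx \leq \avgint_Q \rho_x^*(v)\, \rho_x(w)\,dx.
\]

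Next I would apply H\"older's inequality with exponents $p'$ and $p$ to the right-hand side, uniformly across the range $1 \leq p \leq \infty$ (using the usual conventions when $p \in \{1,\infty\}$):
\[
\avgint_Q \rho_x^*(v)\, \rho_x(w)\,dx
\leq \bigg(\avgint_Q \rho_x^*(v)^{p'}\,dx\bigg)^{\frac{1}{p'}}
\bigg(\avgint_Q \rho_x(w)^{p}\,dx\bigg)^{\frac{1}{p}}
= \langle \rho^* \rangle_{p',Q}(v)\, \langle \rho \rangle_{p,Q}(w).
\]
The endpoint cases $p=1$ (where the $\rho_x^*$ factor is replaced by its essential supremum on $Q$) and $p=\infty$ (where the $\rho_x$ factor is handled likewise) follow exactly the same pattern with the obvious modification of H\"older.

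Finally, since $\rho_x$ is a norm for each $x$, we have $\langle \rho \rangle_{p,Q}(w) > 0$ whenever $w \neq 0$, so dividing by $\langle \rho \rangle_{p,Q}(w)$ and taking the supremum over $w \in \R^d \setminus \{0\}$ yields
\[
\langle \rho \rangle_{p,Q}^*(v)
= \sup_{w \neq 0}\frac{|\langle v,w\rangle|}{\langle \rho \rangle_{p,Q}(w)}
\leq \langle \rho^* \rangle_{p',Q}(v),
\]
which is \eqref{eqn:avg-dual1}. There is no genuine obstacle here; the only points requiring care are the endpoint exponents and the positivity of $\langle \rho \rangle_{p,Q}(w)$ for $w \neq 0$, both of which are handled routinely. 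The measurability of $x \mapsto \rho_x^*(v)$ needed to make sense of $\langle \rho^* \rangle_{p',Q}(v)$ is guaranteed by Corollary~\ref{cor:dual-norm-measurable}.
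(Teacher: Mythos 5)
Your proof is correct and follows essentially the same route as the paper: average the pointwise duality bound $|\langle v,w\rangle|\leq \rho_x^*(v)\rho_x(w)$ over $Q$, apply H\"older with exponents $p'$ and $p$ (with the obvious $L^\infty$ modification at the endpoints), and conclude by the definition of the dual norm. The added remarks on positivity of $\langle\rho\rangle_{p,Q}(w)$ and measurability via Corollary~\ref{cor:dual-norm-measurable} are fine but not a departure from the paper's argument.
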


\begin{proof}
  Fix $1<p<\infty$.   By H\"older's inequality,
  given two vectors $v,\,w\in \R^d$,
  \begin{multline*}
    |\langle v, w \rangle |
    \leq
    \avgint_Q \rho_x^*(v) \rho_x(w)\,dx \\
    \leq
    \bigg(\avgint_Q \rho_x^*(v)^{p'}\,dx\bigg)^{\frac{1}{p'}}
    \bigg(\avgint_Q \rho_x(w)^{p}\,dx\bigg)^{\frac{1}{p}}
    =
    \langle \rho^*\rangle_{p',Q}(v)\langle \rho\rangle_{p,Q}(w). 
  \end{multline*}
  The desired inequality now follows by the definition of the dual
  norm.  When $p=1$ or $p=\infty$, we repeat this argument but use the
  $L^\infty$ norm in place of the $L^{p'}$ or $L^p$ norm.  
\end{proof}

An $\A_p$ norm is one for which the reverse of
inequality~\eqref{eqn:avg-dual1} holds.  The following definition and
lemma first appeared in the work of Nazarov, Treil, and Volberg \cite{{MR1428988}, Vol} when
$1<p<\infty$.  Note that our definition of an
$\A_\infty$ norm is different from the one that is given there.

\begin{definition} \label{defn:Ap-defn}
  Given a norm function $\rho : \R^n \times \R^d \rightarrow
  [0,\infty)$, then for $1\leq p \leq \infty$ we say that $\rho\in\A_p$ if
  for every cube $Q$ and $v\in \R^d$, 
  \begin{equation} \label{eqn:Ap-defn1}
    \langle\rho^*\rangle_{p',Q}(v) \lesssim \langle\rho\rangle_{p,Q}^*(v). 
  \end{equation}
  The infimum of the constants which make this inequality true is
  denoted by $[\rho]_{\A_p}$.
\end{definition}

\begin{lemma} \label{lemma:Ap-duality}
  Given $1\leq p \leq \infty$ and norm function $\rho : \R^n \times \R^d \rightarrow
  [0,\infty)$, if $\rho\in \A_p$, then $\rho^* \in \A_{p'}$ and
  $[\rho^*]_{\A_{p'}} =[\rho]_{\A_p}$. 
\end{lemma}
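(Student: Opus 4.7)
\medskip

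\noindent\textbf{Proof plan for Lemma~\ref{lemma:Ap-duality}.} The strategy is pure duality: the $\A_p$ condition is a norm inequality between $\langle\rho^*\rangle_{p',Q}$ and $\langle\rho\rangle_{p,Q}^*$, and dualizing this inequality on the finite dimensional normed space $\R^d$ yields exactly the $\A_{p'}$ condition for $\rho^*$.

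First I would record the elementary duality principle on $\R^d$: if $P$ and $R$ are norms on $\R^d$ satisfying $P(v)\le C R(v)$ for all $v\in\R^d$, then by the definition of the dual norm \eqref{eqn:dual-norm},
\[
P^*(v)=\sup_{w\ne 0}\frac{|\langle v,w\rangle|}{P(w)}\ge \sup_{w\ne 0}\frac{|\langle v,w\rangle|}{C R(w)}=\frac{1}{C}R^*(v).
\]
Applying this to $P=\langle\rho^*\rangle_{p',Q}$ and $R=\langle\rho\rangle_{p,Q}^*$, both of which are genuine norms on $\R^d$ (since $\rho_x$ and $\rho_x^*$ are norms for a.e.\ $x$, by Corollary~\ref{cor:dual-norm-measurable}), the $\A_p$ hypothesis gives
\[
\langle\rho^*\rangle_{p',Q}^*(v)\ge \frac{1}{[\rho]_{\A_p}}\langle\rho\rangle_{p,Q}^{**}(v)=\frac{1}{[\rho]_{\A_p}}\langle\rho\rangle_{p,Q}(v),
\]
where in the last step I invoke $\langle\rho\rangle_{p,Q}^{**}=\langle\rho\rangle_{p,Q}$ from Corollary~\ref{cor:dual-norm}. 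Since $(p')'=p$ and $\rho^{**}=\rho$, this rearranges to
\[
\langle(\rho^*)^*\rangle_{(p')',Q}(v)\le [\rho]_{\A_p}\,\langle\rho^*\rangle_{p',Q}^*(v),
\]
which is exactly the $\A_{p'}$ condition for $\rho^*$ with constant $[\rho]_{\A_p}$; hence $\rho^*\in\A_{p'}$ with $[\rho^*]_{\A_{p'}}\le[\rho]_{\A_p}$.

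For the equality of constants, I would simply invoke symmetry: the argument above, applied to $\rho^*$ in place of $\rho$ (using $(p')'=p$ and $\rho^{**}=\rho$), yields $[\rho]_{\A_p}=[(\rho^*)^*]_{\A_{(p')'}}\le[\rho^*]_{\A_{p'}}$. Combined with the inequality from the previous paragraph, this gives $[\rho^*]_{\A_{p'}}=[\rho]_{\A_p}$. There is no real obstacle; the main point is to confirm that the quantities $\langle\rho\rangle_{p,Q}$ and $\langle\rho^*\rangle_{p',Q}$ are finite-valued norms on $\R^d$ (not merely seminorms) so that the double-dual identity of Corollary~\ref{cor:dual-norm} applies, and this is automatic from the hypothesis that $\rho_x$ is a norm for a.e.\ $x$ together with the local integrability assumption imposed at the start of the section.
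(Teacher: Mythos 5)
Your proposal is correct and follows essentially the same route as the paper: dualize the $\A_p$ inequality $\langle\rho^*\rangle_{p',Q}\le[\rho]_{\A_p}\langle\rho\rangle_{p,Q}^*$ using the order-reversing property of the dual norm, then invoke $\langle\rho\rangle_{p,Q}^{**}=\langle\rho\rangle_{p,Q}$ and $\rho^{**}=\rho$ to recognize the resulting inequality as the $\A_{p'}$ condition for $\rho^*$. Your symmetry argument for the equality of constants is also the intended (implicit) justification in the paper, so no changes are needed.
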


\begin{proof}
It is immediate from the definition of the dual norm that if $p_1$ and
$p_2$ are two norms, and $p_1(v)\leq p_2(v)$ for all $v\in \R^d$, then
$p_2^*(v) \leq p_1^*(v)$.    But then from \eqref{eqn:Ap-defn1} we
have that $\langle \rho\rangle_{p,Q}(v) \leq  [\rho]_{\A_p}\langle\rho^*\rangle_{p',Q}^*(v)$, and since
$\rho^{**}=\rho$, it follows that $\rho^* \in \A_{p'}$ and $[\rho^*]_{\A_{p'}} =[\rho]_{\A_p}$. 
\end{proof}

We can also characterize $\A_p$ norms in terms of their associated
matrices; in doing so, we also give our definition of matrix $\A_p$.
As we noted in the Introduction, our definition is different from, but
equivalent to, the definition used previously when $1<p<\infty$, and
corresponds to replacing the matrix $W$ by $W^p$ in that definition; see \cite{Bow}.
We  give two
characterizations.  To do so, we first define the notion of a reducing
operator.  These were first introduced in~\cite{Vol} for norms; here
we will follow the definition in~\cite{MR2015733} in terms of
matrices.
Given a norm function $\rho$, by Theorem~\ref{thm:matrix-norm} there
exists a positive definite matrix mapping $W : \R^n \rightarrow \Sd$
such that $\rho_x(v) \approx |W(x)v|$.  By
Proposition~\ref{prop:dual-matrix-norm} we have that $\rho^*_x(v)
\approx |W^{-1}(x)v|$.    In both cases the implicit constants depend
only on $d$.  Given a cube $Q$ and $1\leq p\leq \infty$,
$\langle\rho \rangle_{p,Q}$ is also a norm and by the John ellipsoid theorem there exists a matrix $\W_Q^p$ such that for all $v\in
\R^d$,
\[ \langle \rho \rangle_{p,Q}(v)
  \approx \|W(\cdot)v\|_{p,Q} \approx |\W_Q^pv|. \]
The matrix $\W_Q^p$ is referred to as the reducing operator associated
to $\rho$ on $Q$.   For the reducing operators associated to the dual
norm we will use the notation $\barW_Q^p$:  i.e.,
\[ \langle\rho^*\rangle_{p,Q}(v)
  \approx \|W^{-1}(\cdot)v\|_{p,Q} \approx |\barW_Q^pv|. \]

\begin{prop} \label{prop:Ap-reducing} Given a norm function
  $\rho : \R^n \times \R^d \rightarrow [0,\infty)$ with associated
  matrix mapping $W$, and given $1\leq p\leq \infty$, $\rho \in \A_p$
  if and only if 
 \begin{equation} \label{eqn:Ap-reducing1}
[W]^R_{\A_p} = \sup_Q |\barW_Q^{p'} \W_Q^p|_{\op} < \infty. 
\end{equation}
Moreover, $[W]^R_{\A_p} \approx [\rho]_{\A_p}$ with implicit constants
that depend only on $d$. 
\end{prop}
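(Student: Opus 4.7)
The plan is to translate the $\A_p$ inequality on the norm function $\rho$ directly into an operator-norm bound on the product $\barW_Q^{p'} \W_Q^p$, by using the matrix representation of a norm and its dual developed earlier in the paper.

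First, I would fix the reducing operators to be symmetric positive definite. By Theorem~\ref{thm:Sd-weight} applied to the matrices furnished by the John ellipsoid (see also the construction preceding the statement), one may choose $\W_Q^p \in \Sd$ and $\barW_Q^{p'} \in \Sd$ so that
\[
\langle\rho\rangle_{p,Q}(v) \approx |\W_Q^p v|, \qquad \langle\rho^*\rangle_{p',Q}(v) \approx |\barW_Q^{p'} v|,
\]
with implicit constants depending only on $d$. Both matrices are invertible since the underlying norms are nondegenerate.

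Second, I would compute the dual of $\langle\rho\rangle_{p,Q}$ in matrix form. Since $\W_Q^p$ is symmetric, Proposition~\ref{prop:dual-matrix-norm} gives $(p_{\W_Q^p})^*(v) = |(\W_Q^p)^{-1}v|$. Combining this with the equivalence above (and the fact that $p_1 \approx p_2$ implies $p_1^* \approx p_2^*$ with reciprocal constants), I obtain
\[
\langle\rho\rangle_{p,Q}^*(v) \approx |(\W_Q^p)^{-1} v|,
\]
uniformly in $Q$ with constants depending only on $d$.

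Third, I would insert these two matrix equivalences into the $\A_p$ inequality \eqref{eqn:Ap-defn1} and perform a linear change of variables. The condition $\langle\rho^*\rangle_{p',Q}(v) \le [\rho]_{\A_p} \langle\rho\rangle_{p,Q}^*(v)$ becomes, up to a dimensional constant,
\[
|\barW_Q^{p'} v| \lesssim [\rho]_{\A_p}\, |(\W_Q^p)^{-1} v| \qquad \text{for all } v\in \R^d.
\]
Setting $u = (\W_Q^p)^{-1} v$, equivalently $v = \W_Q^p u$, the inequality reads $|\barW_Q^{p'} \W_Q^p u| \lesssim [\rho]_{\A_p}\, |u|$ for all $u\in\R^d$, which by definition of the operator norm gives $|\barW_Q^{p'} \W_Q^p|_{\op} \lesssim [\rho]_{\A_p}$. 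Taking the supremum over $Q$ yields $[W]^R_{\A_p} \lesssim [\rho]_{\A_p}$, and the same chain run backwards (starting from the operator-norm bound and undoing the substitution) gives $[\rho]_{\A_p} \lesssim [W]^R_{\A_p}$; in particular the finiteness of one side is equivalent to that of the other, and the two constants are comparable with implicit constants depending only on $d$.

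There is no real obstacle here: the entire argument is essentially a change of variables, and the only point requiring care is to verify that the equivalence constants in the reducing-operator definitions (and in the passage from a norm to its dual) are uniform in $Q$ and depend only on $d$, which is guaranteed by Theorem~\ref{thm:matrix-norm} and Proposition~\ref{prop:dual-matrix-norm}.
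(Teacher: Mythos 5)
Your proof is correct and follows essentially the same route as the paper: both arguments rest on the equivalences $\langle\rho^*\rangle_{p',Q}(v)\approx|\barW_Q^{p'}v|$ and $\langle\rho\rangle_{p,Q}^*(v)\approx|(\W_Q^p)^{-1}v|$ (the latter via Proposition~\ref{prop:dual-matrix-norm}), followed by the substitution $v\mapsto \W_Q^p v$ to pass between the $\A_p$ inequality and the operator-norm bound, with constants depending only on $d$. Your explicit remark that the reducing operators may be taken in $\Sd$ is a minor point of extra care that the paper leaves implicit, but the substance of the argument is the same.
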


\begin{proof}
  Suppose first that \eqref{eqn:Ap-reducing1} holds.  Then given any
cube $Q$ and  vector $v\in \R^d$,
\[ \langle\rho^* \rangle_{p',Q}(v)
  \approx
  |\barW_Q^{p'}v|
  =
  |\barW_Q^{p'}\W_Q^p (\W_Q^p)^{-1}v|
  \leq
  |\barW_Q^{p'}\W_Q^p|_{\op}|(\W_Q^p)^{-1}v|
  \leq
  [W]^R_{\A_p} \langle\rho\rangle_{p,Q}^*(v). \]
Hence, $\rho \in \A_p$.  

Conversely, if $\rho \in \A_p$, then given any vector $v\in \R^d$,
\[  |\barW_Q^{p'}\W_Q^pv|
  \approx
  \langle \rho^*\rangle_{p',Q}(\W_Q^p v)
  \leq [\rho]_{\A_p}
  \langle \rho \rangle_{p,Q}^*(\W_Q^p v)
  \approx
  |(W_Q^p)^{-1}\W_Q^p v|
  =
  |v|. \]
It follows at once that \eqref{eqn:Ap-reducing1} holds and the
constants are comparable.
\end{proof}

If a matrix mapping $W$ is such that $\rho_W$ is an $\A_p$ norm, we
say that  $W$ is in matrix $\A_p$, and write $W\in \A_p$.  Note that
it follows immediately from Proposition~\ref{prop:Ap-reducing}, analogous to
Lemma~\ref{lemma:average-dual}, that $W\in \A_p$ if and only if $W^{-1}
\in \A_{p'}$.

We can give another characterization of matrix $\A_p$ using integral averages
that strongly resembles the Muckenhoupt $A_p$ condition for scalar
weights.  When $1<p<\infty$, this condition is due to
Roudenko~\cite{MR1928089}; when $p=1$ it was used as the definition of
$\A_1$ by Frazier and Roudenko~\cite{MR2104276}.    Here we 
give the proof when $p=1$ (equivalently, when $p=\infty$) and refer
the reader to~\cite{MR1928089} for the case $1<p<\infty$.

\begin{prop} \label{prop:Ap-matrix-defn}
Given a norm function
  $\rho : \R^n \times \R^d \rightarrow [0,\infty)$ with associated
  matrix mapping $W$, and given $1< p< \infty$, $\rho \in \A_p$
  if and only if
  \[ [W]_{\A_p} = \sup_Q \bigg(\avgint_Q \bigg( \avgint_Q
    |W(x)W^{-1}(y)|_{\op}^{p'}\,dy\bigg)^{\frac{p}{p'}}\,dx\bigg)^{\frac{1}{p}} <
    \infty. \]
  When $p=1$, $\rho\in \A_1$ if and only if
  \begin{equation} \label{eqn:A1-matrix1}
    [W]_{\A_1} = \sup_Q \esssup_{x\in Q} \avgint_Q |W^{-1}(x)W(y)|_{\op}\,dy <
    \infty.  
  \end{equation}
  When $p=\infty$, $\rho\in \A_\infty$ if and only if
  \[ [W]_{\A_\infty} =\sup_Q \esssup_{x\in Q} \avgint_Q |W(x)W^{-1}(y)|_{\op}\,dy <
    \infty.  \]
  For all $p$, $[W]_{\A_p} \approx [W]_{\A_p}^R \approx [\rho]_{\A_p}$
  with constants that depend only on $d$.  
\end{prop}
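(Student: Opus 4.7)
The plan is to first reduce to the case $p=1$ and then to establish the two-sided comparison $[W]_{\A_1}\approx_d[W]_{\A_1}^R$ by hand; combined with Proposition~\ref{prop:Ap-reducing} this gives $[W]_{\A_1}\approx_d[\rho]_{\A_1}$. The case $1<p<\infty$ is Roudenko's theorem~\cite{MR1928089}, which I cite. The case $p=\infty$ follows from $p=1$ by replacing $W$ with $W^{-1}$: after invoking Theorem~\ref{thm:Sd-weight} to symmetrize, $\rho^*$ is represented by $W^{-1}$, and $[W^{-1}]_{\A_1}$ is literally the quantity $[W]_{\A_\infty}$ we wish to characterize; the equivalence $\rho\in\A_\infty\Leftrightarrow\rho^*\in\A_1$ from Lemma~\ref{lemma:Ap-duality} transfers the left-hand side of the conclusion. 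So we reduce to $p=1$ with $W:\R^n\to\Sd$ positive definite. Write $\W_Q=\W_Q^1$ and $\barW_Q=\barW_Q^\infty$ (both symmetric and positive definite), with the defining John-ellipsoid equivalences $|\W_Q v|\approx_d\avgint_Q|W(y)v|\,dy$ and $|\barW_Q v|\approx_d\esssup_{y\in Q}|W^{-1}(y)v|$.

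For the direction $[W]_{\A_1}\lesssim_d[W]_{\A_1}^R$, I will factor
\[
W^{-1}(x)W(y)\;=\;\bigl(W^{-1}(x)\barW_Q^{-1}\bigr)(\barW_Q\W_Q)\bigl(\W_Q^{-1}W(y)\bigr),
\]
apply submultiplicativity of $|\cdot|_{\op}$, integrate in $y$, and take $\esssup$ in $x$. The middle factor contributes $[W]_{\A_1}^R$. For the first factor, writing $|W^{-1}(x)\barW_Q^{-1}|_{\op}=\sup_u|W^{-1}(x)u|/|\barW_Q u|$ (via $u=\barW_Q^{-1}w$) allows me to interchange $\esssup_x$ with $\sup_u$ and apply the defining property of $\barW_Q$ to get $\esssup_{x\in Q}|W^{-1}(x)\barW_Q^{-1}|_{\op}\approx_d 1$. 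For the last factor I exploit transpose invariance $|\W_Q^{-1}W(y)|_{\op}=|W(y)\W_Q^{-1}|_{\op}$ (symmetry of both matrices), the estimate $|A|_{\op}\approx_d\max_i|Ae_i|$, and the defining equivalence for $\W_Q$ applied with $v=\W_Q^{-1}e_i$: $\avgint_Q|W(y)\W_Q^{-1}e_i|\,dy\approx_d|\W_Q\W_Q^{-1}e_i|=1$, which yields $\avgint_Q|\W_Q^{-1}W(y)|_{\op}\,dy\lesssim_d 1$.

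For the reverse direction $[W]_{\A_1}^R\lesssim_d[W]_{\A_1}$, the same symmetry-and-transpose trick used in the other order gives
\[
|\barW_Q\W_Q|_{\op}\approx_d\esssup_{x\in Q}|W^{-1}(x)\W_Q|_{\op}=\esssup_{x\in Q}|\W_Q W^{-1}(x)|_{\op}\approx_d\esssup_{x\in Q}\max_i|\W_Q W^{-1}(x)e_i|.
\]
For each $i$ and a.e.\ $x\in Q$, substituting $u=W^{-1}(x)e_i$ into the defining equivalence for $\W_Q$ yields $|\W_Q W^{-1}(x)e_i|\approx_d\avgint_Q|W(y)W^{-1}(x)e_i|\,dy\leq\avgint_Q|W^{-1}(x)W(y)|_{\op}\,dy\leq[W]_{\A_1}$, once more using transpose invariance.

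The main technical point I expect to trip over is choosing \emph{when} to transpose and where to insert the identity $\barW_Q^{-1}\barW_Q$: the naive factorization $W^{-1}(x)W(y)=(W^{-1}(x)\W_Q)(\W_Q^{-1}W(y))$ reduces the inequality to estimating $\avgint_Q|\W_Q^{-1}W(y)|_{\op}\,dy$, and the crude bound $|\W_Q^{-1}|_{\op}\avgint_Q|W(y)|_{\op}\,dy$ only yields the generally unbounded condition number $|\W_Q|_{\op}|\W_Q^{-1}|_{\op}$. Inserting $\barW_Q^{-1}\barW_Q$ in the middle, and invoking the transpose identity at exactly the moment when a $W(y)$ or $W^{-1}(x)$ should be commuted past $\W_Q$, are what make the estimates dimension-free; both rely crucially on the symmetry of $W$, $\W_Q$, and $\barW_Q$ afforded by Theorem~\ref{thm:Sd-weight} and the John ellipsoid.
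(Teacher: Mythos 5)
Your plan follows essentially the same route as the paper: reduce to $p=1$, cite Roudenko for $1<p<\infty$, pass to $p=\infty$ by duality ($\rho\in\A_\infty\Leftrightarrow\rho^*\in\A_1$, with $\rho^*$ represented by $W^{-1}$, so that $[W^{-1}]_{\A_1}$ is literally $[W]_{\A_\infty}$), and compare the matrix $\A_1$ quantity with the reducing-operator constant $[W]^R_{\A_1}$ using transpose invariance $|AB|_{\op}=|BA|_{\op}$ for symmetric matrices and $|A|_{\op}\approx \max_i|Ae_i|$ (constants depending only on $d$); Proposition~\ref{prop:Ap-reducing} then closes the loop. Your three-factor insertion $W^{-1}(x)W(y)=\bigl(W^{-1}(x)\barW_Q^{-1}\bigr)\bigl(\barW_Q\W_Q\bigr)\bigl(\W_Q^{-1}W(y)\bigr)$ is a mild repackaging of the paper's chain of estimates, and your reverse direction matches the paper's converse (the paper picks a near-extremal $v$ and then a near-extremal $x$; you take $\esssup_x$ of the operator norm, which is legitimate since $\sup_v\esssup_x\le\esssup_x\sup_v$).

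One step needs repair. For the first factor you claim $\esssup_{x\in Q}|W^{-1}(x)\barW_Q^{-1}|_{\op}\lesssim 1$ by ``interchanging $\esssup_x$ with $\sup_u$,'' but that interchange goes the wrong way: $\esssup_x\sup_u\ge\sup_u\esssup_x$, so as written it does not produce an upper bound on the quantity you actually need. The conclusion is nevertheless true, and the fix is the device you already use for the third factor: for a.e.\ $x\in Q$, $|W^{-1}(x)\barW_Q^{-1}|_{\op}\approx\max_i|W^{-1}(x)\barW_Q^{-1}e_i|$, and for each of the finitely many $i$ one has $|W^{-1}(x)\barW_Q^{-1}e_i|\le\esssup_{y\in Q}|W^{-1}(y)\barW_Q^{-1}e_i|\approx|\barW_Q\barW_Q^{-1}e_i|=1$ off a null set; discarding these finitely many null sets gives the bound for a.e.\ $x$. (Equivalently one may use a countable dense set of directions and continuity.) This is precisely how the paper's own proof negotiates the $\esssup$/$\sup$ issue, via $\sum_i\esssup_{x\in Q}|W^{-1}(x)\W_Q^1e_i|$, so with that one-line correction your argument is complete and equivalent to the paper's.
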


\begin{proof}
Recall that if $A$ and $B$ are two matrices in $\Sd$, then
  \[ |AB|_{\op} = |(AB)^t|_{\op} = |B^tA^t|_{\op} = |BA|_{\op}. \]

  Suppose first that $\rho \in \A_1$.  Let $\{e_i\}_{i=1}^d$ be
the standard basis in $\R^d$.  Fix a cube $Q$; then for almost
  every $x\in Q$,
  \begin{multline*}
    \avgint_Q |W^{-1}(x)W(y)|_{\op} \,dy
     =
      \avgint_Q |W(y)W^{-1}(x)|_{\op}\,dy 
     \approx
      \sum_{i=1}^d \avgint_Q |W(y)W^{-1}(x)e_i|\,dy \\
     \approx \sum_{i=1}^d |\W_Q^1 W^{-1}(x)e_i| 
     \approx |\W_Q^1 W^{-1}(x)|_{\op} 
         = |W^{-1}(x)\W_Q^1 |_{\op} \\
 \lesssim \sum_{i=1}^d  \esssup_{x\in Q} |W^{-1}(x)\W_Q^1 e_i| 
     \lesssim \sum_{i=1}^d |\barW_Q^\infty \W_Q^1 e_i| 
       \approx |\barW_Q^\infty \W_Q^1|_{\op} < \infty;
     \end{multline*}
  the last inequality follows from Proposition~\ref{prop:Ap-reducing}.
This gives us  inequality~\eqref{eqn:A1-matrix1}.

  \medskip

  Conversely, suppose ~\eqref{eqn:A1-matrix1} holds.   If we fix a cube $Q$, then there exists a
  vector $v\in \R^d$, $|v|=1$, and $x\in Q$ such that
  \begin{multline*}
    |\barW_Q^\infty \W_Q^1|_{\op}
     \lesssim |\barW_Q^\infty \W_Q^1v| 
     \lesssim |W^{-1}(x)\W_Q^1v| 
     \leq |W^{-1}(x)\W_Q^1|_{\op} 
     = |\W_Q^1 W^{-1}(x)|_{\op} \\
     \lesssim \sum_{i=1}^d |\W_Q^1 W^{-1}(x)e_i| 
     \approx \sum_{i=1}^d \avgint_Q |W(y) W^{-1}(x)e_i|\,dy \\
     \lesssim \avgint_Q |W(y) W^{-1}(x)|_{\op}\,dy 
     \lesssim \avgint_Q |W^{-1}(x) W(y) |_{\op}\,dy < \infty. 
   \end{multline*}
  So again by Proposition~\ref{prop:Ap-reducing}, $\rho\in \A_1$ and
  the constants are comparable.
\end{proof}

    \subsection*{Weighted norm inequalities for
      averaging and maximal operators}
    In this section we generalize Proposition~\ref{prop:avg-op} and
    Theorem~\ref{thm:M-norm-ineq} to the case of matrix weights.  

    \begin{prop} \label{prop:avg-op-wts}
     Given $1\leq p \leq \infty$ and a matrix weight $W$, the
     following are equivalent:
     \begin{enumerate}

     \item $W \in \A_p$.
       
      \item Given any cube $Q$, $A_Q : L^p_\K(\R^n,W) \rightarrow L^p_\K(\R^n,W)$, and
        $\|A_Q\|_{L^p_\K(\R^n,W)} \leq K$.

      \end{enumerate}

      Moreover, we have that $[W]_{\A_p} \approx \sup_Q \|A_Q\|_{L^p_\K(\R^n,W)}$.  
    \end{prop}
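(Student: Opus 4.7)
The plan is to prove the two implications separately, using Proposition~\ref{prop:Ap-reducing} to work throughout with the reducing-operator form $[W]^R_{\A_p} = \sup_Q |\barW_Q^{p'} \W_Q^p|_{\op}$, which is equivalent to $[W]_{\A_p}$ up to constants depending only on $d$. Thus it suffices to establish that, for each cube $Q$,
$\|A_Q\|_{L^p_\K(\R^n, W)} \approx |\barW_Q^{p'} \W_Q^p|_{\op}$,
with implicit constants depending only on $p$ and $d$.

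For the implication $(1) \Rightarrow (2)$, I fix $F$ supported on $Q$ and set $K = \avgint_Q F(y)\,dy \in \bcs$, so that $\|A_Q F\|_{L^p_\K(\R^n, W)}^p = \int_Q |W(x) K|^p\,dx$. Using Lemma~\ref{sf} together with the identity $h_K(u) = \avgint_Q h_{F(y)}(u)\,dy$ (from Theorem~\ref{sn} applied to the normalized measure $\chi_Q\,dy/\ln(Q)$) and self-adjointness of $W(x)$, I write $|W(x) K| = \sup_{|u|=1} h_K(W(x) u)$ and estimate pointwise
$h_{F(y)}(W(x) u) = \sup_{w \in F(y)} \langle W(y)^{-1} W(x) u, W(y) w \rangle \leq |W(y)^{-1} W(x) u| \cdot |W(y) F(y)|$.
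Hölder's inequality in $y$ combined with the defining property of $\barW_Q^{p'}$ yields
$|W(x) K| \lesssim |\barW_Q^{p'} W(x)|_{\op} \cdot \big(\avgint_Q |W(y) F(y)|^p\,dy\big)^{1/p}$.
Raising to the $p$-th power and integrating over $Q$, the key remaining step is to bound $\avgint_Q |\barW_Q^{p'} W(x)|_{\op}^p\,dx$; using that both matrices are self-adjoint so $|\barW_Q^{p'} W(x)|_{\op} = |W(x) \barW_Q^{p'}|_{\op}$, expanding the operator norm over an orthonormal basis, and applying the defining property of $\W_Q^p$ to each column of $\barW_Q^{p'}$ shows this is $\lesssim |\W_Q^p \barW_Q^{p'}|_{\op}^p$. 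Combining everything gives $\|A_Q\|_{L^p_\K(\R^n, W)} \lesssim [W]^R_{\A_p}$.

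For $(2) \Rightarrow (1)$, the strategy is to test on convex-set valued functions of the form $F(y) = \conv\{f(y), -f(y)\}\chi_Q(y)$ arising from vector-valued $f \in L^p(Q, \R^d)$. By Lemma~\ref{lemma:vector-int}, $\avgint_Q f(y)\,dy \in \avgint_Q F(y)\,dy$, so $|W(x) \avgint_Q F| \geq |W(x) \avgint_Q f|$; since also $|W(y) F(y)| = |W(y) f(y)|$, the boundedness hypothesis yields the vector-valued inequality
$\big(\int_Q |W(x) \avgint_Q f|^p \,dx\big)^{1/p} \leq K \big(\int_Q |W(y) f(y)|^p\,dy\big)^{1/p}$,
where $K = \|A_Q\|_{L^p_\K(\R^n, W)}$. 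Substituting $f = W^{-1}\psi$ for an arbitrary $\psi \in L^p(Q, \R^d)$ and applying the defining property of $\W_Q^p$ to the left-hand side converts this into
$\big|\W_Q^p \avgint_Q W(y)^{-1} \psi(y) \,dy\big| \lesssim K \big(\avgint_Q |\psi|^p\,dy\big)^{1/p}$.
Pairing against a unit vector $u$ and taking the supremum over $\psi$ with $\avgint_Q |\psi|^p = 1$ via vector-valued duality between $L^p$ and $L^{p'}$ on $Q$ (using self-adjointness of $\W_Q^p$ to move it onto the dual factor) reduces the left-hand supremum, after interchanging the sup over $u$ and the sup over $\psi$, to $\sup_{|u|=1} \big(\avgint_Q |W(y)^{-1}\W_Q^p u|^{p'}\,dy\big)^{1/p'}$, which by the defining property of $\barW_Q^{p'}$ is comparable to $|\barW_Q^{p'}\W_Q^p|_{\op}$. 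Hence $[W]^R_{\A_p} \lesssim K$. The endpoint cases $p=1, \infty$ follow the same scheme with $L^1$--$L^\infty$ duality and the appropriate reducing operators $\W_Q^1$, $\barW_Q^\infty$.

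The main obstacle will be the careful bookkeeping around self-adjointness used to pass between $|\barW_Q^{p'} W(x)|_{\op}$ and $|W(x) \barW_Q^{p'}|_{\op}$, and to ensure that the constants produced by expanding operator norms in a basis and by the John-ellipsoid approximations defining $\W_Q^p, \barW_Q^{p'}$ depend only on $p$ and $d$, not on $W$ or $Q$. A subsidiary subtle point is the interchange of suprema in the $(2)\Rightarrow(1)$ direction, where one must realize the operator norm $|\barW_Q^{p'}\W_Q^p|_{\op}$ as a duality supremum over constant vectors $u$ after the duality over vector-valued $\psi$ has already been carried out.
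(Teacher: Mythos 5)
Your plan is correct in substance, but it is a genuinely different (more self-contained) route than the one in the paper. For sufficiency with $1\le p<\infty$ the paper simply cites the vector-valued result \cite[Proposition~4.7]{MR3544941} and notes the proof extends to convex-set valued functions, writing out only the $p=\infty$ case by inserting $W^{-1}(y)W(y)$ and invoking the operator-norm form of $\A_\infty$ from Proposition~\ref{prop:Ap-matrix-defn}; for necessity it reduces to vector-valued functions via Lemma~\ref{lemma:vector-int} exactly as you do, and then cites \cite[Theorem~1.18]{MR3803292}. You instead prove both implications directly and uniformly in $p$ (endpoints included) by showing $\|A_Q\|_{L^p_\K(\R^n,W)}\approx|\barW_Q^{p'}\W_Q^p|_{\op}$ for each cube: the identity $h_K(u)=\avgint_Q h_{F(y)}(u)\,dy$ from Theorem~\ref{sn}, H\"older, and the defining properties of the reducing operators give the upper bound, while testing on $\conv\{f,-f\}$ with $f=W^{-1}\psi$ and $L^p$--$L^{p'}$ duality gives the lower bound. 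This is the same circle of ideas as in the cited vector-valued proofs, so what your approach buys is self-containedness, explicit constants depending only on $d$ (and harmlessly on $p$), and a single argument covering $p=1$ and $p=\infty$, at the cost of redoing known computations.

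Two points to nail down in a full write-up, neither of which invalidates the plan. First, take the reducing operators in $\Sd$ (replace $\W_Q^p$ by $((\W_Q^p)^*\W_Q^p)^{1/2}$, which leaves $|\W_Q^p v|$ unchanged), so that the commutations $|\barW_Q^{p'}W(x)|_{\op}=|W(x)\barW_Q^{p'}|_{\op}$ and $|\W_Q^p\barW_Q^{p'}|_{\op}=|\barW_Q^{p'}\W_Q^p|_{\op}$ you use are legitimate; the paper does the same implicitly in the proof of Proposition~\ref{prop:Ap-matrix-defn}. Second, in the necessity direction $W^{-1}\psi$ need not be locally integrable a priori, so the Aumann average of your test function may not contain $\avgint_Q W^{-1}\psi\,dy$ for arbitrary $\psi\in L^p(Q,\R^d)$; test instead with $\psi$ supported on $\{|W^{-1}|_{\op}\le N\}$ and let $N\to\infty$ by monotone convergence before running the duality. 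Alternatively, you can avoid assuming existence of $\W_Q^p$ on the left-hand side by pairing $\avgint_Q W^{-1}\psi\,dy$ against a fixed $u$ and using $|\langle v,u\rangle|\le\langle\rho_W\rangle_{p,Q}(v)\,\langle\rho_W\rangle_{p,Q}^*(u)$, which lands you directly in the norm form of Definition~\ref{defn:Ap-defn} rather than in the reducing-operator form of Proposition~\ref{prop:Ap-reducing}.
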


    \begin{proof}
      We first assume $W\in \A_p$.  When $1\leq p<\infty$, this result
      was 
    originally proved for vector-valued functions
    in~\cite[Proposition~4.7]{MR3544941}, but
    the proof readily extends to convex-set valued functions.  Here we
    prove the case when $p=\infty$.
      Fix $W\in \A_\infty$ and $F \in L_\K^\infty(\R^n, W)$.  Given a cube $Q$,
      for almost every $x\in Q$,
      \begin{align*}
        |W(x)A_QF(x)|
        & = \sup\bigg\{ \bigg|W(x) \avgint_Q f(y)\,dy \bigg| : f \in
          S^1(Q,F) \bigg\} \\
        & \leq  \sup\bigg\{ \avgint_Q |W(x)W^{-1}(y)W(y)f(y)|\,dy  : f \in
          S^1(Q,F) \bigg\} \\
        & \lesssim \sup \big\{ [W]_{\A_\infty} \|Wf\|_\infty :  f \in
          S^1(Q,F) \big\} \\
        & = [W]_{\A_\infty}\|F\|_{L_\K^\infty(\R^n,W)}. 
      \end{align*}

      To prove necessity, first note that it follows at once from the
      mapping from vector-valued functions to convex-set valued
      functions given in
      Lemma~\ref{lemma:vector-int}, that to prove necessity it
      suffices to prove it for vector-valued functions.  This was
      proved when $1\leq p<\infty$ in~\cite[Theorem~1.18]{MR3803292}.
      The proof for $1<p<\infty$ immediately extends to the case
      $p=\infty$, using the fact that the dual of $L^1$ is $L^\infty$.
    \end{proof}

    As a corollary to Proposition~\ref{prop:avg-op-wts} we deduce that if
    $W\in \A_p$, then the operator norm $|W|_{op}$ is a scalar weight in
    $\A_p$.  This was proved by
    Goldberg~\cite[Corollary~2.3]{MR2015733} for $p<\infty$ and the same
    proof holds for $p=\infty$.  We omit the details.

    \begin{corollary} \label{cor:eigenvalues-scalar-Ap}
      For $1\leq p \leq \infty$, if $W\in \A_p$ and $w=|W(\cdot)|_{op}$ is an
      operator norm of $W$, then $w\in \A_p$ with $[w]_{\A_p} \lesssim
      [W]_{\A_p}$. 
    \end{corollary}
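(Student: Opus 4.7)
The plan is to handle the endpoint cases $p\in\{1,\infty\}$ and the intermediate range $1<p<\infty$ separately. The endpoints can be dispatched directly from the matrix $\A_p$ characterization of Proposition~\ref{prop:Ap-matrix-defn}, while the intermediate range will go through the averaging operator characterization of Proposition~\ref{prop:avg-op-wts}.

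For $p=1$, I would start from \eqref{eqn:A1-matrix1}: for a.e.\ $x\in Q$, $\avgint_Q |W^{-1}(x)W(y)|_{\op}\,dy \le [W]_{\A_1}$. Submultiplicativity of the operator norm gives $|W(y)v| \le |W(x)|_{\op}\cdot|W^{-1}(x)W(y)|_{\op}\cdot|v|$ for every $v\in\R^d$, so taking the supremum over $|v|=1$ yields $w(y)\le w(x)|W^{-1}(x)W(y)|_{\op}$. Integrating in $y$ over $Q$ then produces $\avgint_Q w(y)\,dy \le [W]_{\A_1}\,w(x)$ for a.e.\ $x\in Q$, which is exactly the scalar $\A_1$ condition with constant at most $[W]_{\A_1}$. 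The case $p=\infty$ is symmetric: applying the same trick to $|W(x)v|\le|W(x)W^{-1}(y)|_{\op}\cdot|W(y)v|$ gives $w(x)/w(y)\le|W(x)W^{-1}(y)|_{\op}$, and integrating yields the scalar $\A_\infty$ estimate.

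For $1<p<\infty$, I would test the convex-set valued averaging inequality of Proposition~\ref{prop:avg-op-wts} against one-dimensional vector-valued functions. Given a non-negative scalar $f$ and a unit vector $v\in\R^d$, applying Proposition~\ref{prop:avg-op-wts} to the vector-valued function $g(y)=f(y)\chi_Q(y)v$ (viewed as convex-set valued via Lemma~\ref{lemma:vector-int}) gives
\[
\bigg(\avgint_Q f(y)\,dy\bigg)^{p}\int_Q |W(x)v|^p\,dx
\lesssim [W]_{\A_p}^{p}\int_Q f(y)^p |W(y)v|^p\,dy
\le [W]_{\A_p}^{p}\int_Q f(y)^p w(y)^p\,dy.
\]
To recover $w(x)=|W(x)|_{\op}$ on the left, I would fix a $\tfrac12$-net $N\subset\{v\in\R^d:|v|=1\}$, whose cardinality depends only on $d$. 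A standard approximation argument yields $|W(x)|_{\op}\le 2\max_{v\in N}|W(x)v|$, and hence $\int_Q w(x)^p\,dx\lesssim_d \sum_{v\in N}\int_Q|W(x)v|^p\,dx$. Summing the displayed inequality over $v\in N$ and absorbing $|N|\lesssim_d 1$ into the constant then produces
\[
\bigg(\avgint_Q f(y)\,dy\bigg)^{p}\int_Q w(x)^p\,dx
\lesssim_d [W]_{\A_p}^{p}\int_Q f(y)^p w(y)^p\,dy,
\]
which says that the scalar averaging operators $A_Q$ are uniformly bounded on $L^p(\R^n,w^p)$. The scalar case of Proposition~\ref{prop:avg-op-wts} (equivalently the classical Muckenhoupt theorem) then gives $w\in\A_p$ with $[w]_{\A_p}\lesssim_d [W]_{\A_p}$.

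The only non-formal ingredient is the $\tfrac12$-net argument converting direction-by-direction bounds into bounds on $|W(x)|_{\op}$ with $d$-dependent constants; I expect this to be the main technical point in the intermediate range, and no sharper tool is needed since the constants in the statement are allowed to depend on $d$.
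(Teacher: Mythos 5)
Your proposal is correct. Bear in mind that the paper itself gives no written proof of this corollary: it frames the result as a consequence of Proposition~\ref{prop:avg-op-wts}, cites Goldberg's Corollary~2.3 for $p<\infty$, and remarks that the same proof works for $p=\infty$. Your argument fills in those omitted details along essentially the intended lines, with some sensible variations. For $1<p<\infty$ you use exactly the averaging-operator characterization: testing $A_Q$ on the rank-one convex-set valued functions generated by $f(y)\chi_Q(y)v$ via Lemma~\ref{lemma:vector-int} gives the direction-wise bound, and the passage from $\max_{v}|W(x)v|$ over finitely many directions to $|W(x)|_{\op}$ is the same finite-sum mechanism as in the cited Goldberg argument; your $\tfrac12$-net could just as well be replaced by the standard basis, since $|W(x)|_{\op}\le\big(\sum_i|W(x)e_i|^2\big)^{1/2}\le\sqrt{d}\,\max_i|W(x)e_i|$, which is how the paper makes analogous reductions in the proof of Proposition~\ref{prop:Ap-matrix-defn}. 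Your endpoint cases are handled more directly than the paper suggests---straight from Proposition~\ref{prop:Ap-matrix-defn} and submultiplicativity of the operator norm, yielding even the clean constants $[w]_{\A_1}\le[W]_{\A_1}$ and $[w]_{\A_\infty}\le[W]_{\A_\infty}$---which is simpler than routing the endpoints through averaging operators and is perfectly adequate. The two steps you should keep explicit are (i) the quantitative bound $\sup_Q\|A_Q\|_{L^p_\K(\R^n,W)}\lesssim[W]_{\A_p}$, which is indeed part of the statement of Proposition~\ref{prop:avg-op-wts}, and (ii) the conversion of uniform boundedness of the scalar averaging operators on $L^p(w^p\,dx)$ into $w^p\in A_p$, which is either the $d=1$ case of Proposition~\ref{prop:avg-op-wts} or the classical testing computation with $f=w^{-p'}\chi_Q$ (suitably truncated), noting $[w]_{\A_p}^p=[w^p]_{A_p}$ in the paper's normalization; with those in place the stated bound $[w]_{\A_p}\lesssim[W]_{\A_p}$ follows with constants depending only on $d$ and $p$.
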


    \medskip

    To prove norm inequalities for the convex-set valued maximal operator,
    we need an auxiliary weighted maximal operator first introduced by
    Christ and Goldberg~\cite{CG,MR2015733}.  Given a matrix weight
    $W$,  for any function $f\in L^1_{loc}(\R^n, \R^d)$ define
    \[ M_W f(x) = \sup_Q \avgint_Q |W(x)W^{-1}(y)f(y)|\,dy \cdot
      \chi_Q(x). \]

     \begin{prop} \label{prop:max-p-infty-weights}
      Fix $1< p\leq \infty$.  Given a matrix weight $W\in \A_p$, $M_W : L^p(\R^n,\R^d)
      \rightarrow L^p(\R^n)$.   Moreover,
      \[ \|M_Wf \|_{L^p(\R^n)}
        \leq C(n,d,p)[W]_{\A_p}^{p'} \|f\|_{L^p(\R^n,\R^d)}. \]
  \end{prop}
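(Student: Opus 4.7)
The plan is to treat the endpoint $p=\infty$ separately from $1<p<\infty$; the former will follow almost by inspection from the definition of $\A_\infty$, while the latter is a vector-valued Christ--Goldberg argument organized around the reducing operators $\barW_Q^{p'}$ of Proposition~\ref{prop:Ap-reducing}.

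For $p=\infty$, fix a cube $Q$. For a.e.\ $x\in Q$,
\[
\avgint_Q |W(x)W^{-1}(y)f(y)|\,dy
\le \|f\|_{L^\infty}\avgint_Q |W(x)W^{-1}(y)|_{\op}\,dy
\le [W]_{\A_\infty}\|f\|_{L^\infty}
\]
by Proposition~\ref{prop:Ap-matrix-defn}. Taking the supremum over cubes $Q\ni x$ yields $\|M_W f\|_{L^\infty}\le [W]_{\A_\infty}\|f\|_{L^\infty}$, which matches the stated $[W]_{\A_\infty}^{p'}$ bound since $p'=1$.

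For $1<p<\infty$, I would first derive the pointwise bound
\[
M_W f(x) \lesssim \sup_{Q\ni x} |W(x)\barW_Q^{p'}|_{\op}\bigg(\avgint_Q |f(y)|^p\,dy\bigg)^{\!1/p}.
\]
The key factorization is
\[
W(x)W^{-1}(y)=\big(W(x)\barW_Q^{p'}\big)\cdot\big((\barW_Q^{p'})^{-1}W^{-1}(y)\big);
\]
inserting it into the average, pulling $|W(x)\barW_Q^{p'}|_{\op}$ out, and applying H\"older's inequality with exponents $p',p$ gives
\[
\avgint_Q |W(x)W^{-1}(y)f(y)|\,dy
\le |W(x)\barW_Q^{p'}|_{\op}\bigg(\avgint_Q|(\barW_Q^{p'})^{-1}W^{-1}(y)|_{\op}^{p'}\,dy\bigg)^{\!1/p'}\bigg(\avgint_Q |f|^p\bigg)^{\!1/p}.
\]
The middle factor is a dimensional constant: expanding the operator norm in the standard basis and using the defining property of $\barW_Q^{p'}$,
\[
\bigg(\avgint_Q |(\barW_Q^{p'})^{-1}W^{-1}(y) e_i|^{p'}\,dy\bigg)^{\!1/p'} \approx |(\barW_Q^{p'})^{-1}\barW_Q^{p'} e_i|=1
\]
for each $i=1,\ldots,d$.

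It remains to bound this scalar expression in $L^p$ with constant $\lesssim [W]_{\A_p}^{p'}$. I would follow the Christ--Goldberg strategy: for each $x$ pick a cube $Q_x$ almost realizing the supremum, extract a sparse subfamily $\Ss$ via a Calder\'on--Zygmund stopping argument, and estimate
\[
\|M_W f\|_{L^p}^p \lesssim \sum_{Q \in \Ss} \int_Q |W(x)\barW_Q^{p'}|_{\op}^p\,dx \cdot \avgint_Q |f|^p.
\]
The $\A_p$ condition (via Proposition~\ref{prop:Ap-reducing}, after expanding $|W(x)\barW_Q^{p'}|_{\op}^p$ in the standard basis) controls the inner integral by $[W]_{\A_p}^p\,\ln(Q)$, and the sparse sum is then bounded by the scalar Hardy--Littlewood inequality on $|f|^p$. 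The main obstacle is arranging the stopping argument so that the final exponent on $[W]_{\A_p}$ is exactly $p'$ and not some larger power; this is where the specific Christ--Goldberg dependence comes from and is the technically delicate part of the proof.
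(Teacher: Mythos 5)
Your treatment of $p=\infty$ is exactly the paper's proof: that endpoint is the only case the paper proves directly, and it does so with the same two-line estimate from Proposition~\ref{prop:Ap-matrix-defn}. For $1<p<\infty$ the paper gives no proof at all; it cites Christ--Goldberg for the qualitative boundedness and Isralowitz--Moen for the quantitative constant $C(n,d,p)[W]_{\A_p}^{p'}$. So on the range $1<p<\infty$ you are attempting to reprove a cited result, and your sketch has a genuine gap.

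The pointwise reduction is essentially fine, apart from one misstep: the reducing operator controls $\avgint_Q |W^{-1}(y)v|^{p'}\,dy \approx |\barW_Q^{p'}v|^{p'}$ for a \emph{fixed} vector $v$, so you cannot write $\avgint_Q|(\barW_Q^{p'})^{-1}W^{-1}(y)e_i|^{p'}\,dy \approx |(\barW_Q^{p'})^{-1}\barW_Q^{p'}e_i|^{p'}$ directly; you must first transpose using symmetry, $|(\barW_Q^{p'})^{-1}W^{-1}(y)|_{\op}=|W^{-1}(y)(\barW_Q^{p'})^{-1}|_{\op}\lesssim\sum_i|W^{-1}(y)(\barW_Q^{p'})^{-1}e_i|$, and then apply the defining property to the vectors $(\barW_Q^{p'})^{-1}e_i$. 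The real gap is the final step. After using the $\A_p$ condition (correctly) to get $\int_Q|W(x)\barW_Q^{p'}|_{\op}^p\,dx\lesssim[W]_{\A_p}^p\,\ln(Q)$, you are left with $\sum_{Q\in\Ss}\ln(Q)\avgint_Q|f|^p\,dy$, which you propose to bound ``by the scalar Hardy--Littlewood inequality on $|f|^p$.'' But $|f|^p$ is only in $L^1$, and neither the maximal theorem nor the Carleson embedding is available at exponent $1$: the average of $|f|^p$ enters to the first power, and if, for instance, $\Ss$ is an increasing chain of nested dyadic cubes containing the support of $f$ (a perfectly sparse family), every term equals $\int_{\R^n}|f|^p\,dx$ and the sum diverges. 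Making the selected cubes summable is precisely where Christ--Goldberg need the self-improvement (reverse H\"older) property of matrix $\A_p$ weights: one shows there exists $q<p$ with $M_Wf(x)\lesssim C\big(M(|f|^q)(x)\big)^{1/q}$, so the relevant average carries the exponent $p/q>1$ and the scalar maximal theorem (or Carleson embedding) applies; quantifying $q$ in terms of $[W]_{\A_p}$ is what produces the sharp power $p'$ in Isralowitz--Moen. Without that ingredient your argument does not close even qualitatively, let alone with the stated constant.
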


  \begin{proof}
    For $1<p<\infty$, this inequality, without a quantitative estimate of the
    constant, was proved in~\cite{CG,MR2015733}.  The given estimate
    was proved by Isralowitz and Moen~\cite[Theorem~1.3]{MR4030471}.  
We will prove the case when $p=\infty$. 

Give a vector function $f\in  L^\infty(\R^n,\R^d)$, then for  any
    cube $Q$ and a.e.~$x\in Q$, we have by Proposition~\ref{prop:Ap-matrix-defn} that
    \[ \avgint_Q |W(x)W^{-1}(y)f(y)|\,dy
      \leq
      \avgint_Q |W(x)W^{-1}(y)|_{\op} |f(y)|\,dy
      \lesssim [W]_{\A_\infty} \|f\|_\infty. \]
    If we now fix $x$ and take the supremum over all cubes containing $x$,
    we get the desired estimate.
  \end{proof}

  \begin{theorem} \label{thm:wtd-convex-max-bounds}
    Given $1<p\leq \infty$ and a matrix weight $W\in \A_p$, then the
    convex-set valued maximal operator satisfies $M : L^p_\K(\R^n,W) \rightarrow
    L^p_\K(\R^n,W)$.   Moreover, 
 \[ \|MF \|_{L^p_\K(\R^n,W)}
        \leq C(n,d,p)[W]_{\A_p}^{p'} \|F\|_{L^p_\K(\R^n,W)}. \]
  \end{theorem}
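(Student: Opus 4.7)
The plan is to reduce the weighted bound for the convex-set valued maximal operator to the Christ--Goldberg scalar bound given in Proposition~\ref{prop:max-p-infty-weights}. Set $g(y) := |W(y)F(y)|$, so that $\|g\|_{L^p(\R^n)} = \|F\|_{L^p_\K(\R^n,W)}$; this is finite by assumption, and since $|W|_{\op}$ is a scalar $\A_p$ weight (Corollary~\ref{cor:eigenvalues-scalar-Ap}), $F$ is locally integrably bounded, so all Aumann averages $A_Q F(x)$ are well-defined (Theorem~\ref{cci}). The heart of the argument is the pointwise estimate
\[
|W(x)\,MF(x)| \;\lesssim\; \sup_{Q\ni x} \avgint_Q |W(x)W^{-1}(y)|_{\op}\, g(y)\,dy,
\]
after which the right-hand side will be dominated by a finite sum of Christ--Goldberg operators evaluated at scalar multiples of fixed unit vectors.

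To prove the pointwise inequality, I first note that $v \mapsto |W(x)v|$ is a seminorm, so closure and convex hull do not increase its supremum on a set; consequently $|W(x) MF(x)| = \sup_{Q \ni x} |W(x) A_Q F(x)|$. For any $v \in A_Q F(x)$, Definition~\ref{au} writes $v = \avgint_Q f(y)\,dy$ for some $f \in S^1(Q,F)$, and since $f(y) \in F(y)$ forces $|W(y)f(y)| \leq g(y)$, we obtain
\[
|W(x)v| \leq \avgint_Q |W(x)W^{-1}(y)|_{\op}\,|W(y)f(y)|\,dy \leq \avgint_Q |W(x)W^{-1}(y)|_{\op}\, g(y)\,dy.
\]
Taking the supremum over $f$ and then over $Q \ni x$ yields the claimed pointwise bound.

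Using the elementary equivalence $|A|_{\op} \approx \sum_{i=1}^d |Ae_i|$ valid for $d \times d$ matrices, applied to $A = W(x)W^{-1}(y)$, I obtain the majorization
\[
\sup_{Q\ni x} \avgint_Q |W(x)W^{-1}(y)|_{\op}\, g(y)\,dy \;\leq\; C(d) \sum_{i=1}^d M_W(g\,e_i)(x),
\]
where $ge_i \in L^p(\R^n,\R^d)$ has $\|ge_i\|_{L^p(\R^n,\R^d)} = \|g\|_{L^p(\R^n)}$. Applying Proposition~\ref{prop:max-p-infty-weights} to each $ge_i$ and summing gives
\[
\|MF\|_{L^p_\K(\R^n,W)} \leq C(n,d,p)\,[W]_{\A_p}^{p'}\,\|F\|_{L^p_\K(\R^n,W)},
\]
which is the desired estimate; the case $p=\infty$ is included uniformly with the convention $p'=1$.

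The route via the measurable John ellipsoid theorem (Theorem~\ref{mj}) advertised in the introduction provides an alternative: one chooses measurable $V$ with $V(y)\overline{\mathbf B} \subset F(y) \subset \sqrt{d}\,V(y)\overline{\mathbf B}$, represents any selection of $F$ on $Q$ as $V(y)h(y)$ with $|h(y)|\le\sqrt{d}$, and then applies Christ--Goldberg directly to the vector functions $V(\cdot)e_i$. The only technical subtlety I anticipate in either route is the degenerate case where $V(y)$ is singular and the selection $h$ is not uniquely determined; this can be handled by a measurable Moore--Penrose pseudoinverse, or side-stepped by regularizing $F$ to $F + \varepsilon\overline{\mathbf B}$ and sending $\varepsilon \to 0$ via the dominated convergence theorem for convex-set valued mappings. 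I expect the selection-based argument above to be the cleanest, since it never explicitly requires either a measurable inverse of $V$ or the John ellipsoid reduction.
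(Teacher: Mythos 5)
Your argument is correct, but it takes a genuinely different route from the paper's. The paper first reduces the claim to $\|W\,M(W^{-1}F)\|_{L^p_\K(\R^n,|\cdot|)} \lesssim \|F\|_{L^p_\K(\R^n,|\cdot|)}$, then invokes the measurable John ellipsoid theorem (Theorem~\ref{mj}) to write $A(x)\overline{\mathbf B}\subset F(x)\subset \sqrt d\,A(x)\overline{\mathbf B}$, decomposes $F(x)\subset C(d)\sum_{i}F_i(x)$ with $F_i=\clconv\{\pm a_i\}$ built from the columns $a_i$ of $A$, and dominates $|W(x)A_Q(W^{-1}F_i)(x)|\le M_W a_i(x)$, so Proposition~\ref{prop:max-p-infty-weights} is applied to the genuine vector functions $a_i$, whose norms are controlled by $|F|$. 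You instead work directly with integrable selections of the Aumann average, pass at once to the operator norm via $|W(x)f(y)|\le |W(x)W^{-1}(y)|_{\op}\,|W(y)f(y)|\le |W(x)W^{-1}(y)|_{\op}\,g(y)$ with $g=|WF|$, and then dominate by $C(d)\sum_i M_W(g\,e_i)$ using $|A|_{\op}\approx\sum_i|Ae_i|$; the step identifying $|W(x)MF(x)|$ with the supremum over plain averages is the same device as the paper's auxiliary operator $\widehat M$ in Lemma~\ref{lemma:M_p-sublinear}. Your reduction entirely bypasses Theorem~\ref{mj} (the main technical ingredient of the paper's proof, requiring Lemmas~\ref{je}--\ref{jes}) and is shorter and more self-contained; what the paper's route buys is that it never passes through $|W(x)W^{-1}(y)|_{\op}$ but keeps vector-valued data throughout, which is closer in spirit to the convex-body framework, and since the Isralowitz--Moen bound for $M_W$ gives the same power $[W]_{\A_p}^{p'}$ either way, there is no quantitative difference. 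Two small corrections: local integrable boundedness of $F$ does not follow from $|W|_{\op}\in\A_p$ alone; use $|F(x)|\le |W^{-1}(x)|_{\op}\,|W(x)F(x)|$, Corollary~\ref{cor:eigenvalues-scalar-Ap} applied to $W^{-1}\in\A_{p'}$ (so $|W^{-1}|_{\op}\in L^{p'}_{loc}$), and H\"older. Also, the ``degenerate $V$'' subtlety you anticipate in the John-ellipsoid alternative does not arise in the paper's implementation: it expands any $v\in F(x)$ in the columns $a_i(x)$ with coefficient sum at most $d$ and uses sublinearity of $M$, never inverting $A$.
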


  \begin{proof}
    We will prove this by reducing to the corresponding inequalities
    for $M_W$.  First note that by replacing $F$
    by $W^{-1}F$, we have that $M : L^p_\K(\R^n,W) \rightarrow
    L^p_\K(\R^n,W)$ is bounded if and only if 
    \[
    ||WM(W^{-1}F)||_{L^p_\K (\R^n,|\cdot|)} \lesssim ||F||_{L^p_\K (\R^n,|\cdot|)}.
    \]

Given $F\in L^p_\K(\R^n,
|\cdot|)$, by Theorem~\ref{mj} there exists a measurable matrix
map $A : \R^n \rightarrow \Md$ such that 
\[ A(x)\overline{{\mathbf B}} \subset F(x) \subset \sqrt{d} A(x) \overline{{\mathbf B}}. \]
Let $a_i(x)$, $1\leq i \leq d$, be the columns of $A(x)$.  Then
$a_i(x)\in F(x)$, and conversely, if $v\in F(x)$, 
\[ v = \sum_{i=1}^d \lambda_i a_i(x), \quad \text{where} \quad
\sum_{i=1}^d |\lambda_i| \leq \sqrt{d}  \bigg( \sum_{i=1}^d |\lambda_i|^2 \bigg)^{1/2} \le d.  \]
Since $F \in L^p_\K(\R^n,|\cdot|)$, $F$ is locally integrably bounded,
and so $a_i \in L^1_{loc}(\R^d)$.  Define 
\[ F_i(x) =\clconv\{ a_i(x),-a_i(x)\}.  \]
Then by Lemma~\ref{lemma:vector-int}, $F_i$ is measurable and locally
integrably bounded, and by the above estimate,
\begin{equation} \label{eqn:max-vec0}
 F(x) \subset C(d) \sum_{i=1}^d F_i(x). 
\end{equation}
Hence, by Lemma~\ref{lemma:Mf-sublinear},
\begin{equation} \label{eqn:max-vec1}
W(x) M(W^{-1}F)(x) \subset C(d)\sum_{i=1}^d W(x)M(W^{-1}F_i)(x).  
\end{equation}

Again by Lemma~\ref{lemma:vector-int}, for $1\leq i \leq d$ and any
cube $Q$ containing $x$, 
\begin{multline} \label{eqn:max-vec2}
 |W(x)A_Q(W^{-1}F_i)(x)| =
  \sup\bigg\{ \bigg|W(x)\avgint_Q k(y)W^{-1}(y)a_i(y)\,dy \bigg| :  \|k\|_\infty
  \leq 1 \bigg\} \\
  \leq \avgint_Q |W(x)W^{-1}(y)a_i(y)|\,dy
  \leq M_Wa_i(x). 
\end{multline}
Therefore, we have that $|W(x)M(W^{-1}F_i)(x)|\leq M_Wa_i(x)$.  But then by
Proposition~\ref{prop:max-p-infty-weights}, 
\begin{align}\label{eqn:max-vec3}
  \|WM(W^{-1}F)\|_{ L^p_\K(\R^n,|\cdot|)}
&  \leq C(d) \sum_{i=1}^d \|WM(W^{-1}F_i) \|_{ L^p_\K(\R^n, |\cdot|)} \\
&  \leq C(d) \sum_{i=1}^d \|M_Wa_i\|_{ L^p(\R^n,\R)} \notag \\
&  \leq C(n,d,p) [W]_{\A_p}^{p'}\sum_{i=1}^d \|a_i\|_{ L^p(\R^n,\R^d)} \notag \\
&  \leq C(n,d,p) [W]_{\A_p}^{p'}\|F\|_{ L^p_\K(\R^n,|\cdot|)}. \notag 
\end{align}
\end{proof}

  \begin{remark}
    Recently, it was shown in~\cite{DCU-JI-KM-SP-IRR} that for $p=1$,
    if $W\in \A_1$, then
    $M_W : L^1 (\R^n,\R^d) \rightarrow L^{1,\infty}(\R^n)$ with a
    constant proportional to $[W]_{\A_1}^2$.  The
    above proof can be modified to show that
    \begin{equation} \label{eqn:wtd-convex-max-bound1}
 \ln(\{ x\in \R^n : |W(x)M(W^{-1}F)(x)|> \lambda \})
      \leq \frac{C}{\lambda} [W]_{\A_1}^2\int_{\R^n} |F(x)|\,dx.
    \end{equation}
  \end{remark}
  
  
\section{Convex-set valued $\A_1$ and the Rubio de Francia iteration
  algorithm}
\label{section:rubio}

In this section we give a new characterization of the matrix $\A_1$ condition that is a
close analog of the classical Muckenhoupt $A_1$ condition.  We then
use this to define a convex-set valued version of the Rubio de Francia iteration algorithm.  

\subsection*{Convex-set valued $\A_1$}
Given a locally integrably bounded function $F: \R^n \to \bcs$, we
showed in Lemma~\ref{lemma:Mf-dominates-F} that $F(x)\subset MF(x)$
almost everywhere.   This motivates the following definition which
adapts the definition of $A_1$ in the scalar case.

\begin{definition} \label{defn:convex-A1}
Given a locally integrably bounded function $F: \R^n \to \bcs$, we say
that $F$ is in convex-set valued $ \A_1^\K$,  if there exists a constant $C$ such that for almost every
$x$,
\[  MF(x) \subset C F(x).  \]
Denote the infimum of all such constants by $[F]_{\A_1^\K}$.
\end{definition}

There is an alternative characterization of convex-set valued $\A_1$ in
terms of averaging operators.

\begin{lemma} \label{lemma:alt-A1-defn}
  Given a locally integrably bounded function $F: \R^n \to \bcs$,
  $F\in \A_1^\K$ if and only if there exists a constant $C$ such that
  for every cube $Q$ and almost every $x\in Q$,
  \begin{equation} \label{eqn:alt-A1-defn1}
\avgint_Q F(y)\,dy \subset CF(x). 
\end{equation}
The infimum of all such constants equals $[F]_{\A_1^\K}$.
\end{lemma}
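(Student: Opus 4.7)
The plan is to prove both inclusions, being careful about the null sets that arise when passing between ``for every cube $Q$ and a.e.~$x \in Q$'' and ``for a.e.~$x$, for every $Q \ni x$''.

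For the forward direction ($F \in \A_1^\K \Rightarrow$ \eqref{eqn:alt-A1-defn1}), fix any cube $Q$. By the definition of the convex-set valued maximal operator and Corollary~\ref{cor:integral-additive} (or simply the fact that $A_QF(x)$ is one of the sets whose closed convex hull defines $MF$), for a.e.~$x \in Q$,
\[
\avgint_Q F(y)\,dy = A_QF(x) \subset MF(x) \subset C\,F(x),
\]
with $C = [F]_{\A_1^\K}$. This shows \eqref{eqn:alt-A1-defn1} holds with the same constant.

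For the reverse direction, assume \eqref{eqn:alt-A1-defn1} holds with constant $C$ for every cube $Q$. Here the subtlety is that the null set on which the inclusion fails depends on $Q$, so one cannot immediately conclude anything for a.e.~$x$ across all cubes simultaneously. The remedy is Proposition~\ref{prop:MF-measurable}, which gives the representation
\[
MF(x) = \clconv\bigg(\bigcup_{P \in \mathcal{Q}} A_P F(x)\bigg),
\]
where $\mathcal{Q}$ is the countable collection of cubes with rational vertices and edges parallel to the axes. For each $P \in \mathcal{Q}$ there is a null set $N_P \subset P$ such that \eqref{eqn:alt-A1-defn1} holds on $P \setminus N_P$; let $N = \bigcup_{P \in \mathcal{Q}} N_P$, which is still null. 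For every $x \notin N$ and every $P \in \mathcal{Q}$ with $x \in P$, one has $A_PF(x) \subset C\,F(x)$, and trivially $A_PF(x) = \{0\} \subset C\,F(x)$ when $x \notin P$. Since $C\,F(x)$ is closed and convex, it contains the closed convex hull of $\bigcup_{P \in \mathcal{Q}} A_PF(x)$, which equals $MF(x)$. Thus $MF(x) \subset C\,F(x)$ for a.e.~$x$, and the same constant $C$ works, so the two infimum constants coincide.

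The main obstacle — such as it is — is the measurability/null-set issue in the reverse direction, resolved cleanly by invoking the countable representation of $MF$ from Proposition~\ref{prop:MF-measurable} together with the closedness and convexity of $C\,F(x)$; the rest is immediate from the definitions.
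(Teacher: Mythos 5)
Your proof is correct and follows essentially the same route as the paper: the forward direction is immediate from $A_QF(x)\subset MF(x)$ and the $\A_1^\K$ inclusion, and the reverse direction handles the $Q$-dependent null sets exactly as the paper does, by passing to the countable family of rational cubes, taking the union of the exceptional sets, and using the closedness and convexity of $CF(x)$ together with Proposition~\ref{prop:MF-measurable} to absorb the closed convex hull. Since each direction preserves the constant, the identification of the infimum with $[F]_{\A_1^\K}$ also matches the paper's argument.
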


\begin{proof}
  One direction is immediate:  if $F\in \A_1^\K$, then  for every cube
  $Q$ and almost every $x\in Q$,
  \[ \avgint_Q F(y)\,dy \subset MF(x)\subset [F]_{\A_1^\K} F(x). \]

  Conversely, suppose that \eqref{eqn:alt-A1-defn1} holds.  Recall
  that $\Q$ is the countable collection of  cubes whose vertices have
  rational coordinates.  For each $P\in \Q$, let $E_P$ be the set of
  $x\in P$ such that~\eqref{eqn:alt-A1-defn1} does not hold.  If we
  define
  \[ E= \bigcup_{P\in \Q} E_P, \]
  then $\ln(E)=0$.  Fix $x\not\in E$. Then
  \[ \bigcup_{\substack{P\in \Q\\x\in P}} \avgint_P F(y)\,dy \subset
    CF(x). \]
  Since $F\in \K_{bcs}$, the closed convex hull of the left-hand side
  is also contained in $CF(x)$.  Therefore, by
  Proposition~\ref{prop:MF-measurable}, $MF(x) \subset CF(x)$.
 This, together with the above estimate, shows that the infimum of all
 such constant $C$ must be $[F]_{\A_1^\K}$. 
\end{proof}

There is a one-to-one correspondence between $\A_1^\K$ and matrix
$\A_1$ weights.  To prove this  we 
use the characterization of (locally) integrably bounded convex-set valued
mappings from Theorem~\ref{sn}.

\begin{theorem} \label{thm:matrix-convex-A1}
  Given a convex-set valued function $F: \R^n \to \abcs$ that is locally
  integrably bounded,  $F\in \A_1^\K$ if and only if the norm function
  $\rho(x,v)=p_{F(x)^\circ}(v)$ satisfies $\rho \in \A_1$.   Moreover,
  $[F]_{\A_1^\K} \approx [\rho]_{\A_1}$ with implicit constants that
depend only on $d$. 
\end{theorem}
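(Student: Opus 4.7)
The plan is to show that both conditions reduce to the same pointwise inequality on Minkowski functionals, by exploiting the polar duality developed in Section~\ref{section:prelim} together with the integral representation from Theorem~\ref{sn}. By Lemma~\ref{lemma:alt-A1-defn}, $F\in \A_1^\K$ is equivalent to requiring $\avgint_Q F(y)\,dy \subset C\, F(x)$ for every cube $Q$ and a.e.~$x\in Q$, and I would work with this reformulation throughout.

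First I would identify the Minkowski functionals of the sets $F(x)$ and $\avgint_Q F(y)\,dy$. Since $F(x)\in \abcs$, the associated norm $\rho_x$ is genuinely a norm, so by Theorems~\ref{mc} and~\ref{pol}(d), $p_{F(x)} = \rho_x^*$. Because $\rho_x$ is a norm for a.e.~$x$, the averaged seminorm $\langle\rho\rangle_{1,Q}$ is itself a norm on $\R^d$, and then Theorem~\ref{sn} combined with Theorem~\ref{pol}(d) yields that the Minkowski functional of $\avgint_Q F(y)\,dy$ is exactly $\langle\rho\rangle_{1,Q}^*$. Combined with the elementary fact that for $K_1,K_2\in\abcs$, the inclusion $K_1\subset C K_2$ is equivalent to $p_{K_2}(v)\le C p_{K_1}(v)$ for all $v\in\R^d$, this translates $\avgint_Q F(y)\,dy \subset CF(x)$ into the pointwise inequality $\rho_x^*(v)\le C\langle\rho\rangle_{1,Q}^*(v)$ for all $v\in\R^d$.

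With this dictionary in place, the forward direction is immediate: assuming $F\in\A_1^\K$, fix $Q$ and $v$; taking the essential supremum over $x\in Q$ in the pointwise inequality above yields $\langle\rho^*\rangle_{\infty,Q}(v) \le [F]_{\A_1^\K}\,\langle\rho\rangle_{1,Q}^*(v)$, which is precisely the $\A_1$ condition in Definition~\ref{defn:Ap-defn}, with $[\rho]_{\A_1}\lesssim [F]_{\A_1^\K}$. For the converse, assume $\rho\in\A_1$. For each fixed $v\in\R^d$ and each cube $Q$ there is a null set $N_Q(v)\subset Q$ outside of which $\rho_x^*(v)\lesssim [\rho]_{\A_1}\langle\rho\rangle_{1,Q}^*(v)$. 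I would then choose a countable dense set $\{v_k\}\subset\R^d$ and form $N_Q = \bigcup_k N_Q(v_k)$; for $x\in Q\setminus N_Q$ the inequality holds on all $v_k$, and by the continuity of $\rho_x^*$ and $\langle\rho\rangle_{1,Q}^*$ (norms for a.e.~$x$, using Corollary~\ref{cor:dual-norm-measurable}) it extends to every $v\in\R^d$. Translating back via the dictionary produces the inclusion for every $Q$ and a.e.~$x\in Q$, so by Lemma~\ref{lemma:alt-A1-defn}, $F\in\A_1^\K$ with $[F]_{\A_1^\K}\lesssim[\rho]_{\A_1}$.

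The main technical point, and the step most likely to cause trouble, is the null-set argument in the converse direction: although the $\A_1$ inequality holds for each individual $v$, the exceptional set a priori depends on $v$, and collapsing it uniformly requires the countable-dense-set reduction together with continuity of the involved norms. Every other step is a direct application of the polar duality results collected earlier in the paper.
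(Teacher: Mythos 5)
Your proof is correct, but it follows a genuinely different route from the paper's in the passage between the $\A_1$ condition on $\rho$ and the pointwise comparison of norms. The paper first replaces $\rho$ by an equivalent matrix norm via the measurable John ellipsoid (Theorem~\ref{thm:matrix-norm}) and invokes the matrix characterization of $\A_1$ (Proposition~\ref{prop:Ap-matrix-defn}); because that condition is an essential supremum in $x$ of an operator-norm average, the statement ``for a.e.\ $x\in Q$ and \emph{all} $v$'' comes for free, and the proof then converts the resulting inequality $\langle\rho\rangle_{1,Q}(v)\le C\,\rho_x(v)$ into the inclusion $\avgint_Q F(y)\,dy \subset C F(x)$ using Theorem~\ref{sn} and polarity, ending with Lemma~\ref{lemma:alt-A1-defn} exactly as you do (your dual formulation $\rho_x^*(v)\le C\langle\rho\rangle_{1,Q}^*(v)$ is equivalent to the paper's primal one by Corollary~\ref{cor:dual-norm}). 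You instead work directly from Definition~\ref{defn:Ap-defn}, so in the converse direction you must upgrade the $v$-by-$v$ essential supremum to a single null set per cube; your countable-dense-set plus continuity argument handles this correctly, since $\rho_x^*$ and $\langle\rho\rangle_{1,Q}^*$ are norms on $\R^d$, hence continuous, and the constant survives the limit (you should also note, as you implicitly do, that $\langle\rho\rangle_{1,Q}$ is a genuine norm because $F$ takes values in $\abcs$ and is locally integrably bounded, so Theorem~\ref{pol}(d) applies to the average set). What each approach buys: the paper's detour through the associated matrix weight avoids the null-set bookkeeping entirely but introduces dimensional constants $c(d)$ from the John ellipsoid and from Proposition~\ref{prop:Ap-matrix-defn}, whereas your argument never uses Theorem~\ref{thm:matrix-norm} and gives the comparison of constants with no dependence on $d$ (indeed $[\rho]_{\A_1}\le[F]_{\A_1^\K}$ and $[F]_{\A_1^\K}\le[\rho]_{\A_1}$ directly), which is a small sharpening of the stated $\approx$.
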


\begin{proof}
  Let $\rho : \R^n \times \R^d \rightarrow [0,\infty)$ be a norm
  function; then by Theorem~\ref{mc}, $\rho(x,v)=p_{F(x)^\circ}$,
  where  $F : \R^n \rightarrow \bcs$ is the measurable mapping
  \[ F(x) = \{ x\in \R^n : \rho_x(v) \leq 1\}^\circ.  \]
  Conversely, given $F$ we can define the norm function $\rho$ in this
  way.
  By Theorem~\ref{sn}, $F$ is locally integrably bounded if and only if
  for every cube $Q$ and $v\in \R^d$, the norm
  \[ \langle \rho \rangle_{1,Q}(v) = \avgint_Q \rho_x(v)\,dx < \infty. \]

  By Theorem~\ref{thm:matrix-norm}, there exists a
  measurable matrix  $W : \R^n \rightarrow \Sd$, positive
  definite almost everywhere, such that $\rho_x(v) \approx |W(x)v|$,
  with constants depending only on $d$.  By
  Proposition~\ref{prop:Ap-matrix-defn}, $\rho$ is an $\A_1$ norm if
  and only if $W$ is in matrix $\A_1$, and
  satisfies~\eqref{eqn:A1-matrix1}, which is equivalent to the
  existence of a constant $C_0$ such that
  \[ \avgint_Q \frac{|W(y)W^{-1}(x)v|}{|v|}\,dy \lesssim C_0<
    \infty \]
  for any cube $Q$, almost every $x\in Q$, and every $v\in \R^d \setminus \{0\}$.  By the
  change of variables $v\mapsto W(x)v$, this is equivalent to
 \begin{equation}\label{fr2}
 \avgint_Q |W(y)v|\,dy \leq C_0|W(x)v|
\end{equation}
for all $v\in \R^d$, which in turn is equivalent to saying that for
every cube $Q$, almost every $x\in Q$, and every $v\in \R^d$,
\begin{equation} \label{eqn:matrix-convex1}
 \langle \rho \rangle_{1,Q}(v) \leq C_1\rho_x(v),
\end{equation}
where $C_1=c(d)C_0$.

We will now show that \eqref{eqn:matrix-convex1} is equivalent to the
$\A_1^\K$ condition for $F$.  By Theorem~\ref{sn}, for every cube
$Q$, we have that $\langle \rho \rangle_{1,Q}(v)= p_{K_Q}(v)$, where
\[ K_Q = \bigg(\avgint_Q F(y)\,dy \bigg)^\circ.  \]
On the other hand, we have that
\[ K_Q =
  \bigg \{v \in \R^d:  \avgint_Q \rho_y(v) dy \le 1 \bigg\}, \]
and so \eqref{eqn:matrix-convex1} is equivalent to the inclusion
\[ \bigg(\avgint_Q F(y)\,dy \bigg)^\circ = K_Q \supset  \{v \in \R^d:
  C_1 \rho_x(v) \le 1 \} = C_1^{-1}F(x)^\circ = \big(C_1F(x)\big)^\circ. \]
If we take the polar of the sets we reverse the inclusion, so this is
equivalent to
\[ \avgint_Q F(y) dy \subset C_1 F(x), \]
and by Lemma~\ref{lemma:alt-A1-defn} this is equivalent to $F\in
\A_1^\K$. 
Therefore, we have that $\rho$ is an $\A_1$ norm if and only if $F$ is
locally integrably bounded and in $\A_1^\K$.   By taking the infima of
the respective constants we see that $[F]_{\A_1^\K} \approx [\rho]_{\A_1}$.
\end{proof}

\begin{corollary} \label{cor:matrix-norm-A1}
  Given a matrix weight $W$, $W\in \A_1$ if and only if $W\overline{\mathbf{B}}
  \in \A_1^\K$. 
\end{corollary}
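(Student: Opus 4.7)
The plan is to deduce the corollary directly from Theorem~\ref{thm:matrix-convex-A1} by choosing $F(x)=W(x)\overline{\mathbf{B}}$ and identifying the norm function attached to $F$ via \eqref{mc1} as $\rho_W$. First I would verify that $F:\R^n\to\abcs$ is measurable: the family $\{W(\cdot)q:q\in\mathbb{Q}^d\cap\overline{\mathbf{B}}\}$ is a countable dense collection of measurable selection functions, so Theorem~\ref{char} applies. I would also check that $F(x)\in\abcs$ for a.e.\ $x$, which is automatic since for a matrix weight in $\A_1$ (equivalently, a $W$ for which $\rho_W$ is a norm function), $W(x)$ is positive definite, hence invertible, a.e.

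The key step is a short polar calculation. For invertible $W(x)$, $p_{W(x)\overline{\mathbf{B}}}(v)=|W(x)^{-1}v|$, and Theorem~\ref{pol}(d) combined with Lemma~\ref{lemma:norm-matrix}(2) applied to $A=W(x)^{-1}$ yield
\[
p_{F(x)^\circ}(v)=\bigl(p_{W(x)\overline{\mathbf{B}}}\bigr)^{*}(v)=\bigl|\bigl((W(x)^{-1})^{*}\bigr)^{-1}v\bigr|=|W(x)v|=\rho_W(x,v),
\]
where the penultimate equality uses $W(x)^{*}=W(x)$. Thus the norm function attached to $F$ via \eqref{mc1} is exactly $\rho_W$.

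For the local integrable boundedness hypothesis in Theorem~\ref{thm:matrix-convex-A1}, observe that $|F(x)|=|W(x)|_{\op}$. In the forward direction, Corollary~\ref{cor:eigenvalues-scalar-Ap} tells us that $W\in\A_1$ implies $|W|_{\op}$ is a scalar $\A_1$ weight and in particular is locally integrable; in the reverse direction, local integrable boundedness is built into the definition of $\A_1^\K$.

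Having done this bookkeeping, Theorem~\ref{thm:matrix-convex-A1} gives $F\in\A_1^\K$ if and only if $\rho_W\in\A_1$, and by the definition of matrix $\A_p$ immediately preceding Proposition~\ref{prop:Ap-matrix-defn}, this is the same as $W\in\A_1$. The only substantive content is the polar/duality calculation above; I do not expect a genuine obstacle, since the corollary is essentially a reinterpretation of Theorem~\ref{thm:matrix-convex-A1} in the concrete case when the unit ball of the dual norm is an ellipsoid.
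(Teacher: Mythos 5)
Your proposal is correct and takes essentially the same route as the paper: both proofs identify the norm function attached to $F=W\overline{\mathbf{B}}$ via the polar computation as $\rho_W$ (the paper by arguing as in Proposition~\ref{prop:dual-matrix-norm} that $F(x)^\circ=W^{-1}(x)\overline{\mathbf{B}}$, you via Theorem~\ref{pol}(d) and Lemma~\ref{lemma:norm-matrix}) and then invoke Theorem~\ref{thm:matrix-convex-A1}. Your additional measurability and local integrable boundedness checks are just bookkeeping that the paper leaves implicit.
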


\begin{proof}
  Let $F=W\mathbf{B}$.  By Theorem~\ref{thm:matrix-convex-A1}, $F \in
  \A_1^\K$ if and only if $\rho\in A_1$, where $\rho(x,v)=
  p_{F(x)^\circ}(v)$.   We compute $\rho$ explicitly: if we  argue as in
  the proof of Proposition~\ref{prop:dual-matrix-norm}, $F(x)^\circ =
  W^{-1}(x)\overline{\mathbf{B}}$, and so $ p_{F(x)^\circ}(v)=\rho_W(x)$.  
\end{proof}

\subsection*{The Rubio de Francia iteration algorithm}
Our goal now is to show that the Rubio de Francia iteration
algorithm~\cite[Chapter~2]{MR2797562}
can be extended to the convex-set valued maximal operator.  Given
$1<p\leq \infty$, suppose that $\rho \in \A_p$.  Let
$\|M\|_\rho= \|M\|_{L^p_\K(\R^n,\rho)}$ denote the norm of the
convex-set valued maximal operator on $L^p_\K(\R^n,\rho)$: that is,
the infimum of all constants $C$ such that
$\|MF\|_{L^p_\K(\R^n,\rho)} \leq C\|F\|_{L^p_\K(\R^n,\rho)}$.

Given $G \in L^p_\K(\R^n,\rho)$ we formally define the Rubio de
Francia iteration algorithm to be the sum
\begin{equation}  \label{eqn:rdf0}
\Rdf G(x) = \sum_{k=0}^\infty 2^{-k}\|M\|_\rho^{-k} M^kG(x), 
\end{equation}
where $M^kG = M\circ M \circ \cdots \circ MG$ for $k\geq 1$ and
$M^0G(x)=G(x)$.   We can show that this series converges to a
convex-set valued function that has exactly the same properties as in
the scalar setting.

\begin{theorem} \label{thm:Rdf-iteration}Suppose that $\rho$ is an
  $\A_p$ norm for some $1<p \leq\infty$.  Fix
  $G \in L^p_\K(\R^n,\rho)$ and define $\Rdf G$ by \eqref{eqn:rdf0};
  then this series converges in $L^p_\K(\R^n,\rho)$ and
  $\Rdf G : \R^n \rightarrow \bcs$ is a measurable mapping.  Moreover,
  it has the following properties:
\begin{enumerate}

\item $G(x) \subset \Rdf G(x)$;

\item $\|\Rdf G\|_{L^p_\K(\R^n,\rho)} \leq 2\|G\|_{L^p_\K(\R^n,\rho)}$;

\item $\Rdf G \in \A_1^\K$, and
  $M(\Rdf G)(x) \subset 2\|M\|_\rho \Rdf G(x)$.

\end{enumerate}
\end{theorem}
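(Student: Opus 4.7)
The plan is to construct $\Rdf G$ as the pointwise Hausdorff limit of the partial sums $S_N(x)=\sum_{k=0}^N 2^{-k}\|M\|_\rho^{-k} M^k G(x)$, and then verify the three properties using monotonicity of $M$ and monotone convergence for Aumann integrals. This is the direct analog of the classical scalar argument, but the fact that we cannot subtract convex sets means the estimates must be extracted at the partial-sum level and then transferred by a monotonicity argument.

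First, each $S_N$ is measurable with values in $\bcs$ by iterated application of Proposition~\ref{prop:MF-measurable} and the fact that $\bcs$ is closed under Minkowski sums and nonnegative scalar multiplication. Iterating Theorem~\ref{thm:wtd-convex-max-bounds} gives $\|M^k G\|_{L^p_\K(\R^n,\rho)} \le \|M\|_\rho^k\|G\|_{L^p_\K(\R^n,\rho)}$, so the triangle inequality for $\|\cdot\|_p$ yields the uniform bound
\[
\|S_N\|_{L^p_\K(\R^n,\rho)} \le \sum_{k=0}^N 2^{-k}\|G\|_{L^p_\K(\R^n,\rho)} \le 2\|G\|_{L^p_\K(\R^n,\rho)}.
\]

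Since every element of $\bcs$ contains $0$, the sequence $\{S_N(x)\}$ is increasing in $N$ for each $x$; consequently $\rho_x(S_N(x))$ is increasing in $N$, and the monotone convergence theorem together with the uniform bound above shows that $\sup_N \rho_x(S_N(x)) < \infty$ for a.e.~$x$. Thus $\Rdf G(x) := \overline{\bigcup_{N\in\N} S_N(x)}$ lies in $\bcs$ a.e. Since $\bcs$ equipped with the Hausdorff distance is a complete separable metric space and an increasing, bounded sequence converges in Hausdorff distance to the closure of its union, Theorem~\ref{CV} implies that $\Rdf G$ is measurable in the sense of Definition~\ref{defn:measurable}. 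The estimate $d_{H,x}(S_N(x),\Rdf G(x)) \le \rho_x(\Rdf G(x))$ provides an $L^p$ dominant, so dominated convergence yields $d_p(S_N,\Rdf G)\to 0$, establishing convergence of the series in $L^p_\K(\R^n,\rho)$ together with $\|\Rdf G\|_{L^p_\K(\R^n,\rho)} \le 2\|G\|_{L^p_\K(\R^n,\rho)}$, which is property (2). Property (1) is immediate since $G = S_0 \subset \Rdf G$.

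The main obstacle, and the only nontrivial step, is (3). By Lemma~\ref{lemma:alt-A1-defn} it suffices to show that $\avgint_Q \Rdf G(y)\,dy \subset 2\|M\|_\rho\,\Rdf G(x)$ for every cube $Q$ and a.e.~$x\in Q$. Linearity of the Aumann integral (Theorem~\ref{add}) and the inclusion $\avgint_Q M^k G(y)\,dy \subset M^{k+1}G(x)$ for $x\in Q$ (immediate from the definition of $M$) give, after re-indexing,
\[
\avgint_Q S_N(y)\,dy \subset 2\|M\|_\rho \sum_{k=0}^N 2^{-(k+1)}\|M\|_\rho^{-(k+1)} M^{k+1}G(x) \subset 2\|M\|_\rho\,\Rdf G(x).
\]
Since $\{S_N\}$ is an increasing, integrably bounded sequence, the Lebesgue dominated convergence theorem for convex-set valued mappings~\cite[Theorem 8.6.7]{MR2458436} (used in the same way as in the proof of Theorem~\ref{sn}) implies $\avgint_Q S_N(y)\,dy \to \avgint_Q \Rdf G(y)\,dy = \overline{\bigcup_N \avgint_Q S_N(y)\,dy}$ in the Kuratowski sense. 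Since each set on the left is contained in the closed set $2\|M\|_\rho\,\Rdf G(x)$, so is the limit, establishing (3). The substantive difficulty is precisely this interchange: in the scalar case one would estimate directly from $\Rdf G \ge \|M\|_\rho M\Rdf G / 2\|M\|_\rho$, but here the absence of set subtraction forces us to produce the estimate on partial sums and then pass to the limit via monotone convergence for the Aumann integral.
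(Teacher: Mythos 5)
Your argument is essentially correct, but it follows a genuinely different route from the paper. The paper does not prove this theorem directly: it deduces it as an immediate corollary of the more general Theorem~\ref{thm:gen-rdf-algorithm}, valid for any bounded, sublinear, monotone convex-set valued operator $T$. There, convergence of the series comes from a Cauchy estimate on the partial sums, $d_p(S_nG,S_mG)\le 2^{-m}\|G\|_{L^p_\K(\R^n,\rho)}$, together with completeness of $L^p_\K(\R^n,\rho)$ (Theorem~\ref{thm:complete-metric-space}), and the $\A_1^\K$-type property is obtained by writing $SG=S_nG+E_n$, using sublinearity to get $T(SG)\subset T(S_nG)+TE_n$, showing $\rho_x(TE_n(x))\to 0$ a.e.\ via monotonicity and boundedness of $T$, and letting the $\epsilon$-ball shrink. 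You instead exploit the specific averaging structure of $M$: the inclusion $A_Q(M^kG)(x)\subset M^{k+1}G(x)$ for $x\in Q$, linearity of the Aumann integral on the partial sums, the convex-set valued dominated convergence theorem to pass the inclusion $\avgint_Q S_N(y)\,dy\subset 2\|M\|_\rho\,\Rdf G(x)$ to the limit, and then Lemma~\ref{lemma:alt-A1-defn}. This is a legitimate and arguably more elementary argument for $M$ itself, mirroring the scalar proof at the level of averages; what it does not give is the abstract version, which the paper needs later for the operators $P_W$, $T_1$, $T_2$ in factorization and extrapolation, so your proof replaces Theorem~\ref{thm:Rdf-iteration} but not Theorem~\ref{thm:gen-rdf-algorithm}. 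Two points to tighten: (i) your dominated-convergence step giving $d_p(S_N,\Rdf G)\to 0$ does not cover $p=\infty$ as written; use instead the tail bound $d_{H,x}(S_N(x),\Rdf G(x))\le \sum_{k>N}2^{-k}\|M\|_\rho^{-k}\rho_x(M^kG(x))$, whose $L^p$ norm is at most $2^{-N}\|G\|_{L^p_\K(\R^n,\rho)}$ for all $1<p\le\infty$ (this is exactly the paper's Cauchy estimate). (ii) To invoke Theorem~\ref{add} and the dominated convergence theorem for Aumann integrals on a cube you should verify that $M^kG$ and $\Rdf G$ are locally integrably bounded; this follows from membership in $L^p_\K(\R^n,\rho)$, H\"older's inequality, and the local $L^{p'}$ integrability of the dual norm, which is guaranteed by the $\A_p$ condition.
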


Since variants of the iteration algorithm will play a central role in
subsequent sections, we are instead going to prove a more general
result which has Theorem~\ref{thm:Rdf-iteration} as an immediate
corollary using Lemma~\ref{lemma:Mf-sublinear} and
Theorem~\ref{thm:wtd-convex-max-bounds}.

\begin{theorem} \label{thm:gen-rdf-algorithm}
  Fix $1\leq p \leq \infty$ and a norm function $\rho$.  Suppose $T$
  is a convex-set valued operator with the following properties:
  \begin{enumerate}

    \item $T : L^p_\K(\R^n,\rho) \rightarrow L^p_\K(\R^n,\rho)$ with
      norm $\|T\|_\rho$.

      \item $T$ is sublinear and monotone in the sense of
        Lemma~\ref{lemma:Mf-sublinear}.

      \end{enumerate}
 Given $G \in L^p_\K(\R^n,\rho)$, define 
\begin{equation}  \label{eqn:rdf1}
S G(x) = \sum_{k=0}^\infty 2^{-k}\|T\|_\rho^{-k} T^kG(x), 
\end{equation}
where $T^kG = T\circ T \circ \cdots \circ TG$ for $k\geq 1$ and
$T^0G(x)=G(x)$.    Then this series converges in $L^p_\K(\R^n,\rho)$ and
  $S G : \R^n \rightarrow \bcs$ is a measurable mapping.  Moreover,
  it has the following properties:
\begin{enumerate}

\item $G(x) \subset S G(x)$;

\item $\|SG\|_{L^p_\K(\R^n,\rho)} \leq 2\|G\|_{L^p_\K(\R^n,\rho)}$;

\item   $T(S G)(x) \subset 2\|T\|_\rho S G(x)$.

\end{enumerate}
\end{theorem}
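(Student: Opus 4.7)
The plan is to mimic the scalar Rubio de Francia argument, working with the partial sums
\[
S_N G(x) = \sum_{k=0}^N 2^{-k} \|T\|_\rho^{-k} T^k G(x),
\]
and transferring convergence, measurability, and the defining inclusions from finite partial sums to the limit. Since $T$ is bounded on $L^p_\K(\R^n,\rho)$ with norm $\|T\|_\rho$, iteration gives $\|T^k G\|_p \leq \|T\|_\rho^k \|G\|_p$. The seminorm $\|\cdot\|_p$ on $L^p_\K$ satisfies the triangle inequality, so for $M > N$
\[
\Big\| \sum_{k=N+1}^M 2^{-k} \|T\|_\rho^{-k} T^k G \Big\|_p \leq \sum_{k=N+1}^M 2^{-k}\|G\|_p \xrightarrow{N\to\infty} 0,
\]
showing $\{S_N G\}$ is Cauchy in the complete metric space $L^p_\K(\R^n,\rho)$ (Theorem \ref{thm:complete-metric-space}). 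Hence $S_N G \to SG$ in $L^p_\K$ for some measurable $SG : \R^n \to \bcs$.

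Because each $T^k G(x)$ is symmetric convex and contains $0$, the Minkowski partial sums are nested: $S_N G(x) \subset S_{N+1} G(x)$ for all $N$. Combining this monotonicity with the a.e.\ Hausdorff convergence along a subsequence (extracted as in the proof of Theorem \ref{thm:complete-metric-space}) forces the full sequence to converge pointwise a.e.\ in Hausdorff distance to $SG(x) = \overline{\bigcup_N S_N G(x)}$, which confirms $SG$ is measurable as a pointwise limit. Property (1) is then immediate from $G(x) = S_0 G(x) \subset SG(x)$, and property (2) follows by continuity of the norm under metric convergence:
\[
\|SG\|_p = \lim_{N\to\infty} \|S_N G\|_p \leq \sum_{k=0}^\infty 2^{-k} \|G\|_p = 2\|G\|_p.
\]

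The main obstacle, and the heart of the argument, is property (3). Inductively applying the sublinearity of $T$ (Lemma \ref{lemma:Mf-sublinear}, extended to finite Minkowski sums with nonnegative scalars) to the partial sum yields
\[
T(S_N G)(x) \subset \sum_{k=0}^N 2^{-k}\|T\|_\rho^{-k} T^{k+1}G(x) = 2\|T\|_\rho \sum_{j=1}^{N+1} 2^{-j}\|T\|_\rho^{-j} T^j G(x) \subset 2\|T\|_\rho\, SG(x),
\]
since omitting the $j=0$ term cannot enlarge a sum of symmetric convex sets containing the origin. To transfer this inclusion from $T(S_N G)$ to $T(SG)$, I need continuity of $T$ in $L^p_\K$: writing $H_N(x)$ for the $\rho_x$-Hausdorff distance ball between $S_N G(x)$ and $SG(x)$, sublinearity and boundedness of $T$ give $T(SG) \subset T(S_N G) + T(H_N)$ and $T(S_N G) \subset T(SG) + T(H_N)$ with $\|T(H_N)\|_p \lesssim \|T\|_\rho \|H_N\|_p \to 0$, adapting the argument behind Corollary \ref{cor:max-continuous} (bounded sublinear operators are continuous) from $|\cdot|$ to a general norm function $\rho$. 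Extracting an a.e.\ Hausdorff-convergent subsequence $T(S_{N_j}G)(x) \to T(SG)(x)$ and using that $2\|T\|_\rho SG(x)$ is closed yields $T(SG)(x) \subset 2\|T\|_\rho SG(x)$ a.e., completing the proof.
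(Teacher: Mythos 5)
Your proposal is correct, and its treatment of convergence, measurability, and properties (1)--(2) coincides with the paper's argument (partial sums, Cauchy estimate, completeness from Theorem~\ref{thm:complete-metric-space}, nestedness giving $SG(x)=\overline{\bigcup_N S_NG(x)}$). Where you genuinely diverge is property (3). The paper writes the exact tail decomposition $SG(x)=S_nG(x)+E_n(x)$ with $E_n(x)=\sum_{k>n}2^{-k}\|T\|_\rho^{-k}T^kG(x)$, gets $T(SG)\subset T(S_nG)+TE_n$ by sublinearity, shows $\|TE_n\|_p\to 0$, and then uses monotonicity of $T$ together with the nestedness of the $E_n$ to upgrade subsequential a.e.\ decay of $\rho_x(TE_n(x))$ to the full sequence, concluding via closedness of $SG(x)$. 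You instead prove that $T$ is continuous on $L^p_\K(\R^n,\rho)$ by the Hausdorff-ball sandwich of Corollary~\ref{cor:max-continuous}, i.e.\ $SG(x)\subset S_NG(x)+H_N(x)$ and $S_NG(x)\subset SG(x)+H_N(x)$ with $H_N(x)=d_{H,x}(S_NG(x),SG(x))\{v:\rho_x(v)\le 1\}$, so that $d_p(T(S_NG),T(SG))\le\|TH_N\|_p\lesssim\|H_N\|_p\to 0$, and then pass the inclusion $T(S_NG)(x)\subset 2\|T\|_\rho SG(x)$ to the limit along an a.e.\ Hausdorff-convergent subsequence, using that the right-hand set is closed. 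This is a legitimate alternative: it buys you a cleaner limiting step (no need for the identity $SG=S_nG+E_n$, and a subsequence suffices where the paper invokes monotone nestedness of the tails), at the cost of having to justify that $H_N$ is a measurable element of $L^p_\K(\R^n,\rho)$ with $\rho_x(H_N(x))=d_{H,x}(S_NG(x),SG(x))$ — which does follow from Theorem~\ref{mc} and the measurability of $x\mapsto d_{H,x}(S_NG(x),SG(x))$ noted before \eqref{metric} — and of transporting Corollary~\ref{cor:max-continuous} from $|\cdot|$ to a general norm function. One small point to make explicit: the inclusion $T(SG)\subset T(S_NG)+T(H_N)$ uses monotonicity of $T$ (applied to $SG\subset S_NG+H_N$) in addition to sublinearity; this is exactly hypothesis (2), so no gap, but it should be stated.
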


\begin{proof}
  For brevity, in this proof we will denote
  $\|\cdot\|_{L^p_\K(\R^n,\rho)}$ simply as
    $\|\cdot\|_p$.  
  To prove that the series~\eqref{eqn:rdf1} converges in norm, we apply
  Theorem~\ref{thm:complete-metric-space}.  Let
  \[ S_nG(x) = \sum_{k=0}^n 2^{-k}\|T\|_\rho^{-k} T^kG(x) \]
  denote the partial sums of the series $S G$.  We claim that this
  sequence is Cauchy with respect to the metric $d_p$ defined in \eqref{metric}.
  Indeed, if $n>m$, then by~\eqref{lp1} and
  the boundedness of $T$,
  \begin{multline} \label{eqn:rdf2}
    d_p(S_nG,S_mG)
    = d_p\bigg( \sum_{k=m+1}^n
    2^{-k}\|T\|_\rho^{-k} T^kG, \{0\}\bigg) \\
    = \bigg\| \sum_{k=m+1}^n
    2^{-k}\|T\|_\rho^{-k} T^kG \bigg\|_p
    \leq \sum_{k=m+1}^n 2^{-k}\|T\|_\rho^{-k} \|T^kG\|_p
    \leq 2^{-m}\|G\|_p. 
  \end{multline}
  By Theorem~\ref{thm:complete-metric-space},  $ L^p_\K(\R^n,\rho)$ is complete with respect to the metric
  $d_p$, so the sequence $\{S_nG\}_{n\in \N}$ converges.  Let $S
  G$ denote the limit.
  
  We now prove the desired properties.  To prove the first, since
  $d_p(S_n G,S G)\rightarrow 0$, there exists a subsequence such that
  for almost every $x\in \R^n$,
\[ \lim_{j\rightarrow \infty} d_{H,x}(S_{n_j} G(x), S G(x)) =
  0. \]
But for all $n\in \N$, $S_{n-1} G(x) \subset S_n
G(x)$.  Therefore,  we have that $S_n G(x)\rightarrow S G(x)$ in
the Hausdorff metric.  It then follows
from~\cite[Theorem~1.8.7]{MR1216521} that
\begin{equation} \label{eqn:nested-rdf}
 S G(x) = \overline{\bigcup_{n\in \N} S_n G(x)}.  
\end{equation}
Property (1) follows immediately.

  \medskip

To prove the second property, note that for every $n\geq 1$,
\[ \|S G\|_p = d_p(S G,\{0\})
  \leq d(S G, S_nG) + d(S_n G,\{0\})
  = d(S G, S_nG) + \|S_n G\|_p. \]
But then, if we take the limit as $n\rightarrow \infty$ and estimate
the second norm as we did above in \eqref{eqn:rdf2}, we have that
\[ \|S G\|_p
  \leq
  \limsup_{n\rightarrow 0} \big[ d(S G, S_nG) + \|S_n G\|_p \big]
  \leq 2\|G\|_p.  \]

Finally, we show the third property.  For each
$n\geq 1$, we can write
\[ S G(x)  = S_n G(x) + E_n(x), \qquad\text{where } E_n(x)=\sum_{k=n+1}^\infty 2^{-k}\|T\|_\rho^{-k} T^kG(x); \]
by assumption $T$ is sublinear, so
\[ T(S G)(x) \subset T(S_nG)(x) + TE_n(x).  \]
We estimate each term on the right separately.
To estimate the first, we argue as above.  
Since by \eqref{lp1},
\[ d_p(S G, S_nG) = d_p(E_n,\{0\}) = \|E_n\|_p, \]
we have that $\|E_n\|_p \rightarrow 0$ as $n\rightarrow \infty$.
Since $T$ is bounded, $\|TE_n\|_p \rightarrow 0$
as $n\rightarrow \infty$.   Therefore, there exists a subsequence
$\{E_{n_j}\}$ such that
\[ \rho_x (TE_{n_j}(x)) \rightarrow 0 \]
almost everywhere as $j\rightarrow \infty$.    However, the sets $E_n$
are nested, $E_{n+1}(x) \subset E_n(x)$;  since by assumption $T$ is monotone, $TE_{n+1}(x) \subset
TE_n(x)$.  Therefore, we have that for a.e.~$x$
\[ \rho_x (TE_{n}(x)) \rightarrow 0 \]
as $n\rightarrow \infty$.  Hence, given any
$\epsilon>0$, for all $n$ sufficiently large, $TE_n(x) \subset
B(\epsilon, 0)$.   

On the other hand, again since $T$ is sublinear,
\[ T(S_nG)(x) \subset
  \sum_{k=0}^n 2^{-k}\|T\|_\rho^{-k} T^{k+1}G(x)
  \subset 2\|T\|_\rho S_{n+1}G(x)
  \subset 2\|T\|_\rho S G(x).  \]
The last inclusion follows from \eqref{eqn:nested-rdf}.  
Combining these two estimates, we see that for every $\epsilon>0$, 
\[ T(S G)(x) \subset 2\|T\|_\rho S G(x) + B(\epsilon,0).  \]
Since $\epsilon>0$ is arbitrary and since $S G(x)$ is closed, it follows that $T(S G)(x) \subset
2\|T\|_\rho S G(x) $.
\end{proof}

\section{Factorization of matrix weights}
\label{section:factorization}

In this section we prove the Jones factorization
theorem~\cite{preprint-DCU} for matrix weights,
Theorem~\ref{thm:jones-factorization-intro} in the Introduction.  We
restate it here.

\begin{theorem} \label{thm:jones-factorization}
  Fix $1<p<\infty$.  Given a matrix weight $W$, we have $W\in \A_p$ if and
  only if
  \[  W = W_0^{1/p} W_1^{1/p'}, \]
for some commuting  matrix weights  $W_0\in
  \A_1$ and  $W_1 \in \A_\infty$.
\end{theorem}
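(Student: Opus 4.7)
\smallskip

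My plan is to prove the two implications separately, since (unlike the scalar case) both require substantial work.

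\emph{Reverse factorization} $(\Leftarrow)$. Given commuting $W_0\in \A_1$ and $W_1\in \A_\infty$, set $W=W_0^{1/p}W_1^{1/p'}$; since $W_0(x),W_1(x)$ commute and lie in $\Sd$, $W(x)\in \Sd$ as well. I would work entirely on the norm-function side. Let $\rho_j(x,v)=|W_j(x)v|$ for $j=0,1$; by Corollary~\ref{dwgm} applied pointwise with $t=1/p'$ and using that $W_0(x)\#_{1/p'}W_1(x)=W_0(x)^{1/p}W_1(x)^{1/p'}$ for commuting matrices (directly from Lemma~\ref{wgm} in a simultaneous eigenbasis), the norm $\rho_W(\cdot,v)=|W(\cdot)v|$ is pointwise equivalent to the double dual $(\rho_0^{1-t}\rho_1^t)^{**}$ of the weighted geometric mean of $\rho_0$ and $\rho_1$. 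Moreover, Proposition~\ref{prop:geometric-double-dual} provides a matching dual identity $\rho_W^*\approx (\rho_0^{*\,1-t}\rho_1^{*\,t})^{**}\le \rho_0^{*\,1-t}\rho_1^{*\,t}$ (the last step using Lemma~\ref{lemma:double-dual-geo-mean}). To verify $W\in \A_p$, I would bound $\langle \rho_W^*\rangle_{p',Q}(v)$ from above and $\langle \rho_W\rangle_{p,Q}^*(v)$ from below. For the upper bound, H\"older's inequality in the $x$-variable applied to $\rho_W^*\le \rho_0^{*\,1/p}\rho_1^{*\,1/p'}$ gives
\[
\langle \rho_W^*\rangle_{p',Q}(v)\lesssim \langle \rho_0^*\rangle_{1,Q}(v)^{1/p}\,\langle \rho_1^*\rangle_{\infty,Q}(v)^{1/p'}.
\]
The $\A_1$ conditions for $W_0$ and $W_1^{-1}$ (the latter because $W_1\in\A_\infty$ means $W_1^{-1}\in \A_1$) translate via Theorem~\ref{thm:matrix-convex-A1} and Proposition~\ref{prop:dual-matrix-norm} into $\langle \rho_0^*\rangle_{1,Q}(v)\lesssim \rho_{0,x}^*(v)$ and $\langle \rho_1^*\rangle_{\infty,Q}(v)\lesssim \rho_{1,x}^*(v)$ for a.e.\ $x\in Q$. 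Combining, one gets $\langle \rho_W^*\rangle_{p',Q}(v)^{p'}\lesssim \avgint_Q \rho_{0,x}^{*\,p'/p}\rho_{1,x}^{*}\,dx$, and then the geometric-mean lower bound $\rho_W(x,w)\lesssim \rho_0(x,w)^{1/p}\rho_1(x,w)^{1/p'}$ combined with H\"older's inequality (the ``interpolation between finite-dimensional spaces'' alluded to in the introduction) lets one identify this with $\langle\rho_W\rangle_{p,Q}^*(v)$.

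\emph{Factorization} $(\Rightarrow)$. Given $W\in \A_p$, the plan is to run a convex-set valued Rubio de Francia algorithm. By Theorem~\ref{thm:wtd-convex-max-bounds} and its dual (since $W\in \A_p\iff W^{-1}\in \A_{p'}$), $M$ is bounded on $L^p_\K(\R^n,W)$ and on $L^{p'}_\K(\R^n,W^{-1})$. I would define two conjugated maximal-type sublinear, monotone operators and apply Theorem~\ref{thm:gen-rdf-algorithm} to each, starting from a single ``seed'' convex-set valued function $G$ built directly from $W$ (e.g.\ the ellipsoid $G(x)=W(x)^{\alpha}\overline{\mathbf B}$ with an appropriate exponent, so that every $M^kG(x)$ remains an ellipsoid that commutes with $W(x)$ --- this is what produces the ``scalar multiples of $W$'' feature mentioned in the remark). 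The resulting iterates $H_0,H_1$ then satisfy $M$-invariance properties which, via Theorem~\ref{thm:matrix-convex-A1} and Corollary~\ref{cor:matrix-norm-A1}, translate to matrix $\A_1$ conditions on scalar-valued multiples $\phi_0(x),\phi_1(x)$ of powers of $W$. A brief algebraic bookkeeping then produces commuting $W_0\in \A_1$ and $W_1\in \A_\infty$ (with $W_1\in \A_\infty$ coming from the $\A_1$ condition for the \emph{dual} iterate) such that $W=W_0^{1/p}W_1^{1/p'}$.

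The main obstacle, as anticipated by the authors, is the reverse direction: making the geometric-mean--dual identity from Section~\ref{section:prelim} interact cleanly with the $\A_1$ pointwise domination provided by $W_0$ and $W_1$, in order to close the H\"older chain down to $\langle \rho_W\rangle_{p,Q}^*(v)$. The commutativity hypothesis is essential here, because it collapses the weighted geometric mean $W_0\#_{1/p'}W_1$ (whose dual behavior is controlled by Proposition~\ref{prop:geometric-double-dual} only up to the double-dual modification) to an honest product, which is needed to align the matrix identity $W=W_0^{1/p}W_1^{1/p'}$ with the norm-function estimates.
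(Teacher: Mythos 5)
Your overall skeleton (norm functions plus geometric-mean duality for the reverse direction; a convex-set valued Rubio de Francia iteration for factorization proper) is the same as the paper's, but both halves of your sketch have genuine gaps at exactly the points where the real work happens. In the reverse direction, first, your translations of the hypotheses are wrong: $W_0\in\A_1$ gives, for a.e.\ $x\in Q$, $\langle\rho_0\rangle_{1,Q}(v)\lesssim\rho_{0,x}(v)$ (equivalently, after dualizing in $v$, $\rho_{0,x}^*(v)\lesssim\langle\rho_0\rangle_{1,Q}^*(v)$); the inequality $\langle\rho_0^*\rangle_{1,Q}(v)\lesssim\rho_{0,x}^*(v)$ that you state is the $\A_1$ condition for $W_0^{-1}$, i.e.\ $W_0\in\A_\infty$. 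Likewise $W_1\in\A_\infty$ (i.e.\ $W_1^{-1}\in\A_1$) only yields $\langle\rho_1^*\rangle_{1,Q}(v)\lesssim\rho_{1,x}^*(v)$; your sup-average version $\langle\rho_1^*\rangle_{\infty,Q}(v)\lesssim\rho_{1,x}^*(v)$ is strictly stronger and false in general (already for scalar $A_1$ weights unbounded on $Q$). Second, and more importantly, the closing phrase ``lets one identify this with $\langle\rho_W\rangle_{p,Q}^*(v)$'' conceals the actual difficulty: after H\"older and the (corrected) hypotheses you need an inequality of the shape ``geometric mean of dual averaged norms $\lesssim$ dual of the averaged norm,'' and the naive comparison goes the wrong way, since Lemma~\ref{lemma:double-dual-geo-mean} gives $p_t^{**}\le p_t$, i.e.\ the dual of a geometric mean is dominated by the geometric mean of the duals, not conversely. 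The paper's proof of Proposition~\ref{prop:general-reverse-factorization} closes this by exploiting that $\sigma=\langle\rho_W^*\rangle_{p',Q}$ is itself a norm, hence $\sigma=\sigma^{**}\lesssim\sigma_t^{**}$; it then applies the $\A_1$ and $\A_\infty$ hypotheses at the level of the averaged norms ($\sigma_j\le[\rho_j]\,\tau_j^*$) and uses Proposition~\ref{prop:geometric-double-dual} to get $\sigma_t^{**}\lesssim[\rho_0]^{1/p}[\rho_1]^{1/p'}\,\tau_t^{***}=[\rho_0]^{1/p}[\rho_1]^{1/p'}\,\tau_t^*\lesssim\cdots\lesssim\langle\rho_W\rangle_{p,Q}^*$. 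Without this double-dualization chain your argument does not close.

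For factorization proper, the proposed seed $G(x)=W(x)^{\alpha}\overline{\mathbf{B}}$ does not behave as claimed: the convex-set valued maximal operator averages ellipsoids of varying orientation over cubes, so $M^kG(x)$ is in general not of the form $s(x)W(x)^{\alpha}\overline{\mathbf{B}}$ (nor even an ellipsoid), and such a seed will typically not lie in the space on which your operators are bounded. The paper builds the needed structure into the operators rather than the seed: it introduces the exhausting operators $N_W$, $N_{W^{-1}}$, $N_I$ (Definition~\ref{defn:Nw}), forms $T_1G=\big[WP_W(W^{-1}(N_IG)^{p'})\big]^{1/p'}$ and $T_2G=\big[W^{-1}P_{W^{-1}}(W(N_IG)^{p})\big]^{1/p}$, which are sublinear, monotone, map ball-valued functions to ball-valued functions, and are bounded on the \emph{unweighted} space $L^{q}_\K(\R^n,|\cdot|)$ with $q=pp'$; the iteration algorithm is then applied once to $T=T_1+T_2$ with an arbitrary ball-valued seed, producing $SG=\bar r\,\overline{\mathbf{B}}$. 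Setting $W_0=\bar r^{\,p}W$ and $W_1=\bar r^{\,-p'}W$ yields $W_0\in\A_1$, $W_1\in\A_\infty$, and $W_0^{1/p}W_1^{1/p'}=W$; commutativity is automatic because both are scalar multiples of $W$, not because the iterates commute with $W$. You would need to supply this (or an equivalent) device; as written, the ``brief algebraic bookkeeping'' rests on a false invariance of the iterates.
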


To make clear the connection with the classical factorization theorem
for scalar $A_p$ weights,
recall that a scalar weight
$w\in \A_p$ if and only if $w^p\in A_p$.  Thus, we can restate the
Jones factorization theorem as $w\in \A_p$ if and only if
$w=w_0^{1/p}w_1^{-1/p'}$, where $w_0,\,w_1\in \A_1$ and so $w_1^{-1}\in \A_\infty$.
Any two scalar weights commute, hence the assumption of commutativity is moot.
In higher dimensions the situation is more complicated. For non-commuting matrix weights $W_0$ and $W_1$, it is necessary to replace the product $W_0^{1/p} W_1^{1/p'} $ by their weighted geometric mean $((W_0)^2 \#_{1/p'} (W_1)^2)^{1/2}$. For that reason Theorem \ref{thm:jones-factorization} splits into two more precise statements 
generalizing the scalar theorem.

\medskip

\subsection{Factorization}
We divide the proof of Theorem~\ref{thm:jones-factorization} into two
propositions.  In the first we prove factorization proper, which
in the scalar case is the more difficult half of the proof.   The
proof is a modification of the proof in the scalar
case~\cite[Theorem~4.2]{preprint-DCU} using the Rubio de Francia
iteration algorithm, which yields matrix weights $W_0$ and $W_1$, which are not only commuting, but also scalar multiples of one another.

\begin{prop} \label{prop:factorization}
     Fix $1<p<\infty$.  Given a matrix weight $W\in \A_p$, there exist
     matrix weights  $W_0$ and $W_1$ such that:
     \begin{itemize} 
     \item
     $W_0\in \A_1$ with $[W_0]_{\A_1} \lesssim  [W]^{p}_{\A_p}$,
\item $W_1 \in \A_\infty$ with  $[W_1]_{\A_\infty}  \lesssim [W]^{p'}_{\A_p}$,
 \item $W_0=rW$, $W_1=sW$ for some
  measurable scalar functions $r,\,s$, and
  \[  W = W_0^{1/p} W_1^{1/p'}. \]
  \end{itemize}
\end{prop}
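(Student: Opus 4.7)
The plan is to adapt the scalar Rubio de Francia proof of the Jones factorization theorem from \cite{preprint-DCU} by constructing a single scalar function $r$ that simultaneously forces $rW\in\A_1$ and $r^{1-p'}W\in\A_\infty$. Then setting $W_0=rW$ and $W_1=r^{1-p'}W$ will yield the factorization $W=W_0^{1/p}W_1^{1/p'}$, since $r$ is scalar and commutes with $W$ and $r^{1/p+(1-p')/p'}=r^0=1$. First, using the duality $[W^{-1}]_{\A_{p'}}=[W]_{\A_p}$ (Lemma~\ref{lemma:Ap-duality} and Proposition~\ref{prop:Ap-matrix-defn}), it suffices to treat the case $1<p\le 2$: if $p>2$, I would apply the already-proved conclusion to $W^{-1}\in\A_{p'}$ to obtain a scalar $\tilde r$ and factorization $W^{-1}=(\tilde rW^{-1})^{1/p'}(\tilde r^{1-p}W^{-1})^{1/p}$, and then invert to get $W=(\tilde r^{p-1}W)^{1/p}(\tilde r^{-1}W)^{1/p'}$, which retains the required scalar-multiple form and exponent bounds since $\A_1$ and $\A_\infty$ are interchanged by matrix inversion.

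Fix $1<p\le 2$. I would introduce two scalar operators acting on positive scalar $r$,
\[ Ur(x) = \sup_{Q\ni x}\avgint_Q |W^{-1}(x)W(y)|_{\op}\,r(y)\,dy,\qquad U^*r(x) = \sup_{Q\ni x}\avgint_Q |W(x)W^{-1}(y)|_{\op}\,r(y)\,dy. \]
By Proposition~\ref{prop:Ap-matrix-defn}, $rW\in\A_1$ is equivalent to $Ur\le Cr$ pointwise, and $r^{1-p'}W\in\A_\infty$ (equivalently $r^{p'-1}W^{-1}\in\A_1$) is equivalent to $U^*(r^{p'-1})\le Cr^{p'-1}$ pointwise. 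Using the elementary bound $|A|_{\op}\lesssim\sum_i|Ae_i|$, the boundedness of $U$ and $U^*$ reduces to that of the vector-valued Christ--Goldberg operators $M_{W^{-1}}$ and $M_W$ applied to the functions $re_i$. Proposition~\ref{prop:max-p-infty-weights} then gives $\|Ur\|_{L^{p'}}\lesssim[W]_{\A_p}^p\|r\|_{L^{p'}}$ and $\|U^*r\|_{L^p}\lesssim[W]_{\A_p}^{p'}\|r\|_{L^p}$, where I have used $(p')'=p$ together with the aforementioned duality.

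Form the combined scalar operator $Tr=Ur+Vr$ on $L^{p'}(\R^n)$, where $Vr=(U^*(r^{p'-1}))^{1/(p'-1)}$. The identity $\|Vr\|_{L^{p'}}^{p'}=\|U^*(r^{p'-1})\|_{L^p}^p$ (using $p'/(p'-1)=p$) together with $\|r^{p'-1}\|_{L^p}=\|r\|_{L^{p'}}^{p'-1}$ and the bound on $U^*$ yields $\|Vr\|_{L^{p'}}\lesssim[W]_{\A_p}^p\|r\|_{L^{p'}}$, so $\|T\|_{L^{p'}}\lesssim[W]_{\A_p}^p$. The operator $T$ is sublinear: $U$ is plainly sublinear, while the key observation for $V$ is that since $p\le 2$ forces $p'-1\ge 1$, for each fixed $x$ and cube $Q$ the quantity $(\avgint_Q|W(x)W^{-1}(y)|_{\op}r(y)^{p'-1}\,dy)^{1/(p'-1)}$ is the $L^{p'-1}$-norm of $r$ with respect to the measure $|W(x)W^{-1}(y)|_{\op}\chi_Q(y)\,dy/\ln(Q)$, so Minkowski's inequality combined with subadditivity of the supremum yields sublinearity of $V$. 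Applying the scalar analogue of the Rubio de Francia iteration algorithm (Theorem~\ref{thm:gen-rdf-algorithm}) to $T$ on $L^{p'}$ with a strictly positive seed produces $r\in L^{p'}$ with $Tr\le 2\|T\|_{L^{p'}}r$ pointwise, hence simultaneously $Ur\lesssim[W]_{\A_p}^pr$ and $U^*(r^{p'-1})\lesssim[W]_{\A_p}^{p(p'-1)}r^{p'-1}=[W]_{\A_p}^{p'}r^{p'-1}$. Setting $W_0=rW$ and $W_1=r^{1-p'}W$ yields the desired factors with $[W_0]_{\A_1}\lesssim[W]_{\A_p}^p$ and $[W_1]_{\A_\infty}\lesssim[W]_{\A_p}^{p'}$. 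The main obstacle is precisely the sublinearity of $V$: for $p>2$ the exponent $1/(p'-1)$ exceeds $1$ and Minkowski's inequality no longer gives sublinearity of $V$ directly, which is exactly why the initial duality reduction to the range $p\le 2$ is indispensable.
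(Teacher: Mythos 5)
Your proposal is correct, but it takes a genuinely different route from the paper. The paper never leaves the convex-set valued framework: it builds two operators $T_1G=[W\,P_W(W^{-1}(N_IG)^{p'})]^{1/p'}$ and $T_2G=[W^{-1}P_{W^{-1}}(W(N_IG)^{p})]^{1/p}$ from the convex-set valued maximal operator and the exhausting operator $N_W$, and runs the generalized iteration algorithm (Theorem~\ref{thm:gen-rdf-algorithm}) for $T=T_1+T_2$ on $L^{q}_\K(\R^n,|\cdot|)$ with $q=pp'$; since the inner exponents there are $p$ and $p'$, both larger than $1$, sublinearity (Lemma~\ref{lemma:M_p-sublinear}) holds for every $p$ and no case split is needed, and a single iterated function $\bar r$ yields $W_0=\bar r^{\,p}W\in\A_1$ and $W_1=\bar r^{-p'}W\in\A_\infty$ --- which is exactly your structure, since $\bar r^{-p'}=(\bar r^{\,p})^{1-p'}$. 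You instead scalarize: you encode the $\A_1$ and $\A_\infty$ conditions for $rW$ and $r^{1-p'}W$ through the kernels $|W^{-1}(x)W(y)|_{\op}$ and $|W(x)W^{-1}(y)|_{\op}$, dominate the resulting operators $U$ and $U^*$ by the Christ--Goldberg maximal operators via $|A|_{\op}\lesssim\sum_i|Ae_i|$ so that Proposition~\ref{prop:max-p-infty-weights} gives the correct quantitative bounds, and run a classical scalar Rubio de Francia iteration on $L^{p'}$; the only genuine obstruction, the failure of sublinearity of $V r=(U^*(r^{p'-1}))^{1/(p'-1)}$ when $p'-1<1$, you correctly remove by the duality reduction $W\leftrightarrow W^{-1}$, $p\leftrightarrow p'$, and I checked that the exponents and constants ($[W_0]_{\A_1}\lesssim[W]_{\A_p}^p$, $[W_1]_{\A_\infty}\lesssim[W]_{\A_p}^{p'}$, and $[V^{-1}]_{\A_1}=[V]_{\A_\infty}$) survive that reduction. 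What each approach buys: yours is more elementary for this proposition, avoiding the convex-set machinery entirely and needing only the scalar iteration; the paper's avoids any case analysis on $p$ and deliberately exercises the same convex-set valued iteration that is reused verbatim in the extrapolation proof. Two routine points you should still record: the identification of the pointwise bounds $Ur\le Cr$, $U^*(r^{p'-1})\le Cr^{p'-1}$ with the $\A_1$ conditions requires the usual restriction to cubes with rational vertices to handle the null sets, and the property $T(Sg)\le 2\|T\|_{L^{p'}}Sg$ of the iteration requires countable subadditivity, which for your $U$ and $V$ follows from monotonicity and monotone convergence (this is the analogue of the monotonicity-plus-continuity argument in the proof of Theorem~\ref{thm:gen-rdf-algorithm}).
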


The proof requires several lemmas which will also be used in the proof of 
extrapolation in Section~\ref{section:extrapolation}.   
The key technical idea is that we replace the
convex-set valued maximal operator with a slightly larger,
ellipsoid-valued maximal operator configured to the matrices.

\begin{definition} \label{defn:Nw}
Let $W$ be an invertible
matrix weight. Given a measurable function $H  :\R^n \rightarrow
  \bcs$,  define the exhausting operator $N_W$ with respect to $W$, which acts on $H$ by 
\[ N_WH(x) = |W(x)H(x)| W(x)^{-1}\overline{\mathbf{B}}.\]
\end{definition}

The following lemma shows that the exhausting operator is sublinear, monotone, and an isometry on $L^p_\K (\R^n,W)$.

\begin{lemma} \label{lemma:ellipsoid-max} Given $1<p<\infty$, a matrix
  $W$, and $H\in L^p_\K (\R^n,W)$, the operator $N_W$ satisfies the following:
  \begin{enumerate}
  \item $H(x) \subset N_WH(x)$. 
  \item $N_W$ is an isometry:
    $\|N_WH\|_{L^p_\K (\R^n,W)}=\|H\|_{L^p_\K (\R^n,W)}$.


    \item $N_W$ is sublinear and monotone in the sense of
      Lemma~\ref{lemma:Mf-sublinear}. 
    \end{enumerate}
  \end{lemma}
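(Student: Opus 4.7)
The plan is to verify the three assertions by essentially bookkeeping computations based on two elementary facts about the set norm $|\cdot|$ from \eqref{norm}: (i) it is sub-additive and absolutely homogeneous, so $|K_1+K_2|\le|K_1|+|K_2|$ and $|\alpha K|=|\alpha|\,|K|$ for $K,K_1,K_2\in\bcs$ and $\alpha\in\R$; and (ii) for a convex set $K$ containing the origin and scalars $a,b\ge 0$ one has $aK+bK=(a+b)K$. Combined with the single observation $W(x)N_WH(x)=|W(x)H(x)|\overline{\mathbf B}$, these will handle everything.

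Before anything else I would confirm that $N_WH$ is a measurable $\bcs$-valued mapping, so that the seminorm $\|N_WH\|_{L^p_\K(\R^n,W)}$ makes sense. The scalar function $x\mapsto|W(x)H(x)|$ is measurable by Lemma~\ref{lemma:norm-measure} applied to the norm function $\rho_W$, and $x\mapsto W(x)^{-1}$ is measurable by hypothesis; since the map $(r,A)\mapsto rA\overline{\mathbf B}$ is continuous from $[0,\infty)\times\Md$ into $\bcs$ endowed with the Hausdorff distance, Theorem~\ref{CV} yields measurability of $N_WH$. The set $|W(x)H(x)|W(x)^{-1}\overline{\mathbf B}$ is manifestly in $\bcs$ since $W(x)^{-1}\overline{\mathbf B}$ is a symmetric, full-dimensional ellipsoid.

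For (1), any $v\in H(x)$ satisfies $|W(x)v|\le|W(x)H(x)|$ by definition of the set norm, hence $W(x)v\in|W(x)H(x)|\overline{\mathbf B}$ and, applying $W(x)^{-1}$, $v\in N_WH(x)$. For (2), the identity $W(x)N_WH(x)=|W(x)H(x)|\overline{\mathbf B}$ gives $|W(x)N_WH(x)|=|W(x)H(x)|$ pointwise, and integrating $p$-th powers delivers the equality of the $L^p_\K(\R^n,W)$ seminorms.

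Finally, (3) splits in two. Monotonicity is immediate: $F(x)\subset G(x)$ forces $|W(x)F(x)|\le|W(x)G(x)|$, and scaling $W(x)^{-1}\overline{\mathbf B}$ by a larger positive number enlarges the set. Positive homogeneity $N_W(\alpha F)(x)=|\alpha|N_WF(x)=\alpha N_WF(x)$ follows from (i) applied to $\alpha W(x)F(x)$ together with the symmetry of $W(x)^{-1}\overline{\mathbf B}$. Sublinearity follows by first using (i) to estimate $|W(x)(F(x)+G(x))|\le|W(x)F(x)|+|W(x)G(x)|$ and then invoking (ii) with $K=W(x)^{-1}\overline{\mathbf B}$ to split the resulting scaled ball as a Minkowski sum, yielding $N_W(F+G)(x)\subset N_WF(x)+N_WG(x)$. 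There is no genuine obstacle in any of this; the only minor subtlety is to keep in mind that $|W(x)H(x)|$ denotes the set norm of the image $W(x)H(x)$, not a matrix operator norm.
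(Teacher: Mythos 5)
Your proposal is correct and follows essentially the same route as the paper: the same pointwise inclusion via $|W(x)v|\le|W(x)H(x)|$, the same identity $W(x)N_WH(x)=|W(x)H(x)|\overline{\mathbf B}$ for the isometry, and the same subadditivity-of-the-set-norm argument (with the convexity identity $aK+bK=(a+b)K$, which the paper leaves implicit) for sublinearity and monotonicity. The measurability check you add at the start is a harmless extra detail the paper omits.
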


  \begin{proof}
    To prove the inclusion, note that if $v\in H(x)$, then $W(x)v\in
    W(x)H(x)$, and so $|W(x)v|\leq |W(x)H(x)|$.  Hence $v\in
    |W(x)H(x)|W^{-1}(x)\overline{\mathbf{B}} =N_WH(x)$.
    
To prove $N_W$ is an isometry, it is enough to observe that for almost every $x\in \R^n$,
    \[ |W(x)N_W H(x)|= \big||W(x)H(x)|\overline{\mathbf{B}} \big| = 
      |W(x)H(x)|. \]
    %


    Finally, to prove that $N_W$ is sublinear, fix $G,\,H \in   L^p_\K
    (\R^n,W)$.  If $w(x) \in
    W(x)G(x)+W(x)H(x)$,  then $|w(x)|\leq |W(x)G(x)|+|W(x)H(x)|$.  Hence,
    $|W(x)(G+H)(x)| \leq |W(x)G(x)|+|W(x)H(x)|$, and so  $N_W(G+H)(x) \subset N_WG(x)+N_WH(x)$.
    Similarly, if  $ \alpha\in \R$, then $N_W(\alpha H)(x)= |\alpha| N_WH(x)= \alpha N_WH(x)$.
  \end{proof}

  In the proof of factorization and extrapolation in the next section,
  we  consider two special classes of convex-set valued functions:  a
  function $G$ is ball-valued if there exists a non-negative scalar
  function $r$ such that $G(x)=r(x)\overline{\mathbf{B}}$.    Similarly, given a
  matrix $W$, $G$ is said to be 
  ellipsoid-valued with respect to $W$ if
  $G(x)=r(x)W(x)\overline{\mathbf{B}}$. 

  \begin{lemma} \label{lemma:ball-iteration} Fix $1\leq p \leq \infty$
    and a norm function $\rho$.  Let $T$ be a convex-set valued
    operator that satisfies the hypotheses of
    Theorem~\ref{thm:gen-rdf-algorithm}, and suppose that if $G\in L^p_\K(\R^n,\rho)$ is a
    ball-valued function, then $TG$ is as well.  If   $S$ is the associated
    iteration operator, then $SG$ is ball-valued.    More generally, if
    whenever $G$ is an ellipsoid-valued function with respect to a
    matrix $W$, then $TG$ is, we have
    that $SG$ is also ellipsoid-valued function with respect to $W$.
  \end{lemma}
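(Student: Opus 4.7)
The plan is to show the structural property is preserved at each stage of the iteration---first by the powers $T^kG$, then by the partial sums $S_nG$, and finally by the limit $SG$---exploiting the fact that for a convex set $K$ and scalars $\lambda,\mu\ge 0$ one has $\lambda K+\mu K=(\lambda+\mu)K$.

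I would first treat the ball-valued case. By hypothesis, if $G$ is ball-valued then so is $TG$; a trivial induction then gives that $T^kG$ is ball-valued for every $k\ge 0$. Writing $T^kG(x)=r_k(x)\overline{\mathbf B}$, the radii $r_k(x)=\sup\{|v|:v\in T^kG(x)\}$ are measurable by Lemma~\ref{lemma:norm-measure}. Since $\overline{\mathbf B}\in\bcs$ is convex, the identity $\lambda\overline{\mathbf B}+\mu\overline{\mathbf B}=(\lambda+\mu)\overline{\mathbf B}$ for $\lambda,\mu\ge 0$ propagates through finite Minkowski sums, so each partial sum
\[
S_nG(x)=\sum_{k=0}^n 2^{-k}\|T\|_\rho^{-k}T^kG(x)=R_n(x)\,\overline{\mathbf B},\qquad R_n(x):=\sum_{k=0}^n 2^{-k}\|T\|_\rho^{-k}r_k(x),
\]
is ball-valued. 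The sequence $\{S_nG(x)\}_n$ is nested and increasing (since $0\in T^kG(x)$), so the $R_n(x)$ are nondecreasing in $n$. Setting $R(x)=\sup_n R_n(x)=\sum_{k=0}^\infty 2^{-k}\|T\|_\rho^{-k}r_k(x)$, which is finite a.e.\ because $SG\in L^p_{\K}(\R^n,\rho)$ (and hence takes values in $\bcs$ a.e.), the identity $SG(x)=\overline{\bigcup_{n\in\N}S_nG(x)}$ established in \eqref{eqn:nested-rdf} gives
\[
SG(x)=\overline{\bigcup_{n\in\N}R_n(x)\,\overline{\mathbf B}}=R(x)\,\overline{\mathbf B},
\]
so $SG$ is ball-valued.

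The ellipsoid-valued case is essentially the same argument applied to $W\overline{\mathbf B}$ in place of $\overline{\mathbf B}$. If $G(x)=r(x)W(x)\overline{\mathbf B}$, then by hypothesis $TG$ is ellipsoid-valued with respect to $W$, and the induction gives $T^kG(x)=s_k(x)W(x)\overline{\mathbf B}$ for measurable $s_k(x)=\sup\{|W^{-1}(x)v|:v\in T^kG(x)\}$ (again measurable by Lemma~\ref{lemma:norm-measure} applied to the norm function $v\mapsto|W^{-1}(x)v|$). Since $W(x)\overline{\mathbf B}$ is convex, the same additivity identity $\lambda K+\mu K=(\lambda+\mu)K$ for convex $K$ shows the partial sums are of the form $\bigl(\sum_{k=0}^n 2^{-k}\|T\|_\rho^{-k}s_k(x)\bigr)W(x)\overline{\mathbf B}$, and passing to the closed union via \eqref{eqn:nested-rdf} delivers $SG(x)=S(x)W(x)\overline{\mathbf B}$ with $S(x)=\sum_{k=0}^\infty 2^{-k}\|T\|_\rho^{-k}s_k(x)$.

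There is no real obstacle beyond being careful about the a.e.\ finiteness of the scalar radii (which follows from $SG\in L^p_{\K}(\R^n,\rho)$ and hence $SG(x)\in\bcs$ a.e.) and the measurability of the coefficients (which follows from Lemma~\ref{lemma:norm-measure}). The crucial algebraic input is simply the convex-set identity $\lambda K+\mu K=(\lambda+\mu)K$ together with the representation of $SG$ as the closure of the nested union of its partial sums.
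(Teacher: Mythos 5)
Your argument is correct and follows essentially the same route as the paper's proof: induction to show each $T^kG$ retains the ball/ellipsoid form, the identity $\lambda K+\mu K=(\lambda+\mu)K$ for convex $K$ to handle the partial sums, and the representation $SG(x)=\overline{\bigcup_n S_nG(x)}$ from \eqref{eqn:nested-rdf} to pass to the limit. The extra remarks on measurability of the radii and their a.e.\ finiteness are fine but not essential to the argument.
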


  \begin{proof}
    Let $G=r_0W \overline{\mathbf{B}}$ for some scalar function $r_0$ and matrix $W$.   Then by
    induction, we have that for all $k\geq 0$, $T^kG$ is ellipsoid-valued,
    so we have that $T^kG=r_kW\overline{\mathbf{B}}$.   Since the Minkowski sum of
   two ellipsoids of the form $rW\overline{\mathbf{B}}$ and $sW\overline{\mathbf{B}}$ is again
   an ellipsoid of this form, we have, in the notation of
    Theorem~\ref{thm:gen-rdf-algorithm}, that for all $n\in \N$,
    $S^nG$ is an ellipsoid-valued function with respect to $W$.  But then it follows at once
    from~\eqref{eqn:nested-rdf} that $SG$ is ellipsoid-valued with
    respect to $W$.
  \end{proof}
  
  We now define the powers of a ball-valued function.  If
  $G(x)=r(x)\overline{\mathbf{B}}$ is a ball-valued function, for all $t>0$ 
  let $G^t=r^t\overline{\mathbf{B}}$.  The following lemma is an
  immediate consequence of this definition.

\begin{lemma} \label{lemma:ball-monotone}
  If $G$ is a ball-valued function, then for all $t>0$, the
  mapping $G\mapsto G^t$ is monotone.
\end{lemma}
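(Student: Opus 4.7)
The plan is to unwind the definitions and reduce the claim to the monotonicity of $r \mapsto r^t$ on $[0,\infty)$. Monotonicity of $G \mapsto G^t$ should mean: whenever $G_1(x) \subset G_2(x)$ for a.e.\ $x$, then $G_1^t(x) \subset G_2^t(x)$ for a.e.\ $x$. This is the natural notion of monotonicity used throughout Section~\ref{section:maximal} (compare Lemma~\ref{lemma:Mf-sublinear}).

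First I would write $G_i(x) = r_i(x)\overline{\mathbf{B}}$ for non-negative measurable scalar functions $r_1,\,r_2$, which is possible by the hypothesis that the $G_i$ are ball-valued. Since $\overline{\mathbf{B}}$ is the closed unit ball in $\R^d$, the inclusion $G_1(x) \subset G_2(x)$ is equivalent to $r_1(x) \le r_2(x)$ (for $r_2(x)>0$ this is immediate by rescaling; for $r_2(x)=0$ both sides are $\{0\}$).

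Next, since $t>0$ the scalar function $r\mapsto r^t$ is monotone non-decreasing on $[0,\infty)$. Thus $r_1(x) \le r_2(x)$ implies $r_1(x)^t \le r_2(x)^t$, and rescaling $\overline{\mathbf{B}}$ once more gives
\[
G_1^t(x) = r_1(x)^t \overline{\mathbf{B}} \subset r_2(x)^t \overline{\mathbf{B}} = G_2^t(x),
\]
which is the desired conclusion. There is essentially no obstacle here: the result is a direct consequence of the definition of $G^t$ for ball-valued $G$ together with the monotonicity of the scalar power function, and is recorded only because it will be invoked repeatedly in the factorization and extrapolation arguments that follow.
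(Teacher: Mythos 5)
Your proof is correct and matches the paper's treatment: the paper simply records this lemma as an immediate consequence of the definition $G^t = r^t\overline{\mathbf{B}}$, and your argument spells out exactly that reduction to the monotonicity of $r\mapsto r^t$ on $[0,\infty)$ together with the equivalence $r_1(x)\overline{\mathbf{B}}\subset r_2(x)\overline{\mathbf{B}} \iff r_1(x)\le r_2(x)$.
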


\begin{proof}[Proof of Proposition~\ref{prop:factorization}]
To apply the Rubio de Francia iteration algorithm, we define two
auxiliary operators, $T_1$ and $T_2$.  Let $P_W=N_WM$, where $M$ is
the convex-set valued maximal operator.   Let $q=pp'>1$.  For $G\in
L^q_\K(\R^n, |\cdot|)$ define
\[ T_1G(x) = \big[ W(x) P_W (W^{-1}(N_IG)^{p'})(x)\big]^{1/p'}. \]
Here, $N_I$ is the exhausting operator with respect to the identity matrix $I$.
By the definition of $N_W$ and $N_I$, the two exponents appear on ball-valued
functions and so are well defined.  We claim that $T_1$ satisfies the
hypotheses of Theorem~\ref{thm:gen-rdf-algorithm}.  First, by
Lemma~\ref{lemma:ellipsoid-max} and
Theorem~\ref{thm:wtd-convex-max-bounds},
\begin{multline}\label{T1}
 \int_{\R^n} |T_1G(x)|^q\,dx 
   = \int_{\R^n} |W(x)P_W(W^{-1}(N_IG)^{p'})(x)|^{p}\,dx \\
   \leq C[W]_{\A_p}^{pp'} \int_{\R^n} |N_IG(x)^{p'}|^p\,dx 
   = C[W]_{\A_p}^{q} \int_{\R^n}|G(x)|^q\,dx. 
 \end{multline}
 
 We now prove that $T_1$ is monotone.   By
 Lemmas~\ref{lemma:Mf-sublinear},~\ref{lemma:ellipsoid-max},
 and~\ref{lemma:ball-monotone}, all its component functions are
 monotone, so $T_1$, their composition, is as well.

 To prove that it
 is sublinear, first note that since $T_1F$ is a ball-valued function,
 sublinearity is equivalent to showing that for $G,\,H\in L^q_\K(\R^n,
 |\cdot|)$ and $x\in \R^n$, 
 \[ |T_1(G+H)(x)| \leq |T_1G(x)| + |T_2H(x)|. \]
 To prove this, fix $x$ and define the norm $\rho(v)=|W(x)v|$.  For
 any locally integrably bounded function $F$, by the definition of
 $N_I$ and by (the proof of) Lemma~\ref{lemma:ellipsoid-max},
 \begin{multline*}
   |(W(x) N_W(MF)(x))^{1/p'}|
= \big| |W(x)MF(x)| ^{1/p'} \overline{\mathbf{B}} \big|
   = \big||W(x)MF(x)| \overline{\mathbf{B}} \big|^{1/p'} \\
=  \big|W(x) |W(x)MF(x)| W^{-1}(x)\overline{\mathbf{B}} \big|^{1/p'}
= \rho(N_W(MF)(x))^{1/p'}
   = \rho(MF(x))^{1/p'}. 
 \end{multline*}
Note that $N_I$ produces ball-valued functions and is
 sublinear by Lemma~\ref{lemma:ellipsoid-max}; hence,
 $|(G+H)(y)|\leq |G(y)|+|H(y)|$.  
 Therefore, if we combine these two observations, by Lemma~\ref{lemma:M_p-sublinear},
\begin{align*}
  |T_1(G+H)(x)|
  & = \rho(M(|G(x)+H(x)|^{p'}W^{-1}(x)\overline{\mathbf{B}}))^{1/p'}
  \\
  & \leq
    \rho(M((|G(x)|+|H(x)|)^{p'}W^{-1}(x)\overline{\mathbf{B}}))^{1/p'}
  \\
  & \leq
    \rho(M(|G(x)|^{p'}W^{-1}(x)\overline{\mathbf{B}}))^{1/p'}
    +
    \rho(M(|H(x)|^{p'}W^{-1}(x)\overline{\mathbf{B}}))^{1/p'} \\
  & = |T_1G(x)|+|T_2H(x)|.
\end{align*}

We define $T_2$ similarly:
\[ T_2G(x) = \big[ W^{-1}(x) P_{W^{-1}}(W(N_IG)^{p})(x)\big]^{1/p}. \]
Since $W^{-1} \in \A_{p'}$, the same argument as above shows that
\begin{equation}\label{T2}
  \|T_2 G\|_{L^q_\K(\R^n,|\cdot|)} \leq C[W^{-1}]_{\A_{p'}}
  \|G\|_{L^q_\K(\R^n,|\cdot|)}, 
  \end{equation}
and that $T_2$ is sublinear and
monotone.

Define the operator $T=T_1+T_2$.  Then $T$ satisfies the hypotheses of
Theorem~\ref{thm:gen-rdf-algorithm} with operator norm $\|T\|_{L^q_\K(\R^n,|\cdot|)} \lesssim [W]_{\A_p}$ in light of \eqref{T1} and \eqref{T2}.
Hence, if we define
\[ SG(x) = \sum_{k=0}^\infty 2^{-k} \|T\|_{L^q_\K(\R^n,|\cdot|)}^{-k} T^kG(x), \]
then $\|S G\|_{L^q_\K(\R^n,|\cdot|)}
\leq 2\|G\|_{L^q_\K(\R^n,|\cdot|)}$  and %
\begin{equation} \label{eqn:rev-fact-A1}
  T(SG)(x) \subset2\|T\|_{L^q_\K(\R^n,|\cdot|)}SG(x).  
\end{equation}

Now fix a ball-valued function
$G=r\overline{\mathbf{B}} \in L^q_\K(\R^n,|\cdot|)$.  Then by
Lemma~\ref{lemma:ball-iteration},  $SG=\bar{r}\overline{\mathbf{B}}$.
It follows from~\eqref{eqn:rev-fact-A1} that
$ T_1(SG)(x) \subset2\|T\|_{L^q_\K(\R^n,|\cdot|)}SG(x)$; equivalently,
\[ W(x) P_W(W^{-1} (SG)^{p'})(x) \subset C_1SG(x)^{p'},  \]
where $C_1=2^{p'}\|T\|_{L^q_\K (\R^n,|\cdot|)}^{p'}$.
Define the matrix $W_1(x)= \bar{r}(x)^{-p'}W(x)$.  Then
\[ M(W_1^{-1}\overline{\mathbf{B}})(x) \subset P_W(W^{-1} (SG)^{p'})(x)
  \subset C_1W(x)^{-1}SG(x)^{p'}= C_1W_1(x)^{-1}\overline{\mathbf{B}}. \]
Therefore, $W_1^{-1}\overline{\mathbf{B}} \in \A_1^\K$;   by
Corollary~\ref{cor:matrix-norm-A1}, $W_1^{-1} \in \A_1$, and so $W_1
\in \A_\infty$. Moreover, by \eqref{T1}
\[
[W_1]_{\A_\infty} = [W_1^{-1}]_{\A_1} \lesssim C_1 \lesssim [W]^{p'}_{\A_p}.
\]

We can repeat the above argument, replacing $T_1$ by $T_2$; if we
define $W_0(x)= \bar{r}(x)^pW(x)$, then we get that $W_0\overline{\mathbf{B}}\in
\A_1^\K$, and so $W_0 \in \A_1$. Moreover, by \eqref{T2}
\[
[W_0]_{\A_1} \lesssim  \|T\|_{L^q_\K( \R^n,|\cdot|)}^{p} \lesssim [W]^{p}_{\A_p}
\]
  Finally, we have that
\[ W_0^{1/p}(x) W_1^{1/p'}(x) = \bar{r}(x)W^{1/p}(x)
  \bar{r}(x)^{-1}W^{1/p'}(x)= W(x). \qedhere
  \]
\end{proof}

\subsection{Reverse factorization}
We now prove the so-called ``reverse factorization'' property, that
the product of suitable powers of $\A_1$ and $\A_\infty$ weights is an
$\A_p$ weight.  In the scalar case this is an immediate consequence of
the definitions. However, in the matrix case it is much more difficult since the statement involves a weighted geometric mean of two matrices, while the proof
requires working with norms rather than matrix weights.  To state our
result, recall Definition~\ref{geme}:  given two symmetric, positive
definite matrices, for $0<t<1$, let $ A\#_tB = A^{1/2}(A^{-1/2}BA^{-1/2})^tA^{1/2}$.


\begin{prop} \label{prop:reverse-factorization}
 Suppose that $W_0 \in \A_1$, $W_1\in \A_\infty$, and $1<p<\infty$. Then,
 \[
 \bar{W}= ((W_0)^2\#_{1/p'} (W_1)^2)^{1/2} \in \A_p.
 \]
In particular, if $W_0$ and $W_1$ commute, then $W_0^{1/p}W_1^{1/p'} \in \A_p$.
\end{prop}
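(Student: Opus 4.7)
My plan is to reduce the matrix statement to one about norm functions, extract scalar $A_1$ and $\A_\infty$ conditions, and then verify the matrix $\A_p$ condition via the reducing-operator formulation, using Proposition~\ref{prop:geometric-double-dual} to compensate for the fact that the relevant geometric mean of norms is not itself convex.

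First, I would pass from matrices to norm functions. Set $\rho_i(x,v)=|W_i(x)v|$ for $i=0,1$, and $\bar\rho(x,v)=|\bar W(x)v|$. By Corollary~\ref{dwgm}, $\bar\rho(x,v)\approx \rho_t^{**}(x,v)$ (up to $d$-dependent constants) where $t=1/p'$ and $\rho_t(x,v)=\rho_0(x,v)^{1/p}\rho_1(x,v)^{1/p'}$. Since a $d$-dependent change in a norm function changes $[\bar W]_{\A_p}$ by only a $d$-dependent factor, it suffices to verify the $\A_p$ condition of Definition~\ref{defn:Ap-defn} for $\rho_t^{**}$. Two pointwise bounds are immediate: by Lemma~\ref{lemma:double-dual-geo-mean}, $\bar\rho(x,v)\le \rho_0(x,v)^{1/p}\rho_1(x,v)^{1/p'}$; and since $\bar\rho^*=\rho_t^*$, Proposition~\ref{prop:geometric-double-dual} yields $\bar\rho^*(x,v)\le C_d\,\rho_0^*(x,v)^{1/p}\rho_1^*(x,v)^{1/p'}$.

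Second, I would extract scalar weight conditions. Using the definition of matrix $\A_1$ from Proposition~\ref{prop:Ap-matrix-defn} together with $|W_0(y)v|\le|W_0(y)W_0^{-1}(x)|_{\op}|W_0(x)v|$, the function $\rho_0(\cdot,v)$ is a scalar $A_1$ weight uniformly in $v$ with constant controlled by $[W_0]_{\A_1}$; via the equivalence $W_0\in\A_1 \Leftrightarrow W_0^{-1}\in\A_\infty$, the same reasoning yields $\rho_0^*(\cdot,v)\in$ scalar $\A_\infty$ uniformly. The symmetric argument with $W_1$ gives $\rho_1(\cdot,v)\in$ scalar $\A_\infty$ and $\rho_1^*(\cdot,v)\in$ scalar $A_1$, both uniformly in $v$. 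These scalar conditions produce the comparabilities $\avgint_Q \rho_0\approx \essinf_Q \rho_0$, $\avgint_Q \rho_0^*\approx \esssup_Q \rho_0^*$, $\avgint_Q \rho_1\approx \esssup_Q \rho_1$, and $\avgint_Q \rho_1^*\approx \essinf_Q \rho_1^*$.

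In the final step I would verify the reducing-operator form of $\A_p$ from Proposition~\ref{prop:Ap-reducing}. Let $a(v)=\avgint_Q \rho_0(\cdot,v)$ and $b(v)=\avgint_Q \rho_1(\cdot,v)$; both are norms on $\R^d$ controlled (via John's ellipsoid) by the reducing matrices $\W_{0,Q}$ and $\W_{1,Q}$, and similarly $a^*,b^*$ are controlled by the dual reducing matrices $\barW_{0,Q},\barW_{1,Q}$. The pointwise bound for $\bar\rho$ together with the $\A_\infty$-comparability $\esssup_Q\rho_1\approx\avgint_Q\rho_1$ yields $|\bar\W_Q^p v|^p\lesssim a(v)\,b(v)^{p-1}$, so $|\bar\W_Q^p v|\lesssim [W_1]_{\A_\infty}^{1/p'}\,a(v)^{1/p}b(v)^{1/p'}$. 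Symmetrically, the pointwise bound for $\bar\rho^*$ plus the $\A_\infty$-comparability for $\rho_0^*$ gives $|\bar{\bar\W}_Q^{p'} v|\lesssim [W_0]_{\A_1}^{1/p}\,(\avgint_Q\rho_0^*)^{1/p}(\avgint_Q\rho_1^*)^{1/p'}$, and the $\A_1$/$\A_\infty$ conditions in reducing-operator form ($|\barW_{0,Q}\W_{0,Q}|_{\op}\lesssim [W_0]_{\A_1}$ and $|\W_{1,Q}\barW_{1,Q}|_{\op}\lesssim [W_1]_{\A_\infty}$) convert these averages into $a^*(v)^{1/p}b^*(v)^{1/p'}$ up to the stated constants. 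Applying Proposition~\ref{prop:geometric-double-dual} to the pair of norms $(a,b)$ on $\R^d$ with exponent $t=1/p'$ identifies $(a^{1/p}b^{1/p'})^{**}$ with the dual of $(a^{*1/p}b^{*1/p'})^{**}$, and this duality is exactly what converts the two above estimates into $|\bar{\bar\W}_Q^{p'}\bar\W_Q^p|_{\op}\lesssim [W_0]_{\A_1}^{1/p}[W_1]_{\A_\infty}^{1/p'}$.

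The main obstacle is this last step. A direct Hölder argument fails because the pointwise products $\rho_0^{1/p}\rho_1^{1/p'}$ and $\rho_0^{*1/p}\rho_1^{*1/p'}$ are homogeneous but not convex, so their ``dualities'' do not match in the naive way. The technical heart of the proof is Proposition~\ref{prop:geometric-double-dual}, which repairs this defect by identifying, up to $d$-constants, the convex envelope of a geometric mean of norms with the dual of the symmetrically constructed geometric mean on the dual side; this is the ``interpolation argument between finite-dimensional spaces'' advertised in the introduction, and it is where the $\A_\infty$ (not merely $\A_p$) hypothesis on $W_1$ is essential to recover the reducing-operator inequality on the dual side.
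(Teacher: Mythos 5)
Your proposal is correct, and at its core it runs on the same engine as the paper's proof of Proposition~\ref{prop:general-reverse-factorization} (of which the statement is the case $q_0=1$, $q_1=\infty$, $t=1/p'$): the pointwise bounds $\bar\rho\lesssim\rho_0^{1/p}\rho_1^{1/p'}$ and $\bar\rho^*\lesssim (\rho_0^*)^{1/p}(\rho_1^*)^{1/p'}$, H\"older on a cube, the $\A_1$/$\A_\infty$ hypotheses fed in through duality, and Proposition~\ref{prop:geometric-double-dual} to repair the non-convexity of the geometric mean of norms. The packaging differs: the paper stays with the norm-function condition of Definition~\ref{defn:Ap-defn} for general exponents and runs a chain of dualizations of the averaged norms, which yields the sharp constant $c(d)[W_0]_{\A_1}^{1/p}[W_1]_{\A_\infty}^{1/p'}$ needed later for Corollary~\ref{cor:Duo-sharp-factorization}; you specialize to the endpoints, pass to the reducing-operator characterization (Proposition~\ref{prop:Ap-reducing}), and replace $\langle\rho_1\rangle_{\infty,Q}$ and $\langle\rho_0^*\rangle_{\infty,Q}$ by exponent-one averages. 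Two steps you assert need to be written out: (i) the comparabilities $\esssup_{x\in Q}\rho_1(x,v)\lesssim[W_1]_{\A_\infty}\avgint_Q\rho_1(y,v)\,dy$ and its analogue for $\rho_0^*$ do not follow formally ``by symmetry'' from the $A_1$-type statements; they need a short Chebyshev argument (average $|W_1(x)v|\le|W_1(x)W_1^{-1}(y)|_{\op}|W_1(y)v|$ over the half-measure set of $y$ where $|W_1(x)W_1^{-1}(y)|_{\op}\le 2[W_1]_{\A_\infty}$); (ii) in the last step one must note that the reducing norms of $\bar\rho$ and $\bar\rho^*$ are genuine norms, hence dominated by the double duals $(a^{1/p}b^{1/p'})^{**}$ and $(a^{*1/p}b^{*1/p'})^{**}$, after which Proposition~\ref{prop:geometric-double-dual} and Lemma~\ref{lemma:norm-matrix}(2) give the operator bound. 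Both are fillable, so there is no genuine gap; note only that your detour through exponent-one averages inflates the constant (harmless here, since only membership in $\A_p$ is claimed), and that the commuting case follows because $A\#_tB=A^{1-t}B^t$ for commuting matrices.
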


The second half of Theorem~\ref{thm:jones-factorization} follows
immediately from Proposition~\ref{prop:reverse-factorization}.  We
will in fact prove a much more general result.

\begin{prop} \label{prop:general-reverse-factorization}
Given $1\leq q_0, q_1\leq \infty$, suppose that $W_0 \in \A_{q_0}$ and $W_1 \in \A_{q_1}$.  Fix $0<t<1$ and define $\bar{W}= ((W_0)^2\#_t (W_1)^2)^{1/2}$.  Then, $\bar{W} \in \A_q$,
  where $\frac{1}{q}=\frac{1-t}{q_0}+\frac{t}{q_1}$.  Moreover,
  \[ [\bar{W}]_{\A_q}
    \leq c(d) [W_0]_{\A_{q_0}}^{1-t} [W_1]_{\A_{q_1}}^{t}. \]
\end{prop}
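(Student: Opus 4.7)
The plan is to reduce the problem to a uniform bound on averaging operators via Proposition~\ref{prop:avg-op-wts} and then obtain it by a complex interpolation between the finite-dimensional Hilbert structures $(\R^d,|W_0(y)\cdot|)$ and $(\R^d,|W_1(y)\cdot|)$ at each point $y$, whose complex interpolant at level $t$ is precisely $(\R^d,|\bar W(y)\cdot|)$; this is the ``interpolation argument between finite dimensional spaces'' mentioned in the introduction.

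First I would pass to the norm function formulation via Proposition~\ref{prop:Ap-matrix-defn}, so that it suffices to show $\rho_{\bar W} \in \A_q$ with $[\rho_{\bar W}]_{\A_q} \lesssim [W_0]_{\A_{q_0}}^{1-t}[W_1]_{\A_{q_1}}^t$. By Proposition~\ref{prop:avg-op-wts} this is equivalent to the uniform averaging-operator bound $\sup_Q \|A_Q\|_{L^q_\K(\R^n,\bar W)} \lesssim [W_0]_{\A_{q_0}}^{1-t}[W_1]_{\A_{q_1}}^t$, with inputs $\|A_Q\|_{L^{q_i}_\K(\R^n,W_i)} \lesssim [W_i]_{\A_{q_i}}$ supplied by the hypothesis. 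Via Lemma~\ref{lemma:vector-int} and the John ellipsoid reduction used in the proof of Theorem~\ref{thm:wtd-convex-max-bounds}, one reduces further to the analogous estimate for vector-valued matrix-weighted spaces.

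The heart of the proof is a Stein-type interpolation on the analytic family
\[
\bar W_z^2(y) := W_0(y)\bigl(W_0(y)^{-1}W_1(y)^2 W_0(y)^{-1}\bigr)^{z} W_0(y), \qquad 0 \leq \Re z \leq 1,
\]
where the complex power is defined via the principal branch of the logarithm applied to the positive-definite Hermitian matrix $W_0(y)^{-1}W_1(y)^2 W_0(y)^{-1}$. By Definition~\ref{geme} this coincides with $W_0^2\#_s W_1^2$ for real $s\in[0,1]$, so $\bar W_t = \bar W$, while for $z = i\tau$ the factor $(W_0^{-1}W_1^2 W_0^{-1})^{i\tau}$ is unitary, yielding admissible pointwise control of the quadratic form $\langle \bar W_{i\tau}^2 v, v\rangle$ by $|W_0(y) v|^2$ (and symmetrically on $\Re z = 1$ by $|W_1(y) v|^2$). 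For fixed test vectors $v, w \in \R^d$ and a cube $Q$, I would consider the analytic function
\[
F(z) := \ln(Q)^{-1}\left(\int_Q |\bar W_z^{-1}(y) v|^{q_z'}\,dy\right)^{1/q_z'}\!\left(\int_Q |\bar W_z(y) w|^{q_z}\,dy\right)^{1/q_z}, \quad \frac{1}{q_z} = \frac{1-z}{q_0}+\frac{z}{q_1},
\]
and show that the $\A_{q_i}$ conditions yield boundary bounds $F(i\tau) \lesssim [W_0]_{\A_{q_0}} |\langle v, w\rangle|$ and $F(1+i\tau) \lesssim [W_1]_{\A_{q_1}} |\langle v, w\rangle|$ with admissible growth (after Stein-linearization of the $L^{q_z}$ norms with complex exponent). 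Hadamard's three-lines theorem then gives $F(t) \lesssim [W_0]_{\A_{q_0}}^{1-t}[W_1]_{\A_{q_1}}^t |\langle v, w\rangle|$, and optimizing over $w$ recovers the $\A_q$ condition for $\bar W$ with the claimed constant.

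The main obstacle is the rigorous execution of this interpolation: since $\bar W_z(y)$ is not self-adjoint for complex $z$, one must work at the quadratic form level (interpolating the scalar-valued analytic function $z \mapsto \langle \bar W_z^2(y) v, w\rangle$ rather than attempting to extract a complex matrix square root), and verify analyticity, measurability, and admissible growth of $F$ throughout the strip in the presence of non-commuting matrix weights depending measurably on $y$. The interpolation structure is essential and cannot be replaced by a direct H\"older bound: starting from the pointwise inequality $\rho_{\bar W}(y,v) \lesssim \rho_0(y,v)^{1-t}\rho_1(y,v)^t$ (a combination of Corollary~\ref{dwgm} and Lemma~\ref{lemma:double-dual-geo-mean}) and integrating by H\"older gives $\langle \rho_{\bar W}^*\rangle_{q',Q}(v) \lesssim [W_0]_{\A_{q_0}}^{1-t}[W_1]_{\A_{q_1}}^t\langle \rho_0\rangle_{q_0,Q}^*(v)^{1-t}\langle \rho_1\rangle_{q_1,Q}^*(v)^t$, but the missing inequality $\langle \rho_0\rangle_{q_0,Q}^*(v)^{1-t}\langle \rho_1\rangle_{q_1,Q}^*(v)^t \lesssim \langle \rho_{\bar W}\rangle_{q,Q}^*(v)$ already fails with unbounded ratio for constant commuting diagonal weights in $d=2$ (for example $W_0 = \mathrm{diag}(M^{1/2},M^{-1/2})$ and $W_1 = \mathrm{diag}(M^{-1/2},M^{1/2})$ with $t = 1/2$, giving $\bar W = I$), because the matrix geometric mean $\bar W$ is strictly smaller than the pointwise H\"older product and this gap can only be absorbed through the interpolation.
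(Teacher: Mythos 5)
Your instinct that the result can be proved by complex interpolation is not wrong in spirit (the paper sketches exactly such an alternative in the remark following its proof, using exactness of the complex interpolation functor, the duality theorem, and the identification $[p_0,p_1]_t \approx (p_0^{1-t}p_1^t)^{**}$), and your observation that the naive pointwise-H\"older route cannot close the argument is correct. But the three-lines scheme you build the proof on has a genuine gap: the boundary estimates you assert are false. For fixed test vectors $v,w$ you claim $F(i\tau)\lesssim [W_0]_{\A_{q_0}}|\langle v,w\rangle|$, i.e.\ $\langle\rho_0^*\rangle_{q_0',Q}(v)\,\langle\rho_0\rangle_{q_0,Q}(w)\lesssim |\langle v,w\rangle|$ for \emph{all} $v,w$. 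The $\A_{q_0}$ hypothesis only gives $\langle\rho_0^*\rangle_{q_0',Q}(v)\lesssim \langle\rho_0\rangle_{q_0,Q}^*(v)=\sup_{w\neq 0}|\langle v,w\rangle|/\langle\rho_0\rangle_{q_0,Q}(w)$, which is a statement about the supremum over $w$, not about each fixed $w$; indeed for $w\perp v$ your claimed bound has right-hand side $0$ and strictly positive left-hand side. Since Hadamard's three-lines theorem requires the pair $(v,w)$ to be fixed throughout the strip, and no single $w$ realizes the supremum simultaneously at $z=i\tau$, $z=1+i\tau$ and $z=t$, the heart of the argument does not go through as formulated. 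On top of this, the analyticity problem you yourself flag (variable exponents $q_z$, moduli, and square roots of the non-self-adjoint matrices $\bar W_z^2$) is real and is not resolved by retreating to the quadratic form $\langle \bar W_z^2(y)v,w\rangle$, because your $F(z)$ is built from norms $|\bar W_z^{\pm1}(y)\,\cdot\,|$ and $L^{q_z}$-averages, not from that bilinear form.

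For comparison, the paper's actual proof avoids interpolation entirely and is elementary: after the two pointwise estimates $\rho\lesssim \rho_0^{1-t}\rho_1^t$ and $\rho^*\lesssim (\rho_0^*)^{1-t}(\rho_1^*)^t$ (your Corollary~\ref{dwgm} plus Lemma~\ref{lemma:double-dual-geo-mean}, together with $(A\#_tB)^{-1}=A^{-1}\#_tB^{-1}$) and the H\"older step you also carry out, the gap you correctly identified is closed not by proving the false inequality $\tau_0^*(v)^{1-t}\tau_1^*(v)^t\lesssim \tau^*(v)$, but by dualizing twice: from $\tau\lesssim\tau_t$ one gets $\tau_t^*\lesssim\tau^*$, from $\sigma\lesssim\sigma_t$ one gets $\sigma\lesssim\sigma_t^{**}$, and Proposition~\ref{prop:geometric-double-dual} (the equivalence $(p_0^{1-t}p_1^t)^{**}\approx\big((p_0^*)^{1-t}(p_1^*)^t\big)^*$, proved by simultaneous diagonalization) converts the geometric mean of the dual averaged norms, after taking double duals, into $\tau_t^*$ and hence into $\tau^*$. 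If you want to salvage an interpolation proof, the correct mechanism is the abstract one in the paper's remark: apply exactness of $[\,\cdot\,,\cdot\,]_t$ to the identity map between the finite-dimensional spaces $(\R^d,\langle\rho_i^*\rangle_{q_i',Q})$ and $(\R^d,\langle\rho_i\rangle_{q_i,Q}^*)$, and then invoke the duality theorem $[X_0^*,X_1^*]_t=([X_0,X_1]_t)^*$ together with $[p_0,p_1]_t\approx(p_0^{1-t}p_1^t)^{**}$; it is precisely this duality theorem that replaces your untenable fixed-$(v,w)$ boundary bounds. Finally, the opening reduction via Proposition~\ref{prop:avg-op-wts} to averaging-operator bounds is a loose end you never use, and making it work would require identifying the complex interpolation space of the weighted vector-valued spaces $L^{q_0}(\R^n,W_0)$ and $L^{q_1}(\R^n,W_1)$ with $L^q(\R^n,\bar W)$, which is itself a statement of comparable depth to the proposition being proved.
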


Proposition~\ref{prop:reverse-factorization} follows at once from this
if we take $q_0=1$, $q_1=\infty$, and $t=1/p'$.    Beyond its
intrinsic interest, we prove
Proposition~\ref{prop:general-reverse-factorization} because the
following corollary, which again follows at once by the correct choice
of $q_0$ and $q_1$, plays an important role in the proof of
extrapolation in Section~\ref{section:extrapolation}.  In the scalar
case this result is used to prove sharp constant
extrapolation and is due to
Duoandikoetxea~\cite[Lemma~2.1]{10.1016/j.jfa.2010.12.015} (see
also~\cite[Theorem~3.22]{MR2797562}).

\begin{corollary} \label{cor:Duo-sharp-factorization}
Given $1<p<\infty$ and $W\in \A_p$, suppose there exists a scalar
function $s$ such that $W_1 =sW\in \A_\infty$.  Then for
$p<p_0<\infty$, $\bar{W}=W^{p/p_0}W_1^{1-p/p_0} \in \A_{p_0}$;
moreover, 
  \[ [\bar{W}]_{\A_{p_0}}
    \leq c(d) [W]_{\A_{p}}^{p/p_0} [W_1]_{\A_{\infty}}^{1-p/p_0}. \]
Similarly, if there exists a scalar function $r$ such that $W_0=rW\in
\A_1$, then for $1<p_0<p$, $\bar{W}=W_0^{1-p'/p_0'}W^{p'/p_0'} \in \A_{p_0}$;
moreover, 
  \[ [\bar{W}]_{\A_{p_0}}
    \leq c(d) [W_0]_{\A_{1}}^{1-p'/p_0'} [W]_{\A_{p}}^{p'/p_0'}. \]
\end{corollary}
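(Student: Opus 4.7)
The plan is to deduce both parts as direct specializations of Proposition~\ref{prop:general-reverse-factorization}, exploiting the crucial fact that when $W_1 = sW$ (respectively $W_0 = rW$) for a scalar function $s$ (respectively $r$), the pair of matrix weights commutes pointwise. Under this commutativity, a simultaneous diagonalization of the squares via Lemmas~\ref{simd} and~\ref{wgm} (taking the common diagonalizer to be orthogonal) collapses the weighted geometric mean to a simple product: for commuting matrices $A, B \in \Sd$, one has
\[
(A^2 \#_t B^2)^{1/2} = A^{1-t} B^t.
\]
Hence the matrix weight produced by Proposition~\ref{prop:general-reverse-factorization} will coincide on the nose with the one stated in Corollary~\ref{cor:Duo-sharp-factorization}, and only a short exponent computation remains.

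For the first assertion I would apply Proposition~\ref{prop:general-reverse-factorization} with first weight $W$, second weight $W_1 = sW$, and parameters $q_0 := p$, $q_1 := \infty$, $t := 1 - p/p_0$. The commuting-mean formula gives $\bar W = W^{p/p_0} W_1^{1 - p/p_0}$, while
\[
\frac{1-t}{q_0} + \frac{t}{q_1} = \frac{p/p_0}{p} + 0 = \frac{1}{p_0},
\]
so $q = p_0$; the quantitative bound $c(d)[W]_{\A_p}^{p/p_0}[W_1]_{\A_\infty}^{1-p/p_0}$ then reads off from the proposition directly.

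For the second assertion I would instead apply Proposition~\ref{prop:general-reverse-factorization} with first weight $W_0 = rW$, second weight $W$, and parameters $q_0 := 1$, $q_1 := p$, $t := p'/p_0'$. Commutativity again collapses the geometric mean to $W_0^{1 - p'/p_0'} W^{p'/p_0'} = \bar W$, and the exponent identity
\[
\frac{1-t}{q_0} + \frac{t}{q_1}
= \Bigl(1 - \frac{p'}{p_0'}\Bigr) + \frac{p'}{p\, p_0'}
= 1 - \frac{p'}{p_0'}\Bigl(1 - \frac{1}{p}\Bigr)
= 1 - \frac{1}{p_0'} = \frac{1}{p_0}
\]
is driven purely by the conjugate exponent identity $1 - 1/p = 1/p'$. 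The stated constant bound again follows verbatim from the proposition. There is no real obstacle here; the only point requiring a bit of care is this harmonic-mean exponent check in the second case, and it is entirely elementary.
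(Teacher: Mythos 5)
Your proposal is correct and is exactly the paper's intended route: the corollary is stated there as following ``at once'' from Proposition~\ref{prop:general-reverse-factorization} by the choices $q_0=p,\ q_1=\infty,\ t=1-p/p_0$ (resp.\ $q_0=1,\ q_1=p,\ t=p'/p_0'$), with the scalar-multiple hypothesis guaranteeing commutativity so that $((W_0)^2\#_t(W_1)^2)^{1/2}$ collapses to the stated product. Your exponent verifications and the commuting-mean identity are the only details needed, and they are carried out correctly.
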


\begin{proof}[Proof of
  Proposition~\ref{prop:general-reverse-factorization}]
  We define three norm functions:
  \begin{align*}
    \rho_0(x,v) &= |W_0(x)v|, \\
    \rho_1(x,v) &= |W_1(x)v|, \\
    \rho(x,v) &= |\bar{W}(x)v|, \qquad\text{where }\bar{W}= ((W_0)^2\#_t (W_1)^2)^{1/2}.
  \end{align*}
For fixed point $x$, define $p_t(v) = \rho_0(x,v)^{1-t} \rho_1(x,v)$, $v\in \R^d$. 
By Corollary \ref{dwgm} followed by Lemma \ref{lemma:double-dual-geo-mean}  we have
\begin{equation}\label{rh1}
 \rho(x,v) \approx p_t^{**}(v) \le p_t(v) = \rho_0(x,v)^{1-t}\rho_1(x,v)^{t}.
 \end{equation}
Since 
\[
A^{-1} \#_t B^{-1} = (A \#_t B)^{-1}, \qquad\text{for } A,B \in \Sd, \ 0<t<1,
\]
by Proposition~\ref{prop:dual-matrix-norm} we have a similar inequality for dual norms 
\begin{equation}
\label{rh2}
 \rho^*(x,v) \lesssim \rho_0^*(x,v)^{1-t} \rho_1^*(x,v)^{t}.
 \end{equation}

Fix a cube $Q$. Since $1= \frac{q(1-t)}{q_0}+\frac{qt}{q_1}$, by \eqref{rh1} and 
H\"older's inequality (if $q_0,q_1<\infty$),
\begin{multline} \label{eqn:gen-fac-A1}
  \langle \rho \rangle_{q,Q}(v)
  \lesssim \bigg(\avgint_Q
  \rho_0(x,v)^{q(1-t)}\rho_1(x,v)^{qt}\,dx\bigg)^{\frac{1}{q}} \\
  \leq \bigg(\avgint_Q \rho_0(x,v)^{q_0}\bigg)^{\frac{1-t}{q_0}}
  \bigg(\avgint_Q \rho_q(x,v)^{q_1}\bigg)^{\frac{t}{q_0}}
  = \langle \rho_0 \rangle_{q_0,Q}(v)^{1-t}
  \langle \rho_1 \rangle_{q_1,Q}(v)^t.
\end{multline}
A simple modification of this argument shows that this inequality
holds if $q_0$ or $q_1=\infty$.  Since we also have that $1=\frac{q'(1-t)}{q_0'}+\frac{q't}{q_1'}$, we
can repeat this argument using \eqref{rh2} to get that
\begin{equation} \label{eqn:gen-fac-B1}
  \langle \rho^* \rangle_{q',Q}(v) \lesssim
  \langle \rho_0^* \rangle_{q_0',Q}(v)^{1-t}
  \langle \rho_1^* \rangle_{q_1',Q}(v)^{t}.
\end{equation}
Since $W_0\in \A_{q_0}$ and $W_1\in \A_{q_1}$,  by
Definition~\ref{defn:Ap-defn} we
have that
\begin{equation} \label{eqn:gen-fac-C1}
  \langle \rho_0^* \rangle_{q_0',Q}(v)^{1-t}
  \langle \rho_1^* \rangle_{q_1',Q}(v)^t
  \leq [\rho_0]_{\A_{q_0}}^{1-t} [\rho_1]_{\A_{q_1}}^{t}
  \langle \rho_0 \rangle_{q_0,Q}^*(v)^{1-t}
  \langle \rho_1 \rangle_{q_1,Q}^*(v)^{t}.
\end{equation}

To simplify notation, we define several norms:
\begin{align*}
     \sigma_0 = \langle \rho_0^* \rangle_{q_0',Q}, \qquad
  & 
    \tau_0 = \langle \rho_0 \rangle_{q_0,Q}, \\
     \sigma_1 = \langle \rho_1^* \rangle_{q_1',Q}, \qquad
  & 
    \tau_1 = \langle \rho_1 \rangle_{q_1,Q}, \\
     \sigma = \langle \rho^* \rangle_{q',Q}, \qquad
  & 
    \tau = \langle \rho \rangle_{q,Q},
\end{align*}
and the two geometric means
\[ \sigma_t(v) = \sigma_0(v)^{1-t}\sigma_1(v)^{t}, \qquad
  \tau_t(v) = \tau_0(v)^{1-t}\tau_1(v)^{t}. \]
By inequality~\eqref{eqn:gen-fac-B1}, $\sigma(v)\lesssim
\sigma_0(v)^{1-t}\sigma_1(v)^{t}$.  By the definition of the dual norm
\eqref{eqn:dual-norm} it follows that $\sigma^*(v)\gtrsim
\sigma_t^*(v)$.  By 
Corollary~\ref{cor:dual-norm} and Lemma~\ref{lemma:geometric-mean},
these are both norms and if we 
dualize again, we get
\begin{equation} \label{eqn:gen-frac-D1}
\sigma(v) = \sigma^{**}(v) \lesssim \sigma_t^{**}(v).
\end{equation}

  Similarly, by inequality~\eqref{eqn:gen-fac-A1}, $\tau(v)\lesssim
  \tau_t(v)$, so we can repeat the above argument to get that
  $\tau(v)\lesssim \tau_t^{**}(v)$.  If we dualize yet again, we get
  \begin{equation} \label{eqn:gen-fac-E1}
    \tau_t^*(v)  = \tau_t^{***}(v) \lesssim \tau^*(v). 
  \end{equation}

  Finally, by inequality~\eqref{eqn:gen-fac-C1} we have that
  \[ \sigma_t(v)
    \leq [\rho_0]_{\A_{q_0}}^{1-t}
  [\rho_1 ]_{\A_{q_1}}^{t}
\tau_0^*(v)^{1-t} \tau_1^*(v)^{t}.  \]
If we dualize twice, and then apply the dual of
equivalence~\eqref{eqn:geometric-double-dual1} in
Proposition~\ref{prop:geometric-double-dual}, we get that 
\begin{multline} \label{eqn:gen-fac-F1}
  \sigma_t^{**}(v)
  \leq
  [\rho_0]_{\A_{q_0}}^{1-t}
  [\rho_1 ]_{\A_{q_1}}^{t} \big(\tau_0^*(v)^{1-t}
    \tau_1^*(v)^{t}\big)^{**} \\
   \approx   [\rho_0]_{\A_{q_0}}^{1-t}
    [\rho_1 ]_{\A_{q_1}}^{t}
    \tau_t^{***}(v) 
 =  [\rho_0]_{\A_{q_0}}^{1-t}
    [\rho_1 ]_{\A_{q_1}}^{t}
    \tau_t^{*}(v).
  \end{multline}

  If we now combine
  inequalities~\eqref{eqn:gen-frac-D1},~\eqref{eqn:gen-fac-E1},
  and~\eqref{eqn:gen-fac-F1}, we have
  \begin{multline*}
    \langle \rho^* \rangle_{q',Q}(v) =  \sigma(v)
    \lesssim \sigma_t^{**}(v) 
    \lesssim  [\rho_0]_{\A_{q_0}}^{1-t}
    [\rho_1 ]_{\A_{q_1}}^{t}
    \tau_t^{*}(v) \\
    \lesssim    [\rho_0]_{\A_{q_0}}^{1-t}
    [\rho_1 ]_{\A_{q_1}}^{t}
    \tau^*(v)
    =   [\rho_0]_{\A_{q_0}}^{1-t}
    [\rho_1 ]_{\A_{q_1}}^{t}
    \langle \rho \rangle_{q,Q}^*(v).
  \end{multline*}
Since the cube $Q$ is arbitrary, we get the desired result.
\end{proof}

\begin{remark}
Proposition \ref{prop:general-reverse-factorization} can be also shown using the complex interpolation method. Here we give only a brief sketch of the argument. Applying the exactness of the complex interpolation functor of exponent $t$ 
\cite[Theorem 4.1.2]{BL} to the identity operator on $\R^d$ equipped with norms appearing in Definition~\ref{defn:Ap-defn}, we deduce that the complex interpolation norms satisfy
\begin{equation}
\begin{aligned}
\label{coi}
[\langle \rho_0^* \rangle_{q_0',Q},
  \langle \rho_1^* \rangle_{q_1',Q}]_t
  & \le  
  [\rho_0]_{\A_{q_0}}^{1-t} [\rho_1]_{\A_{q_1}}^{t}
  [\langle \rho_0 \rangle_{q_0,Q}^*,
  \langle \rho_1 \rangle_{q_1,Q}^*]_t
  \\
  &= 
  [\rho_0]_{\A_{q_0}}^{1-t} [\rho_1]_{\A_{q_1}}^{t}
   [\langle \rho_0 \rangle_{q_0,Q},
  \langle \rho_1 \rangle_{q_1,Q}]_t^*.
  \end{aligned}
  \end{equation}
  The last identity is a consequence of the duality theorem \cite[Corollary 4.5.2]{BL}. Then, we use the fact that for any two norms $p_0$ and $p_1$ on $\R^d$, the complex interpolation norm satisfies
\[
[p_0,p_1]_t \approx (p_0^{1-t} p_1^t)^{**}.
\]
This can be shown using Corollary \ref{dwgm} and the complex interpolation of weighted $L^p$ spaces \cite[Theorem 5.5.3]{BL}. Hence, applying the double dual to \eqref{eqn:gen-fac-B1} followed by \eqref{coi} and then by the triple dual of \eqref{eqn:gen-fac-A1} yields
\[
\langle \rho^* \rangle_{q',Q} \lesssim
 [\langle \rho_0^* \rangle_{q_0',Q},
  \langle \rho_1^* \rangle_{q_1',Q}]_t \lesssim
  [\rho_0]_{\A_{q_0}}^{1-t} [\rho_1]_{\A_{q_1}}^{t}
   [\langle \rho_0 \rangle_{q_0,Q},
  \langle \rho_1 \rangle_{q_1,Q}]_t^*
  \lesssim  [\rho_0]_{\A_{q_0}}^{1-t} [\rho_1]_{\A_{q_1}}^{t} \langle \rho \rangle_{q,Q}^*.
  \]
 This shows that $\rho$ belongs to $\A_q$ with appropriate bound on $[\rho]_{\A_{q}}$.
  \end{remark}

\section{Extrapolation of matrix weights}
\label{section:extrapolation}

In this section we state and prove the Rubio de Francia extrapolation
theorem for
matrix $\A_p$ weights, originally formulated as
Theorem~\ref{thm:matrix-extrapolation-intro} in the Introduction.  As we noted there, we prove a version of sharp constant
extrapolation; this proof
requires multiple cases.  
A
 simpler proof, with only one case but which does not give the best
 possible  constant or include the endpoint result $p_0=\infty$, is possible,
 following the proof given in~\cite[Theorem~3.9]{MR2797562}.
 We leave the details to the interested reader.

To state our result, we introduce the convention of extrapolation
pairs.  This approach to extrapolation was developed in
\cite{MR2797562}.  Hereafter, $\F$ will denote a family of pairs $(f,g)$ of
measurable, vector-valued functions such that neither $f$ nor
$g$ is equal to $0$ almost everywhere.  If we write an
inequality of the form
\[ \|f\|_{L^p(\R^n,W)} \leq C\|g\|_{L^p(\R^n,W)}, \qquad (f,g) \in
    \F, \]
  we mean that this inequality holds for all pairs $(f,g)\in\F$ such
  that the lefthand side of this inequality is finite.  The constant,
  whether given explicitly or implicitly, is assumed to be independent
  of the pair $(f,g)$ and to depend only on $[W]_{\A_p}$ and not on
  the particular weight $W$.  We want to stress that
  $\|f\|_{L^p(\R^n,W)}<\infty$ is a crucial technical assumption in
  our proof, and to apply extrapolation an appropriate family $\F$
  must be constructed.  In the scalar case this can easily be done via
  a truncation argument and approximation:
  see~\cite[Section~6]{preprint-DCU}.  In the case of matrix weights a
  similar argument can be applied: see Section \ref{section:applications}.

  \begin{theorem} \label{thm:matrix-extrapolation}
Suppose that for some $p_0$, $1 \leq p_0\leq\infty$, there exists an
increasing function $K_{p_0}$ such that for every $W_0\in \A_{p_0}$,
\begin{equation} \label{eqn:matrix-extrapol1}
\|f\|_{L^{p_0}(\R^n, W_0)}
  \leq K_{p_0}([W_0]_{\A_{p_0}})\|g\|_{L^{p_0}(\R^n, W_0)},
  \qquad (f,g) \in
    \F. 
  \end{equation}
  Then for all $p$, $1<p<\infty$, and 
  for all $W\in \A_p$,
  \begin{equation} \label{eqn:matrix-extrapol2}
   \|f\|_{L^p(\R^n,W)}
  \leq K_{p}(p,p_0,n,d,[W]_{\A_{p}})\|g\|_{L^{p}(\R^n,W)},
  \qquad (f,g) \in
    \F, 
  \end{equation}
  where
  \[ K_p (p,p_0,n,d,[W]_{\A_{p}})
    = C(p,p_0)K_{p_0}\bigg(C(n,d,p,p_0)[W]_{\A_p}^{\max\big\{\frac{p}{p_0},\frac{p'}{p_0'}\big\}}\bigg). \]
\end{theorem}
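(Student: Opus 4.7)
My plan is to reduce the desired inequality at an arbitrary $p\in(1,\infty)$ to the hypothesized inequality at $p_0$, using the sharp factorization from Corollary~\ref{cor:Duo-sharp-factorization} together with the convex-set valued Rubio de Francia iteration algorithm of Theorem~\ref{thm:gen-rdf-algorithm}. The proof will split into three cases: $p=p_0$ (immediate from the hypothesis), $1<p<p_0\leq\infty$ (handled directly), and $1\leq p_0<p<\infty$ (reduced to the previous case via matrix-weighted duality). Throughout, I will assume on each pair $(f,g)\in\F$ that $\|f\|_{L^p(W)}<\infty$, as is standard in extrapolation.

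For $1<p<p_0<\infty$, fix $W\in\A_p$ and set $G_0=|Wg|\,W^{-1}\overline{\mathbf B}\in L^p_\K(\R^n,W)$, which has norm $\|g\|_{L^p(W)}$. I will apply Theorem~\ref{thm:gen-rdf-algorithm} with $T=N_W\circ M$, where $N_W$ is the exhausting operator of Definition~\ref{defn:Nw} and $M$ is the convex-set valued maximal operator. By Lemma~\ref{lemma:ellipsoid-max}, $N_W$ is a sublinear, monotone isometry on $L^p_\K(\R^n,W)$, and Theorem~\ref{thm:wtd-convex-max-bounds} gives $\|M\|_{L^p_\K(\R^n,W)}\lesssim[W]_{\A_p}^{p'}$; thus $T$ meets the hypotheses of Theorem~\ref{thm:gen-rdf-algorithm} with $\|T\|\lesssim[W]_{\A_p}^{p'}$. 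Because $T$ preserves ellipsoid-valuedness with respect to $W^{-1}$, Lemma~\ref{lemma:ball-iteration} guarantees $\Rdf G_0=\tilde r\,W^{-1}\overline{\mathbf B}$ for some scalar $\tilde r$ enjoying (i) $\tilde r\geq|Wg|$, (ii) $\|\tilde r\|_{L^p}\leq 2\|g\|_{L^p(W)}$, and (iii) $M(\Rdf G_0)\subset T(\Rdf G_0)\subset 2\|T\|\,\Rdf G_0$ (since $N_W$ expands every set). Property (iii) puts $\Rdf G_0$ in $\A_1^\K$, so by Theorem~\ref{thm:matrix-convex-A1} and Corollary~\ref{cor:matrix-norm-A1} we have $\tilde r W^{-1}\in\A_1$, equivalently $sW\in\A_\infty$ with $s:=\tilde r^{-1}$ and $[sW]_{\A_\infty}\lesssim[W]_{\A_p}^{p'}$. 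Corollary~\ref{cor:Duo-sharp-factorization} then supplies $\bar W=s^{1-p/p_0}W\in\A_{p_0}$ with $[\bar W]_{\A_{p_0}}\lesssim[W]_{\A_p}^{p'/p_0'}$, and applying \eqref{eqn:matrix-extrapol1} to $\bar W$ yields $\int s^{p_0-p}|Wf|^{p_0}\leq K_{p_0}([\bar W]_{\A_{p_0}})^{p_0}\int s^{p_0-p}|Wg|^{p_0}$. H\"older with exponents $p_0/p$ and $p_0/(p_0-p)$ then gives
\[
\int|Wf|^p\leq\Big(\int s^{p_0-p}|Wf|^{p_0}\Big)^{p/p_0}\Big(\int s^{-p}\Big)^{(p_0-p)/p_0},
\]
and combining with (i), (ii), and the pointwise bound $s|Wg|\leq 1$ (which forces $\int s^{p_0-p}|Wg|^{p_0}\leq\int|Wg|^p$) closes the estimate. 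The case $p_0=\infty$ is a simpler variant: the hypothesis becomes $\esssup s|Wf|\leq K_\infty\esssup s|Wg|\leq K_\infty$, so $|Wf|\leq K_\infty\tilde r$ pointwise and (ii) finishes the proof.

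For $1\leq p_0<p<\infty$ I will dualize. Using $\|f\|_{L^p(W)}=\sup\{\int\langle f,h\rangle:\|h\|_{L^{p'}(W^{-1})}\leq 1\}$ together with $W^{-1}\in\A_{p'}$, the previous construction runs on the dual side: for each admissible $h$, iterate $T=N_{W^{-1}}\circ M$ in $L^{p'}_\K(\R^n,W^{-1})$ from $|W^{-1}h|\,W\overline{\mathbf B}$ to obtain a scalar $\tilde r\geq|W^{-1}h|$ with $\|\tilde r\|_{L^{p'}}\leq 2$ and $\tilde r W\in\A_1$ of constant $\lesssim[W]_{\A_p}^{p}$. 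The other half of Corollary~\ref{cor:Duo-sharp-factorization} then delivers $\bar W=\tilde r^{1-p'/p_0'}W\in\A_{p_0}$ with $[\bar W]_{\A_{p_0}}\lesssim[W]_{\A_p}^{p/p_0}$. Setting $\gamma=(1-p'/p_0')p_0$, I bound $\int\langle f,h\rangle\leq\int|Wf||W^{-1}h|$ by H\"older with exponents $p_0,p_0'$ and weight $\tilde r^\gamma$; the $h$-factor collapses via $\tilde r\geq|W^{-1}h|$ and the identity $p_0'-\gamma p_0'/p_0=p'$ to at most $(\int|W^{-1}h|^{p'})^{1/p_0'}\leq 1$, while the hypothesis applied to $\bar W$ together with a second H\"older (exponents $p/p_0$ and $p/(p-p_0)$, exploiting the key identity $\gamma p/(p-p_0)=p'$) bounds the $g$-factor by a constant times $K_{p_0}([\bar W]_{\A_{p_0}})\|g\|_{L^p(W)}$. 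Taking the supremum over $h$ yields the conclusion; when $p_0=1$ the first H\"older becomes the degenerate $L^1/L^\infty$ pairing but the scheme is unchanged.

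The hardest part will be arranging the iteration to produce an ellipsoid-valued (hence scalar-multiplier) auxiliary weight, since Corollary~\ref{cor:Duo-sharp-factorization} applies only when the two input matrix weights are scalar multiples of one another. Inserting the exhausting operator $N_W$ (or $N_{W^{-1}}$) in front of $M$ is precisely what makes this possible: it preserves ellipsoid-valuedness with respect to the appropriate inverse matrix throughout the iteration, yielding the scalar function $\tilde r$ needed to build $\bar W$. Without this step the iterates of $M$ alone would produce only a genuine convex-set valued $\A_1^\K$ weight, to which the scalar-multiple form of Duoandikoetxea's sharp factorization does not directly apply.
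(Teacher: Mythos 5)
Your overall strategy coincides with the paper's: compose the exhausting operator $N_W$ (or $N_{W^{-1}}$, $N_I$) with the convex-set valued maximal operator so that the Rubio de Francia iterates stay ellipsoid-valued and hence produce a scalar multiplier of $W$, then invoke Corollary~\ref{cor:Duo-sharp-factorization}, H\"older's inequality, and duality when $p_0<p$; your constant bookkeeping ($[\bar W]_{\A_{p_0}}\lesssim [W]_{\A_p}^{p'/p_0'}$ when $p<p_0$ and $\lesssim [W]_{\A_p}^{p/p_0}$ when $p_0<p$) is correct. However, there is a genuine gap in the case $1<p<p_0\le\infty$: you build $\tilde r$ by iterating only from $G_0=|Wg|\,W^{-1}\overline{\mathbf B}$ and then ``apply \eqref{eqn:matrix-extrapol1} to $\bar W$.'' Under the extrapolation-pairs convention on which the theorem rests, the hypothesis is only assumed for pairs whose left-hand side is \emph{finite}, so before invoking it you must verify $\|f\|_{L^{p_0}(\R^n,\bar W)}<\infty$, i.e. $\int \tilde r^{-(p_0-p)}|W(x)f(x)|^{p_0}\,dx<\infty$ (resp. $\esssup \tilde r^{-1}|Wf|<\infty$ when $p_0=\infty$). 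Since $\tilde r$ is built from $g$ alone, nothing prevents $|Wf|$ from being large where $\tilde r$ is small, and knowing only $|Wf|\in L^p$ with $p<p_0$ gives no control on this quantity; the application of the hypothesis may therefore be vacuous, and the step fails as written. The repair is exactly the paper's device: iterate from the normalized sum $\bar h=|Wf|/\|f\|_{L^p(\R^n,W)}+|Wg|/\|g\|_{L^p(\R^n,W)}$, so that the iterate dominates $|Wf|$ as well, which gives $\|f\|_{L^{p_0}(\R^n,W_0)}\le\|f\|_{L^p(\R^n,W)}<\infty$ (and likewise for $p_0=\infty$) while leaving the rest of your estimate --- the bound $\int(\Rdf_W\bar h)^p\,dx\lesssim 1$ and the $g$-estimate --- unchanged.

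In the case $1\le p_0<p$ your argument is sound and is essentially the paper's Cases III--IV (the paper pairs the scalar $|Wf|$ with $h\in L^{p'}(\R^n)$ and uses the operator $W^{-1}M(W\cdot)$, while you use vector-valued duality with $\|h\|_{L^{p'}(\R^n,W^{-1})}\le 1$; this difference is immaterial). The one point you should make explicit is that the same H\"older step with exponents $p/p_0$ and $p/(p-p_0)$, together with $\gamma p/(p-p_0)=p'$ and $\|\tilde r\|_{L^{p'}}\le 2$, also shows $\|f\|_{L^{p_0}(\R^n,\bar W)}\lesssim\|f\|_{L^p(\R^n,W)}<\infty$, which is what licenses applying \eqref{eqn:matrix-extrapol1} there; in this direction, unlike the downward case, the finiteness check does come for free from your construction.
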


   \medskip




\begin{proof}
 The proof has four cases and is modeled on the proof of sharp-constant
 extrapolation in~\cite[Theorem~3.22]{MR2797562}.

 Fix $1<p<\infty$ and $W\in \A_p$.  We begin by defining two iteration
 operators.  To define the first, let $P_W=N_WM$, where $M$ is the
 convex-set valued maximal operator and $N_W$ is from
 Definition~\ref{defn:Nw}.  By 
 Lemma~\ref{lemma:ellipsoid-max} and
 Theorem~\ref{thm:wtd-convex-max-bounds},
 \[ \|P_W\|_{L^p_\K(\R^n,W)}
   = \|M\|_{L^p_\K(\R^n,W)}
   \leq C(n,d,p) [W]_{\A_p}^{p'}. \]
Moreover, by Lemmas~\ref{lemma:Mf-sublinear}
and~\ref{lemma:ellipsoid-max}, $P_W$ is sublinear and monotone.
Therefore, by Theorem~\ref{thm:gen-rdf-algorithm} we can define
 \[ \Rdf_W H(x) =
   \sum_{k=0}^\infty 2^{-k}\|P_W\|_{L^p_\K(\R^n,W)}^{-k}
   P_W^kH(x), \]
and we have that
 \begin{enumerate}
 \item[(A)]  $H(x) \subset \Rdf_W H(x)$,
 \item[(B)] $\|\Rdf_W H\|_{L^p_\K(\R^n,W)}
   \leq 2 \|H\|_{L^p_\K(\R^n,W)}$,
 \item [(C)] $\Rdf_WH \in \A_1^\K$ and $M(\Rdf_W H)(x)
   \subset P_W(\Rdf_W H)(x)
   \subset 2 C(n,d,p) [W]_{\A_p}^{p'} \Rdf_W H(x)$;
 \end{enumerate}
 the first inclusion in (C) follows from
 Lemma~\ref{lemma:ellipsoid-max}.  Further, by the definition of
 $N_W$, we have that if $H=rW^{-1}\overline{\mathbf{B}}$, then $P_WH$ is also an
 ellipsoid-valued function with respect to $W^{-1}$.  Hence, by
 Lemma~\ref{lemma:ball-iteration}, $\Rdf_W H$ is also of this form.

 We now define the second iteration operator.  Since $W\in \A_p$,
 $W^{-1}\in \A_{p'}$, so by
 Theorem~\ref{thm:wtd-convex-max-bounds}, $M$  is bounded on
 $L^{p'}_\K(\R^n,W^{-1})$ and
 \[ \|M\|_{L^{p'}_\K(\R^n,W^{-1})}
   \leq C(n,d,p)\|W^{-1}\|_{\A_{p'}}^p
   = C(n,d,p)\|W\|_{\A_{p}}^p. \]
 Define $M'H(x) = W^{-1}(x)M(WH)(x)$.  Then
 \[ \|M'H\|_{L^{p'}_\K(\R^n,|\cdot|)}
   = \|M(WH)\|_{L^{p'}_\K(\R^n,W^{-1})}
   \leq C(n,d,p)\|W\|_{\A_{p}}^p\|H\|_{L^{p'}_\K(\R^n,|\cdot|)}. \]
 Now let $P_I' = N_IM'$;  again by Lemmas~\ref{lemma:Mf-sublinear}
and~\ref{lemma:ellipsoid-max}, $P_I'$ is sublinear and monotone, so by
Theorem~\ref{thm:gen-rdf-algorithm} we can define
 \[ \Rdf_I' H(x)
   = \sum_{k=0}^\infty 2^{-k}\|P_I'\|_{L^{p'}_\K(\R^n,|\cdot|)}^{-k}
   (P_I')^kH(x), \]
and we have that
 \begin{enumerate}
 \item[(A$'$)]  $H(x) \subset \Rdf_I' H(x)$,
 \item[(B$'$)] $\|\Rdf_I' H\|_{L^{p'}_\K(\R^n,|\cdot|)}
   \leq 2 \|H\|_{L^{p'}_\K(\R^n,|\cdot|)}$,
 \item [(C$'$)] $W\Rdf_I' H\in \A_1^\K$ and
   $M(W\Rdf_I' H)(x) 
   \subset 2 C(n,d,p) [W]_{\A_p}^{p} W\Rdf_I' H(x)$.
 \end{enumerate}
 To see why (C$'$) holds, note that by
 Theorem~\ref{thm:gen-rdf-algorithm} and
 Lemma~\ref{lemma:ellipsoid-max} we have that
 \[  W^{-1}(x) M(W\Rdf_I'H)(x)
   \subset N_I( W^{-1} M(W\Rdf_I'H))(x) \subset
   2\|P_I'\|_{L^{p'}_\K(\R^n,|\cdot|)} \Rdf_I' H(x). \]
Finally, by
Lemma~\ref{lemma:ball-iteration}, if $H$ is a ball-valued function,
then so is $\Rdf_I'H$.

To prove extrapolation we consider four cases,
depending on the relative sizes of $p$ and $p_0$.

\subsection*{Case I: $\mathbf{1<p<p_0<\infty}$}
Fix $(f,g)\in \F$.  To prove
 inequality~\eqref{eqn:matrix-extrapol2}, we may suppose, by our
 assumptions on the family $\F$, that
 $0<\|f\|_{L^p(\R^n,W)}<\infty$.   Similarly, we may assume that
   $0<\|g\|_{L^p(\R^n,W)}<\infty$; we may assume the second inequality
     since otherwise~\eqref{eqn:matrix-extrapol2} is trivially true.
     Define the functions
     \begin{gather*}
       F(x) = \conv\{ -f(x), f(x) \}, \qquad
        N_W F(x) = |W(x)F(x)|W^{-1}(x)\overline{\mathbf{B}}, \\
       G(x) = \conv\{ -g(x), g(x) \}, \qquad
        N_W G(x) = |W(x)G(x)|W^{-1}(x)\overline{\mathbf{B}}, 
     \end{gather*}
     where we have that
     \[ 
       |W(x)F(x)| = |W(x)f(x)|, \qquad
       |W(x)G(x)| = |W(x)g(x)|.
\]
     Now define the ellipsoid-valued function
     \[ \bar{H}(x) = \bigg(\frac{|W(x)f(x)|}{\|f\|_{L^p(\R^n,W)}}
       +\frac{|W(x)g(x)|}{\|g\|_{L^p(\R^n,W)}}\bigg)W^{-1}(x)\overline{\mathbf{B}}
       = \bar{h}(x) W^{-1}(x)\overline{\mathbf{B}}. \]
     Then we have that $\|\bar{H}\|_{L^p_\K(\R^n,W)} \leq 2$. 
     The function $\Rdf_W \bar{H}$ is
     also ellipsoid-valued with respect to $W^{-1}$. Hence, there exists a scalar function, which we denote by $\Rdf_W \bar{h}$, such that
     \[ \Rdf_W \bar{H}(x) = \Rdf_W \bar{h}(x) W^{-1}(x)\overline{\mathbf{B}}. \]
     By property (A), $\bar{H}(x) \subset \Rdf_W \bar{H}(x)$, which implies
     that $\bar{h}(x) \leq \Rdf_W \bar{h}(x)$.

     By H\"older's inequality with
     exponents $p_0/p$ and $(p_0/p)'=\frac{p_0}{p_0-p}$,
     \begin{align*}
       & \bigg(\int_{\R^n} |W(x)f(x)|^p\,dx\bigg)^{\frac{1}{p}} \\
       & \qquad \qquad = \bigg(\int_{\R^n} |\Rdf_W \bar{h}(x)^{-\frac{p_0-p}{p_0}}
         W(x)f(x)|^p \Rdf_W
         \bar{h}(x)^{p\frac{p_0-p}{p_0}}\,dx\bigg)^{\frac{1}{p}} \\
       & \qquad \qquad \leq
         \bigg(\int_{\R^n} |\Rdf_W \bar{h}(x)^{-\frac{p_0-p}{p_0}}
         W(x)f(x)|^{p_0}\,dx\bigg)^{\frac{1}{p_0}}
          \bigg(\int_{\R^n}
         \Rdf_W\bar{h}(x)^{p}\,dx\bigg)^{\frac{1}{p}\frac{p_0-p}{p_0}}
       \\
       & \qquad \qquad  = I_1^{\frac{1}{p_0}}I_2^{\frac{1}{p}\frac{p_0-p}{p_0}}. 
     \end{align*}

     We estimate $I_1$ and $I_2$ separately.  To estimate the latter:
     by the definition of $\Rdf_W\bar{h}$, by property (B), and by
     Lemma~\ref{lemma:ellipsoid-max}, 
     \[
       I_2  = \int_{\R^n} \Rdf_W\bar{h}(x)^p\,dx 
             = \int_{\R^n} |W(x) \Rdf_W \bar{H}(x)|^p\,dx 
            \leq 2^p \int_{\R^n} |W(x) \bar{H}(x)|^p\,dx 
                    \leq 4^p.
\]

      To estimate $I_1$ note first that by property (C),
      \[  (\Rdf_W \bar{h}) W^{-1}\overline{\mathbf{B}}
        = \Rdf_W \bar{H} \in \A_1^\K. \]
By Corollary~\ref{cor:matrix-norm-A1}, $(\Rdf_W \bar{h})
      W^{-1}\in \A_1$, and so $(\Rdf_W \bar{h})^{-1} W \in \A_\infty$.  
      Thus, by       Corollary~\ref{cor:Duo-sharp-factorization},
      \[ W_0 = (\Rdf_W \bar{h})^{-\frac{p_0-p}{p_0}}W
        = [(\Rdf_W \bar{h})^{-1} W]^{\frac{p_0-p}{p_0}}
        W^{\frac{p}{p_0} }\in \A_{p_0}, \]
      and
      \begin{equation}\label{wc} [W_0]_{\A_{p_0}}
        \leq C(n,d,p,p_0)[W]_{\A_p}^{\frac{p}{p_0}} [W]_{\A_p}^{p'
          \frac{p_0-p}{p_0}}=  C(n,d,p,p_0)[W]_{\A_p}^{\frac{p'}{p_0'}}. 
          \end{equation}

        Second, since $\Rdf_W \bar{h}(x)  \geq |\bar{h}(x)| \geq
        |W(x)f(x)|/\|f\|_{L^p(\R^n,W)}$, we have     
        \begin{align*}
          I_1 &= ||f||_{L^{p_0}(W_0,\R^d)}^{p_0}
          =\int_{\R^n} |\Rdf_W \bar{h}(x)^{-\frac{p_0-p}{p_0}}
         W(x)f(x) |^{p_0}\,dx \\
         & \leq  \int_{\R^n} |\Rdf_W \bar{h}(x)|^{-(p_0-p)}|W(x)F(x)|^{p_0}\,dx
         \\
         &\leq \|f\|_{L^p(\R^n,W)}^{p_0-p} \int_{\R^n} |W(x)F(x)|^p\,dx
         = \|f\|_{L^p(\R^n,W)}^{p_0} < \infty. 
       \end{align*}
       
       Likewise,  using the fact that
       $\Rdf_W \bar{h}(x)  \geq |\bar{h}(x)| \geq
        |W(x)G(x)|/\|g\|_{L^p(\R^n,W)}$, we have
        \[
        ||g||_{L^{p_0}(W_0,\R^d)}^{p_0}=\int_{\R^n} |\Rdf_W \bar{h}(x)^{-\frac{p_0-p}{p_0}}
         W(x)g(x) |^{p_0}\,dx \le \|g\|_{L^p(\R^n,W)}^{p_0} <\infty.
         \]
        
       Taken together, these estimates imply that we can apply our
       hypothesis~\eqref{eqn:matrix-extrapol1} to the pair $(f,g)$
       with the weight $W_0$.  Therefore, by \eqref{wc} we have
       \[
         I_1^{\frac{1}{p_0}}
         = \|f\|_{L^{p_0}(W_0,\R^d)}
         \leq K_{p_0}([W_0]_{A_{p_0}}) \|g\|_{L^{p_0}(W_0,\R^d)} 
         \leq K_{p_0}\bigg(C(n,d,p,p_0)[W]_{\A_p}^{\frac{p'}{p_0'}}\bigg)
           \|g\|_{L^p(\R^n,W)}.
\]
       Combining this inequality with the estimate for $I_2$ yields \eqref{eqn:matrix-extrapol2}.

\subsection*{Case II: $\mathbf{p_0=\infty}$}
As in the
previous case we have $W_0=(\Rdf_W \bar{h})^{-1} W \in \A_\infty$.  Moreover, for
almost every $x$ we have that 
\[
|W(x)F(x)|\Rdf_W \bar{h}(x)^{-1}\leq |W(x)F(x)| \bar{h}(x)^{-1} \le 
\|f\|_{L^p(\R^n,W)}.
\]
Thus, $\|f\|_{L^\infty(\R^n, W_0 )}\le \|f\|_{L^p(\R^n,W)}<\infty$.  The same argument also shows that
\[ \|g\|_{L^\infty(\R^n, W_0 )} \leq \|g\|_{L^p(\R^n,W)}<\infty. \]
Therefore, we can
apply~\eqref{eqn:matrix-extrapol1} to the pair $(f,g)\in \mathcal F$ and argue as in Case I to get
\begin{align*}
  \|f\|_{L^p(\R^n,W)}^p &=  \int_{\R^n} |W(x)f(x)|^p\,dx \\
  & =  \int_{\R^n} |\Rdf_W \bar{h}(x)^{-1} W(x)f(x)|^p \Rdf_W
\bar{h}(x)^{p}\,dx \\
& \leq \|f\|_{L^\infty(\R^n, W_0 )}^p \int_{\R^n} \Rdf_W
\bar{h}(x)^{p}\,dx 
\\ 
& \leq 4^p K_\infty( [W_0]_{\A_{p_0}})^p \|g\|_{L^\infty(\R^n, W_0)}^p \\
& \leq  4^p K_\infty(C(n,d,p) [W]_{\A_p}^{p'})^p\|g\|_{L^p(\R^n,W)}^p. 
\end{align*}

\subsection*{Case III: $\mathbf{1<p_0<p}$}
 Fix $(f,g)\in \F$.  To prove
 inequality~\eqref{eqn:matrix-extrapol2}, we may again assume that
 $0<\|f\|_{L^p(\R^n,W)},\, \|g\|_{L^p(\R^n,W)}<\infty$.  Since the
 dual of the scalar function space $L^p(\R^n)$ is $L^{p'}(\R^n)$,
 there exists $h\in L^{p'}(\R^n)$, $\|h\|_{L^{p'}(\R^n)}=1$, such that
 \[ \|f\|_{L^p(\R^n,W)} = \int_{\R^n} |W(x)f(x)|h(x)\,dx. \]
   Define the ball-valued function $H(x)=h(x)\overline{\mathbf{B}}$; since 
   $H\in L^{p'}_\K(\R^n,|\cdot|)$, $\Rdf_I'H$ is defined and is
   ball-valued function; set $\Rdf_I'H(x)=\Rdf_I'h(x)\overline{\mathbf{B}}$.  As
   before, by (A$'$), we have that $h(x)\leq \Rdf_I'h(x)$.  Therefore, by
   H\"older's inequality with exponent $p_0$, we have
   that 
   \begin{align*}
     \int_{\R^n} |W(x)f(x)|h(x)\,dx
     & \leq \int_{\R^n} |\Rdf_I'h(x)
       ^{1-p'/p_0'}W(x)f(x)|h(x)^{p'/p_0'}\,dx \\
     & \leq \bigg(\int_{\R^n} |\Rdf_I'h(x)
       ^{1-p'/p_0'}W(x)f(x)|^{p_0}\,dx\bigg)^{\frac{1}{p_0}}
       \bigg(\int_{\R^n} h(x)^{p'}\,dx\bigg)^{\frac{1}{p_0'}} \\
       & = \bigg(\int_{\R^n} |\Rdf_I'h(x)
       ^{1-p'/p_0'}W(x)f(x)|^{p_0}\,dx\bigg)^{\frac{1}{p_0}}.
   \end{align*}

   To complete the estimate, first note that by (C$'$),
   $(\Rdf_I'h)W\overline{\mathbf{B}} \in \A_1^\K$. Hence, by
   Corollary~\ref{cor:matrix-norm-A1},
   $(\Rdf_I'h)W \in \A_1$.  Therefore, by
   Corollary~\ref{cor:Duo-sharp-factorization}, $W_0=\Rdf_I'h(x)
   ^{1-p'/p_0'}W(x) \in \A_{p_0}$ and
   \[ [W_0]_{\A_{p_0}} \leq C(n,d,p,p_0)[W]_{\A_p}^{p(1-p'/p_0')}
     [W]_{\A_p}^{p'/p_0'} = C(n,d,p,p_0)[W]_{\A_p}^{p/p_0}. \]
Moreover, the above estimates yield
\[
\|f\|_{L^p(\R^n,W)} \le \|f\|_{L^{p_0}(\R^n,W_0)}.
\]
On the other hand, by H\"older's inequality with exponent $p/p_0$ and property (B$'$),
   \begin{multline*}
     \|f\|_{L^{p_0}(\R^n,W_0)}^{p_0}=
     \int_{\R^n} |\Rdf_I'h(x)
     ^{1-p'/p_0'}W(x)f(x)|^{p_0}\,dx \\
     \leq
     \bigg(\int_{\R^n} |W(x)f(x)|^p\,dx\bigg)^{p_0/p}
     \bigg(\int_{\R^n} \Rdf_I'h(x)^{p'}\,dx\bigg)^{1/(p/p_0)'}
       \leq 2^{1/(p/p_0)'}\|f\|_{L^p(\R^n,W)}^{p_0} < \infty. 
     \end{multline*}
     Likewise, we have
     \[
     \|g\|_{L^{p_0}(\R^n,W_0)}^{p_0}
       \leq 2^{1/(p/p_0)'}\|g\|_{L^p(\R^n,W)}^{p_0} < \infty. 
       \]

     Therefore, we can apply our hypothesis
     \eqref{eqn:matrix-extrapol1} to the pair $(f,g)$ with the weight
     $W_0$
     \begin{multline*}
     \|f\|_{L^p(\R^n,W)} \le \|f\|_{L^{p_0}(\R^n,W_0)}
 \leq K_{p_0}([W_0]_{A_{p_0}})
         \|g\|_{L^{p_0}(\R^n,W_0)} \\
  \leq C(p,p_0) K_{p_0}(C(n,d,p,p_0)[W]_{\A_p}^{p/p_0})
         \|g\|_{L^p(\R^n,W)}.
       \end{multline*}
       
     \subsection*{Case IV: $\mathbf{p_0=1}$}
We make the same assumptions and use the same notation as in the
previous case.   Then we have that $W_0=(\Rdf_I' h) W \in \A_1$, and the
above argument shows that 
\[
\|f\|_{L^1(\R^n, W_0)} \le 2^{1/p'}\|f\|_{L^p(\R^n,W)} < \infty.
\]
The same inequality holds for $g$.
Therefore, we can apply inequality~\eqref{eqn:matrix-extrapol1} to the
pair $(f,g)$ to get
\begin{multline*}
  \|f\|_{L^p(\R^n,W)}
   = \int_{\R^n} |W(x)f(x)|h(x)\,dx \leq \int_{\R^n} |\Rdf_I'h(x)W(x)f(x)|\,dx  
  = \|f\|_{L^1(\R^n, W_0)} 
  \\
   \leq K_1([W_0]_{\A_p}) \|g\|_{L^1(\R^n, W_0)} 
  \leq 2 K_1(C(n,d,p)[W]_{\A_p}^p) \|g\|_{L^p(\R^n,W)}.
  \qedhere
\end{multline*}
\end{proof}

   \medskip

   \section{An application of Rubio de Francia extrapolation}
   \label{section:applications}

   In this section we illustrate
   Theorem~\ref{thm:matrix-extrapolation} by deducing Theorem
   \ref{thm:matrix-extrapolation-intro} and by proving quantitative
   $L^p$ bounds for maximal rough singular integral operators,
   extending the results from~\cite{MR4245601}. Indeed, Theorem
   \ref{thm:matrix-extrapolation-intro} is a simple consequence of
   Theorem~\ref{thm:matrix-extrapolation}.

\begin{proof}[Proof of Theorem \ref{thm:matrix-extrapolation-intro}]
Let $T$ be a scalar-valued operator. We assume that the extension of $T$
to vector-valued functions, which is given for ${f}=(f_1,\ldots,f_d)^t$ by
applying it to each coordinate $T{f}=(Tf_1,\ldots,Tf_d)^t$, 
fulfills the hypothesis of Theorem \ref{thm:matrix-extrapolation-intro}.
That is, for some $p_0$, $1 \leq p_0\leq\infty$, there exists an
increasing function $K_{p_0}$ such that for every $W_0\in \A_{p_0}$ we have \eqref{ext1}. 
If we take any scalar weight $w_0\in \A_{p_0}$ we can define $W_0$ to
be the diagonal matrix with copies of $w_0$ on the diagonal, so $T$ is
bounded on $L^{p_0}(\R^n,w_0)$.  Hence, the scalar-valued extrapolation theorem implies that
 $T$ is bounded on the scalar weighted spaces $L^p(\R^n, w)$, for any $w\in \A_p$, $1<p<\infty$.
   To apply Theorem~\ref{thm:matrix-extrapolation}, we need to
   construct a family $\F$ of pairs $(Tf,f)$, such that 
   given any $p$, $1<p<\infty$, and $W\in \A_p$, then for any pair
   $(Tf,f)\in \F$, we have $\|Tf\|_{L^p(\R^n, W)} < \infty$.   
   
   Given a function $f \in
   L^\infty_c(\R^n,\R^d)$, we have that $f\in L^p(\R^n,w)$ for any scalar
   weight $w\in \A_p$.  Moreover, by
   Corollary~\ref{cor:eigenvalues-scalar-Ap}, if matrix $W\in
   \A_p$, then $|W|_{\op}$ is a scalar $\A_p$ weight. 
   Hence, since
   $T$ is bounded on the scalar weighted spaces,
\[ \int_{\R^n} |W(x)Tf(x)|^p\,dx
\leq
\int_{\R^n} \big( |W(x)|_{\op} |Tf(x)|\big)^p\,dx 
\leq
C\int_{\R^n} \big( |W(x)|_{\op} |f(x)|\big)^p\,dx 
< \infty. \]
Therefore, if we form the family of extrapolation pairs
\[ \F = \{ (Tf,f) : f\in  L^\infty_c(\R^n,\R^d) \}, \]
then for each $1<p<\infty$ we can apply the conclusion of
Theorem~\ref{thm:matrix-extrapolation} to the family $\F$.
This gives us the desired inequality \eqref{ext2} for all $f\in
L^\infty_c(\R^n,\R^d)$.  

Now suppose that $T$ is linear. Take any $f\in L^p(\R^n,W)$.
Let $\{f_j\}_{j=1}^\infty$ be a sequence in $L^\infty_c(\R^n,\R^d)$, which converges to $f$ in $L^p(\R^n,W)$ norm. Since $T$ is linear, we have $Tf_j - Tf_k = T(f_j - f_k)$, and by \eqref{ext2}, $\{Tf_j\}_{j=1}^\infty$ is a Cauchy sequence in $L^p(\R^n,W)$. Since the space $L^p(\R^n,W)$ is complete, this sequence converges. Define $Tf$ to be the limit. Thus, 
we get the inequality \eqref{ext2} for  all $f\in L^p(\R^n,W)$. 
\end{proof}

   Take $\Omega \in
   L^\infty(S^{n-1})$ with $||\Omega||_\infty \le 1$ and vanishing
   integral on the unit sphere $S^{n-1}$ in $\R^n$.
   For each $0<\delta<1$, define the truncated rough singular integral by
   \[ T_{\Omega,\delta} f(x) = \int_{\delta<|x-y|<\delta^{-1}} \frac{\Omega((x-y)/|x-y|)}{|x-y|^n}
     f(y)\,dy. \]
   Define the maximal rough singular integral operator
   \[ T_\Omega^{\natural} f(x) = \sup_{0<\delta<1}
     |T_{\Omega,\delta}f(x)|. \]
   In~\cite{MR4245601}, di Plinio, Hyt\"onen, and Li proved that if
   $W\in \A_2$, then
   \begin{equation} \label{eqn:rough-sio-base}
     \| \sup_{0<\delta<1} |W T_{\Omega,\delta}f|\|_{L^2(\R^n)}
       \leq
       C[W]_{\A_2}^5 \|f\|_{L^2(\R^n,W)}. 
     \end{equation}

Our extrapolation result, Theorem \ref{thm:matrix-extrapolation}, yields the following extension to $L^p$ spaces.

   \begin{theorem} \label{thm:rough-sio}
Given $\Omega \in L^\infty(S^{n-1})$ with  $||\Omega||_\infty \le 1$ and $\int_{S^{n-1}} \Omega(x)\, dx=0$, for all $p$, $1<p<\infty$, and $W\in
\A_p$, we have
 \begin{equation} \label{eqn:rough-sio-p}
     \| \sup_{0<\delta<1} |W T_{\Omega,\delta}f|\|_{L^p(\R^n)}
       \leq
       C[W]_{\A_p}^{5\max\{\frac{p}{2},\frac{p'}{2}\}} \|f \|_{L^p(\R^n,W)}. 
     \end{equation}
   \end{theorem}
   
   \begin{remark}
     If we restate the constant in terms of \eqref{eqn:roudenko}, the traditional definition
     of matrix $A_p$, it becomes
     $[W]_{A_p}^{\frac{5}{2}\max\{1,\frac{1}{p-1}\}}$.  This is larger
     than the constant gotten in~\cite{MR4357365} for rough singular integrals.
   \end{remark}

   \begin{proof}
Fix $p$, $1<p<\infty$.  Following the proof of Theorem \ref{thm:matrix-extrapolation-intro}, we construct an appropriate family $\F$ of
extrapolation pairs.  Fix $f\in L^\infty_c(\R^n,\R^d)$; then $f\in
L^p(W)$ for all $p$ and $W\in \A_p$.   Since $T_\Omega^{\natural} $
is bounded on $L^p(\R^n,\R^d)$, we have that for almost every $x\in
\R^n$
\[ \sup_{0<\delta<1} |T_{\Omega,\delta} f(x)| < \infty.  \]
Since matrix multiplication takes a bounded set to a bounded set, we
have that
\[ \sup_{0<\delta<1} |W(x)T_{\Omega,\delta} f(x)| < \infty  \]
almost everywhere.  Therefore, for all such $x$ we can find
$\delta_x>0$ such that
\[ \sup_{0<\delta<1} |W(x)T_{\Omega,\delta} f(x)|
  \leq
  2 |W(x)T_{\Omega,\delta_x} f(x)|. \]
Let $g(x) = T_{\Omega,\delta_x} f(x)$; it is straightforward to show
that we can choose $\delta_x$ measurably, so $g$ is a measurable function.  Since $T_\Omega^{\natural}$ is
bounded on $L^p(\R^n,w)$ for all scalar $w\in \A_p$~\cite{DuRu}, by Corollary~\ref{cor:eigenvalues-scalar-Ap} we have that
\[  \int_{\R^n} |W(x)g(x)|^p\,dx
    \leq
    \int_{\R^n} \big(  |W(x)|_{op} |T_\Omega^{\natural} f(x)|\big)^p\,dx
    \leq
    C  \int_{\R^n} \big( |W(x)|_{op} f(x)|\big)^p\,dx < \infty. \]

  By inequality~\eqref{eqn:rough-sio-base}, for any $V\in A_2$,
  \[\bigg( \int_{\R^n} |V(x)g(x)|^2\,dx\bigg)^{\frac12}
    \leq
     \bigg(\int_{\R^n} \big(\sup_{0<\delta<1}
     |V(x)T_{\Omega,\delta}f(x)|\big)^2\,dx\bigg)^{\frac12}
     \leq C[V]_{\A_2}^5 \|f \|_{L^2(\R^n,V)}. \]
   Therefore, if we let $\F= \{(g,f) : f \in
   L^\infty_c(\R^n,\R^d), g(x) = T_{\Omega,\delta_x} f(x) \}$, we have that this family satisfies the hypotheses of
 Theorem~\ref{thm:matrix-extrapolation}. Hence, it follows
 that~\eqref{eqn:rough-sio-p} holds for all $f\in
 L^\infty_c(\R^n,\R^d)$.

 Now fix $p$, $1<p<\infty$, and $W\in \A_p$.  Since the operator
 $T_{\Omega,\delta}$ is linear, the operator $T$, which is defined initially for $f\in
 L^\infty_c(\R^n,\R^d)$ by
 \[ Tf(x)= \sup_{0<\delta<1} |W(x)T_{\Omega,\delta} f(x)| \]
 is sublinear. Arguing as we did above in the proof of
 Theorem~\ref{thm:matrix-extrapolation-intro}, we can show that $T$ has a
 continuous extension to all $f\in L^p(\R^n,W)$. Indeed, take any $f\in L^p(\R^n,W)$.
Let $\{f_j\}_{j=1}^\infty$ be a sequence in $L^\infty_c(\R^n,\R^d)$, which converges to $f$ in $L^p(\R^n,W)$ norm. Since $T$ is sublinear, we have $|Tf_j - Tf_k| \le |T(f_j - f_k)|$, and by \eqref{eqn:rough-sio-p}, $\{Tf_j\}_{j=1}^\infty$ is a Cauchy sequence in $L^p(\R^n)$, so this sequence converges. Define $Tf$ to be the limit. Clearly, $T: L^p(\R^n,W) \to L^p(\R^n)$ is bounded.

 We will now show that for all $f\in L^p(\R^n,W)$,
 \begin{equation}\label{eqn:orig-defn}
  \sup_{0<\delta<1} |W(x)T_{\Omega,\delta} f(x)| \le  Tf(x) 
\end{equation}
 almost everywhere.
 Fix such an $f$ and for each $k\in \N$,
 define $f_k \in L^\infty_c(\R^n,\R^d)$ by
 \[ f_k(x) = f(x) \min\bigg(1,
   \frac{k}{|f(x)|}\bigg)\chi_{B(0,k)}(x).  \]
 Then we have that $|f_k(x)|\leq |f(x)|$ and $f_k \to f$ pointwise
 a.e.  Moreover, since $f$ and $f_k$ are parallel vectors,
 $|W(x)f_k(x)|\leq |W(x)f(x)|$.  Hence, by the dominated convergence
 theorem, $f_k \to f$ in $L^p(\R^n,W)$. Therefore, we have that $
 \sup_{0<\delta<1} |W(x)T_{\Omega,\delta} f_k|$ converges to $Tf$ 
 in $L^p(\R^n)$.  By passing to a subsequence, we then have that for
 almost every $x\in \R^n$, 
 \begin{equation} \label{eqn:first-limit}
 Tf(x) = \lim_{k\to \infty} \sup_{0<\delta<1}
   |W(x)T_{\Omega,\delta} f_k(x)|. 
 \end{equation}

 Now fix $\delta$, $0<\delta<1,$ and let
 $B=B(0,N)$, where $N>\delta^{-1}$.  For brevity, in the definition of
 $T_{\Omega,\delta}$  we write $[x-y]'= (x-y)|x-y|^{-1}$. Then we have that
 \begin{align*}
   \int_B |W(x)T_{\Omega,\delta}f(x)|^p\,dx
   & = \int_B \bigg| W(x) \int_{\delta<|x-y|<\delta^{-1}}
     \frac{ \Omega([x-y]')}{|x-y|^n} f(y)\,dy\bigg|^p \,dx \\
   & \leq \delta^{-np} \int_B
     \bigg(\int_{|x-y|<\delta^{-1}} |W(x)W^{-1}(y)W(y)f(y)|\,dy\bigg)^p
     \,dx \\
   & \leq  \delta^{-np} \int_{2B}
     \bigg( \int_{2B} |W(x)W^{-1}(y)|^{p'}_{\op}\,dy
     \bigg)^{\frac{p}{p'}}\,dx
     \bigg( \int_{\R^n} |W(y)f(y)|^p\,dy \bigg) \\
   & \leq C[W]_{\A_p}^p |B|^p \|f\|^p_{L^p(\R^n, W)};
 \end{align*}
 the last inequality holds by applying the $\A_p$ condition with balls
 instead of cubes. Since $N$ can be made arbitrarily large, we get
 that 
 for all $\delta$,  $WT_{\Omega,\delta}f \in L^p_{loc}(\R^n)$,
 and so $|WT_{\Omega,\delta}f(x)|<\infty$ a.e.

 Moreover, again since $f$ and $f_k$ are parallel vectors, for a.e. $x,\, y\in \R^n$
 \[ |W(x) \Omega([x-y]')|x-y|^{-n} f_k(y)| \leq
   |W(x) \Omega([x-y]')|x-y|^{-n} f(y)|. \]
 Therefore, by the dominated convergence theorem, we have that
for every $\delta$ and for almost every
 $x$,
 \begin{equation} \label{eqn:limit-two}
   W(x)T_{\Omega, \delta}f(x) = \lim_{k\to\infty}
   W(x)T_{\Omega, \delta}f_k(x). 
 \end{equation}
 Fix an $x$ such that both \eqref{eqn:first-limit} and \eqref{eqn:limit-two} hold. Consequently,
 \begin{align*}
   \sup_{0<\delta<1} |W(x)T_{\Omega, \delta}f(x)|\big| & 
   =  \sup_{0<\delta<1} \lim_{k\to\infty} |W(x)T_{\Omega, \delta}f_k(x)| 
\\
&  \le \lim_{k\to\infty} \sup_{0<\delta<1}  |W(x)T_{\Omega, \delta}f_k(x)|  = Tf(x). \end{align*}
 This proves \eqref{eqn:orig-defn} and the boundedness of $T: L^p(\R^n,W) \to L^p(\R^n)$ yields \eqref{eqn:rough-sio-p}.
   \end{proof}
   
\bibliographystyle{plain}
\bibliography{convex-maximal}

\begin{thebibliography}{10}

\bibitem{MR2458436}
J.-P. Aubin and H.~Frankowska.
\newblock {\em Set-valued analysis}.
\newblock Modern Birkh\"auser Classics. Birkh\"auser Boston, Inc., Boston, MA,
  2009.
\newblock Reprint of the 1990 edition [MR1048347].

\bibitem{MR2319768}
P.~Auscher and J.~M. Martell.
\newblock Weighted norm inequalities, off-diagonal estimates and elliptic
  operators. {I}. {G}eneral operator theory and weights.
\newblock {\em Adv. Math.}, 212(1):225--276, 2007.

\bibitem{BL}
J.~Bergh and J.~L\"{o}fstr\"{o}m.
\newblock {\em Interpolation spaces. {A}n introduction}.
\newblock Grundlehren der Mathematischen Wissenschaften, No. 223.
  Springer-Verlag, Berlin-New York, 1976.

\bibitem{bhatia}
R.~Bhatia.
\newblock {\em Positive definite matrices}.
\newblock Princeton Series in Applied Mathematics. Princeton University Press,
  Princeton, NJ, 2007.

\bibitem{MR3937322}
K.~Bickel, A.~Culiuc, S.~Treil, and B.~Wick.
\newblock Two weight estimates with matrix measures for well localized
  operators.
\newblock {\em Trans. Amer. Math. Soc.}, 371(9):6213--6240, 2019.

\bibitem{MR3452715}
K.~Bickel, S.~Petermichl, and B.~Wick.
\newblock Bounds for the {H}ilbert transform with matrix {$A_2$} weights.
\newblock {\em J. Funct. Anal.}, 270(5):1719--1743, 2016.

\bibitem{Bow}
M.~Bownik.
\newblock Inverse volume inequalities for matrix weights.
\newblock {\em Indiana Univ. Math. J.}, 50(1):383--410, 2001.

\bibitem{BR}
M.~Bownik and K.~Ross.
\newblock The structure of translation-invariant spaces on locally compact
  abelian groups.
\newblock {\em J. Fourier Anal. Appl.}, 21(4):849--884, 2015.

\bibitem{MR2759829}
H.~Brezis.
\newblock {\em Functional analysis, {S}obolev spaces and partial differential
  equations}.
\newblock Universitext. Springer, New York, 2011.

\bibitem{CV}
C.~Castaing and M.~Valadier.
\newblock {\em Convex Analysis and Measurable Multifunctions}, volume 580 of
  {\em Lecture Notes in Mathematics}.
\newblock Springer-Verlag, Berlin-New York, 1977.

\bibitem{CG}
M.~Christ and M.~Goldberg.
\newblock Vector {$A_2$} weights and a {H}ardy-{L}ittlewood maximal function.
\newblock {\em Trans. Amer. Math. Soc.}, 353(5):1995--2002, 2001.

\bibitem{CruzUribe:2016ji}
D.~Cruz-Uribe.
\newblock {Two weight inequalities for fractional integral operators and
  commutators}.
\newblock In F.~J. Martin-Reyes, editor, {\em VI International Course of
  Mathematical Analysis in Andalusia}, pages 25--85. World Scientific, 2016.

\bibitem{preprint-DCU}
D.~Cruz-Uribe.
\newblock Extrapolation and factorization.
\newblock In J.~Lukes and L.~Pick, editors, {\em Function spaces, embeddings
  and extrapolation X, Paseky 2017}, pages 45--92. Matfyzpress, Charles
  University, 2017.
\newblock arXiv:1706.02620.

\bibitem{MR2927495}
D.~Cruz-Uribe, A.~Fiorenza, and C.~J. Neugebauer.
\newblock Weighted norm inequalities for the maximal operator on variable
  {L}ebesgue spaces.
\newblock {\em J. Math. Anal. Appl.}, 394(2):744--760, 2012.

\bibitem{MR3803292}
D.~Cruz-Uribe, J.~Isralowitz, and K.~Moen.
\newblock Two weight bump conditions for matrix weights.
\newblock {\em Integral Equations Operator Theory}, 90(3):Art. 36, 31, 2018.

\bibitem{DCU-JI-KM-SP-IRR}
D.~Cruz-Uribe, J.~Isralowitz, K.~Moen, S.~Potts, and I.~Rivera-R\'{\i}os.
\newblock Weak endpoint bounds for matrix weights.
\newblock {\em Rev. Mat. Iberoamericana}, to appear.
\newblock arXiv:1905.06436.

\bibitem{MR2797562}
D.~Cruz-Uribe, J.~M. Martell, and C.~P{\'e}rez.
\newblock {\em Weights, extrapolation and the theory of {R}ubio de {F}rancia},
  volume 215 of {\em Operator Theory: Advances and Applications}.
\newblock Birkh\"auser/Springer Basel AG, Basel, 2011.

\bibitem{MR3544941}
D.~Cruz-Uribe, K.~Moen, and S.~Rodney.
\newblock Matrix {$\mathcal A_p$} weights, degenerate {S}obolev spaces, and
  mappings of finite distortion.
\newblock {\em J. Geom. Anal.}, 26(4):2797--2830, 2016.

\bibitem{cruz-uribe-perez00}
D.~Cruz-Uribe and C.~P{\'e}rez.
\newblock Two weight extrapolation via the maximal operator.
\newblock {\em J. Funct. Anal.}, 174(1):1--17, 2000.

\bibitem{MR3818613}
A.~Culiuc, F.~Di~Plinio, and Y.~Ou.
\newblock Uniform sparse domination of singular integrals via dyadic shifts.
\newblock {\em Math. Res. Lett.}, 25(1):21--42, 2018.

\bibitem{MR4245601}
F.~Di~Plinio, T.~Hyt\"{o}nen, and K.~Li.
\newblock Sparse bounds for maximal rough singular integrals via the {F}ourier
  transform.
\newblock {\em Ann. Inst. Fourier (Grenoble)}, 70(5):1871--1902, 2020.

\bibitem{domelevo2021boundedness}
K.~Domelevo, S.~Kakaroumpas, S.~Petermichl, and O.~Soler~i Gibert.
\newblock Boundedness of {J}ourn\'{e} operators with matrix weights.
\newblock {\em preprint}, 2021.
\newblock arXiv:2102.03395.

\bibitem{DPTV-2024}
K.~Domelevo, S.~Petermichl, S.~Treil, and A.~Volberg.
\newblock The matrix {$A_2$} conjecture fails, i.e., $3/2>1$.
\newblock {\em preprint}, 2024.
\newblock arXiv:2402.06961.

\bibitem{MR2140200}
O.~Dragi\v{c}evi\'{c}, L~Grafakos, M.~C. Pereyra, and S.~Petermichl.
\newblock Extrapolation and sharp norm estimates for classical operators on
  weighted {L}ebesgue spaces.
\newblock {\em Publ. Mat.}, 49(1):73--91, 2005.

\bibitem{MR1009162}
N.~Dunford and J.~T. Schwartz.
\newblock {\em Linear operators. {P}art {I}}.
\newblock Wiley Classics Library. John Wiley \& Sons, Inc., New York, 1988.
\newblock General theory, With the assistance of William G. Bade and Robert G.
  Bartle, Reprint of the 1958 original, A Wiley-Interscience Publication.

\bibitem{duoandikoetxea01}
J.~Duoandikoetxea.
\newblock {\em Fourier analysis}, volume~29 of {\em Graduate Studies in
  Mathematics}.
\newblock American Mathematical Society, Providence, RI, 2001.

\bibitem{10.1016/j.jfa.2010.12.015}
J.~Duoandikoetxea.
\newblock {Extrapolation of weights revisited: New proofs and sharp bounds}.
\newblock {\em J. Func. Anal.}, 260(6):1886 -- 1901, 03 2011.

\bibitem{DuRu}
J.~Duoandikoetxea and J.~L. Rubio~de Francia.
\newblock Maximal and singular integral operators via {F}ourier transform
  estimates.
\newblock {\em Invent. Math.}, 84(3):541--561, 1986.

\bibitem{MR1439553}
R.~Fefferman and J.~Pipher.
\newblock Multiparameter operators and sharp weighted inequalities.
\newblock {\em Amer. J. Math.}, 119(2):337--369, 1997.

\bibitem{MR2104276}
M.~Frazier and S.~Roudenko.
\newblock Matrix-weighted {B}esov spaces and conditions of {$A_p$} type for
  {$0<p\leq1$}.
\newblock {\em Indiana Univ. Math. J.}, 53(5):1225--1254, 2004.

\bibitem{garcia-cuerva-rubiodefrancia85}
J.~Garc{\'{\i}}a-Cuerva and J.~L. Rubio~de Francia.
\newblock {\em Weighted norm inequalities and related topics}, volume 116 of
  {\em North-Holland Mathematics Studies}.
\newblock North-Holland Publishing Co., Amsterdam, 1985.

\bibitem{MR2302906}
I.~M. Glazman and Ju.~I. Ljubi\v{c}.
\newblock {\em Finite-dimensional linear analysis}.
\newblock Dover Publications, Inc., Mineola, NY, 2006.
\newblock A systematic presentation in problem form, Translated from the
  Russian and edited by G. P. Barker and G. Kuerti, Reprint of the 1974
  edition.

\bibitem{MR2015733}
M.~Goldberg.
\newblock Matrix {$A_p$} weights via maximal functions.
\newblock {\em Pacific J. Math.}, 211(2):201--220, 2003.

\bibitem{MR944321}
E.~Harboure, R.~A. Mac\'{\i}as, and C.~Segovia.
\newblock Extrapolation results for classes of weights.
\newblock {\em Amer. J. Math.}, 110(3):383--397, 1988.

\bibitem{He1}
H.~Helson.
\newblock {\em Lectures on Invariant Subspaces}.
\newblock Academic Press, New York-London, 1964.

\bibitem{He2}
H.~Helson.
\newblock {\em The Spectral Theorem}, volume 1227 of {\em Lecture Notes in
  Mathematics}.
\newblock Springer-Verlag, Berlin, 1986.

\bibitem{MR0507504}
F.~Hiai and H.~Umegaki.
\newblock Integrals, conditional expectations, and martingales of multivalued
  functions.
\newblock {\em J. Multivariate Anal.}, 7(1):149--182, 1977.

\bibitem{hytonenP2010}
T.~Hyt\"{o}nen.
\newblock The sharp weighted bound for general {C}alder\'{o}n-{Z}ygmund
  operators.
\newblock {\em Ann. of Math. (2)}, 175(3):1473--1506, 2012.

\bibitem{MR3204859}
T.~Hyt\"{o}nen.
\newblock The {$A_2$} theorem: remarks and complements.
\newblock In {\em Harmonic analysis and partial differential equations}, volume
  612 of {\em Contemp. Math.}, pages 91--106. Amer. Math. Soc., Providence, RI,
  2014.

\bibitem{MR3092729}
T.~Hyt\"onen and C.~P\'erez.
\newblock Sharp weighted bounds involving {$A_\infty$}.
\newblock {\em Anal. PDE}, 6(4):777--818, 2013.

\bibitem{MR3936542}
T.~Hyt\"{o}nen, S.~Petermichl, and A.~Volberg.
\newblock The sharp square function estimate with matrix weight.
\newblock {\em Discrete Anal.}, pages Paper No. 2, 8, 2019.

\bibitem{MR4159390}
J.~Isralowitz.
\newblock Sharp matrix weighted strong type inequalities for the dyadic square
  function.
\newblock {\em Potential Anal.}, 53(4):1529--1540, 2020.

\bibitem{MR4030471}
J.~Isralowitz and K.~Moen.
\newblock Matrix weighted {P}oincar\'{e} inequalities and applications to
  degenerate elliptic systems.
\newblock {\em Indiana Univ. Math. J.}, 68(5):1327--1377, 2019.

\bibitem{MR4225835}
J.~Isralowitz, S.~Pott, and I.~Rivera-R\'{\i}os.
\newblock Sharp {$A_1$} weighted estimates for vector-valued operators.
\newblock {\em J. Geom. Anal.}, 31(3):3085--3116, 2021.

\bibitem{MR4454483}
J.~Isralowitz, S.~Pott, and S.~Treil.
\newblock Commutators in the two scalar and matrix weighted setting.
\newblock {\em J. Lond. Math. Soc. (2)}, 106(1):1--26, 2022.

\bibitem{jones}
P.~W. Jones.
\newblock Factorization of {$A_{p}$} weights.
\newblock {\em Ann. of Math. (2)}, 111(3):511--530, 1980.

\bibitem{Lern2012}
A.~K. Lerner.
\newblock On an estimate of {C}alder\'{o}n-{Z}ygmund operators by dyadic
  positive operators.
\newblock {\em J. Anal. Math.}, 121:141--161, 2013.

\bibitem{MR3709538}
V.~Milman and L.~Rotem.
\newblock Powers and logarithms of convex bodies.
\newblock {\em C. R. Math. Acad. Sci. Paris}, 355(9):981--986, 2017.

\bibitem{MR3985279}
V.~Milman and L.~Rotem.
\newblock Weighted geometric means of convex bodies.
\newblock In {\em Functional analysis and geometry: {S}elim {G}rigorievich
  {K}rein centennial}, volume 733 of {\em Contemp. Math.}, pages 233--249.
  Amer. Math. Soc., [Providence], RI, 2019.

\bibitem{muckenhoupt72}
B.~Muckenhoupt.
\newblock Weighted norm inequalities for the {H}ardy maximal function.
\newblock {\em Trans. Amer. Math. Soc.}, 165:207--226, 1972.

\bibitem{muckenhoupt-wheeden74}
B.~Muckenhoupt and R.~L. Wheeden.
\newblock Weighted norm inequalities for fractional integrals.
\newblock {\em Trans. Amer. Math. Soc.}, 192:261--274, 1974.

\bibitem{MR4357365}
P.~Muller and I.~Rivera-R\'{\i}os.
\newblock Quantitative matrix weighted estimates for certain singular integral
  operators.
\newblock {\em J. Math. Anal. Appl.}, 509(1):Paper No. 125939, 38, 2022.

\bibitem{MR3689742}
F.~Nazarov, S.~Petermichl, S.~Treil, and A.~Volberg.
\newblock Convex body domination and weighted estimates with matrix weights.
\newblock {\em Adv. Math.}, 318:279--306, 2017.

\bibitem{MR1428988}
F.~Nazarov and S.~Treil.
\newblock The hunt for a {B}ellman function: applications to estimates for
  singular integral operators and to other classical problems of harmonic
  analysis.
\newblock {\em Algebra i Analiz}, 8(5):32--162, 1996.

\bibitem{MR4000248}
Z.~Nieraeth.
\newblock Quantitative estimates and extrapolation for multilinear weight
  classes.
\newblock {\em Math. Ann.}, 375(1-2):453--507, 2019.

\bibitem{MR3698161}
S.~Pott and A.~Stoica.
\newblock Bounds for {C}alder\'{o}n-{Z}ygmund operators with matrix {$A_2$}
  weights.
\newblock {\em Bull. Sci. Math.}, 141(6):584--614, 2017.

\bibitem{MR0045938}
H.~R{\aa}dstr\"om.
\newblock An embedding theorem for spaces of convex sets.
\newblock {\em Proc. Amer. Math. Soc.}, 3:165--169, 1952.

\bibitem{Rock}
R.~T. Rockafellar.
\newblock {\em Convex Analysis}, volume~28 of {\em Princeton Mathematical
  Series}.
\newblock Princeton University Press, Princeton, N.J, 1970.

\bibitem{MR1350650}
A.~Ron and Z.~Shen.
\newblock Frames and stable bases for shift-invariant subspaces of
  {$L_2(\mathbf R^d)$}.
\newblock {\em Canad. J. Math.}, 47(5):1051--1094, 1995.

\bibitem{MR1928089}
S.~Roudenko.
\newblock Matrix-weighted {B}esov spaces.
\newblock {\em Trans. Amer. Math. Soc.}, 355(1):273--314 (electronic), 2003.

\bibitem{rubio}
J.~L. Rubio~de Francia.
\newblock Factorization theory and {$A_{p}$} weights.
\newblock {\em Amer. J. Math.}, 106(3):533--547, 1984.

\bibitem{Ru}
W.~Rudin.
\newblock {\em Functional analysis}.
\newblock International Series in Pure and Applied Mathematics. McGraw-Hill
  Inc., New York, second edition, 1991.

\bibitem{MR1216521}
R.~Schneider.
\newblock {\em Convex bodies: the {B}runn-{M}inkowski theory}, volume~44 of
  {\em Encyclopedia of Mathematics and its Applications}.
\newblock Cambridge University Press, Cambridge, 1993.

\bibitem{TV}
T.~Tao and V.~Vu.
\newblock {\em Additive combinatorics}, volume 105 of {\em Cambridge Studies in
  Advanced Mathematics}.
\newblock Cambridge University Press, Cambridge, 2006.

\bibitem{MR1030053}
S.~Treil.
\newblock Geometric methods in spectral theory of vector-valued functions: some
  recent results.
\newblock In {\em Toeplitz operators and spectral function theory}, volume~42
  of {\em Oper. Theory Adv. Appl.}, pages 209--280. Birkh\"{a}user, Basel,
  1989.

\bibitem{MR1478786}
S.~Treil and A.~Volberg.
\newblock Continuous frame decomposition and a vector
  {H}unt-{M}uckenhoupt-{W}heeden theorem.
\newblock {\em Ark. Mat.}, 35(2):363--386, 1997.

\bibitem{MR1428818}
S.~Treil and A.~Volberg.
\newblock Wavelets and the angle between past and future.
\newblock {\em J. Funct. Anal.}, 143(2):269--308, 1997.

\bibitem{Vol}
A.~Volberg.
\newblock Matrix {$A_p$} weights via {$S$}-functions.
\newblock {\em J. Amer. Math. Soc.}, 10(2):445--466, 1997.

\bibitem{vuorinen2023strong}
E.~Vuorinen.
\newblock The strong matrix weighted maximal operator.
\newblock {\em preprint}, 2023.
\newblock arXiv:2306.03858.

\bibitem{Woj}
P.~Wojtaszczyk.
\newblock {\em Banach spaces for analysts}, volume~25 of {\em Cambridge Studies
  in Advanced Mathematics}.
\newblock Cambridge University Press, Cambridge, 1991.

\end{thebibliography}

\end{document}